\documentclass[10pt, reqno]{amsart}
\usepackage{textcmds}  
\usepackage{amsmath, amssymb, amsfonts, amstext, verbatim, amsthm, mathrsfs}
\usepackage{microtype}
\usepackage[all]{xy}
\usepackage[modulo]{lineno}
\usepackage[usenames]{color}
\usepackage{aliascnt}
\usepackage{enumitem}
\usepackage{xspace}
\usepackage{amsfonts}
\usepackage{amssymb}
\usepackage[centertags]{amsmath}
\usepackage{amsthm}
\usepackage[margin=3cm]{geometry}
\usepackage{dsfont}
\usepackage{bm}
\usepackage{color}
\usepackage{subfigure}
\usepackage{amsmath}
\usepackage{array}
\usepackage[all]{xy}
\usepackage{euscript}

\usepackage{graphics,graphicx}  

\setcounter{tocdepth}{2}

\let\oldtocsection=\tocsection

\let\oldtocsubsection=\tocsubsection

\renewcommand{\tocsection}[2]{\hspace{0em}\oldtocsection{#1}{#2}}
\renewcommand{\tocsubsection}[2]{\hspace{1em}\oldtocsubsection{#1}{#2}}

\usepackage[backref,colorlinks=true,linkcolor=black,citecolor=blue,urlcolor=blue,citebordercolor={0 0 1},urlbordercolor={0 0 1},linkbordercolor={0 0 1}]{hyperref} 


\makeatletter
\def\@secnumfont{\bfseries}
\def\section{\@startsection{section}{1}%
  \z@{.7\linespacing\@plus\linespacing}{.5\linespacing}%
  {\normalfont\Large\bfseries}}
\def\subsection{\@startsection{subsection}{2}%
  \z@{.5\linespacing\@plus.7\linespacing}{-.5em}%
  {\normalfont\large\bfseries}}
  \def\subsubsection{\@startsection{subsubsection}{3}%
  \z@{.5\linespacing\@plus.7\linespacing}{-.5em}%
  {\normalfont\bfseries}}
\makeatother

\DeclareGraphicsRule{.tif}{png}{.png}{`convert #1 `dirname #1`/`basename #1 .tif`.png}

\theoremstyle{plain}
\newtheorem{thm}{Theorem}[section]
\newtheorem{thm*}[thm]{Theorem*}
\newtheorem{lemma}[thm]{Lemma}
\newtheorem{prop}[thm]{Proposition}

\theoremstyle{definition}

\newtheorem{definition}[thm]{Definition}
\newtheorem{remark}[thm]{Remark}

\theoremstyle{remark}

\numberwithin{equation}{section} 	

\newcommand{\R}{\mathbb{R}}
\newcommand{\Z}{\mathbb{Z}}
\newcommand{\C}{\mathbb{C}}

\newcommand{\PP}{\mathbb{P}}
\newcommand{\K}{\mathbb{K}}

\newcommand{\La}{\Lambda}
\newcommand{\la}{\lambda}
\newcommand{\lam}{\lambda}

\newcommand{\e}{\varepsilon}
\newcommand{\eps}{\e}

\newcommand{\op}{\operatorname}

\newcommand{\wt}{\widetilde}
\newcommand{\wh}{\widehat}

\newcommand{\std}{\text{std}}

\newcommand{\sss}{\vspace{2.5 mm}}
\newcommand{\bcs}{\natural}

\newcommand{\Int}{\text{Int}\,}

\newcommand{\bdy}{\partial}
\newcommand{\D}{\mathbb{D}}
\newcommand{\calL}{\mathcal{L}}
\newcommand{\calD}{\mathcal{D}}
\newcommand{\univ}{\text{univ}}

\newcommand{\nil}{\emptyset}
\newcommand{\Op}{\mathcal{O}p\,}
\newcommand{\calR}{\mathcal{R}}
\newcommand{\calM}{\mathcal{M}}
\newcommand{\calMovl}{\ovl{\calM}}

\newcommand{\calN}{\mathcal{N}}
\newcommand{\calP}{\mathcal{P}}
\newcommand{\calA}{\mathcal{A}}
\newcommand{\calS}{\mathcal{S}}
\newcommand{\hf}{HF}
\newcommand{\cf}{CF}
\newcommand{\sh}{SH}
\newcommand{\hw}{HW}
\newcommand{\twsh}{{SH}_{\Omega}}

\newcommand{\twhwblank}{{HW}}

\newcommand{\ovl}{\overline}

\newcommand{\calJ}{\mathcal{J}}
\newcommand{\fuk}{\mathfrak{Fuk}}
\newcommand{\wfuk}{\mathfrak{WFuk}}
\newcommand{\fukdir}{\fuk^{\rightarrow}}

\newcommand{\twfuk}{\mathfrak{Fuk}_{\Omega}}
\newcommand{\twfukblank}{\mathfrak{Fuk}}
\newcommand{\conf}{\text{Conf}}
\newcommand{\pslr}{\text{PSL}(2,\mathbb{R})}
\newcommand{\pslc}{\text{PSL}(2,\mathbb{C})}
\newcommand{\hol}{\text{Hol}}
\newcommand{\reg}{\text{reg}}
\newcommand{\calF}{\mathcal{F}}
\newcommand{\ev}{\text{ev}}
\newcommand{\ec}{\e_c}
\newcommand{\calH}{\mathcal{H}}
\newcommand{\calC}{\mathcal{C}}
\newcommand{\tot}{\text{tot}}
\newcommand{\twmu}{\mu_{\Omega}}

\newcommand{\calI}{\mathcal{I}}
\newcommand{\calB}{\mathcal{B}}
\newcommand{\calE}{\mathcal{E}}

\newcommand{\calT}{\mathcal{T}}

\newcommand{\gen}{\op{Gen}}
\newcommand{\genHLL}[2]{\gen(#1,#2)}

\newcommand{\calJctct}{\mathcal{J}_\text{ctct}}

\newcommand{\Hhom}{\text{Hhom}}
\newcommand{\hhom}{H\!\hom}
\newcommand{\fun}{\text{fun}}

\newcommand{\bdfuk}{\mathfrak{Fuk}_{i_{\mho}}}

\newcommand{\oo}{\mathfrak{0}}
\newcommand{\nn}{\mathfrak{1}}
\newcommand{\bL}{\mathbf{L}}
\newcommand{\bx}{\mathbf{x}}

\newcommand{\bimod}{bimod}
\newcommand{\euA}{\EuScript{A}}
\newcommand{\euP}{\EuScript{P}}
\newcommand{\euT}{\euA_{\op{tot}}}
\newcommand{\euB}{\EuScript{B}}
\newcommand{\euU}{\EuScript{U}}
\newcommand{\euD}{\EuScript{D}}

\newcommand{\fs}{\calF}
\newcommand{\identity}{\mathds{1}}
\newcommand{\iv}{IV}
\newcommand{\tv}{TV}
\newcommand{\LL}{\mathfrak{L}}
\newcommand{\Jo}{J_{\op{ref}}}

\newcommand{\KL}{\mathbb{L}}
\newcommand{\field}{\mathbb{F}}

\newcommand{\calHtau}{\calH_{\tau}}
\newcommand{\calcyl}{\calC}
\newcommand{\calcylovl}{\ovl{\calC}}
\newcommand{\calcyluniv}{\calC^\univ}
\newcommand{\calcont}{\calD}
\newcommand{\calcontovl}{\ovl{\calcont}}
\newcommand{\calcontuniv}{\calD^\univ}
\newcommand{\signu}{s(u)}

\newcommand{\calRovl}{\ovl{\calR}}
\newcommand{\calRuniv}{\calR^{\univ}}
\newcommand{\calRunivpre}{\wt{\calR}^\univ}
\newcommand{\calRovluniv}{\ovl{\calR}^{\univ}}
\newcommand{\calTR}[1]{\calT_{\calR_{#1}}}
\newcommand{\calTRsemi}[1]{\calT_{\calR_{#1}}^{\op{semi}}}

\newcommand{\calJfix}{\calJ}
\newcommand{\calsphere}{\mathcal{N}}
\newcommand{\calsphereovl}{\ovl{\mathcal{N}}}
\newcommand{\calsphereuniv}{\calsphere^\univ}

\newcommand{\CP}{\mathbb{CP}}

\newcommand{\calTsphere}[1]{\calT_{\calsphere_{#1}}}

\newcommand{\calTcyl}[1]{\calT_{\calcyl_{#1}}}
\newcommand{\calTcylsemi}[1]{\calT_{\calcyl_{#1}}^{\op{semi}}}
\newcommand{\calTcont}[1]{\calT_{\calcont_{#1}}}
\newcommand{\calTcontsemi}[1]{\calT_{\calcont_{#1}}^{\op{semi}}}

\newcommand{\ii}{{int}}
\newcommand{\rd}{{rd}}
\newcommand{\pl}{{pl}}
\newcommand{\ipl}{{\ii,\pl}}
\newcommand{\ird}{{\ii,\rd}}
\newcommand{\spr}{{spr}}

\newcommand{\visphere}{V_\ii(\calT_\calN)}
\newcommand{\vipR}{V_{\ii,\pl}(\calT_\calR)}
\newcommand{\virR}{V_{\ii,\rd}(\calT_\calR)}

\newcommand{\vicyl}{V_\ii(\calT_{\calcyl})}
\newcommand{\vipcyl}{V_\ipl(\calT_{\calcyl})}
\newcommand{\vircyl}{V_\ird(\calT_{\calcyl})}
\newcommand{\vicont}{V_\ii(\calT_{\calcont})}
\newcommand{\vipcont}{V_\ipl(\calT_{\calcont})}
\newcommand{\vircont}{V_\ird(\calT_{\calcont})}

\newcommand{\orien}{\op{or}}

\renewcommand{\ker}{\op{Ker}}

\newcommand{\End}{\op{End}}

\title{Squared Dehn twists and deformed symplectic invariants}
\author{Kyler Siegel}
\thanks{The author was partially supported by NSF grant DGE-114747.}


\begin{document}

\sloppy

\begin{abstract}
We establish an infinitesimal version of fragility for squared Dehn twists around even dimensional Lagrangian spheres. The precise formulation involves twisting the Fukaya category by a closed two-form or bulk deforming it by a half-dimensional cycle.
As our main application, we compute the twisted and bulk deformed symplectic cohomology of the subflexible Weinstein manifolds constructed in \cite{murphysiegel}.
\end{abstract}

\maketitle

\tableofcontents

\section{Introduction}

The classical Dehn twist is a certain self-diffeomorphism of the annulus which is the identity near the boundary and the antipodal map on the core circle.
It is well-appreciated that Dehn twists around curves play a central role in the study of surfaces. Among other things they generate the mapping class group of any closed orientable surface and therefore form a basic building block for their automorphisms.
The Dehn twist also has a natural generalization to higher dimensions, sometimes called the ``generalized Dehn twist" or ``Dehn--Seidel twist" or ``Picard--Lefschetz transformation".
It is a self-diffeomorphism of the unit disk cotangent bundle of the sphere, $D^*S^n$, which is the identity near the boundary and the antipodal map on the zero section.
In fact, it was first observed by Arnold \cite{arnold1995some} that generalized Dehn twists are symplectomorphisms which respect to the canonical symplectic structure on $D^*S^n$.
Moreover, by the Weinstein neighborhood they can be implanted into a neighborhood of any Lagrangian sphere $S$ in a symplectic manifold $(M^{2n},\omega)$. 
The resulting symplectomorphism $\tau_S: M \rightarrow M$ is well-defined up to symplectic isotopies fixing the boundary.
Following in their two-dimensional fraternal footsteps,
higher dimensional Dehn twists have recently become a major element of symplectic geometry.
Besides being interesting automorphisms in their own right, they provide powerful tools for computations in Floer theory and Fukaya categories.
Dehn twists also arise as monodromy transformations around critical values of Lefschetz fibrations, allowing for deep connections with singularity theory and algebraic geometry.

The classical Picard--Lefschetz formula describes the action of a generalized Dehn twist on the level of singular homology. Using it, one can easily check that the iterates of a Dehn twist around an odd-dimensional sphere are typically distinct, even on the level of homotopy theory.
On the other hand, Dehn twisting twice around an even dimensional sphere acts trivially on homology.
If fact, at least if the sphere is two-dimensional, the squared Dehn twist is known to be smoothy isotopic to the 
identity rel boundary \cite{seidel1999}.
A similar proof seems to work for six-dimensional spheres (see the discussion in \cite[\S 5.3]{maydanskiy2009}), and for general even-dimensional spheres it is known that some finite iterate of the Dehn twist is smoothly isotopic to the identity \cite{krylov2007relative}, although the precise order is unknown.
At any rate, Seidel realized that $\tau_S^2$ is typically {\em not} symplectically isotopic to the identity rel boundary, and this can be detected using Floer theory \cite{seidel1997floer}.
This is an example of a rigidity phenomenon in symplectic geometry which goes far beyond smooth topology.

\sss

In \cite{seidel1997floer}, Seidel also made the intriguing observation that, for $S$ two-dimensional, $\tau_S^2$ is typically {\em fragile}. That is, although $\tau_S^2: M \rightarrow M$ is not symplectically isotopic to the identity rel boundary, there exist symplectic forms $\omega_t$, $t \in [0,1]$, and symplectomorphisms $\Phi_t$ of $(M,\omega_t)$ fixing the boundary such that $(\omega_0,\Phi_0) = (\omega,\tau_S^2)$ and $\Phi_t$ is symplectically isotopic to the identity rel boundary for any $t > 0$.
This observation seems to suggest that symplectic rigidity is more delicate than one could reasonably guess.

As a step in interpreting this phenomenon, Ritter observed in \cite{ritter2010deformations} that deforming a symplectic form is at least heuristically related to twisting symplectic invariants by a closed two-form. Roughly, for $\Omega$ a sufficiently small closed two-form, the symplectic geometry of $(M,\omega + \Omega)$ ought to be reflected in the $\Omega$-twisted symplectic invariants of $(M,\omega)$.
As a manifestation of this, in $\S\ref{sec:fukaya categories}$ we construct the Fukaya category of a Liouville domain twisted by a closed two-form $\Omega$, denoted by $\twfuk(M,\theta)$,
and in \S\ref{sec:squared dehn twists} we prove:
\begin{thm}\label{thm:main thm twisted}
Let $L$ and $S$ be Lagrangian spheres in a four-dimensional Liouville domain $(M^4,\theta)$.
Assume that $L$ and $S$ intersect once transversely, and let $\Omega$ be a real closed two-form on $M$ 
such that $\int_L \Omega = 0$ and $\int_S \Omega \neq 0$.
Then $L$ and $\tau_S^2 L$ are not quasi-isomorphic in $\fuk(M,\theta)$, but are quasi-isomorphic in $\twfuk(M,\theta)$.
\end{thm}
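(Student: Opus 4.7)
The plan is to apply (a twisted version of) Seidel's long exact sequence for Dehn twists twice, reduce the question to the contractibility of a single cone object built from $S$, and localize the distinction between the twisted and untwisted settings to a single connecting morphism in $HF^*(S,S) \cong H^*(S^2)$.

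Assuming Seidel's exact triangle has been established in both $\fuk(M,\theta)$ and $\twfuk(M,\theta)$---an exact triangle of the form
\begin{equation*}
HF_\Omega(S,X) \otimes S \longrightarrow X \longrightarrow \tau_S X \xrightarrow{[1]}
\end{equation*}
for any Lagrangian $X$, and similarly in the untwisted setting---I first note that $HF_\Omega(S,L) \cong HF(S,L) \cong \FF$ is concentrated in a single degree, since $L \cap S$ consists of one transverse point and the $\exp \int \Omega$ twisting weights do not alter a chain complex with trivial differential. Applying the triangle once yields $\tau_S L \simeq \op{Cone}(S \to L)$ in either category. Applying it a second time, and using the standard identity $\tau_S S \simeq S[-1]$ for a Lagrangian $2$-sphere, presents $\tau_S^2 L$ as a double cone assembled from $L$ and two shifted copies of $S$. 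The octahedral axiom then produces an exact triangle
\begin{equation*}
L \longrightarrow \tau_S^2 L \longrightarrow \op{Cone}\bigl(\delta \colon S[a] \to S[b]\bigr) \xrightarrow{[1]},
\end{equation*}
in which the connecting morphism $\delta$ lies in a specific graded piece of $HF^*(S,S)$; a quasi-isomorphism $L \simeq \tau_S^2 L$ holds exactly when $\delta$ is invertible.

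For the non-quasi-isomorphism in $\fuk(M,\theta)$, the goal is to check that $\delta$ lies in a non-invertible component of $HF^*(S,S) \cong \FF \oplus \FF[-2]$, typically because an index count places it in the degree-zero piece where a nonzero multiple of $\identity_S$ would violate the Maslov grading of the participating polygons. Hence $\op{Cone}(\delta)$ splits as a sum of shifted copies of $S$, which is manifestly non-contractible. In the twisted category the identical derivation produces $\delta_\Omega \in HF_\Omega^*(S,S)$, but every contributing pseudoholomorphic polygon is now weighted by $\exp \int_u \Omega$, and polygons whose boundary represents the class $[S]$ acquire nontrivial factors involving $\int_S \Omega \neq 0$. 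The hope is that these weights promote $\delta_\Omega$ into the top-degree component of $HF_\Omega^*(S,S)$, making it a nonzero multiple of the Poincar\'e dual of the fundamental class of $S$---hence invertible---which collapses the cone and forces $L \simeq \tau_S^2 L$ in $\twfuk(M,\theta)$.

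The main obstacle is the moduli-theoretic identification of $\delta$ and $\delta_\Omega$: (i) locating the moduli of pseudoholomorphic polygons with boundary on $L$, $S$, and their Dehn-twisted images which compute the iterated connecting morphism; (ii) verifying that the untwisted signed count lies in a non-invertible graded piece of $HF^*(S,S)$; and (iii) exhibiting a contributing disk whose boundary has nontrivial period against $\Omega$, so that $\delta_\Omega$ is genuinely altered by the twist. An alternative and possibly more tractable route would be to exploit Seidel's fragility construction directly: one deforms $\omega$ to $\omega + t\Omega$ so that $\tau_S^2$ becomes Hamiltonian-isotopic to the identity relative to the boundary for $t > 0$, and then translates this geometric trivialization into the desired quasi-isomorphism in $\twfuk(M,\theta)$ via the correspondence between twisting by a closed two-form and deforming the symplectic form.
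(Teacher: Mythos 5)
There is a genuine gap, and it sits at the very first step. Your argument is built on a twisted Seidel exact triangle for $\tau_S$, i.e.\ on morphism spaces $\hf_\Omega(S,L)$ and $\hf_\Omega(S,S)$. But under the hypothesis $\int_S\Omega\neq 0$ the sphere $S$ is \emph{not} an object of $\twfuk(M,\theta)$: the category (even in its extended forms with one-forms or line bundles on the branes) only admits Lagrangians on which $\Omega$ is exact, precisely because Stokes' theorem on the boundary of a polygon is what makes the weights $t^{\int u^*\Omega}$ compatible with the $\calA_\infty$ relations. So there is no twisted triangle to apply, and this is not a technicality one can perturb away --- the failure of $\tau_S$ to act on the twisted category is the mechanism behind the theorem. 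Moreover, even granting some substitute triangle, your proposed mechanism cannot work as stated: the twist multiplies each rigid polygon count by the degree-zero unit $t^{\int u^*\Omega}$ and therefore cannot ``promote'' a connecting morphism $\delta$ into a different graded piece of $\hf^*(S,S)$. Indeed, if the twisted triangle existed with $\hf_\Omega(S,L)\cong\FF$ ``unchanged,'' your own octahedron argument would run verbatim and yield $L\not\simeq\tau_S^2L$ in $\twfuk$, contradicting the statement. The actual source of the difference is not a degree shift but a cancellation phenomenon within a fixed degree: two rigid curves contribute with opposite signs but different $\Omega$-areas, so an untwisted count $1-1=0$ becomes $\pm(1-t^c)\neq 0$.

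For comparison, the paper argues quite differently. It first proves a quasi-isomorphism criterion (Proposition \ref{prop:quasi-isomorphism criterion twisted}): for two Lagrangian spheres meeting in exactly two points $a,b$, quasi-isomorphism in $\twfuk$ follows once the twisted signed count of holomorphic strips from $a$ to $b$ passing through a generic point $p\in L_0$ is nonzero, since this count is the coefficient of the unit in $[a^\vee]\cdot[b]\in \Hhom(L_0,L_0)\cong H^*(S^2)$. It then reduces to a universal local model: a Weinstein neighborhood of $L\cup S$ is the $A_2$ Milnor fiber, where $L$, $\tau_S^2L$ are matching cycles of an auxiliary Lefschetz fibration, and the relevant strips are pseudoholomorphic sections over a bigon. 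The gluing theorem for sections (Proposition \ref{prop:gluing for sections}) together with the explicit model computation (Lemma \ref{lem:intersection computation}) shows there are exactly two such sections through $p$, one with vanishing and one with nonvanishing $\Omega_\e$-area, giving a nonzero twisted count and a vanishing untwisted one; the condition $\int_S\Omega\neq 0$ enters only through the cohomology class of $\Omega$ restricted to the neighborhood. Your fallback suggestion --- deducing the result from Seidel's fragility by deforming $\omega$ to $\omega+t\Omega$ --- is also not available: the correspondence between deforming the symplectic form and twisting is, as the introduction notes, only heuristic, and the theorem is meant as an infinitesimal substitute for fragility rather than a consequence of it.
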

\noindent This result can be interpreted as an infinitesimal analogue of the fragility of $\tau_S^2$.
It roughly states that, although $\tau_S^2$ is not symplectically isotopic to the identity, it behaves like the identity in the presence of the twisting two-form $\Omega$. 

\sss

At first glance, fragility for squared Dehn twists around two-dimensional Lagrangian spheres seems to have no analogue in higher dimensions. For example, since $T^*S^n$ has trivial second cohomology for $n > 2$, by Moser's theorem there are no nontrivial deformations of the symplectic form of $T^*S^n$.
However, there is actually a higher analogue of twisting symplectic invariants, namely the notion of ``bulk deformations" as introduced by Fukaya--Oh--Ohta--Ono.
The Fukaya category of $(M,\theta)$ bulk deformed by a smooth cycle $i_\mho: (\mho,\bdy \mho) \rightarrow (M,\bdy M)$, denoted by $\fuk_\mho(M,\theta)$, is defined by counting pseudoholomorphic polygons with interior point constraints in the cycle.
We give a construction of $\fuk_\mho(M,\theta)$ in $\S\ref{sec:bulk deformations}$, and in \S\ref{sec:squared dehn twists} we prove the following higher analogue of Theorem \ref{thm:main thm twisted}:
\begin{thm}\label{thm:main thm bulk deformed}
Let $L$ and $S$ be Lagrangian spheres in a $4m$-dimensional Liouville domain $(M^{4m},\theta)$, for $l \geq 2$. 
Assume that $L$ and $S$ intersect once transversely, and let $i_\mho: (\mho,\bdy \mho) \rightarrow (M,\bdy M)$ be a smooth half-dimensional cycle in $M$ which is disjoint from $L$ and intersects $S$ once transversely.
Then $L$ and $\tau_S^2L$ are not quasi-isomorphic in $\fuk(M,\theta)$, but are quasi-isomorphic in $\fuk_\mho(M,\theta)$.
\end{thm}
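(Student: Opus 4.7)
The plan is to prove Theorem \ref{thm:main thm bulk deformed} by adapting the proof of Theorem \ref{thm:main thm twisted}, with the bulk deformation by the half-dimensional cycle $\mho$ playing the role previously played by the twisting two-form $\Omega$. Both deformations modify the $A_\infty$-structure on the Fukaya category by weighting pseudoholomorphic disks according to auxiliary intersection data: for the two-form twist via the period $\int_D \Omega$, and for the bulk deformation via insertions at interior marked points constrained to lie on $\mho$. In dimension $4l$ with $l\geq 2$ the cohomology $H^2(T^*S^{2l})$ vanishes and so no nontrivial two-form construction is available on a neighborhood of $S$, but the $2l$-dimensional bulk deformation provides a natural replacement.

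For the non-quasi-isomorphism in $\fuk(M,\theta)$, I would appeal to Seidel's long exact triangle
$$HF(S,L)\otimes HF(L',S)\to HF(L',L)\to HF(L',\tau_S L)\to [+1]$$
applied with $L'=L$ and then with $L'=\tau_S L$. Since $|L\cap S|=1$, both $HF(L,S)$ and $HF(S,L)$ are one-dimensional; combined with the spherical twist formula $\tau_S S\simeq S[1-2l]$, iterating the triangle yields a computation of $HF(L,\tau_S^2 L)$ whose rank differs from that of $HF(L,L)$. Since quasi-isomorphic objects have isomorphic Floer cohomology with every test object, this rules out $L\simeq \tau_S^2 L$ in $\fuk(M,\theta)$.

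For the quasi-isomorphism in $\fuk_\mho(M,\theta)$, the same triangle persists in its bulk-deformed incarnation, constructed from the moduli spaces of $\S\ref{sec:bulk deformations}$. The new input is a pseudoholomorphic disk with boundary on $L, S, L$ and one interior marked point constrained to $\mho$: because $\mho$ meets $S$ transversely at a single point and is disjoint from $L$, the signed count of such disks should equal $\pm[S]\cdot[\mho]=\pm 1$. Inserting this extra contribution into the bulk-deformed evaluation map, and tracing through the same algebraic manipulation as in Theorem \ref{thm:main thm twisted}, forces the iterated mapping cone that computes $\tau_S^2 L$ in $\fuk_\mho(M,\theta)$ to collapse back to $L$, yielding the desired quasi-isomorphism.

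The principal obstacle is the analytic identification of the bulk contribution. One has to define and compactify the relevant moduli space of bulk-constrained polygons in the presence of a squared Dehn twist, establish transversality within the framework of $\fuk_\mho(M,\theta)$, and carry out a local Picard--Lefschetz computation near the intersection point of $L$ and $S$ to evaluate the signed count as $\pm 1$. Once this geometric input is secured, the algebraic deduction of the quasi-isomorphism from the bulk-deformed exact triangle proceeds exactly as in the two-dimensional twisted case of Theorem \ref{thm:main thm twisted}.
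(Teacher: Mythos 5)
Your proposal breaks down at both halves, in ways that matter.

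For the non-quasi-isomorphism in $\fuk(M,\theta)$, the rank comparison you propose cannot work. Here the spheres are even-dimensional, $\tau_S^2$ acts trivially on homology, and (after a small isotopy, e.g.\ in the matching-cycle model) $L$ and $\tau_S^2L$ meet transversely in exactly two points whose gradings differ by more than one, so the Floer differential vanishes and $HF(L,\tau_S^2L)$ has rank $2$ — exactly the rank of $HF(L,L)\cong H^*(S^{2l})$. Testing ranks against other compact objects fails for the same reason (e.g.\ $HF(S,\tau_S^2L)\cong HF(\tau_S^{-2}S,L)\cong HF(S,L)$). What actually distinguishes the two objects is multiplicative: the unique degree-zero class $[b]\in HF^0(L,\tau_S^2L)$ fails to be invertible, i.e.\ the relevant triangle count $[a^\vee]\cdot[b]$ misses the unit. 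This is precisely what the paper detects, via the converse of the quasi-isomorphism criterion (Remark \ref{rem:converse to quasi-iso criterion}) combined with the null-cobordism of the evaluation map for sections with standard boundary condition (Proposition \ref{prop:standard fibrations}), or alternatively via the wrapped Floer cohomology of a test thimble as in Maydanskiy's argument. Iterating the exact triangle and counting ranks, as you suggest, yields no contradiction.

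For the quasi-isomorphism in $\fuk_\mho(M,\theta)$, your plan to run a ``bulk-deformed incarnation'' of the exact triangle through $S$ hits an immediate obstruction: in the category constructed in \S\ref{sec:bulk deformations}, objects are required to be disjoint from $\mho$, and since $\mho$ meets $S$ once, $S$ is not an object at all (the same issue rules out the analogous triangle in the twisted case, where $\int_S\Omega\neq 0$); no deformed exact triangle is established, or needed, in the paper. Moreover the key count is not the topological number $[S]\cdot[\mho]=\pm 1$. The relevant moduli space consists of strips with boundary on $L$ and $\tau_S^2L$ through a generic boundary point $p$, which before the bulk constraint is a manifold of positive dimension ($2l-2$); showing that the single interior constraint on $\mho$ cuts it to one regular point is a genuine enumerative computation, not an intersection-number identity. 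The paper carries it out by reducing, via a Weinstein-neighborhood argument, to the $A_2$ plumbing, realizing $L$ and $\tau_S^2L$ as matching cycles of an auxiliary Lefschetz fibration (Proposition \ref{prop:matching cycle half-twist}), and then combining the explicit model computation of sections meeting the perturbed thimble $T_\e$ (Lemma \ref{lem:intersection computation}) with the gluing theorem for pseudoholomorphic sections (Proposition \ref{prop:gluing for sections}); the quasi-isomorphism then follows from Proposition \ref{prop:quasi-isomorphism criterion bulk deformed}, which replaces the exact-triangle mechanism entirely. You correctly flag ``the analytic identification of the bulk contribution'' as the crux, but as written both the framework you invoke and the evaluation of that contribution are missing.
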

\noindent Theorem \ref{thm:main thm bulk deformed} seems to have no direct interpretation in terms of deformations of symplectic forms, but can perhaps be viewed as an abstract or noncommutative generalization of fragility.
It also seems plausible that both Theorem \ref{thm:main thm twisted} and Theorem \ref{thm:main thm bulk deformed} could be extended to triviality statements for $\tau_S^2$ as an automorphism of the full twisted or bulk deformed Fukaya category.

\sss

Our main application of Theorems \ref{thm:main thm twisted} and \ref{thm:main thm bulk deformed} is to the study of subflexible Weinstein domains as defined in \cite{murphysiegel}.
These are examples of exotic symplectic manifolds whose ordinary symplectic cohomology vanishes. 
It was shown in \cite{murphysiegel} that many of these examples can be seen to be nonflexible, and hence exotic, using twisted symplectic cohomology.
In \S\ref{subsec:bulk deformed sh} we also define the bulk deformed version of symplectic cohomology,
which can be used to distinguish further subflexible examples for which no two-dimensional cohomology class suffices.

In this paper we provide the main computational tool for these examples.
Recall that subflexibilization is defined in \cite{murphysiegel} in terms of Lefschetz fibrations.
Let $(X^{2n+2},\la)$ be a Liouville domain admitting a Liouville Lefschetz fibration over the disk with fiber $(M^{2n},\theta)$ and vanishing cycles $V_1,...,V_k \subset M$. 
The subflexibilization $(X',\la')$ of $(X,\la)$ is defined as follows:
\begin{itemize}
\item
For $i = 1,..,k$, assume there is a Lagrangian disk $T_i \subset M$ with Legendrian boundary $\bdy T_i \subset \bdy M$
such that $T_i$ intersects $V_i$ once transversely.
\item
For $i = 1,...,k$, attach a Weinstein handle $H_i$ to $(M,\theta)$ along $\bdy T_i$.
Let $S_i$ be the Lagrangian sphere given by the union of $T_i$ and the core of $H_i$.
\item
Take $(X',\la')$ to be the total space of the Liouville Lefschetz fibration with fiber $M \cup H_1 \cup ... \cup H_k$ and vanishing cycles $\tau_{S_1}^2V_1,..., \tau_{S_k}^2V_k$.
\end{itemize}

The connection to Theorems \ref{thm:main thm twisted} and \ref{thm:main thm bulk deformed} is hopefully now apparent. 
At least in the case $\dim M = 4$, $\tau_{S_i}^2V_i$ is smoothly isotopic to $V_i$, and consequently as a smooth manifold $X'$ is simply given by attaching $k$ subcritical handles to $X$.\footnote{This is also true for $\dim M = 4l \geq 8$ on the level of homology, although smoothly there is a subtlety regarding the framings of handles; see \cite[Rmk. 4.2]{murphysiegel} and specifically \cite{maydanskiyseidelcorrigendum} for more details.}
Moreover, by \cite[Cor. 3.9]{murphysiegel} the ordinary symplectic cohomology of $(X',\la')$ vanishes.
On the other hand, the symplectic geometry of $(X,\la)$ should be reflected in the 
twisted or bulk deformed symplectic geometry of $(X',\la')$.
We make this precise in \S\ref{sec:squared dehn twists} and \S\ref{sec:from fiber to total space}, culminating in the following theorem. 
Take $\mho$ to be union of the cocores of $H_1,...,H_k$ crossed with the base disk, viewed as 
a smooth cycle (of dimension $n+2$, or codimension $n$) in $X'$ disjoint from the critical handles, and $\Omega$ to be the Poincar\'e dual thereof.
\begin{thm}\label{thm:compute twsh and bdfuk weak}
Suppose $(X,\la)$ has a Lefschetz thimble with nontrivial wrapped Floer cohomology. Then:
\begin{itemize}
\item
If $\dim X =  6$, $\sh_\Omega(X',\la')$ is nontrivial.
\item
If $\dim X = 4m+2 \geq 10$, $\sh_\mho(X',\la')$ is nontrivial.
\end{itemize}
\end{thm}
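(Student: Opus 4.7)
The plan is to produce a Lagrangian in $(X',\la')$ whose twisted (resp.\ bulk-deformed) wrapped Floer cohomology is nonzero, and then invoke the closed-open map to conclude nontriviality of $\twsh(X',\la')$ (resp.\ $\bdsh(X',\la')$). By hypothesis there is a Lefschetz thimble $D \subset X$ with $\hw(D,D) \neq 0$, and I denote by $V_i \subset M$ its vanishing cycle. In the subflexibilization $(X',\la')$, the corresponding vanishing cycle in the enlarged fiber $M' := M \cup H_1 \cup \cdots \cup H_k$ is $\tau_{S_i}^2 V_i$, and I write $D' \subset X'$ for the associated Lefschetz thimble. The bulk cycle $\mho$ (resp.\ its Poincar\'e dual two-form $\Omega$ in the $\dim X = 4$ case) is supported on the cocores of the new Weinstein handles crossed with the base disk; in particular it is disjoint from $V_i \subset M$, and its restriction to any fiber intersects each $S_i$ exactly once, transversely, at the intersection of the core and cocore of $H_i$. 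This is precisely the input of Theorems \ref{thm:main thm twisted} and \ref{thm:main thm bulk deformed} applied to the pair $(V_i, S_i)$ in $M'$.

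Next, I apply the fiber-to-total-space formalism of \S\ref{sec:from fiber to total space} in the twisted / bulk-deformed setting to express $\twhw(D',D')$ (resp.\ $\bdhw(D',D')$) as a hom of $\tau_{S_i}^2 V_i$ in the twisted (resp.\ bulk-deformed) Fukaya category of the fiber $M'$. Theorem \ref{thm:main thm twisted} (resp.\ Theorem \ref{thm:main thm bulk deformed}) then yields a quasi-isomorphism $\tau_{S_i}^2 V_i \simeq V_i$ in this category, so the hom is identified with the corresponding hom of $V_i$. Since $V_i$ is disjoint from the support of the deformation data, the twisting / bulk structure restricts trivially to it, and this hom coincides with the undeformed Fukaya hom of $V_i$ in $\fuk(M',\theta')$. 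Applying the fiber-to-total-space formalism in reverse to $(X,\la)$, this matches $\hw(D,D)$, which is nonzero by assumption; hence $\twhw(D',D') \neq 0$ (resp.\ $\bdhw(D',D') \neq 0$). The twisted / bulk-deformed closed-open map sends the unit in $\twsh(X',\la')$ (resp.\ $\bdsh(X',\la')$) to the unit in this wrapped Floer cohomology, and nonvanishing of the target forces nonvanishing of the source.

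The main obstacle will be setting up the deformed fiber-to-total-space correspondence rigorously and verifying that the quasi-isomorphism supplied by Theorems \ref{thm:main thm twisted}/\ref{thm:main thm bulk deformed} in the fiber assembles compatibly with the Lefschetz fibration structure in the total space. Concretely, one must check that the chosen deformation on $(X',\la')$ restricts fiberwise to exactly the deformation data required by the fragility theorems, that the wrapped Floer theory of a Lefschetz thimble is honestly computed by a hom of its vanishing cycle in the deformed fiber Fukaya category, and that the identification is natural enough to transport the quasi-isomorphism in the fiber to an identification of wrapped Floer complexes in the total space. Once the machinery of \S\ref{sec:from fiber to total space} is in place, the conclusion follows essentially formally from the chain of identifications above.
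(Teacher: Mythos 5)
Your overall architecture (fragility in the fiber, a fiber-to-total-space correspondence, then the unit/module argument to pass from wrapped Floer cohomology to symplectic cohomology) is the same as the paper's, and your final step is exactly the paper's appeal to the unital module structure of \S\ref{subsubsec:wrapped Floer as a module}. However, the central step as you state it is not just unverified but false: the wrapped Floer cohomology of a Lefschetz thimble is \emph{not} computed by a hom of its vanishing cycle in the (deformed) Fukaya category of the fiber. Fiber homs between vanishing cycles are finite-rank Floer groups of compact Lagrangians, whereas $HW(D',D')$ is typically infinite-dimensional (already for a thimble of the standard fibration on $D^*S^n$, whose wrapped endomorphisms are $H_{-*}(\Omega S^n)$ while the fiber hom of the vanishing cycle is $H^*(S^{n-1})$). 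What \S\ref{sec:from fiber to total space} actually provides is the weaker and structurally different statement of Theorems \ref{thm:meta-principle twisted} and \ref{thm:meta-principle bulk deformed}: the deformed $HW(T_i',T_i')$ is determined, up to isomorphism, by the deformed Fukaya category of \emph{all} the vanishing cycles up to order-preserving quasi-isomorphism, via the directed category, the bimodule boundary map $\delta$, and the monodromy/continuation formalism --- not by a single hom space.

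This changes what you must feed into the fragility theorems. Because $HW(D',D')$ depends on the whole category $\fuk_\Omega(\tau_{S_1}^2V_1,\dots,\tau_{S_k}^2V_k)$ (resp.\ $\fuk_\mho$), applying Theorem \ref{thm:main thm twisted} or \ref{thm:main thm bulk deformed} only to the one pair $(V_i,S_i)$ attached to your chosen thimble is insufficient. The paper applies the fragility theorem to every pair $(V_j,S_j)$, $j=1,\dots,k$ (each handle's deformation datum meets its $S_j$ once and misses $V_j$, so the hypotheses hold for all $j$), and then uses Lemma \ref{lem:quasi-iso trick} to assemble the $k$ object-level quasi-isomorphisms into an order-preserving quasi-isomorphism $\fuk_\Omega(\tau_{S_1}^2V_1,\dots,\tau_{S_k}^2V_k)\simeq \fuk(V_1,\dots,V_k)$ (resp.\ the bulk-deformed version), so that Theorems \ref{thm:meta-principle twisted}/\ref{thm:meta-principle bulk deformed} identify the deformed $HW$ of each thimble of $(X',\la')$ with the undeformed $HW$ of the corresponding thimble of $(X,\la)$. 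With that correction your argument closes up and coincides with the paper's proof; without it, the chain of identifications in your second paragraph does not go through.
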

\noindent 
We explain in \S\ref{sec:from fiber to total space} how this result can be deduced from general tools in symplectic Picard--Lefschetz theory.
Although the above result suffices to establish nonflexibility,
it can be strengthened to the following more elegant statement:
\begin{thm*}\label{thm:compute twsh and bdfuk strong}
In general, we have:
\begin{itemize}
\item
If $\dim X = 6$, there is an isomorphism $\twsh(X',\la') \cong \sh(X,\la)$.
\item
If $\dim X = 4m+2 \geq 10$, there is an isomorphism $\sh_\mho(X',\la') \cong \sh(X,\la)$.
\end{itemize}
\end{thm*}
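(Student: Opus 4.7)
The plan is to lift the symplectic Picard--Lefschetz description of $\sh$ of the total space to the twisted/bulk deformed setting and then use Theorems \ref{thm:main thm twisted} and \ref{thm:main thm bulk deformed} to replace the deformed vanishing cycles $\tau_{S_i}^2 V_i$ by the undeformed $V_i$. First, I would invoke the (by now well-developed) Picard--Lefschetz machinery --- as in work of Seidel, Bourgeois--Ekholm--Eliashberg, Ganatra, and others --- which expresses $\sh$ of the total space of a Liouville Lefschetz fibration in terms of the Lefschetz thimbles as generators of the wrapped Fukaya category, or equivalently in terms of the directed $A_\infty$-subcategory spanned by the vanishing cycles $V_1,\ldots,V_k$ in the fiber. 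Both the twisted category of \S\ref{sec:fukaya categories} and the bulk deformed category of \S\ref{sec:bulk deformations} should admit analogous wrapped/directed versions, and the Picard--Lefschetz dictionary should extend verbatim: $\twsh(X',\la')$ is computed by the $\Omega$-twisted directed Fukaya category of $(\tau_{S_1}^2V_1,\ldots,\tau_{S_k}^2V_k)$ in $M'$, and similarly for $\sh_\mho$.

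Next I would check that the restriction of $\Omega$ (respectively of $\mho$) to a regular fiber agrees, up to a compactly supported exact perturbation, with a two-form (respectively a half-dimensional cycle) supported in a small neighborhood of the union of the cocores of $H_1,\ldots,H_k$. This ensures that for each pair $(V_i,S_i)$ the hypotheses of Theorem \ref{thm:main thm twisted} or Theorem \ref{thm:main thm bulk deformed} are satisfied: the restricted deformation vanishes on $V_i \sse M$ and pairs nontrivially with $S_i$ thanks to the transverse intersection of the cocore of $H_i$ with the core of that handle. The theorems then yield a quasi-isomorphism $V_i \simeq \tau_{S_i}^2 V_i$ in the deformed Fukaya category of $M'$, and after arranging the $V_i$, $S_i$ and the supports of $\tau_{S_i}^2$ to be mutually disjoint by a Hamiltonian isotopy in $M$ these quasi-isomorphisms assemble into an equivalence of directed $A_\infty$-subcategories $\langle V_1,\ldots,V_k\rangle \simeq \langle \tau_{S_1}^2V_1,\ldots,\tau_{S_k}^2V_k\rangle$.

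Finally, since each $V_i$ lies in $M \sse M'$ disjoint from the support of $\Omega$ or of $\mho$, its deformed Floer theory reduces tautologically to its ordinary Floer theory in $M$. Combining the two previous steps, the deformed directed Fukaya category computing $\twsh(X',\la')$ (resp.\ $\sh_\mho(X',\la')$) is quasi-equivalent to the undeformed directed Fukaya category of $(V_1,\ldots,V_k)$ in $M$, which is precisely what computes $\sh(X,\la)$ via Picard--Lefschetz applied to $(X,\la)$. This yields the desired isomorphisms. The main obstacle is the categorical bookkeeping in the first step: one must extend the generation/open--closed formula relating $\sh$ of a Lefschetz fibration to the directed Fukaya category of its vanishing cycles to the twisted or bulk deformed setting. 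Concretely, one must verify that holomorphic sections of the fibration are counted with the correct twisting/bulk weight, that the open--closed map from Hochschild invariants of the vanishing cycle category to $\sh$ of the total space respects the deformation class, and that no pseudoholomorphic curves escaping into the handles $H_i$ spoil the identifications --- an essentially technical but delicate matter.
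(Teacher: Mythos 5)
Your overall route is the same as the paper's: (i) a deformed Picard--Lefschetz meta-theorem asserting that the twisted/bulk deformed symplectic cohomology of the total space depends only on the correspondingly deformed Fukaya category of the vanishing cycles in the fiber, (ii) Theorems \ref{thm:main thm twisted} and \ref{thm:main thm bulk deformed} applied once per vanishing cycle to replace $\tau_{S_i}^2V_i$ by $V_i$, assembled into an order-preserving quasi-isomorphism of categories, and (iii) the tautological observation that, since the $V_i$ avoid the support of $\Omega$ (resp.\ $\mho$), the deformed category on these objects is the undeformed one, which computes $\sh(X,\la)$. You also correctly identify step (i) as the genuinely hard, not-yet-fully-available ingredient; this is exactly why the statement carries an asterisk in the paper, whose proof invokes either SFT transversality or expected extensions of the Picard--Lefschetz package (the starred meta-theorems at the end of \S\ref{subsec:incorporating twistings}).

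Two corrections, one of which is substantive. First, the invariant of the fiber that controls $\sh$ of the total space is \emph{not} the directed category $\fukdir(V_1,\ldots,V_k)$ but the full subcategory $\fuk(V_1,\ldots,V_k)$ of the fiber's Fukaya category (equivalently, the directed category together with the boundary bimodule map $\delta:\euA^\vee\rightarrow\euA$ induced by the inclusion $\euA\rightarrow\euB$). The directed category alone carries essentially no information: for a single vanishing cycle it is independent of $V$ and of the fiber, while $\sh$ of the total space is certainly not. So your step (i), as phrased (``equivalently in terms of the directed $\calA_\infty$-subcategory''), is false, and an equivalence of directed subcategories in step (ii) would not suffice; you must upgrade both to the full subcategories, which is how the paper's meta-theorems are stated and what \S\ref{subsec:a picard--lefschetz} is about. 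Second, your demand that the $V_i$, $S_i$ and the supports of the $\tau_{S_i}^2$ be made mutually disjoint is both unnecessary and in general unachievable (distinct vanishing cycles may intersect, and $S_i$ meets $V_i$ by construction); the object-wise quasi-isomorphisms $V_i\simeq\tau_{S_i}^2V_i$ in the ambient deformed category assemble into an order-preserving quasi-isomorphism of full subcategories purely algebraically, via Lemma \ref{lem:quasi-iso trick}, with no geometric disjointness hypothesis. With these two repairs your argument coincides with the paper's.
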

\noindent The asterisk indicates that the proof relies on either transversality in symplectic field theory or expected but not yet available results in symplectic Picard--Lefschetz theory.

\sss

The rest of this paper is organized as follows. In \S\ref{sec:fukaya categories} we review the main features of Seidel's construction of the Fukaya category and establish some basic properties and notation.
In \S\ref{sec:turning on a B-field} we introduce B-fields and explain how to incorporate them into the frameworks of Fukaya categories and symplectic cohomology.
In \S\ref{sec:bulk deformations} we introduce bulk deformations, construct the bulk deformed Fukaya category and bulk deformation symplectic cohomology, and provide some general context.
In \S\ref{sec:Lefschetz fibrations} we discuss Lefschetz fibrations and some important computational tools.
In \S\ref{sec:squared dehn twists} we then combine these various ingredients to prove Theorem \ref{thm:main thm twisted} and Theorem \ref{thm:main thm bulk deformed}.
Finally, in \S\ref{sec:from fiber to total space} we discuss how to deduce invariants of the total space of a Lefschetz fibration from those of the fiber and use this to prove Theorem \ref{thm:compute twsh and bdfuk weak}
and Theorem* \ref{thm:compute twsh and bdfuk strong}.

\section*{Acknowledgements}
{
I wish to thank my thesis advisor Yasha Eliashberg for numerous stimulating discussions and for providing the inspiration behind this project.
I thank Sheel Ganatra for helping to alleviate a number of my confusions, and Oleg Lazarev for stimulating discussions.
I also thank Mohammed Abouzaid and Paul Seidel for explaining some of their work related to \S\ref{sec:from fiber to total space}. Lastly, I thank the anonymous referee for many attentive comments which helped improve the exposition of this paper.
}

\section*{Conventions}
{
\begin{itemize}
\item By default we assume Lagrangians are embedded, with interior disjoint from the boundary of the ambient symplectic manifold.
\item
Unless stated otherwise, we will assume our symplectic manifolds have trivial first Chern class.
\item In the general context of undeformed symplectic invariants, we work over an arbitrary field $\field$.
\item In the context of twisted invariants as in \S\ref{sec:turning on a B-field}, we 
work over a field $\K$ equipped with an injective group homomorphism $\R \rightarrow \K^*$.
When applying the Picard--Lefschetz techniques in \S\ref{subsec:a picard--lefschetz}, we further assume that $\K$ is not of characteristic two (this is inherited from \cite{seidelbook}).
\item In context of bulk deformed invariants by a cycle of even codimension $l$ as in \S\ref{sec:bulk deformations},
 we work over a graded ring $\KL$ of the form 
$\KL = \KL_0[\hbar,\hbar^{-1}]$, where $\KL_0$ is a field of characteristic zero and $\hbar$ is a formal variable of degree $2-l$.

\end{itemize}

\section{Fukaya categories}\label{sec:fukaya categories}

\subsection{The Fukaya category of a Liouville domain}\label{subsec:fukaya category of a Liouville domain}\label{subsec:the fukaya category of a Liouville domain}

In this subsection we review the various ingredients that go into constructing the Fukaya category of a symplectic manifold. 
We will focus on Liouville domains, a particularly nice class of open symplectic manifolds.
Recall that a Liouville domain is a pair $(M^{2n},\theta)$, where:
\begin{itemize}
\item
$M$ is a smooth compact manifold with boundary.
\item
$\theta$ is a $1$-form on $M$ such that $d\theta$ is symplectic.
\item
The Liouville vector field $Z_\theta$, defined by $(d\theta)(Z_\theta,\cdot) = \theta$, is outwardly transverse to $\bdy M$.
\end{itemize}
Roughly, the Fukaya category $\fuk(M,\theta)$ is an $\calA_\infty$ category with objects given by closed exact Lagrangians in $(M,\theta)$, morphisms given by Floer cochain complexes between Lagrangians, and higher $\calA_\infty$ products given by counting pseudoholomorphic polygons with Lagrangian boundary conditions. Working in the exact setting alleviates many of the analytic difficulties that plague the field. 
Still, there are various technical issues to deal with, related to the fact that Lagrangians might not intersect transversely and moduli spaces of pseudoholomorphic curves might not be cut out transversely. In \cite{seidelbook}, Seidel gives a careful outline for overcoming these issues using coherent perturbations of the Cauchy--Riemann equations. 
This approach requires choosing the following auxiliary data in a coherent manner:
\begin{enumerate}
\item
{\em Floer data} for every pair of Lagrangians, which is the data needed to define Floer complexes
\item
{\em strip-like ends} for boundary-punctured Riemann disks, which are needed to formulate asymptotic conditions for pseudoholomorphic curves near boundary punctures
\item 
{\em perturbation data}, which are used to perturb the Cauchy--Riemann equations and ensure that all relevant moduli spaces of pseudoholomorphic curves are cut out transversely.
\end{enumerate}
Since we consider pseudoholomorphic maps whose domains have arbitrary complex structures, these choices must vary smoothly over the moduli space of boundary-punctured Riemann disks. Moreover, in order for various curve counts to fit together to satisfy the $\calA_\infty$ equations, our choices should be appropriately compatible with the structure of the Deligne--Mumford--Stasheff compactification.

We now explain each of these elements in more detail.
This general framework for handling pseudoholomorphic curve invariants will reappear in several flavors throughout the paper.

\subsubsection{The moduli space $\calR_{k+1}$ and its compactification.}
\label{subsubsec:stable disks}
For $k+1 \geq 3$, let $\calR_{k+1}$ denote the moduli space of Riemann disks with $k+1$ ordered boundary marked points, modulo biholomorphisms. 
Here and for the rest of the paper we require marked points to be pairwise disjoint.
We further require the ordering of the marked points to respect the boundary orientation.
We declare the first marked point to be ``negative" and the rest to be ``positive"\footnote{In the sequel we will endow marked points and punctures of Riemann surfaces with signs.
When discussing strip-like ends and cylindrical ends we assume their signs match those of the corresponding marked points or punctures.}.
Note that $\calR_{k+1}$ is a smooth manifold of dimension $k-2$.
There is also a universal family $\calRunivpre_{k+1} \rightarrow \calR_{k+1}$, where the fiber $\calRunivpre_r$ over $r \in \calR_{k+1}$ is a boundary-marked Riemann disk which represents $r$ itself.
Concretely, we can take 
\begin{align*}
\calR_{k+1} = \conf_{k+1}(\bdy\D^2)/\pslr, \;\;\;\;\; \calRunivpre_{k+1} = \conf_{k+1}(\bdy \D^2) \times_{\pslr} \D^2.
\end{align*}
Finally, let $\calRuniv_{k+1} \rightarrow \calR_{k+1}$ be given by puncturing the marked points in each fiber of $\calRunivpre_{k+1} \rightarrow \calR_{k+1}$.

Let $\calTR{k+1}$ denote the set of stable planted\footnote{By a {\em planted tree} we mean a tree with one external vertex distinguished as the root.} 
ribbon trees with $k$ leaves.
That is, an element of $\calTR{k+1}$ is a tree $T$ with:
\begin{itemize}
\item
one distinguished external vertex, called the ``root"
 \item 
 $k$ remaining external vertices, called the ``leaves"
 \item 
{\em ribbon structure:} a cyclic ordering of the edges incident to each vertex
 \item 
{\em stability:} each internal vertex $v$ has valency $|v| \geq 3$.
\end{itemize}
We will always endow the edges of $T$ with the orientation pointing away from the root. In particular, this induces an absolute ordering of the edges incident to each vertex, starting with the incoming edge.
Let $V_i(T)$ and $E_i(T)$ denote the internal vertices and internal edges of $T$ respectively.
As we will now recall, $\calTR{k+1}$ models the stratification structure of the Deligne--Mumford--Stasheff compactification of $\calR_{k+1}$.

As a preliminary step, endow the family $\calRuniv_{k+1} \rightarrow \calR_{k+1}$
with a choice of {\em universal strip-like ends}, for all $k+1 \geq 3$.
This means, for each $r \in \calR_{k+1}$, 
pairwise disjoint holomorphic embeddings
\begin{align*}
\epsilon_0: \left(\R_- \times [0,1], \R_- \times \{0,1\}\right) \hookrightarrow (\calRuniv_r,\bdy \calRuniv_r)\\
\epsilon_1,...,\epsilon_k: \left(\R_+ \times [0,1], \R_+ \times \{0,1\}\right) \hookrightarrow (\calRuniv_r,\bdy \calRuniv_r)
\end{align*}
such that $\lim_{s \rightarrow \pm \infty}\epsilon_i(s,\cdot)$ is the $i$th puncture of $\calRuniv_r$ for $i = 0,...,k$.
These should combine to give smooth fiberwise embeddings
\begin{align*}
\epsilon_0,...,\epsilon_k&: \calR_{k+1} \times \R_{\pm} \times [0,1] \hookrightarrow \calRuniv_{k+1}.
\end{align*}
Note that so far we have not mentioned any compatibility between these choices for different $k$.

For $T \in \calTR{k+1}$ and $\e > 0$ small, set 
\begin{align*}
\ovl{\calR}_T := \prod_{v\in V_i(T)} \calR_{|v|},\;\;\;\;\;
\ovl{\calR}_T^{\e} := \ovl{\calR}_T \times (-\e,0]^{E_i(T)}.
\end{align*}
We will identify $\ovl{\calR}_T$ with the subset $\ovl{\calR}_T \times \{0\}^{E_i(T)} \subset \ovl{\calR}_T^{\e}$. 
We use $\rho_e$ to denote the coordinate on $(-\e,0]$ corresponding to $e \in E_i(T)$.
This will be a gluing parameter corresponding to a gluing region having neck length $\ell_e := -\log(-\rho_e)$ \cite[9e]{seidelbook}.
That is, given Riemann surfaces $S_+$ and $S_-$ with strip-like ends 
$\epsilon_+: \R_+ \times [0,1] \hookrightarrow S_+$
and $\epsilon_-: \R_- \times [0,1] \hookrightarrow S_-$, we glue with parameter $\ell = -\log(-\rho) \in (0,\infty)$ by starting with the disjoint union $S_+ \coprod S_-$, throwing away $\epsilon_+([\ell,\infty) \times [0,1])$ 
and $\epsilon_-((-\infty,-\ell] \times [0,1])$, and then identifying what remains of the strip-like ends via $\epsilon_+(s+\ell,t) \sim \epsilon_-(s,t)$.

Observe that, using the absolute ordering of the edges at the internal vertices,
each $e \in E_i(T)$ corresponds to two boundary marked points of $\ovl{\calR}_T$.
By gluing at these two marked points using our universal strip-like ends,
we get a map
\begin{align*}
\phi_{T,e}: \{r \in \ovl{\calR}^\e_T\;:\; \rho_e  \neq 0\} \rightarrow \ovl{\calR}_{T/e}^\e.
\end{align*}
Here $T/e$ is the tree obtained by contracting the internal edge $e$.
More specifically, we endow $T/e$ with the {\em planarly induced} ribbon structure, defined as follows.
Let $\iv(e)$ and $\tv(e)$ denote the initial and terminal vertices of $e$ in $T$.
Let $v \in T/e$ denote the resulting vertex after contracting $e$, and let $E(v)$ denote the edges incident to it.
We order $E(v)$ by enumerating $E(\iv(e))$ up to $e$, then enumerating $E(\tv(e))$, and lastly enumerating the remaining elements of $E(\iv(e))$.

We can now define the Deligne--Mumford--Stasheff compactification of $\calR_{k+1}$ as a topological space by
\begin{align*}
\ovl{\calR}_{k+1} := \left(\coprod_{T \in \calTR{k+1}} \ovl{\calR}_T^\e \right) / \sim, 
\end{align*}
where $r \sim \phi_{T,e}(r)$ for any $r$ in the domain of $\phi_{T,e}$.
Note that as a set we have $\ovl{\calR}_{k+1} = \coprod_{T \in \calTR{k+1}}\ovl{\calR}_T$, and the topology is such that
$\calRovl^\e_T$ is a collar neighborhood of $\calRovl_T$.
Actually, it is well known that the space $\calRovl_{k+1}$ has much more structure than just a topological space.
Among other things it is naturally a $(k-2)$-dimensional smooth manifold with corners, in fact a convex polytope.
Elements of $\ovl{\calR}_{k+1}$ are {\em stable broken disks}.
Here we think of the limiting case of gluing for $\rho_e = 0$ as simply identifying the two marked points, i.e. producing a boundary node.
The unique tree $T_0$ with no internal edges corresponds to the open stratum $\ovl{\calR}_{T_0} = \calR_{k+1}$,
and more generally the codimension of the stratum $\calRovl_T$ is given by the number of internal edges of $T$.
There is also a partially compactified universal family $\calRovluniv_{k+1} \rightarrow \ovl{\calR}_{k+1}$, 
where the fiber $\calRovluniv_r$ over $r \in \ovl{\calR}_{k+1}$ represents the corresponding broken disk.

\subsubsection{Lagrangian labels.}
Let $\mathbf{L} := (L_0,...,L_k)$ be a list of closed exact Lagrangians (not necessarily pairwise distinct) in $(M,\theta)$.
Let $\calR(\textbf{L})$ denote the moduli space of Riemann disks with $k+1$ ordered boundary marked points, modulo biholomorphisms, such that the segments of the boundary between the marked points are labeled in order by $L_0,...,L_k$.
This space is of course equivalent to $\calR_{k+1}$, but it will be convenient to keep track of the Lagrangian labels.

For $T \in \calTR{k+1}$, let $\gamma_0(T)$ denote the minimal path from the root to the first leaf, let $\gamma_i(T)$ denote the minimal path from the $i$th leaf to the $(i+1)$st leaf for $i = 1,...,k-1$, and let $\gamma_k(T)$ denote the minimal path from the $k$th leaf to the root.
We say that $T$ is {\em labeled by $\mathbf{L}$} if the paths $\gamma_0(T),...,\gamma_k(T)$ are labeled in order by $L_0,...,L_k$.
More visually, if we embed $T$ as a ribbon graph in $\R^2$ with the external vertices at infinity, this data is equivalent to labeling the connected components of $\R^2 \setminus T$ in order by $L_0,...,L_k$.
Observe that each edge has two associated labels, corresponding to its two sides.
Also, each $v \in V_i(T)$ has an associated list $\mathbf{L}_v$ of Lagrangian labels, namely those encountered (in order) in a small neighborhood of $v$.
We set
\begin{align*}
\ovl{\calR}_T(\mathbf{L}) :=  \prod_{v \in V_i(T)} \calR(\mathbf{L}_v),\;\;\;\;\;  \ovl{\calR}_T^\e(\mathbf{L}) := \ovl{\calR}_T(\mathbf{L}) \times (-\e,0]^{E_i(T)}.
\end{align*}
For any internal edge $e$, the contracted tree $T/e$ naturally inherits an $\mathbf{L}$ labeling from $T$, and therefore as above we can define gluing maps
\begin{align*}
\phi_{T,e}: \{r \in \ovl{\calR}^\e_T(\mathbf{L})\;:\; \rho_e \neq 0 \} \rightarrow \ovl{\calR}_{T/e}^\e(\mathbf{L}),
\end{align*}
along with the compactification $\ovl{\calR}(\mathbf{L})$, the universal family
$\calRuniv(\mathbf{L}) \rightarrow \calR(\mathbf{L})$, and its partial compactification
$\calRovluniv(\mathbf{L}) \rightarrow \ovl{\calR}(\mathbf{L})$.

\subsubsection{Floer data}\label{subsubsec:Floer data}

Let $\calH$ denote the space of smooth real-valued functions on $M$ (i.e. ``Hamiltonians") which vanish on $\Op(\bdy M)$.
Let $\Jo$ be a fixed reference almost complex structure on $M$ which is compatible with $d\theta$ and 
makes the boundary of $M$ {\em weakly $\Jo$-convex} (see \cite[\S 2.3]{cieliebak2012stein}).
For such an almost complex structure, there is a maximum principle which prevents pseudoholomorphic curves in $M$ from touching the boundary unless they are entirely contained in it.
Let $\calJfix$ denote the space of almost complex structures on $M$ which are $d\theta$-compatible and coincide with $\Jo$ on $\Op(\bdy M)$.
For a pair $(L_0,L_1)$ of Lagrangians in $(M,\theta)$, a 
{\em Floer datum} $(H,J)$ consists of:
\begin{itemize}
\item
a time-dependent family of Hamiltonians $H \in C^{\infty}([0,1],\calH)$
such that the image of $L_0$ under its time-$1$ flow $\phi^1_{H}$ 
is transverse to $L_1$
\item
a time-dependent family of almost complex structures $J \in C^{\infty}([0,1],\calJfix)$.
\end{itemize}
Let $\genHLL{L_0}{L_1}$ denote the finite set of time-$1$ Hamiltonian flow trajectories of $H$ which start on $L_0$ and end on $L_1$.
Note that $\genHLL{L_0}{L_1}$ is in bijection with the set of intersection points $\phi_{H}^1(L_0) \cap L_1$.

For Lagrangian labels $L_0,L_1$ and $x_0,x_1 \in \genHLL{L_0}{L_1}$, let $\wh{\calM}(x_0,x_1)$ denote the corresponding space of Floer strips, i.e. maps $u: \R \times [0,1] \rightarrow M$ such that:
\begin{itemize}
\item
$u$ satisfies Floer's equation: $\bdy_s u + J_{L_0,L_1}(\bdy_t u - X_{H}) = 0$
\item
$u(\R \times \{0\}) \subset L_0$ and $u(\R \times \{1\}) \subset L_1$
\item
$\lim_{s \rightarrow -\infty}u(s,\cdot) = x_0$ and $\lim_{s \rightarrow +\infty} u(s,\cdot) = x_1$,
\end{itemize}
and let $\calM(x_0,x_1)$ denote the quotient of $\wh{\calM}(x_0,x_1)$ by the free $\R$-action which translates in the $s$ coordinate.
We say that the Floer data $(H,J)$ is {\em regular for the pair $L_0,L_1$} if the moduli spaces $\calM(x_0,x_1)$ are regular (i.e. their corresponding linearized Cauchy--Riemann type operators are surjective), and hence smooth manifolds, for all $x_0,x_1 \in \genHLL{L_0}{L_1}$.
It is a standard fact in Floer theory that a generic\footnote{Here we say that a property holds ``generically'' if the subspace on which it holds is comeager, i.e. it contains a countable intersection of open dense subsets.} choice of Floer data is regular.
More precisely, after choosing $H$ such that $\phi^1_{H}(L_0)$ intersects $L_1$ transversely, the moduli spaces $\calM(x_0,x_1)$ for all $x_0,x_1 \in \genHLL{L_0}{L_1}$ are regular for generic $J \in C^{\infty}([0,1],\calJfix)$.  
In particular, if $L_0$ and $L_1$ already intersect transversely, we can take $H$ to be trivial, and furthermore we can take $J = \Jo$ if all of the associated moduli spaces of Floer strips happen to already be regular.

Assume now that we have chosen a regular Floer datum$(H_{L_0,L_1},J_{L_0,L_1})$ for every pair of closed exact Lagrangians $(L_0,L_1)$ in $(M,\theta)$. 

\subsubsection{Consistent universal strip-like ends}\label{subsubsec:consistent strip-like ends}
In order for the moduli spaces defined below to have the desired compactification structure, we need to pick strip-like ends somewhat more carefully.
At this point, for any $T \in \calTR{k+1}$, gluing along all of the internal edges of $T$ induces a map
\begin{align*}
\phi_T: \ovl{\calR}_T \times (-\e,0)^{E_i(T)} \rightarrow \calR_{k+1}.
\end{align*}
{The image of $\phi_T$ is equipped with two a priori different families of strip-like ends, 
one induced by our universal choice for $\calR_{k+1}$, and one 
induced by gluing the universal choices for each $\calR_{|v|}$.
We say our universal choices are {\em consistent} if these two families agree, at least sufficiently close to the stratum $\ovl{\calR}_T$, for all $T \in \calTR{k+1}$.
A basic fact is that by making choices inductively we can find consistent universal strip-like ends (see \cite[\S 9g]{seidelbook} for details).
From now on we assume such a choice has been made,
and we use this to induce consistent universal strip-like ends on the families $\calRuniv(\bL) \rightarrow \calR(\bL)$ for all $\bL$.

\subsubsection{Consistent universal perturbation data}\label{subsubsec:Consistent universal perturbation data}

Let $S$ be a fixed Riemann disk with $k+1 \geq 3$ boundary punctures, equipped with Lagrangian labels $\bL$ and strip-like ends induced from the universal family.
A {\em perturbation datum for $S$} is a pair $(K,J)$ consisting of:
\begin{itemize}
\item
$K \in \Omega^1(S,\calH)$
\item
$J \in C^\infty(S,\calJfix)$
\end{itemize}
subject to the conditions:
\begin{itemize}
\item
For any $p \in \bdy S$ with corresponding label $L$, we have $K(X)|_L \equiv 0$ for any $X \in T_p\bdy S$.
\item 
For each boundary puncture of $S$ with adjacent labels $(L,L')$ and corresponding strip-like end $\epsilon$, we have
\begin{align*}
(\epsilon^*K,\epsilon^*J) \equiv (H_{L,L'}dt,J_{L,L'}).
\end{align*}
\end{itemize}
In other words, $K$ is a Hamiltonian-valued one-form on $S$, $J$ is an $S$-dependent family of almost complex structures on $M$, and these reduce to the already chosen Floer data along each of the strip-like ends.

A {\em universal choice of perturbation data} consists of a smoothly varying choice of fiberwise perturbation data for the universal family $\calRuniv(\bL) \rightarrow \calR(\bL)$, for all Lagrangian labels $\bL$.
By design, the perturbation data are standard on the strip-like ends, and hence can be glued together.
This means that the image of $\phi_T: \ovl{\calR}_T(\bL) \times (-\e,0)^{E_i(T)} \rightarrow \calR_{k+1}(\bL)$ is equipped with 
two a priori different families of perturbation data, one induced by the universal choice for $\calR_{k+1}(\bL)$ and one induced by gluing.
Naively we would like to require these to coincide, in parallel to the situation for strip-like ends. However, this turns out to be too stringent, since it imposes restrictions which make it difficult to perturb in order to achieve regularity (see \cite[Remark 9.6]{seidelbook}). Instead, we say our perturbation data are {\em consistent} if, for all $T$ and $\bL$, we have:
\begin{itemize}
\item
the two families agree on the {\em thin parts} of $\calRuniv_r(\bL)$ for all $r \in \calR(\bL)$ sufficiently close to the boundary stratum $\ovl{\calR}_T(\bL)$
 (see \cite[Remark 9.1]{seidelbook} for details)
\item
the perturbation data on $\calRuniv(\bL) \rightarrow \calR(\bL)$ extends smoothly to $\calRovluniv(\bL) \rightarrow \ovl{\calR}(\bL)$ in such a way that it agrees with the induced perturbation data on $\ovl{\calR}_T(\bL)$.
\end{itemize}
As explained in  \cite[\S 9i]{seidelbook}, consistent universal choices of perturbation data can be found by an inductive procedure.
From now on we assume such a choice has been made.

\subsubsection{The moduli space $\calM(\bx)$}\label{subsubsec:pseudoholomorphic polygons}

As before, let $S$ be a fixed Riemann disk with $k+1 \geq 3$ boundary punctures and Lagrangian labels $\bL$.
We assume $S$ is equipped with the strip-like ends $\epsilon_0,...,\epsilon_k$ and perturbation datum $(K,J)$ induced from our universal choices.
Note that the symplectic form $d\theta$ can be used to dualize any Hamiltonian $H \in \calH$ to a Hamiltonian vector field $X_H$ on $M$ via the prescription $d\theta(X_H,-) = dH$. 
Similarly, by using $d\theta$ to dualize the outputs of $K$, we obtain a one-form $Y$ on $S$ with values in Hamiltonian vector fields on $M$.
In this context, a {\em pseudoholomorphic polygon with domain $S$} is a map $u: S \rightarrow M$ 
which sends each boundary component of $S$ to its corresponding Lagrangian label and satisfies the 
{\em inhomogeneous pseudoholomorphic curve equation}
\begin{align*}
\left(Du - Y\right)^{0,1} = 0.
\end{align*}
Here the superscript denotes the complex anti-linear part with respect to the complex structure on $S$ and $J$ on $M$.
Let $\gen(\bL)$ denote the set of tuples $\bx = (x_0,...,x_k)$,
where $x_0 \in \genHLL{L_0}{L_k}$ 
and $x_i \in \genHLL{L_{i-1}}{L_i}$ for $i = 1,...,k$.
Given $\bx \in \gen(\bL)$,
$u$ is said to have {\em asymptotics $\bx$} if 
\begin{align*}
\lim_{s \rightarrow \pm \infty} (u \circ \epsilon_i)(s,\cdot) = x_i\;\;\;\;\; \text{for}\;\; i= 0,...,k.
\end{align*} 
Let $\calM_S(\bx)$ denote the space of pseudoholomorphic polygons $u$ with domain $S$ and asymptotics $\bx$.
We define the moduli space of pseudoholomorphic polygons with asymptotics $\bx$ and {\em arbitrary domain} by
\begin{align*}
\calM(\bx) := \{(r,u)\;:\; r \in \calR(\bL),\; u \in \calM_{\calRuniv_r}(\bx)\}.
\end{align*}
Here the Lagrangian labels $\bL$ are implicit in the notation.
\begin{prop}{\cite[\S 9k]{seidelbook}}
\label{prop:moduli spaces are topological manifolds}
For generic perturbation data,
the moduli space $\calM(\bx)$ is regular and hence has the structure of a smooth manifold.
\end{prop}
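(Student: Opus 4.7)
The plan is to follow the standard Sard--Smale scheme adapted to the $\calA_\infty$-consistency framework of \cite{seidelbook}, working inductively over the stratification of $\ovl{\calR}(\bL)$ by the trees $T \in \calTR{k+1}$. I will first dispense with the open stratum $\calR(\bL)$, and then extend consistently across the boundary.

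For the open stratum, I would fix $r \in \calR(\bL)$ (or rather work fiberwise over $\calR(\bL)$) and introduce a separable Banach space $\calP$ of perturbation data, e.g.\ the Floer-type $C^\e$-space of pairs $(K,J)$ agreeing with our already-chosen Floer data along the strip-like ends. I would then consider the universal moduli space
\begin{align*}
\calM^{\univ}(\bx) := \{(r,u,K,J)\;:\; (K,J) \in \calP,\ u \in W^{1,p}_{loc}(\calRuniv_r,M),\ (Du-Y)^{0,1}=0,\ u\to \bx\}
\end{align*}
as the zero set of a $C^1$ Fredholm section of a Banach bundle over the Banach manifold of triples $(r,u,K,J)$ with Sobolev regularity and prescribed asymptotics. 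Standard elliptic theory shows each such $u$ is smooth, and the linearization $D_{(r,u,K,J)}$ in the $u$-direction alone is a Fredholm operator.

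The heart of the argument will be to show that the full linearization, allowing variations of $(K,J)$ as well, is surjective at every solution. For a cokernel element $\eta \in L^q$, I would argue that pairing $\eta$ against variations $\delta K \in \Omega^1(S,\calH)$ supported near a generic interior point $z \in S$ forces $\eta(z)$ to be orthogonal to every Hamiltonian vector field, while pairing against variations $\delta J$ forces $\eta(z)J\partial_s u = 0$. Combining these with Aronszajn-type unique continuation shows $\eta \equiv 0$, provided we know there exists an open set of points $z$ where $u$ is injective and $\partial_s u \neq 0$; in the exact Lagrangian setting, the asymptotic conditions guarantee $u$ is nonconstant, and the usual somewhere-injectivity/regular-point arguments (see \cite[\S 9k]{seidelbook}) supply such $z$. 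This is the main technical obstacle, and it is the one that genuinely requires allowing $K$ (not just $J$) to vary, since the Hamiltonian perturbation is what makes Lagrangian boundary conditions transverse and kills cokernel contributions near the Lagrangians.

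Once the universal space is shown to be a smooth Banach manifold, the projection $\calM^{\univ}(\bx) \to \calP$ is Fredholm of the expected index, and the Sard--Smale theorem produces a Baire-residual set of regular values $(K,J) \in \calP$; the corresponding fibers $\calM(\bx)$ are smooth manifolds of the expected dimension. Finally, to obtain the consistency promised in \S\ref{subsubsec:Consistent universal perturbation data}, I would run this argument inductively on $k$: the boundary strata $\ovl{\calR}_T(\bL)$ involve only moduli spaces with fewer marked points, so by induction regular perturbation data are already fixed there, and we need only perturb in the interior of $\calR(\bL)$ (keeping the gluing-induced data fixed in a collar of each boundary stratum). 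A standard relative version of Sard--Smale, exactly as in \cite[\S 9i,k]{seidelbook}, gives the required residual set of universal perturbation data; passing to a countable intersection over $\bx$ and $\bL$ yields a single generic choice that makes $\calM(\bx)$ regular for all asymptotics simultaneously.
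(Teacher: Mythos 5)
The paper gives no argument of its own for this statement—it is quoted directly from \cite[\S 9k]{seidelbook}—and your outline (universal moduli space over a Banach space of perturbation data, surjectivity of the extended linearization, Sard--Smale, inductive treatment of consistency over the strata of $\ovl{\calR}(\bL)$ with data frozen near the boundary strata) is structurally the same argument as that reference, so in overall strategy you are on the right track.

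There is, however, a genuine gap at exactly the step you flag as the heart of the matter. You make surjectivity of the universal linearization conditional on the existence of an open set of points $z$ where $u$ is injective and $\bdy_s u \neq 0$, and you appeal to ``the usual somewhere-injectivity/regular-point arguments'' to produce such points. Those arguments apply to simple $J$-holomorphic curves, or to Floer strips with translation-invariant, $t$-dependent data; they are not available for solutions of the inhomogeneous equation with domain-dependent perturbation data, and no such lemma exists in this setting. Nor is one needed: the whole point of the framework of \S\ref{subsubsec:Consistent universal perturbation data} is that $\delta K$ is a one-form on the domain $S$ with values in $\calH$, so a variation supported near a single domain point $z$ enters the linearized equation only at $z$, irrespective of whether $u(z)$ is visited by other points of the curve. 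Moreover, since $d\theta$ is nondegenerate and the maximum principle keeps $u$ away from $\Op(\bdy M)$, the values $X_{\delta H}(u(z))$ sweep out all of $T_{u(z)}M$; your own computation (``$\eta(z)$ orthogonal to every Hamiltonian vector field'') therefore already forces $\eta(z)=0$ at every point of the open region away from the strip-like ends where the data may be varied, and elliptic regularity plus unique continuation for the adjoint equation then give $\eta \equiv 0$. In short, the injectivity proviso should be deleted rather than justified: as written, the key step rests on an unavailable lemma, whereas the correct observation is that domain-dependence of $K$ makes injectivity and the nonvanishing of $\bdy_s u$ irrelevant. (For $k=1$, where only translation-invariant Floer data is allowed, injectivity-type arguments genuinely are needed, but that case is treated separately in the text and is not part of this proposition.)
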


\noindent In the special case $\bx = (x_0,x_1)$, we similarly define $\calM(\bx) := \calM(x_0,x_1) = \wh{\calM}(x_0,x_1)/\R$ to be the corresponding moduli space of Floer strips.

\subsubsection{The compactification $\ovl{\calM}(\bx)$}\label{def:stable pseudoholomorphic polygons}

There is a natural compactification of $\calM(\bx)$ by allowing pseudoholomorphic maps to acquire certain boundary nodes.
The domain of such a map is a tree of boundary-marked Riemann disks as in $\ovl{\calR}_{k+1}$, but now the stability condition only applies to constant components.
This is in accordance with the general procedure of compactifying moduli spaces of pseudoholomorphic curves by stable maps.

More precisely, for $k+1 \geq 2$, let $\calTRsemi{k+1}$ be defined in the same way as $\calTR{k+1}$, except that we replace the stability condition with {\em semistability}, i.e. each interval vertex must have valency at least two.
Suppose we have Lagrangian labels $\bL = (L_0,...,L_k)$ and $\bx = (x_0,...,x_k) \in \gen(\bL)$.
A {\em stable broken pseudoholomorphic polygon with asymptotics $\bx$} consists of:
\begin{itemize}
\item
$T \in \calTRsemi{k+1}$ labeled by $\bL$
\item 
$x_e \in \genHLL{L_e}{L_{e}'}$ for each edge $e$ of $T$, where $(L_e,L_e')$ denotes the Lagrangian labels on either side of $e$,
and such that $x_e = x_i$ if $e$ is the $i$th external edge of $T$
\item 
$u_v \in \calM(\bx_v)$ for each $v \in V_i(T)$, where $\bx_v \in \gen(\bL_v)$ denotes the ordered list of elements $x_e$ encountered at the edges incident to $v$.
\end{itemize}

Let $\ovl{\calM}_T(\bx)$ denote the moduli space of equivalence classes of stable broken pseudoholomorphic polygons with asymptotics $\bx$ and fixed tree structure $T \in \calTRsemi{k+1}$.
We set
\begin{align*}
\ovl{\calM}(\bx) := \coprod_{T \in \calTRsemi{k+1}}  \calM_T(\bx),
\end{align*}
equipped with the Gromov topology.
Gromov's compactness theorem implies that $\calMovl(\bx)$ is compact.
Moreover, for $\e > 0$ sufficiently small, standard gluing techniques produce a map 
\begin{align*}
\phi_{T}: \ovl{\calM}_T(\bx) \times (-\e,0)^{E_i(T)} \rightarrow \calM(\bx).
\end{align*}
By analyzing these maps, one can show:
\begin{prop}{\cite[\S 9l]{seidelbook}}\label{prop:compact top mfd with corners}
The space $\ovl{\calM}(\bx)$ has the structure of a compact topological manifold with corners with open stratum $\calM(\bx)$.
\end{prop}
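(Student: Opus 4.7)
The plan is to establish the three ingredients of a compact topological manifold with corners: sequential compactness, a local corner chart near each boundary stratum, and compatibility of charts across strata. Compactness of $\ovl{\calM}(\bx)$ follows from Gromov's compactness theorem for pseudoholomorphic curves with Lagrangian boundary: in the exact Liouville setting, exactness of $d\theta$ and of the Lagrangians rules out sphere and disk bubbling, and weak $\Jo$-convexity of $\bdy M$ (combined with the fact that perturbation data lie in $\calJfix$) provides a maximum principle ruling out escape to the boundary. The only degenerations that survive are breaking along strip-like ends, which correspond exactly to adding an internal edge in the tree, and boundary node formation for unstable sphere components, which is forbidden here. Thus every Gromov-convergent sequence limits to a configuration of the type parameterized by $\calTRsemi{k+1}$.

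Next, I would construct topological charts around each stratum $\ovl{\calM}_T(\bx)$ using the gluing maps $\phi_T: \ovl{\calM}_T(\bx) \times (-\e,0)^{E_i(T)} \rightarrow \calM(\bx)$, which must be extended to collars $\ovl{\calM}_T(\bx) \times (-\e,0]^{E_i(T)}$, with the value at $\rho_e = 0$ being the broken configuration itself. The construction follows the standard analytic recipe: for each $(u,\rho)$ with small $|\rho_e| > 0$, preglue the components of $u$ along each edge using the universal strip-like ends with neck length $\ell_e = -\log(-\rho_e)$, perturb to a true solution via a Newton iteration or implicit function theorem applied to the linearized Cauchy--Riemann operator, and use the regularity provided by Proposition \ref{prop:moduli spaces are topological manifolds} together with exponential decay of Floer trajectories at the punctures to show that the linearization is uniformly invertible on a kernel-avoiding complement. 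Surjectivity onto a neighborhood of $\ovl{\calM}_T(\bx)$ follows from a standard soft rescaling argument: any sequence in $\calM(\bx)$ converging in the Gromov topology to a broken configuration can, for large enough neck length, be written as a small perturbation of the preglued datum, and uniqueness of the Newton iterate yields injectivity.

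Finally, to globalize into a manifold-with-corners structure, I would verify that if $T' \in \calTRsemi{k+1}$ is obtained from $T$ by contracting a subset $E' \sse E_i(T)$ of internal edges, then the chart $\phi_T$ restricted to $\{\rho_e \neq 0\;:\;e \in E'\}$ factors through $\phi_{T'}$ after a smooth reparametrization of the complementary gluing parameters. This coherence on the level of moduli of pseudoholomorphic polygons is forced by the corresponding coherence already built into $\ovl{\calR}(\bL)$ via the consistency conditions on universal strip-like ends (\S\ref{subsubsec:consistent strip-like ends}) and perturbation data (\S\ref{subsubsec:Consistent universal perturbation data}); in particular the fact that the glued and universal perturbation data agree on thin parts of $\calRuniv_r(\bL)$ near $\ovl{\calR}_T(\bL)$ is precisely what allows the two a priori different glued maps to be identified, and the number of gluing parameters equals $|E_i(T)|$, giving the correct codimension.

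The main obstacle is the gluing analysis in the presence of multiple simultaneous neck degenerations, where one must show that the various single-edge gluing constructions commute up to smooth reparametrization in order to produce well-defined corner charts of the expected dimension. A secondary technical difficulty is arranging the Newton iteration uniformly over the compact stratum $\ovl{\calM}_T(\bx)$, which requires uniform bounds on the right inverse to the linearized operator as one varies both the domain and the broken configuration. Once these are in place, the result reduces to a routine verification that the assembled charts cover $\ovl{\calM}(\bx)$ and endow it with the desired corner structure; the full details are carried out in \cite[\S 9l]{seidelbook}.
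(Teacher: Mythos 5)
Your outline is correct and follows essentially the same route as the source the paper relies on: the paper gives no independent argument for this proposition but simply cites \cite[\S 9l]{seidelbook}, where compactness is obtained from Gromov compactness (with exactness excluding disk and sphere bubbling and the convexity/maximum principle excluding escape through $\bdy M$) and the corner structure from gluing along strip-like ends using the consistency of the universal strip-like ends and perturbation data. Your sketch identifies the same ingredients and correctly locates the real analytic work (uniform quadratic estimates for the Newton iteration and compatibility of multi-edge gluings) in the cited reference, which is also all the paper itself uses, since only the zero- and one-dimensional strata enter the $\calA_\infty$ structure equations.
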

\noindent For our purposes we will not need to know the precise global structure of $\ovl{\calM}(\bx)$ in general, but rather just an understanding of the zero and one dimensional pieces.

\subsubsection{Brane structures}\label{subsubsec:brane structures}

In its most rudimentary form, the Fukaya category of $(M,\theta)$ is an ungraded $\calA_\infty$ category defined over a coefficient field of characteristic two.
However, it is sometimes desirable to upgrade this to a $\Z$-graded $\calA_\infty$ category over a coefficient field of arbitrary characteristic, and for this we need to equip Lagrangians with {\em brane structures}.
This extra structure is probably not central to the main results of this paper but we will nevertheless assume it in order to streamline the discussion.\footnote{In particular, we expect that Theorems ~\ref{thm:compute twsh and bdfuk weak} and ~\ref{thm:compute twsh and bdfuk strong} hold without the assumption $c_1(X,d\la) = 0$, provided we work with ungraded versions of (deformed) symplectic cohomology.}

From now on we assume $c_1(M,d\theta) = 0$ and fix a
nonvanishing section $\eta \in \La^{\text{top}}_\C(T^*M)$.
For any Lagrangian $L \subset M$, we get an associated {\em squared phase map} $\alpha: L \rightarrow S^1$ of the form 
\begin{align*}
\alpha(p) = \dfrac{\eta^2(v_1\wedge...\wedge v_n)}{| \eta^2(v_1\wedge...\wedge v_n)|},
\end{align*}
where $v_1,...,v_n$ is any basis for $T_pL$.
A {\em Lagrangian brane} consists of a triple $(L,P^\#,\alpha^\#)$, where:
\begin{itemize}
\item $L$ is a Lagrangian
\item $P^\#$ is a $Pin$ structure on $L$ 
\item $\alpha^\#$ is a {\em grading} on $L$, i.e. a function $L \rightarrow \R$ such that $\exp(2\pi i \alpha^\#(p)) = \alpha(p)$.
\end{itemize}
In general, the obstruction to putting a $Pin$ structure on $L$ is $\omega_2(TL) \in H^2(L;\Z/2)$,
and the set of $Pin$ structures on $L$ (if nonempty) is an affine space over $H^1(L;\Z/2)$.
The obstruction to putting a grading on $L$ is the {\em Maslov class} $\mu_L \in H^1(L;\Z)$, and if it exists it is unique up to an overall integer shift.
As an important special case, note that any sphere of dimension at least two automatically admits a unique $Pin$ structure and a countably infinite set of gradings.

Suppose we have $\bL = (L_0,...,L_k)$ and $\bx = (x_0,...,x_k) \in \gen(\bL)$, and each $L_i$ is equipped with a brane structure.
Then the moduli space $\calM(\bx)$ is oriented (see \cite[\S 11]{seidelbook}).
More precisely, for each pair of Lagrangians $L,L'$ and each $x \in \genHLL{L}{L'}$, there is an associated {\em orientation space} $\orien(x)$, which is an associated real one-dimensional vector space. 
We choose an arbitrary\footnote{Alternatively, there is an equivalent but somewhat more canonical definition of the Fukaya category which does not require choosing trivializations of orientation spaces, at the cost of slightly more notation.} trivialization of all orientation spaces, after which the orientation on $\calM(\bx)$ is uniquely determined.
In particular, each $u \in \calM(\bx)^{\oo}$ has an associated sign $\signu \in \{-1,1\}$,
where $\calM(\bx)^{\oo}$ denotes the isolated (i.e. Fredholm index zero) points of $\calM(\bx)$.
Moreover,
the Maslov index endows each $x_i$ with an integer grading $|x_i| \in \Z$,
and we have 
\begin{align*}
\dim \calM(\bx) = |x_0| - |x_1| - ... - |x_k| + k-2.
\end{align*}
(see  \cite{seidel2000graded} and \cite[\S 11]{seidelbook}).
From now on we will mostly suppress $Pin$ structures and gradings from the notation and (by slight abuse) denote a Lagrangian brane $(L,P^\#,\alpha^\#)$ simply by $L$.
}

\subsubsection{The Fukaya category}

Let $\field$ be an arbitrary coefficient field.
The Fukaya category $\fuk(M,\theta)$ is defined to be the $\Z$-graded $\calA_\infty$ category over $\field$ with:
\begin{itemize}
\item
objects given by closed exact Lagrangian branes in $(M,\theta)$
\item 
for objects $L_0,L_1$, $\hom(L_0,L_1)$ is the $\field$-module generated by $\genHLL{L_0}{L_1}$
\item
for $k \geq 1$, objects $L_0,...,L_k \subset M$, and
$x_i \in \genHLL{L_{i-1}}{L_i}$ for $i = 1,...,k$, 
we set
\begin{align*}
\mu^k(x_k,...,x_1) := \sum_{\substack{x_0 \in \genHLL{L_0}{L_k}\\ u \in \calM(x_0,...,x_k)^{\oo}}} \signu x_0.
\end{align*}
\end{itemize}
Note that the above sums are finite by \S\ref{def:stable pseudoholomorphic polygons}. By the dimension formula from \S\ref{subsubsec:brane structures}, $\mu^k$ has degree $2-k$.
Finally, following a well-known outline, an analysis of the boundaries of the one-dimensional components of $\ovl{\calM}(\bx)$
confirms that the terms $\mu^1,\mu^2,...$ satisfy a signed version of the $\calA_\infty$ structure equations (see \cite[1a]{seidelbook}).

\subsection{Basic invariance properties}\label{subsec:basic_inv}

In this subsection we explain in what sense the Fukaya category is independent of the various ingredients involved in its construction. 
Our primary motivation is to show that certain notions of quasi-isomorphism do not depend on any of these choices.
Most of the results in the section also have straightforward extensions to the settings of twisted and bulk deformed Fukaya categories described in \S\ref{subsec:the twisted Fukaya category} and \S\ref{subsec:bd fuk} respectively.
As such, we sprinkle in a few relevant remarks and leave the precise formulations to the reader. 

\subsubsection{Notions of equivalence}

For an $\calA_\infty$ category $\calC$ over a field $\field$, let $H\calC$ denote the cohomology category.
This is an ordinary graded linear category, possibly without identity morphisms,
with the same objects as $\calC$.
The morphism space $\hhom(X_0,X_1)$ between objects $X_0$ and $X_1$ is by definition the cohomology of $\hom(X_0,X_1)$ with respect to $\mu^1$,
with composition given by $[x'] \cdot [x] := (-1)^{|x|}[\mu^2(x',x)]$.
 
In the case of $\fuk(M,\theta)$,
standard techniques (e.g. the Piunikhin--Salamon--Schwarz isomorphism) show that the endomorphism space $\hhom(L,L)$ of any 
object $L$ is isomorphic as a graded $\field$-algebra to the singular cohomology $H(L;\field)$ of $L$.
Under this isomorphism, the multiplicative unit of $H(L;\field)$ plays the role of an identity morphism in $H\fuk(M,\theta)$.
In particular, since the cohomology category has an identity morphism for every object, we say that 
$\fuk(M,\theta)$ is {\em cohomologically unital}.
 
In general, an $\calA_\infty$ functor $\calC_0 \rightarrow \calC_1$ between two cohomologically unital $\calA_\infty$ categories $\calC_0$ and $\calC_1$ is said to be an {\em $\calA_\infty$ quasi-isomorphism} if the induced cohomology level functor $H\calC_0 \rightarrow H\calC_1$ is an isomorphism of categories.
In this case, a useful consequence of homological perturbation theory is that one can also find an $\calA_\infty$ quasi-isomorphism $\calC_1 \rightarrow \calC_0$ in the reverse direction (see \cite[Corollary 1.14]{seidelbook}, which in fact produces a two-sided inverse up to homotopy).
Similarly, an $\calA_\infty$-functor $\calC_0 \rightarrow \calC_1$ is said to be an {\em $\calA_\infty$ quasi-equivalence} if the induced cohomology level functor is an equivalence of categories, and in this case one can find a reverse $\calA_\infty$ quasi-equivalence $\calC_1 \rightarrow \calC_0$ (see \cite[Theorem 2.9]{seidelbook}).
For later use, we state the following case with slightly more care. In general, 
we say that two objects of a cohomologically unital $\calA_\infty$ category are {\em quasi-isomorphic} if they are isomorphic in the cohomology level category.
\begin{lemma}\label{lem:quasi-iso trick}
Let $\calC$ be a cohomologically unital $\calA_\infty$ category over $\field$ with objects of the form $A_i$ and $B_i$ for $i$ in some indexing set $\calI$, and let $\calA$ and $\calB$ denote the full $\calA_\infty$ subcategories with objects $\{A_i \;:\; i \in \calI\}$ and $\{B_i \;:\; i \in \calI\}$ respectively. Assume that $A_i$ and $B_i$ are quasi-isomorphic in $\calC$ for each $i \in \calI$. Then there is an $\calA_\infty$ quasi-isomorphism $\calA \rightarrow \calB$ sending $A_i$ to $B_i$ for each $i \in \calI$.
\end{lemma}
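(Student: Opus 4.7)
The plan is to pass to the full $\calA_\infty$ subcategory of $\calC$ spanned by both families of objects, extract a quasi-equivalence $\calA \to \calB$ via Seidel's theorem on quasi-inverses, and then strictify its object assignment so that each $A_i$ lands on $B_i$ on the nose.

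Concretely, I would let $\calD \subset \calC$ denote the full $\calA_\infty$ subcategory on objects $\{A_i, B_i : i \in \calI\}$. It inherits cohomological unitality from $\calC$, and the inclusions $\iota_\calA: \calA \hookrightarrow \calD$ and $\iota_\calB: \calB \hookrightarrow \calD$ are strictly full and faithful. Since by hypothesis $A_i$ is quasi-isomorphic to $B_i$ in $\calD$ for each $i$, both inclusions are essentially surjective at the cohomology level, so both are $\calA_\infty$ quasi-equivalences. Applying \cite[Theorem 2.9]{seidelbook} to $\iota_\calB$ yields an $\calA_\infty$ quasi-inverse $G: \calD \to \calB$, and I would set $F_0 := G \circ \iota_\calA: \calA \to \calB$. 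As a composition of quasi-equivalences, $F_0$ is itself a quasi-equivalence; and because $G$ sends the quasi-isomorphism $A_i \simeq B_i$ in $\calD$ to a quasi-isomorphism in $\calB$ while $G(B_i) \simeq B_i$ by the quasi-inverse property, the object $F_0(A_i) = G(A_i)$ is quasi-isomorphic to $B_i$ in $\calB$ for each $i$.

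To upgrade $F_0$ to a quasi-isomorphism $F: \calA \to \calB$ with the specified strict object map $A_i \mapsto B_i$, I would choose for each $i$ a closed degree-zero quasi-isomorphism $\eta_i: F_0(A_i) \to B_i$ in $\calB$ together with a homotopy inverse, and construct $F$ by a standard homological perturbation argument in the spirit of \cite[Corollary 1.14]{seidelbook}. The leading component $F^1$ is obtained essentially by pre- and post-composing $F_0^1$ with the $\eta_i$'s and their homotopy inverses via $\mu^2_\calB$, and each higher $F^k$ for $k \geq 2$ is produced inductively using that the obstruction to the $\calA_\infty$ functor relation at stage $k$ becomes a coboundary once the lower components have been fixed. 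Since the object assignment is a bijection and $F$ remains a quasi-equivalence, $F$ is the desired $\calA_\infty$ quasi-isomorphism.

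The main obstacle is this strictification step: extracting a quasi-equivalence from Seidel's theorem is essentially formal, but forcing the object map to hit $B_i$ exactly requires the full inductive construction of the higher $F^k$'s with the $\calA_\infty$ functor relations exactly satisfied. This is a standard but technical exercise in obstruction theory for $\calA_\infty$ functors; its principal subtlety is the combinatorial bookkeeping linking $F^k$ to the lower components $F^1, \ldots, F^{k-1}$ and to the chosen homotopy inverses of the $\eta_i$.
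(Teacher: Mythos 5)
Your overall route is the one the paper itself has in mind: the paper offers no separate argument for this lemma, treating it as standard homological algebra resting on Seidel's quasi-inverse results (\cite[Corollary 1.14]{seidelbook}, \cite[Theorem 2.9]{seidelbook}, i.e.\ the homological perturbation lemma), and your first step --- pass to the full subcategory $\calD$ on all the $A_i$ and $B_i$, observe that both inclusions are cohomologically full and faithful and essentially surjective on cohomology, and extract a quasi-inverse $G$ of $\calB \hookrightarrow \calD$ so that $F_0 := G \circ \iota_\calA$ is a quasi-equivalence with $F_0(A_i)$ quasi-isomorphic to $B_i$ --- is exactly that argument.

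The soft spot is your strictification step. As written, the assertion that ``the obstruction to the $\calA_\infty$ functor relation at stage $k$ becomes a coboundary once the lower components have been fixed'' is not a proof: obstructions to extending a truncated $\calA_\infty$ functor are Hochschild-type classes which do not vanish for free, and you give no reason why they vanish here. What actually makes the step work is not an abstract vanishing statement but transport of the already existing functor $F_0$ along the isomorphisms $[\eta_i]$ in the cohomology category, using cohomological unitality and the chosen homotopy inverses to build the correction terms --- true, but a genuine piece of work that your sketch elides. The cleaner fix is to avoid strictification entirely: in the construction of the quasi-inverse $G: \calD \rightarrow \calB$ underlying \cite[Theorem 2.9]{seidelbook} one is free to prescribe the object assignment within each quasi-isomorphism class, so you may simply take $G(A_i) = G(B_i) = B_i$. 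Then $F := G \circ \iota_\calA$ sends $A_i$ to $B_i$ on the nose, is cohomologically full and faithful, and is bijective on objects, hence induces an isomorphism of cohomology categories and is the desired $\calA_\infty$ quasi-isomorphism.
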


In the context of bulk deformations, we will also need the above lemma to hold over $\KL$. Here a bit of care is needed, since the relevant homological algebra over an arbitrary commutative ring is rather subtle. 
The proof of \cite[Corollary 1.14]{seidelbook} is based on \cite[Remark 1.13]{seidelbook}, which explains how the transfer principle for $\calA_\infty$ categories (over a field) follows from the homological perturbation lemma.
As recalled in \cite[Prop. 2.2]{lekili2012arithmetic}, the homological perturbation lemma still holds over an arbitrary commutative ring, although in general it implies only a weak form of the transfer principle.
Upon closer inspection of  \cite[Remark 1.13]{seidelbook}, we see that the necessary ingredient is that each morphism space $(\hom_\calC(X_0,X_1),\mu_\calC^1)$ can be split into a direct summand where the differential $\mu^1_\calC$ vanishes, plus a chain contractible complement.
As a warning, note that not every chain complex over $\Z$ admits such a splitting
(consider the complex $0 \rightarrow \Z \rightarrow \Z \rightarrow 0$ with nonzero map given by multiplication by two).
However, in the case of a bulk deformed Fukaya category each morphism space
is a finite dimensional chain complex over $\KL$ with a distinguished basis of homogeneous elements. 
In particular, the matrix coefficients of the differential are of the form $\hbar^k l$ for $k \in \Z$, $l \in \KL_0$, and any such element is automatically invertible if nonzero.
Using this and similar degree considerations, one can produce a splitting of $\KL$-modules of the form $\hom_\calC(X_0,X_1) = \op{Im}\mu^1_\calC \oplus H \oplus I$, where $\ker \mu^1_\calC = \op{Im}\mu^1_\calC \oplus H$. This means that the differential vanishes on $H$ and $\op{Im}{\mu^1_\calC} \oplus I$ defines a contractible subcomplex.

\subsubsection{Independence of Floer data, strip-like ends, and perturbation data}\label{subsubsec:Independence of Floer data, strip-like ends, and perturbation data}

Following \cite[10a]{seidelbook}, a simple algebraic trick can be used to show that the Fukaya category is, up to $\calA_\infty$ quasi-isomorphism, independent of the choices of Floer data, strip-like ends, and perturbation data.
Namely, suppose that $\calC_0$ and $\calC_1$ are two different versions of $\fuk(M,\theta)$ constructed using different such choices. We formally produce a bigger $\calA_\infty$ category $\calC_\tot$ which contains both $\calC_0$ and $\calC_1$.
The objects of $\calC_\tot$ are formal pairs $(L,i)$, where $i \in \{0,1\}$ and $L$ is an object of $\calC_i$. 
To construct morphisms and composition maps for $\calC_\tot$, we proceed as in \S\ref{subsec:fukaya category of a Liouville domain} by choosing a Floer datum for every pair of objects and choosing consistent universal strip-like ends and perturbation data for boundary-marked disks labeled by objects of $\calC_\tot$.
We additionally require that the relevant choices for $(L_0,i_0),...,(L_k,i_k)$ coincide with the choices made for $\calC_i$ in the case that $i_0 = ... = i_k = i$.
The upshot is that there are full and faithful $\calA_\infty$ embeddings $\calC_0 \rightarrow \calC_\tot$ and $\calC_1 \rightarrow \calC_\tot$. 
Moreover, one can check (say by arguing \`a la the PSS isomorphism) that $(L,0)$ and $(L,1)$ are quasi-isomorphic in $\calC_\tot$ for any closed exact Lagrangian $L$.
Lemma \ref{lem:quasi-iso trick} then produces an $\calA_\infty$ quasi-isomorphism $\calC_0 \rightarrow \calC_1$ which sends $(L,0)$ to $(L,1)$ for every $L$.

\subsubsection{Liouville subdomains}\label{subsubsec:Liouville subdomains}
Any fixed collection $\LL$ of closed exact Lagrangians in $(M,\theta)$ defines a full
$\calA_\infty$ subcategory $\fuk(\LL) \subset \fuk(M,\theta)$.
Of course, in principle $\fuk(\LL)$ depends strongly on the ambient Liouville domain $(M,\theta)$.
In fact, the next lemma shows that in favorable circumstances it does not.
\begin{lemma}\label{lemma:Liouville subdomains}
Let $(M,\theta)$ be a Liouville domain and $M' \subset M$ a subdomain such that 
the Liouville vector field $Z_\theta$ is outwardly transverse to $\bdy M'$.
Let $\LL'$ be a set of closed exact Lagrangian branes in $(M',\theta')$, and let $\LL$ be the same set, but viewing elements as Lagrangians in $(M,\theta)$.
There is an $\calA_\infty$ quasi-isomorphism $\fuk(\LL') \rightarrow \fuk(\LL)$ which is the identity on the level of objects.
\end{lemma}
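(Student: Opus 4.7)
The plan is to combine a maximum-principle confinement argument with the ``total Fukaya category'' trick from \S\ref{subsubsec:Independence of Floer data, strip-like ends, and perturbation data}. The key geometric input is that outward transversality of $Z_\theta$ to $\bdy M'$ makes $\bdy M'$ a contact-type hypersurface in $(M,d\theta)$, so with a suitable almost complex structure, pseudoholomorphic polygons with boundary on $\LL \subset \Int(M')$ cannot escape $M'$.

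First I would fix an almost complex structure $J'$ on $M$, compatible with $d\theta$, making $\bdy M'$ weakly $J'$-convex: a collar of $\bdy M'$ is Liouville-symplectomorphic to a piece of the symplectization of $(\bdy M', \theta|_{\bdy M'})$, and the standard recipe of sending the Liouville direction to the Reeb direction produces such a $J'$ (cf.~\cite[\S 2.3]{cieliebak2012stein}). Next, I would choose all Floer and perturbation data for $\fuk(\LL')$ with Hamiltonians supported in $\Int(M')$---harmless since every Lagrangian in $\LL'$ is closed and thus disjoint from $\bdy M'$---and with almost complex structures coinciding with $J'$ near $\bdy M'$. These same data extend to valid choices for $\fuk(\LL)$ by extending the Hamiltonians by zero outside $M'$ and the almost complex structures as any convenient $d\theta$-compatible extension of $J'$ to $M$.

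I would then build the total $\calA_\infty$ category $\calC_\tot$ of \S\ref{subsubsec:Independence of Floer data, strip-like ends, and perturbation data}, whose objects are formal pairs $(L,i)$ for $L \in \LL$ and $i \in \{0,1\}$ (so that the subcategory with $i=0$ is $\fuk(\LL')$ and the one with $i=1$ is $\fuk(\LL)$), and pick consistent auxiliary data extending those already fixed on each subcategory, again arranging that all Hamiltonians are supported in $\Int(M')$ and all almost complex structures agree with $J'$ near $\bdy M'$. Under these hypotheses, every Hamiltonian time-$1$ chord between Lagrangians of $\LL$ lies in $\Int(M')$, and because the inhomogeneous term $Y$ of the perturbed Cauchy--Riemann equation vanishes identically on a neighborhood of $\bdy M'$, the equation there reduces to the honest $J'$-holomorphic equation and the maximum principle applies. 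Consequently every pseudoholomorphic polygon contributing to any structure map of $\calC_\tot$ is entirely contained in $M'$, and these counts match the corresponding counts performed intrinsically in $(M',\theta|_{M'})$. A standard continuation/PSS-style argument (or its $\calA_\infty$ analogue \`a la \cite[\S 10a]{seidelbook}) then shows that $(L,0)$ and $(L,1)$ are quasi-isomorphic in $\calC_\tot$ for each $L \in \LL$, and Lemma~\ref{lem:quasi-iso trick} produces the desired $\calA_\infty$ quasi-isomorphism $\fuk(\LL') \to \fuk(\LL)$ which is the identity on objects.

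The main obstacle is ensuring that the maximum principle is genuinely available despite the inhomogeneous term in the perturbed Cauchy--Riemann equation. This is precisely the reason for insisting that the Hamiltonian-valued one-form $K$ take values in functions supported inside $\Int(M')$: this guarantees that no perturbation is active near $\bdy M'$, so the usual $J'$-convexity argument applies verbatim to any polygon in $\calC_\tot$. Once this confinement is in place, the rest of the proof is bookkeeping within Seidel's framework.
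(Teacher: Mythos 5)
Your proposal is correct and takes essentially the same route as the paper: choose the Hamiltonian terms to vanish outside (a neighborhood of) $\Int(M')$ and the almost complex structures to be adapted to $\bdy M'$, confine all contributing polygons to $M'$ so that for correlated choices the structure maps of $\fuk(\LL)$ and $\fuk(\LL')$ literally coincide, and then invoke independence of the auxiliary choices. The one refinement worth noting is that with convexity imposed only near $\bdy M'$ and an arbitrary $d\theta$-compatible extension of $J'$ elsewhere, the naive pointwise maximum principle is not quite sufficient; the paper instead uses the integrated maximum principle of \cite[Lemma 7.2]{abouzaid2010open}, which requires exactly the contact-type condition along $\bdy M'$ and $d\theta$-compatibility that your choices already provide, so your confinement step goes through once phrased this way (and your detour through $\calC_\tot$ and the PSS-type argument is then redundant, since with the correlated data the two subcategories have identical curve counts).
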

\begin{proof}
When constructing $\fuk(M,\theta)$, assume that any chosen Hamiltonian function for Floer data or perturbation data pertaining to $\fuk(\LL)$ vanishes near $M \setminus \Int(M')$.
Similarly, assume that any almost complex structure $J$ pertaining to the construction of $\fuk(\LL)$ 
is {\em contact type} near $\bdy M'$. 
That is, we have $J^*\theta = e^r dr$ on $\Op(\bdy M')$, where
$r$ is the collar coordinate defined using the flow of $Z_\theta$ which satisfies $\theta = e^r\theta|_{\bdy M'}$.
Then by the integrated maximum principle (\cite[Lemma 7.2]{abouzaid2010open}, see also \cite[Lemma 7.5]{seidelbook}), all pseudoholomorphic curves factoring into the construction of $\fuk(\LL)$ are actually contained in $\Int(M')$. This means that, for suitably correlated choices in the construction of $\fuk(M',\theta')$, 
we can arrange that $\fuk(\LL)$ and $\fuk(\LL')$ coincide .
\end{proof}

As a special case, if $\LL = \{L_0,L_1\}$ consists of two Lagrangians,
the question of whether $L_0$ and $L_1$ are quasi-isomorphic does not depend on whether we view them in $(M',\theta')$ or $(M,\theta)$.
By \S\ref{subsubsec:Independence of Floer data, strip-like ends, and perturbation data}, it also does not depend on any choice of Floer data, strip-like ends, or perturbation data.

\subsubsection{Liouville homotopies}\label{subsubec:Liouville_homotopies}

Recall that a homotopy of Liouville domains is simply a one-parameter family $(M,\theta_t)$, $t \in [0,1]$, where each $(M,\theta_t)$ is a Liouville domain.
In this situation, the Fukaya categories $\fuk(M,\theta_0)$ and $\fuk(M,\theta_1)$ are $\calA_\infty$ quasi-equivalent. 
As a slightly more specific statement, we have:
\begin{lemma}
Let $(M,\theta_t)$, $t \in [0,1]$, be a homotopy of Liouville domains, let 
$\LL_t$ be a smoothly varying set of Lagrangian branes in $(M,\theta_t)$.
There is an $\calA_\infty$ quasi-isomorphism $\fuk(\LL_0) \rightarrow \fuk(\LL_1)$ which is the obvious isotopy-following map on the level of objects.
\end{lemma}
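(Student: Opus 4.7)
The plan is to reduce to the case of a fixed Liouville structure together with an ambient Hamiltonian isotopy of Lagrangian branes, then apply the doubling strategy from \S\ref{subsubsec:Independence of Floer data, strip-like ends, and perturbation data} together with Lemma \ref{lem:quasi-iso trick}.

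First I would apply a standard Moser-type argument on the Liouville completion to produce a smooth family of diffeomorphisms $\phi_t: M \rightarrow M$ with $\phi_0 = \op{id}$ and $\phi_t^*\theta_t = \theta_0 + df_t$ for some smooth family of functions $f_t: M \rightarrow \R$. Concretely, one sets $\dot\phi_t = X_t \circ \phi_t$, where $X_t$ is uniquely determined by
\begin{align*}
X_t \ip d\theta_t = -\dot\theta_t + dg_t
\end{align*}
for a suitable family of functions $g_t$, chosen so that $X_t$ vanishes near $\bdy M$; this is where one exploits outward transversality of the Liouville vector fields, working on the completion and then restricting back. Pulling back via $\phi_t$ transports $\LL_t$ to an isotopy $\wt\LL_t := \phi_t^{-1}(\LL_t)$ of closed exact Lagrangian branes inside the fixed Liouville domain $(M,\theta_0)$, and brane structures are canonically carried along since $\phi_t$ is smoothly isotopic to the identity.

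Second, since $\wt\LL_t$ is an isotopy of closed exact Lagrangians in a single Liouville domain, each component isotopy is realized by an ambient Hamiltonian diffeotopy $\psi_t$ of $(M,\theta_0)$ supported in the interior, using the standard fact that exact Lagrangian isotopies extend to Hamiltonian isotopies together with a cutoff near $\bdy M$. Then, following \S\ref{subsubsec:Independence of Floer data, strip-like ends, and perturbation data}, I would form an auxiliary $\calA_\infty$ category $\calC_\tot$ whose objects are formal pairs $(L,i)$ for $i \in \{0,1\}$, with $i=0$ accounting for objects of $\fuk(\LL_0)$ and $i=1$ for objects of $\fuk(\wt\LL_1)$, both viewed in $(M,\theta_0)$. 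Floer data, strip-like ends, and perturbation data are chosen coherently so that restriction to each of the two pure subcategories recovers the given Fukaya category. A PSS-style continuation argument then shows that $(L,0)$ and $(\psi_1(L),1)$ are quasi-isomorphic in $\calC_\tot$ for every object $L$: one builds mutually inverse comparison morphisms by counting pseudoholomorphic strips with a moving Hamiltonian perturbation interpolating between the two choices of Floer data. Applying Lemma \ref{lem:quasi-iso trick} and composing with the tautological identification $\fuk(\wt\LL_1) \cong \fuk(\LL_1)$ induced by $\phi_1$ produces the desired $\calA_\infty$ quasi-isomorphism, and the construction manifestly sends each $L_0 \in \LL_0$ to its endpoint $L_1 \in \LL_1$ under the isotopy.

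The hard part will be the Hamiltonian invariance step, namely the assertion that $L$ and $\psi_1(L)$ are quasi-isomorphic in $\calC_\tot$. Although this is entirely standard and expected, it requires writing down explicit continuation chain maps between Floer complexes and verifying that their compositions are chain homotopic to the identity, a small but genuinely technical piece of PSS theory transplanted into the Fukaya category framework. In contrast, the Moser reduction in step one and the final invocation of Lemma \ref{lem:quasi-iso trick} are routine.
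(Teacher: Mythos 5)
There is a genuine gap, and it is in your first step. You claim a Moser argument produces self-diffeomorphisms $\phi_t: M \rightarrow M$ with $\phi_0 = \op{id}$ and $\phi_t^*\theta_t = \theta_0 + df_t$, with the generating vector field $X_t$ cut off to vanish near $\bdy M$. No such family exists in general: a homotopy of Liouville domains can change the total symplectic volume, e.g.\ $\theta_t = e^t\theta_0$ gives $\int_M (d\theta_1)^n = e^n\int_M(d\theta_0)^n$, so no diffeomorphism of the compact domain $M$ can pull $d\theta_1$ back to $d\theta_0$, and no choice of the auxiliary functions $g_t$ in $X_t \ip d\theta_t = -\dot\theta_t + dg_t$ will make $X_t$ vanish near $\bdy M$. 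The Moser argument for Liouville homotopies only yields a family of \emph{exact symplectic embeddings} $\Phi_t:(M,\theta_t)\hookrightarrow (\wh{M},\wh{\theta_0})$ into the completion, with $\Phi_0$ the inclusion; the images $\Phi_t(M)$ are moving Liouville subdomains of $\wh{M}$, not $M$ itself, so your reduction to a Hamiltonian isotopy of branes inside the fixed domain $(M,\theta_0)$ does not go through. This is exactly why the paper's proof works in the completion.

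The error propagates to your last step: you call the comparison $\fuk(\wt\LL_1)\cong\fuk(\LL_1)$ a ``tautological identification induced by $\phi_1$,'' but once $\phi_1$ is replaced by the embedding $\Phi_1$ this comparison is between a Fukaya category computed in the ambient completion (or in $(M,\theta_0)$) and one computed in $(M,\theta_1)$, and it is not tautological: a priori the Fukaya category depends on the ambient Liouville domain. The paper closes this gap with Lemma \ref{lemma:Liouville subdomains}, i.e.\ the integrated-maximum-principle argument showing that for contact-type almost complex structures near $\bdy \Phi_1(M)$ all relevant curves stay inside the subdomain, after arranging $\Phi_1^*\wh{\theta_0}=\theta_1$. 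Your middle step (Hamiltonian isotopy of exact Lagrangians in a fixed Liouville manifold implies quasi-isomorphism, fed into Lemma \ref{lem:quasi-iso trick}) matches the paper and is fine, but as written it is applied to Lagrangians produced by a nonexistent family of self-diffeomorphisms; run it instead for the isotopy $\Phi_t(\LL_t)$ in $\wh{M}$ and add the Liouville-subdomain invariance, and you recover the paper's argument.
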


\begin{proof}
By a version of Moser's argument,
we can find a smooth family of exact symplectic embeddings $\Phi:(M,\theta_t) \hookrightarrow (\wh{M},\wh{\theta_0})$ with $\Phi_0$ the inclusion map.
Here $(\wh{M},\wh{\theta_0})$ denotes the completion of $(M,\theta_0)$ which is obtained by attaching the conical end $(\bdy M \times [0,\infty), e^r\theta_0|_{\bdy M})$.
Note that each Lagrangian in $\LL_0$, viewed in $(\wh{M},\wh{\theta_0})$, is
Hamiltonian isotopic to a Lagrangian in $\Phi_1(\LL_1)$ via the family $\Phi_t(\LL_t)$.
Since Hamiltonian isotopy implies quasi-isomorphism in the Fukaya category, 
by Lemma \ref{lem:quasi-iso trick}
it suffices to produce an $\calA_\infty$ quasi-isomorphism between $\fuk(\Phi_1(\LL_1))$ and $\LL_1$.
Without loss of generality, we can assume that $\Phi_1^*(\wh{\theta_0}) = \theta_1$. 
The result then follows from Lemma \ref{lemma:Liouville subdomains}.
\end{proof}

\begin{remark}
In the presence of a B-field or bulk deformation, it is still true that Hamiltonian isotopic Lagrangians are quasi-isomorphic, provided that the isotopy is disjoint from the support of the two-form $\Omega$ or smooth cycle $\mho$.
For more general isotopies, one should more carefully take into account certain {\em bounding cochains}, but this lies outside the scope of this paper.
\end{remark}

\subsubsection{Almost complex structures}

Finally, if $\Jo$ and $\Jo'$ are two different choices of reference almost complex structure,
the two resulting $\calA_\infty$ categories are $\calA_\infty$ quasi-isomorphic.
Indeed, we can construct them in parallel as follows. 
Firstly, choose the same Hamiltonian terms and strip-like ends in both constructions.
Now suppose that $J$ and $J'$ are two corresponding families of almost complex structures associated to some Floer datum or perturbation datum.
We require that for some sufficiently large subdomain $M' \subset M$ we have
\begin{itemize}
\item
$Z_\theta$ is outwardly transverse along $\bdy M'$ and $\Int M'$ contains all of the relevant Lagrangians
\item 
$J$ and $J'$ coincide on $M'$ and are contact type near $\bdy M'$.
\end{itemize}
By the integrated maximum principle, the corresponding pseudoholomorphic curves cannot escape $M'$. We can therefore arrange that the $\calA_\infty$ operations in the two constructions coincide.

In fact, let $\calJctct$ denote the space of $(d\theta)$-compatible almost complex structures on $M$ which are contact type near $\bdy M$.
The construction of the Fukaya category described in \S\ref{subsec:the fukaya category of a Liouville domain} works equally well if we replace $\calJfix$ with $\calJctct$,
and in this case there is no need to single out any one almost complex structure.
However, this approach seems slightly less convenient when discussing Liouville domains with corners.

\section{Turning on a B-field}\label{sec:turning on a B-field}

In this section we discuss how to twist symplectic invariants by a closed two-form $\Omega$, often called a ``B-field" in the physics literature.
The basic idea is to consider the same pseudoholomorphic curves as in the untwisted case, but to count each curve with an extra weight factor determined by the integral of $\Omega$ over that curve. The resulting twisted invariants are sensitive to additional information about homology classes of curves and, as we will see, can sometimes detect qualitative features which are invisible to their untwisted cousins.
In the existing literature B-fields have been applied to
\begin{itemize}
\item
Lagrangian Floer theory in \cite{cho2008non,fukaya2011lagrangian} in the context of displaceability questions for Lagrangians
\item
Hamiltonian Floer theory in \cite{usher2011deformed} in the context of symplectic capacities and quasimorphisms 
\item
symplectic cohomology in \cite{ritternovikov,ritter2010deformations} in the context of obstructing Lagrangian embeddings.
\end{itemize}

\sss

\noindent \textbf{Coefficients.} In order to apply the twisting construction, we work over a field $\K$ which is equipped with an injective group homomorphism $H: \R \rightarrow \K^*$ from the additive group of real numbers to the multiplicative group of invertible elements in $\K$. 
We set $t := H(1)$ and more generally $t^r := H(r)$ for any $r \in \R$. 
For example, we could take $\K = \C$ and $H(r) = e^r$, or take $\K$ to be the field of rational functions in a formal variable $t$ with real exponents and coefficients in an auxiliary field.
\begin{remark}
Symplectic invariants with Novikov coefficients also fit into the context of B-fields,
with $\Omega$ given by the symplectic form itself. In this case $t$ is usually taken to be some kind of Novikov parameter with respect to which $\K$ is complete, although this is not technically necessary when working with exact symplectic manifolds.
\end{remark}

\subsection{The twisted Fukaya category}\label{subsec:the twisted Fukaya category}

In this subsection we discuss how to incorporate a B-field into the construction of the Fukaya category from \S\ref{sec:fukaya categories}.
With our intended applications in mind, we expedite the definition by restricting to Lagrangians which are disjoint from the support of the B-field (but see Remark \ref{rmk:lags with connections} for a more general framework). 
Let $(M,\theta)$ be a Liouville domain and let $\Omega$ be a closed two-form on $M$.
Pick Floer data, strip-like ends, and perturbation data as in the construction of $\fuk(M,\theta)$.
We define $\fuk_\Omega(M,\theta)$ as the $\calA_\infty$ category over $\K$ with:
\begin{itemize}
\item
objects given by closed exact Lagrangian branes $L$ in $(M,\theta)$ such that $\Op(L)$ is disjoint from the support of $\Omega$ 
\item 
for objects $L_0,L_1$, $\hom(L_0,L_1)$ is the $\K$-module generated by $\genHLL{L_0}{L_1}$
\item
for $k \geq 1$, objects $L_0,...,L_k$, and
$x_i \in \genHLL{L_{i-1}}{L_i}$ for $i = 1,...,k$, 
we set
\begin{align*}
 \twmu^k(x_k,...,x_1) :=  \sum_{\substack{x_0 \in \genHLL{L_0}{L_k}\\ u \in \calM(x_0,...,x_k)^{\oo}}} t^{\int u^*\Omega}\signu x_0.
\end{align*}
\end{itemize}

Since $\twfuk(M,\theta)$ is defined using the same pseudoholomorphic polygons as $\fuk(M,\theta)$, to confirm the $\calA_\infty$ structure equations we just need to check that the new term
$t^{\int u^*\Omega}$ behaves appropriately.
Namely, suppose we have $\bL = (L_0,...,L_k)$ and $\bx \in \gen(\bL)$, and $u_0$ and $u_1$ 
are once-broken pseudoholomorphic polygons which 
together form the boundary of a one-dimensional component of $\ovl{\calM}(\bx)$.
The integral of $\Omega$ over a broken curve still makes sense by summing over each component,
and it suffices to check that 
\begin{align*}
\int u_0^*\Omega = \int u_1^*\Omega.
\end{align*}
Since $\Omega$ is closed and vanishes near $L_0 \cup ... \cup L_k$, this follows easily from Stokes' theorem.

\begin{remark}\label{rmk:lags with connections}\hspace{1cm}
\begin{enumerate}
\item
By equipping Lagrangians with additional decorations, we can enlarge the class of objects of $\twfuk(M,\theta)$.
Although we will not need this generality for our main results, we briefly explain the idea for context.
We consider the $\calA_\infty$ category over $\K$ with
\begin{itemize}
\item
objects given by pairs $(L,\nu)$, where $L$ is a closed exact Lagrangian brane in $(M,\theta)$ and $\nu$ is a one-form on $L$ with $d\nu = \Omega|_L$
\item 
for objects $(L_0,\nu_0),(L_1,\nu_1)$, $\hom((L_0,\nu_0),(L_1,\nu_1))$ is the $\K$-module generated by $\genHLL{L_0}{L_1}$
\item
for $k \geq 1$, objects $(L_0,\nu_0)$,...,$(L_k,\nu_k)$, and
$x_i \in \genHLL{L_{i-1}}{L_i}$ for $i = 1,...,k$, 
we set
\begin{align*}
 \twmu^k(x_k,...,x_1) :=  \sum_{\substack{x_0 \in \genHLL{L_0}{L_k}\\ u \in \calM(x_0,...,x_k)^{\oo}}} t^{\int u^*\Omega - \hol(u)}\signu x_0.
\end{align*}
Here the term $\hol(u)$ is given by
\begin{align*}
\hol(u) := \sum_{i = 0}^k\int_{\gamma_i}u^*\nu_i,
\end{align*}
where $\gamma_0,...,\gamma_k$ denote the (ordered) boundary components of the domain of $u$.
\end{itemize}
The $\calA_\infty$ structure equations again follow from an application of Stokes' theorem.
Note that we are still excluding any Lagrangian $L \subset M$ for which $\Omega|_L$ is not exact.

\item
There is a yet further extension, considered by Cho in \cite{cho2008non}, which involves arbitrary closed exact Lagrangian branes in $(M,\theta)$.
Namely, we take complex coefficients $\K = \C$, and objects of the form $(L,\xi,\nabla)$, where $L$ is a closed exact Lagrangian brane, $\xi \rightarrow L$ is a complex line bundle, and $\nabla: C^\infty(\xi) \rightarrow C^\infty(T^*L \otimes \xi)$ is a complex connection whose curvature two-form coincides with $2\pi i \Omega|_L$. For two objects $(L_0,\xi_0,\nabla_0)$,$(L_1,\xi_1,\nabla_1)$,
we set
\begin{align*}
\hom((L_0,\xi_0,\nabla_0),(L_1,\xi_1,\nabla_1)) := \bigoplus_{x \in \genHLL{L_0}{L_1}} \hom_\C  ((\xi_0)_{x(0)},(\xi_1)_{x(1)}).
\end{align*}
The $\calA_\infty$ operations are defined as above, except that now
the term $\hol(u)$ is defined by composing the holonomies of the given connections along the boundary of $u$.
In particular, if $L|_\Omega$ is exact, we can pick $\xi$ to be the trivial bundle $L \times \C$ and use a connection of the form $\nabla(f) = df + f\nu$ for a one-form $\nu$.
That is, this category contains the previous version as a full subcategory.
\end{enumerate}
\end{remark}

\subsection{Twisted symplectic cohomology}

In this subsection we recall the construction of $\sh_\Omega(M,\theta)$, the symplectic cohomology of $(M,\theta)$ twisted by $\Omega$.
For further details and different applications we refer the reader to Ritter's papers \cite{ritternovikov,ritter2010deformations,ritter2013topological}.

\subsubsection{Symplectic cohomology formalism}\label{subsubsec:symplectic cohomology formalism}

We first recall the construction of ``ordinary" symplectic cohomology.
For a more thorough treatment we recommend any of the excellent surveys \cite{oancea2004survey,seidel2008biased,wendlbeginner,abouzaid2013symplectic}.
Let $(\wh{M},\wh{\theta})$ denote the completion of $(M,\theta)$, given by attaching the conical end $(\bdy M \times [0,\infty),e^r\theta|_{\bdy M})$.
Assume that the Reeb flow associated to the contact form $\theta|_{\bdy M}$ is nondegenerate.
For $\tau > 0$, let $\calHtau$ denote the space of Hamiltonians $H: \wh{M} \rightarrow \R$ such that $H|_{\Op(\wh{M} \setminus M)} = \tau e^r$.
We call $\tau$ the ``slope at infinity" of $H \in H_\tau$,
and we only allow $\tau > 0$ not equal to the period of any Reeb orbit of $\theta|_{\bdy M}$.
For such a $\tau$ and a nondegenerate time-dependent Hamiltonian $H \in C^{\infty}(S^1,\calHtau)$, the set of $1$-periodic orbits is finite and we denote it by $\calP_H$.
Let $\calJ$ denote the space of $(d\wh{\theta})$-compatible almost complex structures on $\wh{M}$ which are contact type, i.e. satisfy $J^*\theta = e^r dr$, on 
$\Op(\wh{M} \setminus M)$.
The special shapes at infinity of $H \in C^{\infty}(S^1,\calHtau)$ and $J \in C^{\infty}(S^1,\calJ)$ ensure the maximum principle needed to make sense of the Floer complex for generic $(H,J)$.
The space of (parametrized) Floer trajectories with asymptotics $\gamma_-,\gamma_+ \in \calP_H$ is given by
\begin{align*}
\wh{\calM}(\gamma_-,\gamma_+) := \left\{ u: \R \times S^1 \rightarrow \wh{M}\;:\; \bdy_s u + J(\bdy_t u - X_H) = 0,\; \lim_{s \rightarrow \pm \infty}u(s,\cdot) = \gamma_{\pm}\right\}.
\end{align*}
Let $\calM(\gamma_-,\gamma_+)$ denote the quotient of $\wh{\calM}(\gamma_-,\gamma_+)$ by the free $\R$-action which translates in the $s$ coordinate. 
The Floer complex is generated as $\K$-module by $\calP_H$, with differential $\delta$ given on a orbit $\gamma_+ \in \calP_H$ by
\begin{align*}
\delta(\gamma_+) := \sum_{\substack{\gamma_- \in \calP_H \\ u \in \calM(\gamma_-,\gamma_+)^\oo}} \signu \gamma_-.
\end{align*}
We denote this complex by $\cf(H)$, with the understanding that the differential also depends on an accompanying choice of $J$.

By standard Floer theoretic techniques, $\cf(H)$ is independent of $J$ up to chain homotopy equivalence.
However, unlike for the case of a closed symplectic manfiold, $\hf(H)$ does depend on $H$.
To remove this dependence, one considers {\em monotone continuation maps}.
Namely, there is a continuation map $\Phi: \cf(H_+) \rightarrow \cf(H_-)$ whenever the slope at infinity of $H_-$ is larger than that of $H_+$.
The construction of $\Phi$ depends on the following choices:
\begin{itemize}
\item a {\em monotone homotopy} from $H_-$ to $H_+$,
i.e. an $\R$-dependent family of slopes $\tau_s$ and Hamiltonians $H_s \in C^\infty(S^1,\calH_{\tau_s})$,
such that $H_s = H_{\pm}$ for $\pm s \gg 0$ and $\bdy_s \tau_s \leq 0$ for all $s$
\item
a family of almost complex structures $J_s \in C^\infty(S^1,\calJ)$ such that $J_s = J_\pm$ for $\pm s \gg 0$.
\end{itemize}
Here the inequality for $\tau_s$ ensures a maximum principle for solutions of the continuation map equation.
Given generic such choices and $\gamma_{\pm} \in \calP_{H_{\pm}}$,
we set
\begin{align*}
\calM(\gamma_-,\gamma_+) := \left\{ u: \R \times S^1 \rightarrow \wh{M}\;:\; \bdy_s u + J(\bdy_t u - X_{H_s}) = 0,\; \lim_{s \rightarrow \pm \infty}u(s,\cdot) = \gamma_{\pm}\right\}.
\end{align*}
Note that since $H_s$ is $s$-dependent there is no longer a translation $\R$-action.
The continuation map $\Phi: \cf(H_+) \rightarrow \cf(H_-)$ is then given on an orbit $\gamma_+ \in \calP_{H_+}$ by 
\begin{align*}
\Phi(\gamma_+) := \sum_{\substack{\gamma_- \in \calP_{H_-} \\ u \in \calM(\gamma_-,\gamma_+)^\oo}} \signu \gamma_-.
\end{align*}
Standard Floer theoretic techniques show that 
\begin{itemize}
\item
$\Phi$ is a chain map
\item
up to chain homotopy, $\Phi$ is independent of the choice of monotone homotopy
\item 
the composition of two continuation maps is chain homotopic to a continuation map.
\end{itemize}

Finally, the symplectic cohomology of $(M,\theta)$ is defined as the direct limit 
\begin{align*}
\sh(M,\theta) := \lim_{\tau \rightarrow \infty} \hf(H)
\end{align*}
over generic $H \in C^{\infty}(S^1,\calHtau)$,
where the connecting maps are given by monotone continuation maps.
By the direct limit formalism, the result is manifestly independent of any choice of Hamiltonian or almost complex structure within the allowed class.
Less obviously, it turns out to be invariant under arbitrary symplectomorphisms of $(\wh{M},d\wh{\theta})$ (see \cite[\S 2c]{abouzaid2010altering} or the discussion in \ref{subsec:some functoriality properties} below).

\subsubsection{Adding a twist}

The twisted Floer complex $\cf_\Omega(H)$ coincides with $\cf(H)$ as a $\K$-module,
but with the twisted differential given on an orbit $\gamma_+ \in \calP_H$ by
\begin{align*}
\delta_\Omega(\gamma_+) := \sum_{\substack{\gamma_- \in \calP_H \\ u \in \calM(\gamma_-,\gamma_+)^\oo}}  t^{\int u^*\Omega} \signu \gamma_-.
\end{align*}
Since $\Omega$ is closed, one can check using Stokes' theorem that the new contributions agree on two cancelling ends of a one-dimensional component of $\ovl{\calM}(\gamma_-,\gamma_+)$, so we again have $\delta_\Omega^2  = 0$.
Similarly, for $H_- \in \calH_{\tau_-}$ and $H_+ \in \calH_{\tau_+}$ with $\tau_- \geq \tau_+$, the twisted monotone continuation map $\Phi_\Omega: \cf(H_+) \rightarrow \cf(H_-)$ is given on an orbit $\gamma_+ \in \calP_{H_+}$ by 
\begin{align*}
\Phi_\Omega(\gamma_+) := \sum_{\substack{\gamma_- \in \calP_{H_-} \\ u \in \calM(\gamma_-,\gamma_+)^\oo}} t^{\int u^*\Omega}\signu \gamma_-.
\end{align*}
By another application of Stokes' theorem, this satisfies $\Phi_\Omega \circ \delta_\Omega = \delta_\Omega \circ \Phi_\Omega$.

\begin{remark}\label{rem:local system}
The two-form $\Omega$ defines a certain local system $\K_{\Omega}$ on the free loop space $\calL M$ of $M$.
Namely:
\begin{itemize}
\item
to each point $p \in \calL M$ we associate a copy $(\K_\Omega)_p$ of $\K$
\item
to each smooth path $\eta: [0,1] \rightarrow \calL M$ from $p$ to $p'$ we associate the monodromy homomorphism
\begin{align*}
(\K_\Omega)_{p} \rightarrow (\K_\Omega)_{p'},\;\;\;\;\; k \mapsto t^{\int u^*\Omega}k,
\end{align*}
viewing $\eta$ as the cylinder $u: [0,1] \times S^1 \rightarrow M$.
\end{itemize}
It is thus natural to view $\sh_\Omega(M,\theta)$ as the symplectic cohomology of $(M,\theta)$ with coefficients in the local system $\K_{\Omega}$.
\end{remark}

\subsubsection{Independence of $\Omega$}\label{subsubsec:independence of Omega}

A basic fact about $\sh_{\Omega}(M,\theta)$ is that it depends only on the cohomology class $[\Omega] \in H^2(M;\R)$. In particular, it agrees with the untwisted version when $\Omega$ is exact. To see this, consider the effect of adding an exact two-form to $\Omega$, say $d\alpha$ for $\alpha$ a $1$-form on $M$.
The new twisted differential is then given by
\begin{align*}
\delta_{\Omega + d\alpha}(\gamma_+) &=   \sum_{\substack{\gamma_- \in \calP_{H} \\ u \in \calM(\gamma_-,\gamma_+)^\oo}}\left(t^{\int u^* \Omega + \int u^* d\alpha}\right)  \gamma_-\\
&=   \sum_{\substack{\gamma_- \in \calP_{H} \\ u \in \calM(\gamma_-,\gamma_+)^\oo}}\left(t^{\int u^* \Omega} \right)\left(t^{\int {\gamma_+}^*\alpha - \int {\gamma_-}^*\alpha}\right)  \gamma_-,
\end{align*}
i.e.
\begin{align*}
\delta_{\Omega + d\alpha}\left(t^{-\int {\gamma_+}^* \alpha}\gamma_+\right) =   \sum_{\substack{\gamma_- \in \calP_{H} \\ u \in \calM(\gamma_-,\gamma_+)^\oo}}\left(t^{\int u^* \Omega} \right) \left(t^{-\int {\gamma_-}^* \alpha} \gamma_- \right),
\end{align*}
so the twisting disappears after the change of basis 
$\gamma \mapsto t^{-\int \gamma^* \alpha}\gamma$.

\section{Bulk deformations}\label{sec:bulk deformations}

In this section we explain how to ``bulk deform" the symplectic invariants of a Liouville domain $(M,\theta)$.
This is analogous to twisting by a B-field, except with $\Omega$ now replaced by a closed 
$l$-form for $l > 2$. 
Although formally similar to twisting by a closed two-form, there are also some important differences which stem from the fact that a two-form can be integrated over an isolated curve while an $l$-form can only be integrated over a $(l-2)$-dimensional family of curves.
Indeed, whereas twisting incorporates additional information about homology classes of rigid curves, bulk deforming adds certain contributions from non-rigid curves.
As we will see, the added freedom to use higher index classes 
can be used to detect qualitative features which are invisible in the ordinary or twisted cases.

In order to probe higher index curves, we consider moduli spaces of pseudoholomorphic curves with interior marked points.
Rather than working directly with a closed $l$-form, for technical reasons we find it more convenient to take the Poincar\'e dual perspective and work 
with a smooth codimenson $l$ cycle $\mho$.
More precisely, $\mho$ is a smooth oriented $(2n-l)$-dimensional manifold with boundary, equipped with a smooth map $i_{\mho}: (\mho,\bdy \mho) \rightarrow (M,\bdy M)$.\footnote{By abuse of notation, we will often suppress $i_{\mho}$ from the notation
and speak of $\mho$ as if it were an embedded submanifold in $M$.
}
It then makes sense to count pseudoholomorphic curves in $M$ which become rigid after requiring some number of interior points to pass through $\mho$.
By appropriately combining these counts over any number of point constraints, 
we produce algebraic structures which mimic the undeformed versions.

Bulk deformations were first introduced as part of a very general Lagrangian Floer theory package in the seminal work of Fukaya--Oh--Ohta--Ono. Further applications to displaceability questions in toric manifolds are given in \cite{fukaya2011lagrangian}.
Usher also implements bulk deformations for the Hamiltonian Floer theory of closed symplectic manifolds in \cite{usher2011deformed}.
A different but closely related approach is taken by Barraud--Cornea in \cite{barraud2007lagrangian},
where the authors design a spectral sequence to extract information from higher index curves.

Our goal in this section is to construct the bulk deformed Fukaya category and bulk deformed symplectic cohomology of a Liouville domain.
In this setting we can give a direct treatment of transversality via Hamiltonian perturbations, following Seidel's approach in \cite{seidelbook}.
Although much of our discussion is likely well-known or expected by experts, we hope to complement the existing literature and give a self-contained treatment.

\sss

\noindent \textbf{Coefficients.} We assume that the codimension $l$ of $\mho$ is greater than two and {\em even} (see however Remark~\ref{rmk:parity}), and we work over a 
graded ring of the form $\KL := \KL_0[\hbar,\hbar^{-1}]$,
where $\KL_0$ is a field of characteristic zero and $\hbar$ is a formal variable of degree $2-l$.
The characteristic assumption is needed because fractional coefficients will appear.
The fact that $\hbar$ has nonzero degree will allow us to compensate for index shifts caused by extra point constraints and thereby produce a $\Z$ grading.
Note that the presence of an element of nonzero degree precludes $\KL$ from being a field.

\begin{remark}\label{rmk:parity}
For our main geometric applications we only consider the case that $l$ is even, and we will need $\hbar$ to be invertible (c.f. the proof of Proposition~\ref{prop:quasi-isomorphism criterion bulk deformed}).
However, for the general construction of bulked deformed invariants in this section we can equally well allow $l$ to be odd, and take $\KL$ to be any graded commutative ring having an element $\hbar$ of degree $2-l$.
The main difference in this case is the behavior of signs, hence the graded commutativity assumption, which necessitates the relation $\hbar^2 = 0$.
In particular, note that having both $l$ odd and $\hbar$ invertible are incompatible if $\KL_0$ is a nontrivial field, since then $1 = \hbar \hbar^{-1}\hbar\hbar^{-1} = -\hbar^2 \hbar^{-2} = 0$.
\end{remark}
\begin{remark} 
The maps we will write down are a priori infinite power series in $\hbar$, suggesting that we should complete $\KL$ with respect to $\hbar$. 
However, all but finitely many of these terms will be forced to vanish for degree reasons.
\end{remark}

\subsection{The bulk deformed Fukaya category}\label{subsec:bd fuk}

In this subsection we sketch a construction of the bulk deformed Fukaya category of $(M,\theta)$ via coherent perturbations.
Our perturbation scheme is formally similar to the one from \S\ref{sec:fukaya categories}, but with perturbations now depending on the locations of the interior marked points. 
Similar to the case for B-fields, we will restrict our attention to Lagrangians in $(M,\theta)$ which are disjoint from $\mho$.
One could also consider a more general class of objects along the lines of Remark \ref{rmk:lags with connections}, leading to the notion of {\em bounding cochains}
from \cite{fukaya2009lagrangian}, but this lies outside the scope of this paper.

\subsubsection{The moduli space $\calsphere_{q+1}$ and its compactification}
\label{subsubsec:the sphere moduli space}

As a warmup, we begin by discussing the moduli space of marked Riemann spheres and its Deligne--Mumford compactification.
For $q+1 \geq 3$, let $\calsphere_{q+1}$ denote the moduli space of Riemann spheres with $q+1$ ordered marked points, modulo biholomorphisms.
We declare the first marked point to be negative and the rest to be positive.
Note that $\calsphere_{q+1}$ is a smooth manifold of dimension $2q-4$.
There is also a universal family $\calsphereuniv_{q+1} \rightarrow \calsphere_{q+1}$ where the fiber
$\calsphereuniv_r$ over $r \in \calsphere_{q+1}$ represents the corresponding marked Riemann sphere.
Concretely, we can take
\begin{align*}
\calsphere_{q+1} = \conf_{q+1}(\CP^1)/\pslc,\;\;\;\;\; \calsphereuniv_{q+1} = \conf_{q+1}(\CP^1) \times_{\pslc} \CP^1.
\end{align*}
Note that we do not puncture the marked points in $\calsphereuniv_{q+1}$, and indeed they will play a slightly different role from the boundary marked points of $\calRuniv_{k+1}$.

Let $\calTsphere{q+1}$ denote the set of stable trees with $q+1$ ordered external edges.
We view an element of $\calTsphere{q+1}$ as a stable tree $T$ where:
\begin{itemize}
\item
the first external edge is distinguished as the root
\item
the remaining $q$ external vertices are equipped with an ordering.
\end{itemize}
As we will recall, $\calTsphere{q+1}$ models the stratification structure of the Deligne--Mumford compactification of $\calsphere_{q+1}$.

We compactify $\calsphere_{q+1}$ by allowing spheres to acquire certain nodes, and therefore 
any given stratum of $\calsphereovl_{q+1}$ is modeled on a product of factors of the form $\calsphere_{q'+1}$ for various $q' \leq q$. However, since the nodal points of each factor are not canonically ordered, it will be more natural to consider marked points with more general labels.
Namely, we consider Riemann spheres with marked points labeled by certain sets of tree edges.
More specifically, for $T \in \calTsphere{q+1}$ and $v \in V_\ii(T)$, let $\calsphere_{E(v)}$ be defined just like $\calsphere_{|v|}$, except that
\begin{itemize}
\item
 the marked points are indexed by the set $E(v)$ instead of being ordered
 \item
 the marked point indexed by the incoming edge is declared to be negative and the rest of the marked points are positive.
\end{itemize}
Evidently $\calsphere_{E(v)}$ is equivalent to $\calsphere_{|v|}$, although not canonically unless we pick an ordering of $E(v)$.
However, note that the subset of marked points of $\calsphere_{E(v)}$ which are indexed by external edges inherits a canonical ordering from that of $T$.
In particular, if $T_0 \in \calTsphere{q+1}$ denotes the tree with a unique internal vertex $v_0$, we freely 
identify $\calTsphere{E(v_0)}$ with $\calTsphere{q+1}$.
As a shorthand, let $\visphere$ denote the union of $V_\ii(T)$ over all $T \in \calTsphere{q+1}$ and $q+1 \geq 3$.

For each $v \in \visphere$, we endow the universal family $\calsphereuniv_{E(v)} \rightarrow \calsphere_{E(v)}$ with a choice of {\em universal cylindrical ends}, meaning pairwise disjoint fiberwise holomorphic embeddings
\begin{align*}
\epsilon_e'&: \calsphere_{E(v)} \times \R_\pm \times S^1 \hookrightarrow \calsphereuniv_{E(v)},
\;\;\;\;\; e \in E(v),
\end{align*}
with $-$ for the incoming edge and $+$ for the remaining edges, 
such that $\lim_{s \rightarrow \pm\infty}\epsilon'_e(s,\cdot)$ is the marked point indexed by $e$.
The situation closely parallels the case of strip-like ends for $\calRuniv_{k+1} \rightarrow \calR_{k+1}$, except for an additional $S^1$ ambiguity coming from rotations of $\R_\pm \times S^1$.
To remove the $S^1$ ambiguity, we pick an asymptotic marker\footnote{Recall that an {\em asymptotic marker} at a marked point is a choice of half line at the tangent space to that point.}
 at the negative marked point $p_-$ of $\calsphereuniv_r$ which
smoothly varies over $r \in \calsphere_{E(v)}$. 
The asymptotic marker at $p_-$ then naturally induces asymptotic markers at each the remaining marked points of $\calsphereuniv_r$.
Namely, identify $\calsphereuniv_r \setminus \{p_-,p_e\}$ with the cylinder $\R \times S^1$ and
take the asymptotic marker at $p_e$ which lies on the same line $\R \times \{\text{const}\}$ as the asymptotic marker at $p_-$.
Finally, for each cylindrical end, we require that $1 \in S^1$ matches up with the relevant asymptotic marker in the limit as $s \rightarrow \pm \infty$.

For $T \in \calTsphere{q+1}$ and $\e > 0$ small, set 
\begin{align*}
\calsphereovl_T := \prod_{v \in V_\ii(T)} \calsphere_{E(v)},\;\;\;\;\; \calsphereovl_T^\e := \calsphereovl_T \times \left(\D^2_\e\right)^{E_\ii(T)},
\end{align*}
where $\D^2_\e$ denotes the open disk of radius $\e$. 
We again use $\rho_e \in \D^2_\e$ to denote the gluing parameter corresponding to $e \in E_\ii(T)$,
viewing $\D^2_\e$ as $(-\e,0] \times S^1$ with $\{0\} \times S^1$ collapsed to a point.
Using our universal cylindrical ends, for each $e \in V_\ii(T)$ we have a gluing map
\begin{align*}
\phi_{T,e}: \{r \in \calsphereovl^\e_T\;:\; \rho_e  \neq 0\} \rightarrow \calsphereovl_{T/e}^\e,
\end{align*}
where we glue at the two marked points which are indexed by $e$.
The gluing procedure parallels the case of strip-like ends, except that the extra $S^1$ factor of $\rho_e$ corresponds to relative rotations of the cylindrical ends.
We then define the Deligne--Mumford compactification of $\calsphere_{q+1}$ as a topological space by
\begin{align*}
\calsphereovl_{q+1} := \left(\coprod_{T \in \calTsphere{q+1}} \calsphereovl_T^\e\right) /\sim,
\end{align*}
where $r \sim \phi_{T,e}(r)$ for any $r$ in the domain of $\phi_{T,e}$.
Elements of $\calsphereovl_{q+1}$ are {\em stable broken spheres}.
The tree $T_0$ corresponds to the open stratum $\calsphereovl_{T_0} = \calsphere_{q+1}$,
and in general the (real) codimension of the stratum $\calsphereovl_T$ is {\em twice} the number of internal edges of $T$.

\subsubsection{The moduli space $\calR_{k+1,q}$ and its compactification}\label{subsubsec:The moduli space Rkq and its compactification}

Next, for $k+1 + 2q \geq 3$, let $\calR_{k+1,q}$ denote the moduli space of Riemann disks with $k+1$ ordered boundary marked points and $q$ ordered interior marked points, modulo biholomorphisms.
As before, we ask that the order of the boundary marked points respects the boundary orientation.
Note that $\calR_{k+1,q}$ is a smooth manifold of dimension $k+2q - 2$, and for $q=0$ this agrees with $\calR_{k+1}$ from $\S\ref{subsubsec:stable disks}$.
We declare the first boundary marked point to be negative and all other marked points to be positive. 
Let $\calRuniv_{k+1,q} \rightarrow \calR_{k+1,q}$ denote the universal family where the fiber $\calRuniv_r$ over $r \in \calR_{k+1,q}$ represents the Riemann disk $r$ with its boundary marked points punctured and its interior marked points intact.

Let $\calTR{k+1,q}$ denote the set of planted trees $T$ with
\begin{itemize}
\item
a partition of the edges into two types, called ``plain" and ``round",
such that the root edge is plain and there are precisely $k$ plain leaves and $q$ {\em ordered} round leaves
\item
the plain edges form a subtree $T_\pl$ which is further equipped with a ribbon structure, and we require each of its leaves to be also external vertices for $T$
\item {\em stability:} for every internal vertex $v$, we have:
\begin{itemize}
\item if $v \in V_\ipl(T)$, then $|v|_\pl + 2|v|_\rd \geq 3$
\item if $v \in V_\ird(T)$, then $|v|_\rd \geq 3$.
\end{itemize}
\end{itemize}
Here we utilize the following notation:
\begin{itemize}
\item
 $V_\ipl(T)$ denotes the internal vertices of the subtree $T_\pl$ and $V_\ird(T)$ denotes the remaining internal vertices of $T$
 \item
 $E_\pl(v)$ and $E_\rd(v)$ denote the plain and round edges respectively which are incident to $v$ (note that $E_\pl(v)$ is empty unless $v$ is plain, i.e. lies in the subtree $T_\pl$)
 \item 
 $|v|_\pl := \left |E_\pl(v)\right|$ and  $|v|_\rd := \left |E_\rd(v)\right|$.
\end{itemize}
As usual we orient the edges of $T$ away from the root, and this induces an absolute ordering of the plain edges incident to each vertex.
As we will explain, $\calTR{k+1,q}$ models the stratification structure of the compactification of $\calR_{k+1,q}$,
with plain edges indexing boundary nodes (if the edge is internal) and boundary marked points (if external) and round edges indexing interior nodes (if internal) and interior marked points (if external).
In particular, $V_\ipl(T)$ and $V_\ird(T)$ will index disk and sphere components respectively,
hence the two different stability conditions.

As a shorthand, let $\vipR$ denote the union of $V_\ipl(T)$ over all $T \in \calTR{k+1,q}$ and $k+1+2q \geq 3$,
and define $\virR$ similarly.
For $v \in \virR$, we define $\calN_{E(v)}$ as in \S\ref{subsubsec:the sphere moduli space}, and we equip each family $\calsphereuniv_{E(v)} \rightarrow \calsphere_{E(v)}$ with fiberwise cylindrical ends.
Similarly, for $v \in \vipR$, let $\calR_{|v|_\pl,E_\rd(v)}$ be defined just like 
$\calR_{|v|_\pl,|v|_\rd}$ except that the interior marked points are indexed by the set $E_\rd(v)$ instead of  being ordered.
We pick universal strip-like ends
\begin{align*}
\epsilon_0&: \calR_{|v|_\pl,E_\rd(v)} \times \R_{-} \times [0,1] \hookrightarrow \calRuniv_{|v|_\pl,E_\rd(v)}\\
\epsilon_1,...,\epsilon_{k}&: \calR_{|v|_\pl,E_\rd(v)} \times \R_{+} \times [0,1] \hookrightarrow \calRuniv_{|v|_\pl,E_\rd(v)}
\end{align*}
and cylindrical ends
\begin{align*}
\epsilon_e': \calR_{|v|_\pl,E_\rd(v)} \times \R_{+} \times S^1 \hookrightarrow \calRuniv_{|v|_\pl,E_\rd(v)},\;\;\;\;\; e \in E_\rd(v)
\end{align*}
which are asymptotic to the corresponding boundary punctures and interior marked points.
Regarding the $S^1$ ambiguity for the cylindrical ends, observe that there are naturally induced asymptotic markers at the interior marked points of $\calRuniv_r$ for all $r \in \calR_{|v|_\pl,E_\rd(v)}$.
Namely, identify the interior of $\calRuniv_r \setminus \{p_e\}$ with $\D^2_1 \setminus \{0\}$, and then take the 
asymptotic marker at $p_e$ which points towards the negative boundary puncture.
As before, we require the cylindrical ends to align with these asymptotic markers.

For $T \in \calTR{k+1,q}$ and $\e > 0 $ small, set
\begin{align*}
\calRovl_T &:= \left(\prod_{v \in V_\ipl(T)} \calR_{|v|_\pl,E_\rd(v)}\right) \times \left(\prod_{v \in V_\ird(T)} \calsphere_{E(v)} \right),\\
\calRovl_T^\e &:= \calRovl_T  \times (-\e,0]^{E_\ipl(T)} \times \left( \D^2_\e\right)^{E_\ird(T)}.
\end{align*}
Using our universal strip-like and cylindrical ends, for each $e \in V_\ii(T)$ we have a gluing map 
\begin{align*}
\phi_{T,e}: \{ r \in \calRovl^\e_T\;:\; \rho_e \neq 0\} \rightarrow \calRovl^\e_{T/e}.
\end{align*}
We then define the compactification of $\calR_{k+1,q}$ as a topological space by
\begin{align*}
\calRovl_{k+1,q} := \left( \coprod_{T \in \calTR{k+1,q}} \calRovl_T^\e \right) / \sim,
\end{align*}
where $r \sim \phi_{T,e}(r)$ for any $r$ in the domain of $\phi_{T,e}$.
Observe that the codimension of the stratum $\calRovl_T$ is given by
$|E_\ipl(T)| + 2|E_\ird(T)|$.

\begin{remark}
Informally, $\calR_{k+1,q}$ has two new sources of noncompactness compared with $\calR_{k+1}$:
\begin{enumerate}
\item
an interior marked point can drift to the boundary (this is a codimension one phenomenon)
\item
two interior marked points can collide (this is a codimension two phenomenon).
\end{enumerate}
The compactification $\ovl{\calR}_{k+1,q}$ ``solves" the first issue by blowing a disk bubble with a single interior marked point, and the second issue by blowing a sphere bubble with two interior marked points.
 \end{remark}

\begin{remark}
For each $T \in \calTR{k+1,q}$, the stratum $\calRovl_T$
is equivalent to a product of factors of the form $\calR_{k'+1,q'}$ and $\calsphere_{q'}$, for various $k',q'$.
Even though the sphere moduli spaces $\calsphere_{q'}$ are part of the compactification structure of $\ovl{\calR}_{k+1,q}$, they will not make a direct appearance in the definition of the bulk deformed Fukaya category. 
This is because they appear with codimension at least two, and hence do not generically occur in the curve counts of interest. 
Nevertheless, since ruling out sphere bubbles is based on transversality, the spaces 
$\calsphere_{q'}$ must be incorporated into our general perturbation scheme.
\end{remark}

\subsubsection{Lagrangian labels}
Let $\bL = (L_0,...,L_k)$ be a tuple of closed exact Lagrangians in $(M,\theta)$.
We say that $T \in \calTR{k+1,q}$ is {\em labeled by $\bL$} if the underlying plain tree $T_\pl \in \calTR{k+1}$ is.
For any $v \in V_\ipl(T)$, let $\bL_v$ denote the labels encountered in order as we go around the plain edges incident to $v$.
The definition of $\calR_{q}(\bL)$ is the same as $\calR_{k+1,q}$ except that for each disk the boundary segments are labeled in order by $\bL$.
We similarly define the compactification $\ovl{\calR}_{q}(\bL)$,
universal family $\calRuniv_q(\bL) \rightarrow \calR_q(\bL)$, the set-indexed version $\calR_{E(v)}(\bL)$ for $v \in \vipR$, and so on.

\subsubsection{Consistent universal strip-like ends and disk-like neighborhoods}
\label{subsubsec:bdfuk consistent universal strip-like ends and disk-like neighborhoods}

As in \S\ref{subsubsec:consistent strip-like ends}, we will need our universal strip-like ends to be {\em consistent}.
Namely, for $T\in \calTR{k+1,q}$, strip-like ends can be glued via the map
\begin{align*}
\phi_T: \calRovl_T \times (-\e,0)^{E_\ipl(T)} \times \left( \D_\e^2 \setminus \{0\}\right)^{E_\ird(T)} \rightarrow \calR_{k+1,q}.
\end{align*}
The image of $\phi_T$ is therefore equipped with two a priori different families of strip-like ends, one coming from the universal choice for $\calR_{k+1,q}$ and one coming from gluing the universal choices for each $\calR_{|v|_{pl},E_\rd(v)}$ for $v \in V_\ipl(T)$
and $\calR_{E(v)}$ for $v \in V_\ird(T)$.
We require that these coincide in a neighborhood of $\calRovl_T$, for all $T \in \calTR{k+1,q}$.

Regarding the cylindrical ends, we now ignore their parametrizations and consider only their images. 
More specifically, for a Riemann surface $S$ and a cylindrical end $\epsilon': \R_{\pm}\times S^1 \hookrightarrow S$, there is a neighborhood
of the corresponding interior marked point $p \in S$ of the form $U = \op{Im}(\epsilon') \cup \{p\}$.
We say that $U$ is the induced ``disk-like neighborhood" for $p$. Note that our universal cylindrical ends induce disk-like neighborhoods $U_1,...,U_q \subset \calRuniv_r$ which vary smoothly
for $r \in \calR_{|v|_\pl,E_\rd(v)}$ and $v \in \vipR$.
Again, we have two a priori different families of disk-like neighborhoods on the image of each gluing map $\phi_T$, and we require these to coincide in a neighborhood of $\calRovl_T$, for all $T \in \calTR{k+1,q}$.

As already pointed out, for any $T \in \calTR{k+1,q}$ and $v \in V_i(T)$, the subset of external edges in $E_\rd(v)$ inherits an ordering from $T$. In particular, when all of the round edges incident to a plain vertex $v$ are external, we get a canonical identification of $\calR_{|v|_\pl,E_\rd(v)}$ with $\calR_{|v|_\pl,|v|_\rd}$.
In this case we will always assume that the universal choices for $\calR_{|v|_\pl,E_\rd(v)}$ are preserved under this identification.
This assumption will be implicitly used when verifying the $\calA_\infty$ equations for $\fuk_\mho(M,\theta)$, allowing us to conclude that certain curve counts defined by a priori different perturbation data indeed coincide.

\subsubsection{Consistent universal perturbation data}\label{subsubsec:bdfuk consistent universal perturbation data}

Finally, we need consistent universal perturbation data.
Similar to \S\ref{subsubsec:Floer data}, assume we have chosen a Floer datum for every pair of closed exact Lagrangians in $(M,\theta)$ which are disjoint from $\mho$. 
Let $S$ represent an element of $\calR_{k+1,q}(\bL)$. A perturbation datum for $S$
is a pair $(K,J)$ with $K \in \Omega^1(S,\calH)$ and $J \in C^\infty(S,\calJfix)$
satisfying the same conditions as in \S\ref{subsubsec:Consistent universal perturbation data}.
We also impose an extra condition regarding the interior marked points:
\begin{itemize}
\item 
on each disk-like neighborhood of $S$ we have $K \equiv 0$ and $J \equiv \Jo$.
\end{itemize}
Similarly, for $S$ representing an element of $\calsphere_{q}$, a perturbation datum for $S$ consists of 
$K \in \Omega^1(S,\calH)$ and $J \in C^\infty(S,\calJfix)$
such that:
\begin{itemize}
\item
on each disk-like neighborhood of $S$ we have $K \equiv 0$ and $J \equiv \Jo$.
\end{itemize}

We choose fiberwise perturbation data on:
\begin{itemize}
\item
$\calsphereuniv_{E(v)} \rightarrow \calsphere_{E_(v)}$ for all $v \in \virR$
\item 
$\calRuniv_{|v|_\pl,E_\rd(v)}(\bL)\rightarrow \calR_{|v|_\pl,E_\rd(v)}(\bL)$ for all $\bL$ and $v \in \vipR$.
\end{itemize}
As before, gluing via the map $\phi_T$ for $T \in \calTR{k+1,q}$ 
results in a priori distinct families of perturbation data, and
we say our choices are {\em consistent} if they satisfy the analogues of the two conditions stated at the end of \S\ref{subsubsec:Consistent universal perturbation data}.
We also require our choices to be invariant under the identifications mentioned at the end of \S\ref{subsubsec:bdfuk consistent universal strip-like ends and disk-like neighborhoods}.

\subsubsection{The moduli spaces $\calM_{q;\mho}$ and $\calM_{q;\mho}(\bx)$}

Let $S$ be a fixed Riemann sphere with $q \geq 3$ ordered marked points.
Assume that $S$ is equipped with the perturbation datum $(K,J)$ induced from our universal choices.
Let $Y$ denote the $(d \theta)$-dual of $K$ as in \S\ref{subsubsec:pseudoholomorphic polygons}.
By a {\em pseudoholomorphic sphere with domain $S$} we mean a map $u: S \rightarrow M$ which satisfies $(Du - Y)^{0,1} = 0$.
Let $\calM_S$ denote the space of pseudoholomorphic spheres with domain $S$, and set
\begin{align*}
\calM_q := \{(r,u)\;:\; r \in \calsphere_q,\; u \in \calM_{\calsphereuniv_r}\}. 
\end{align*}
Evaluating at the marked points gives a map
\begin{align*}
\ev_q: \calM_q \rightarrow M^{\times q},
\end{align*}
and we set
\begin{align*}
\calM_{q;\mho} := \calM_q \underset{\ev_q,i_{\mho}^{\times q}} \times \mho^{\times q},
\end{align*}
where the right hand sides denote the fiber product with respect to the maps $\ev_q$ and $i_{\mho}^{\times q}$.

Similarly, now suppose that $k+1 + 2q \geq 3$, and let $S$ be a fixed Riemann disk with $k+1$ boundary punctures, $q$ ordered interior marked points, and Lagrangian labels $\bL$.
Assume $S$ is equipped with the strip-like ends $\epsilon_0,...,\epsilon_k$ and perturbation datum $(K,J)$ induced from our universal choices. By a {\em pseudoholomorphic polygon with domain $S$} we mean a map $u: S \rightarrow M$ which sends each boundary component of $S$ to its corresponding Lagrangian label and satisfies $(Du - Y)^{0,1} = 0$.
For $\bx \in \gen(\bL)$, we say that $u$ has {\em asymptotics $\bx$} if
\begin{align*}
\lim_{s \rightarrow \pm \infty} (u \circ \epsilon_i)(s,\cdot) = x_i\;\;\;\;\; \text{for}\;\; i= 0,...,k.
\end{align*} 
Let $\calM_S(\bx)$ denote the space of pseudoholomorphic polygons with domain $S$ and asymptotics $\bx$, and set
\begin{align*}
\calM_q(\bx) := \{(r,u)\;:\; r \in \calR_q(\bL),\; u \in \calM_{\calRuniv_r}(\bx)\}.
\end{align*}
Evaluating at the interior marked points gives a map
\begin{align*}
\ev_q: \calM_q(\bx) \rightarrow M^{\times q},
\end{align*}
and we set
\begin{align*}
\calM_{q;\mho}(\bx) := \calM_q(\bx) \underset{\ev_q,i_{\mho}^{\times q}} \times \ \mho^{\times q}.
\end{align*}

The analogue of Proposition \ref{prop:moduli spaces are topological manifolds} in this setting is:
\begin{prop}\label{prop:bd moduli spaces are regular}
For generic perturbation data, the moduli spaces $\calM_{q;\mho}$ and $\calM_{q;\mho}(\bx)$ are regular and hence smooth manifolds.
\end{prop}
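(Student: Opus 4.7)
The plan is to adapt the universal moduli space / Sard--Smale argument underlying Proposition \ref{prop:moduli spaces are topological manifolds} to include the interior marked points and the cycle constraint. First, I would enlarge $\calM_q$ and $\calM_q(\bx)$ to universal moduli spaces $\calM_q^{\univ}$ and $\calM_q^{\univ}(\bx)$ parametrized by a Banach manifold $\euP$ of consistent universal perturbation data (topologized, say, via Floer's $C_\e$ norms so as to be separable). Standard elliptic theory shows that the vertical linearization at a solution is Fredholm. Mirroring \cite[\S 9k]{seidelbook}, I would check that the full linearization, which incorporates variations $(\delta K, \delta J)$ of the perturbation data, is surjective at every $(K, J, r, u)$. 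The crucial input here is the existence of an \emph{injective point}: some $z$ in the interior of the domain, lying outside the strip-like ends and disk-like neighborhoods, at which $u(z)$ is attained only at $z$ and $du(z) \neq 0$. For solutions to the inhomogeneous Cauchy--Riemann equation such points form a comeager set by unique continuation, as in the homogeneous case; perturbations of $K$ supported in a small disk around such a point produce arbitrary values of the linearized operator, yielding surjectivity.

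Having established that $\calM_q^{\univ}$ and $\calM_q^{\univ}(\bx)$ are smooth Banach manifolds, I next consider the interior evaluation map
\begin{align*}
\ev_q^{\univ}: \calM_q^{\univ}(\bx) \rightarrow M^{\times q}.
\end{align*}
The key claim is that the product map
$(K, J, r, u, \vec{y}) \mapsto (\ev_q^{\univ}(K, J, r, u), i_\mho^{\times q}(\vec{y}))$
is transverse to the diagonal of $M^{\times q} \times M^{\times q}$. This again follows from the presence of injective points: perturbations of $K$ localized near injective points distinct from the marked points can produce any prescribed infinitesimal displacement of $u$ at each marked point independently, so the linearization of $\ev_q^{\univ}$ is surjective onto $T M^{\times q}$ and in particular transverse to the image of $di_\mho^{\times q}$. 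Consequently the fibered universal moduli space is a smooth Banach submanifold, and the Sard--Smale theorem applied to the projection onto $\euP$ yields a comeager subset of perturbation data for which the fiber is exactly the smooth manifold $\calM_{q;\mho}(\bx)$. The closed case $\calM_{q;\mho}$ is handled identically, using injective points on the sphere domain.

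The primary obstacle is compatibility with the consistency conditions of \S\ref{subsubsec:bdfuk consistent universal strip-like ends and disk-like neighborhoods} and \S\ref{subsubsec:bdfuk consistent universal perturbation data}: because $K \equiv 0$ and $J \equiv \Jo$ are pinned on disk-like neighborhoods and on the strip-like ends, perturbations must also restrict correctly to boundary strata and be invariant under the set-indexed identifications, the space $\euP$ of allowable variations is significantly constrained. I would address this inductively on the stratification of $\ovl{\calR}(\bL)$ (and $\calsphereovl_{q+1}$ for the sphere moduli), following the scheme of \cite[\S 9i]{seidelbook}: having fixed perturbations coherently on all lower-dimensional strata, the remaining freedom on the top open stratum is still ample, since injective points can always be arranged to lie deep in the interior of the domain — away from both the Lagrangian boundary and the cylindrical ends — where perturbations remain unconstrained. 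A secondary subtlety is that the marked points on $\calM_{q;\mho}(\bx)$ may coincide under the evaluation map, so one must use $q$ distinct injective points; but such a configuration exists generically for non-constant $u$, and in any event the stratum where two marked points collide is already pushed to the Deligne--Mumford boundary and thus handled at an earlier inductive step.
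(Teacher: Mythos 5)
Your overall route --- a universal moduli space over a Banach manifold of consistent perturbation data, surjectivity of the universal linearization, transversality of the universal evaluation map to $i_\mho^{\times q}$, Sard--Smale, and an inductive treatment of the consistency constraints stratum by stratum --- is the standard one and is exactly what the paper has in mind: the paper in fact gives no written proof of this proposition, asserting it as the analogue of Proposition \ref{prop:moduli spaces are topological manifolds}, which is itself quoted from \cite[\S 9k]{seidelbook}.

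The one substantive problem is your identification of the ``crucial input'' as an injective point $z$ with $u(z)$ attained only at $z$ and $du(z)\neq 0$, together with the claim that such points form a comeager set ``as in the homogeneous case.'' That claim is unjustified: even for genuinely holomorphic curves, injective points exist only for \emph{simple} curves, and solutions of the inhomogeneous equation need not be simple, nor need $du$ be nonvanishing; if your argument really required such points it would have a hole. Fortunately it does not, and this is precisely the point of Seidel's domain-dependent perturbation scheme that the paper is following. Since $\delta K\in\Omega^1(S,\calH)$ depends on the domain point, surjectivity of the universal linearization needs no injectivity at all: given a nonzero annihilator $\eta$ of the image, unique continuation for the formal adjoint equation produces a point $z$ in the nonempty open complement of the strip-like ends and disk-like neighborhoods with $\eta(z)\neq 0$; Hamiltonian vector fields at the single point $u(z)$ span $T_{u(z)}M$ (and $u$ avoids $\Op(\bdy M)$ by the maximum principle, so the vanishing condition defining $\calH$ is harmless), so a $\delta K$ supported near $z$ \emph{in the domain} pairs nontrivially with $\eta$, no matter how many other points of $S$ share the image $u(z)$. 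The same mechanism repairs your evaluation-map step, which as phrased (``perturbations near injective points produce arbitrary independent displacements at the marked points'') states the desired conclusion rather than proving it: consider the combined operator $(\xi,\delta K,\delta J)\mapsto \bigl(D_u\xi - L(\delta K,\delta J),\,\xi(z_1),\dots,\xi(z_q)\bigr)$, take an annihilator $(\eta,w_1,\dots,w_q)$, kill $\eta$ on the perturbable region using $\delta K$, obtain the adjoint equation from variations $\xi$ supported away from the marked points, conclude $\eta\equiv 0$ by unique continuation, and then kill each $w_i$ using $\xi$ supported near the marked points (which are pairwise distinct in the domain, so coincidence of their images is irrelevant). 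With that correction, the rest of your outline, including the inductive handling of consistency and the set-indexed identifications, goes through.
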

\noindent The dimension formula for $\calM_{q;\mho}(\bx)$ is
\begin{align*}
\dim \calM_{q;\mho}(\bx) = |x_0| - |x_1| - ... - |x_k| + k - 2 - q(l-2)
\end{align*}
(recall that $l$ is the codimension of $\mho$ in $M$).
We also define $\calM_{E(v);\mho}$ for $v \in \virR$ and $\calM_{E_\rd(v);\mho}(\bx)$ for $v \in \vipR$ in the same manner by using the relevant moduli spaces with set-indexed interior marked points and the corresponding induced strip-like ends and perturbation data.

\begin{proof}[Proof sketch of Proposition~\ref{prop:bd moduli spaces are regular}]
This is closely analogous to Proposition~\ref{prop:moduli spaces are topological manifolds} and follows standard (``classical'') transversality techniques in Floer theory, so we give only a sketch, focusing on the role played by the point constraints in $\mho$. We discuss regularity for $\calM_{q;\mho}(\bx)$, the case $\calM_{q;\mho}$ being similar and in fact simpler since there are no strip-like ends to worry about.

We begin by recalling the proof scheme of Proposition~\ref{prop:moduli spaces are topological manifolds}, which requires setting up some Fredholm theory.
Fix a collection of Lagrangian labels $\bL = (L_0,\dots,L_k)$ and corresponding asympotics $\bx = (x_0,\dots,x_k)$ as in \S\ref{subsubsec:pseudoholomorphic polygons}.
We wish to show that $\calM(\bx)$ is regular for generic perturbation data $(K,J)$, with $K \in \Omega^1(S,\calH)$ and $J \in C^\infty(S,\calJfix)$.
With respect to some fixed reference choice $(K_0,J_0)$, consider a curve $(r_0,u_0) \in \calM(\bx)$, where $r_0 \in \calR_{k+1}$ and $u_0 \in \calM_{\calRuniv_{r_0}}(\bx)$.
Let $S_0 := \calRuniv_{r_0}$ denote the domain of $u_0$, which is a Riemann disk with $k+1$ boundary punctures, and let $j_0$ denote its associated almost complex structure.

Let $\calB$ denote the Banach space of maps $u: S_0 \rightarrow M$ of Sobolev class $W^{1,p}$ for some $p > 2$ (so in particular these maps are continuous) which satisfy the Lagrangian boundary conditions corresponding to $\bL$ and converge to the corresponding Hamiltonian chords $\bx$ at the punctures.
Let $\calE_{S_0} \rightarrow \calB$ denote the Banach vector bundle whose fiber $(\calE_{S_0})_u$ over $u \in \calB$ is $L^p(\Omega^{0,1}(u^*TM))$, i.e. the space of complex anti-linear (with respect to $(j_0,J_0)$) one-forms on $S_0$ with values in $u^*M$, of class $L^p$.
As in \S\ref{subsubsec:pseudoholomorphic polygons}, let $Y_0$ denote the one-form on $S$ with values in Hamiltonian vectors fields on $M$ which is associated to $K_0$.
Then the perturbed nonlinear Cauchy--Riemann operator $u \mapsto (du - Y_0)^{0,1}$ defines a smooth section $\sigma_{S_0}: \calB \rightarrow \calE_{S_0}$ whose 
vanishing locus is $\calM_{S_0}(\bx)$.
Let $$D_{u_0}\sigma_{S_0}: T_u\calB \rightarrow (\calE_{S_0})_{u_0}$$ denote the corresponding linearized operator at $u_0$, which is a real Cauchy--Riemann type operator, and in particular Fredholm.

Next, we extend the base of $\calE_{S_0}$ by allowing the conformal structure on $S_0$ to vary.
Let $U$ be a small neighborhood of $r_0$ in $\calR_{k+1}$.
We denote by $\calE_{U} \rightarrow U \times \calB$ the Banach vector bundle whose fiber $(\calE_U)_{(r,u)}$ over $(r,u) \in U \times \calB$ is $L^p(\Omega^{0,1}(u^*TM))$, i.e. the space of complex anti-linear (with respect to $(j,J_0)$, where $j$ is the almost complex structure on $S_0$ corresponding to $r$) one-forms on $S$ with values in $u^*TM$, of class $L^p$. 
Again, the perturbed nonlinear Cauchy--Riemann operator defines a smooth section $\sigma_U: U \times \calB \rightarrow \calE_U$ whose vanishing locus describes $\calM(\bx)$ locally near $(r_0,u_0)$.
In particular, $(r_0,u_0) \in \calM(\bx)$ is regular if and only if the linearization 
$$D_{(r_0,u_0)}\sigma_U: T_{r_0}U \times T_{u_0}\calB \rightarrow (\calE_U)_{(r_0,u_0)}$$ is surjective.

In order to modify $(K_0,J_0)$ so as to achieve regularity, the idea is to consider a ``universally-perturbed'' analogue of $\sigma_U$, denoted by $\sigma_{\calP \times U}$, where the base is now $\calP \times U \times \calB$, with $\calP$ the space of all allowable perturbation data near $(K_0,J_0)$. 
There is an analogous Banach vector, denoted by $\calE_{\calP \times U} \rightarrow \calP \times U \times \calB$, with fiber over $(K,J,r,u)$ given by 
$L^p(\Omega^{0,1}(u^*TM))$, where complex anti-linearity is now with respect to $(j,J)$.
The vanishing locus of $\sigma_{\calP \times U}$ describes the ``universally-perturbed moduli space'' locally near $(K_0,J_0,r_0,u_0)$; this fibers over $\calP$, with fiber over $(K,J)$ the moduli space $\calM(\bx)$ constructed with perturbation data $(K,J)$.
There is also a universally-perturbed linearized operator 
$$D_{(K_0,J_0,r_0,u_0)}\sigma_{\calP \times U}: T_{(K_0,J_0)}\calP \times T_{r_0}U \times T_{u_0}\calB \rightarrow (\calE_{\calP \times U})_{(K_0,J_0,r_0,u_0)},$$
where $T_{(K_0,J_0)}\calP$ is the space of all allowable infinitesimal variations in perturbation data near $(K_0,J_0)$.
To first approximation we can view $T_{(K_0,J_0)}\calP$ as simply the space of pairs $(\delta K,\delta J) \in \Omega^1(S_0,\calH) \times C^{\infty}(S_0,T_{J_0}\calJfix)$, but technically we must impose some additional constraints (e.g. $(\delta K,\delta J)$ should vanish on the thin parts of the domain Riemann surface) in order to not jeopardize the consistency conditions from \S\ref{subsubsec:Consistent universal perturbation data}; see \cite[9k]{seidelbook} more details.\footnote{Another technical issue that arises is that one must restrict to a suitable subspace (e.g. Floer's $C_\epsilon$ space - see e.g. \cite[\S4.4.1]{wendl2010lectures}) in order to have a Banach rather than Fr\'echet space.}

The main point is then to show that $D_{(K_0,J_0,r_0,u_0)}\sigma_{\calP \times U}$ is surjective, since then the universally-perturbed moduli space $\sigma_{\calP \times U}^{-1}(0)$ is a smooth Banach manifold, and an argument using the Sard--Smale theorem gives regularity for generic perturbation data in $\calP$. 
As in \cite[Eq. 9.2.6]{seidelbook}, the universally-perturbed linearized operator can be written explicitly as follows:
\begin{align}\label{linearization}
D_{(K_0,J_0,r_0,u_0)}\sigma_{\calP \times U}(\delta K, \delta J, \rho,V) = (\delta Y)^{0,1} + \tfrac{1}{2}\delta J \circ (du - Y) \circ j + D_{(r_0,u_0)}\sigma_{U} (\rho,V),
\end{align}
for $V \in W^{1,p}(\Gamma(u^*TM))$.
Here the first two terms measure infinitesimal variations in the perturbation data, while the last term measures infinitesimal variations in the conformal structure $r_0$ on $S_0$ and the map $u_0: S_0 \rightarrow M$.

To prove surjectivity, suppose by contradiction that the cokernel is nontrivial.
Since $D_{(K_0,J_0,r_0,u_0)}\sigma_{\calP \times U}$ is Fredholm and hence has closed image, this means we can find a nonzero element $\alpha \in L^q(\Omega^{0,1}(u^*TM))$, $\tfrac{1}{p} + \tfrac{1}{q} = 1$, with trivial $L^2$ pairing with every element in the image of $D_{(K_0,J_0,r_0,u_0)}\sigma_{\calP \times U}$. In particular, by looking at the last term in ~\eqref{linearization} and restricting to variations of the form $(0,0,0,V)$, this implies that $\alpha$ is a weak solution of the formal adjoint $D^*_{u_0}\sigma_{S_0}$. Since the latter is conjugate to a real Cauchy--Riemann type operator, elliptic regularity implies that $\alpha$ is smooth, and unique continuation implies that it cannot vanish in an open set in $S$.
On the other hand, using the first term in ~\eqref{linearization} and the fact that $\delta K$ takes arbitrary values in some neighborhood of a point $z \in S$,
we can arrange that $(\delta Y)^{0,1}$ looks like a delta function near $z$,
and then $\langle D_{(K_0,J_0,r_0,u_0)}\sigma_{\calP \times U}(\delta K,0,0,0),\alpha\rangle_{L^2} = 0$ forces $\alpha$ to vanish in a neighborhood of $z$, a contradiction.

\sss

We now consider the analogous setup for $\calM_{q;\mho}(\bx)$. 
As before we consider a curve $(r_0,u_0) \in \calM_q(\bx)$, where now we have $r_0 \in \calR_{k+1,q}$ and $u_0 \in \calM_{\calRuniv_r}(\bx)$. We let $S_0$ denote the domain of $u_0$, which is a Riemann disk with $k+1$ boundary punctures and $q$ interior marked points, and we denote its almost complex structure by $j_0$.
Let $U$ be a small neighborhood of $r_0$ in $\calR_{k+1,q}$. 
Similar to before, we have a universally-perturbed linearized operator
$$D_{(K_0,J_0,r_0,u_0)}\sigma_{\calP \times U}: T_{(K_0,J_0)}\calP \times T_{r_0}U \times T_{u_0}\calB \rightarrow (\calE_{\calP \times U})_{(K_0,J_0,r_0,u_0)},$$ given by
\begin{align}\label{linearization2}
D_{(K_0,J_0,r_0,u_0)}\sigma_{\calP \times U}(\delta K, \delta J, \rho,V) = (\delta Y)^{0,1} + \tfrac{1}{2}\delta J \circ (du - Y) \circ j + D_{(r_0,u_0)}\sigma_{U} (\rho,V),
\end{align}
for $V \in W^{1,p}(\Gamma(u^*TM))$.
Here as before $\calP$ is the space of allowable infinitesimal variations $(\delta K,\delta J)$ in the perturbation data, subject to the same conditions as before, and we now additionally require that $\delta K$ and $\delta J$ vanish near the interior marked points.
Note that $\rho$ now measures variations in the larger moduli space $\calR_{k+1,q}$, rather than $\calR_{k+1}$.

Surjectivity of $D_{(K_0,J_0,r_0,u_0)}\sigma_{\calP \times U}$ follows essentially by the previous argument. 
Indeed, although $(\delta K,\delta J)$ must vanish near the interior marked points of $S_0$, we can arrange that $\delta K$ is arbitrary in some neighborhood of a point $z \in S$ which lies away from the marked points.
It follows that the universally-perturbed moduli space is a smooth Banach manifold near $(K_0,J_0,r_0,u_0)$.
Furthermore, it is useful to observe that the same surjectivity argument based on elliptic regularity and unique continuation applies even if we restrict to variations of the form $(\delta K,0,0,V)$, where $V$ is constrained to vanish at the interior marked points.

Observe that evaluating at the interior marked points $z_1,\dots,z_q \in S_0$ gives a natural map
$$\ev_q: \sigma_{\calP \times U}^{-1}(0) \rightarrow M^{\times q}.$$ 
The main point is to show that this is submersion, since then a Sard--Smale argument proves that the preimage of $\mho^{\times q}$ is regular when specialized to a generic choice of perturbation data $(K,J)$. 
It therefore suffices to show that the linearized evaluation map 
\begin{align*}
d\ev_q: T_{(K_0,J_0)}\calP \times T_{r_0}U \times T_{u_0}\calB \rightarrow T_{u(z_1)}M \times \dots \times T_{u(z_q)}M
\end{align*}
given by
\begin{align*}
(\delta K,\delta J,\rho,V) \mapsto (V(z_1),\dots,V(z_q))
\end{align*}
is surjective when restricted to $\ker D_{(K_0,J_0,r_0,u_0)}\sigma_{\calP \times U}$.
To see this, pick arbitrary $(\xi_1,\dots,\xi_q) \in T_{u(z_1)}M \times \dots \times T_{u(z_q)}M$.
Let $\xi$ be any smooth section of $u^*TM$ such that $\xi(z_i) = \xi_i$ for $i = 1,\dots,q$.
As observed in the previous paragraph, we can find $(\delta K,0,0,V)$ with $V(z_1) = \dots = V(z_q) = 0$ such that
$$D_{(K_0,J_0,r_0,u_0)}\sigma_{\calP \times U} (K,0,0,V) = -D_{u_0}\sigma_{S_0}(\xi).$$
Then it follows that we have $D_{(K_0,J_0,r_0,u_0)}(K,0,0,V+\xi) = 0$ and $(V+\xi)(z_i) = \xi_i$ for $i = 1,\dots,k$. Since $\xi_1,\dots,\xi_q$ were arbitrary, this proves that $\ev_q$ is a submersion.
\end{proof}

\begin{remark}
Observe that $\calM_q$ is just a perturbed version of the $q$-point Gromov--Witten moduli space of $(M,\theta)$, which is not very interesting since $(M,\theta)$ is exact.
At any rate, the perturbation data $(K,J)$ is in general necessary to make the fiber products above transverse.
\end{remark}

\subsubsection{The compactification $\calMovl_{q;\mho}(\bx)$}

Let $\calTRsemi{k+1,q}$ be defined just like $\calTR{k+1,q}$ except with the stability condition replaced by the {\em semistability} condition that for every internal vertex $v$ we have:
\begin{itemize}
\item
if $v \in V_\ipl(T)$, then $|v|_\pl + 2|v|_\rd \geq 2$
\item
if $v \in V_\ird(T)$, then $|v| \geq 3$.
\end{itemize}
In other words we allow plain vertices with valency two.
Suppose we have Lagrangian labels $\bL = (L_0,...,L_k)$ and $\bx = (x_0,...,x_k) \in \gen(\bL)$.
A {\em stable broken pseudoholomorphic polygon with asymptotics $\bx$ and $q$ interior marked points} consists of:
\begin{itemize}
\item
$T \in \calTRsemi{k+1,q}$ labeled by $\bL$
\item
$x_e \in \genHLL{L_e}{L_e'}$ for each $e \in E_\pl(T)$, where $(L_e,L_e')$ denotes the Lagrangian labels on either side of $e$, and such that $x_e = x_i$ if $e$ is the $i$th plain external edge of $T$
\item $u_v \in \calM_{E_\rd(v)}(\bx_v)$ for each $v \in V_\ipl(T)$
\item $u_v \in \calM_{E(v)}$ for each $v \in V_\ird(T)$
\item for each $e \in E_\ird(T)$, say
 with endpoints corresponding to the marked points $p$ and $p'$ of $u_{\iv(e)}$ and $u_{\tv(e)}$ respectively, we have $u_{\iv(e)}(p) = u_{\tv(e)}(p')$.
\end{itemize}
Let $\calMovl_T(\bx)$ denote the space of stable broken pseudoholomorphic polygons 
with asymptotics $\bx$ and $q$ interior marked points which are modeled on $T \in \calTRsemi{k+1,q}$.
Evaluating at the marked points indexed by round leaves, we get a map
\begin{align*}
\ev_q: \calMovl_T(\bx) \rightarrow M^{\times q}.
\end{align*}
We set 
\begin{align*}
\calMovl_{T;\mho}(\bx) := \calMovl_T(\bx) \underset{\ev_q,i_{\mho}^{\times q}}\times\mho^{\times q}
\end{align*}
and
\begin{align*}
\calMovl_{q;\mho}(\bx) := \coprod_{T \in \calTRsemi{k+1,q}}  \calMovl_{T;\mho}(\bx),
\end{align*}
equipped with the Gromov topology.
Let $\calM_{q;\mho}(\bx)^{\nn}$ denote the one-dimensional part of $\calM_{q;\mho}(\bx)$
and let $\ovl{\calM}_{q;\mho}(\bx)^{\nn}$ denote its closure in $\ovl{\calM}_{q;\mho}(\bx)$. 
As a restricted analogue of Proposition \ref{prop:compact top mfd with corners} in this setting, we have:  
\begin{prop}\label{prop:compactification structure}
$\calM_{q;\mho}(\bx)^{\oo}$ is a finite set of points, and $\ovl{\calM}_{q;\mho}(\bx)^{\nn}$ is a compact one-dimensional topological manifold with boundary given by
\begin{align*}
\bdy \ovl{\calM}_{q;\mho}(\bx)^{\nn} = \coprod_{T} \calMovl_{T;\mho}(\bx)^{\oo},
\end{align*}
where the disjoint union is over all $T \in \calTR{k+1,q}$ having one plain internal edge and no round internal edges.
\end{prop}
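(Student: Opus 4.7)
The plan is to mirror the proof of Proposition \ref{prop:compact top mfd with corners} from \cite[\S 9l]{seidelbook}, adapted to incorporate the interior point constraints from $\mho$ and the new compactification phenomena (sphere bubbling and interior marked points escaping to the boundary). The two ingredients are (a) a dimension/codimension count for each stratum $\calMovl_{T;\mho}(\bx)$ of the ambient compactification and (b) standard gluing analysis at the codimension-one boundary strata.

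First, I would upgrade Proposition \ref{prop:bd moduli spaces are regular} to the statement that, for generic consistent universal perturbation data, every stratum $\calMovl_{T;\mho}(\bx)$ with $T \in \calTRsemi{k+1,q}$ is cut out transversely. Combined with the dimension formula
\begin{align*}
\dim \calM_{q;\mho}(\bx) = |x_0| - |x_1| - \ldots - |x_k| + k - 2 - q(l-2),
\end{align*}
and its analogue for each sphere-component factor, this yields
\begin{align*}
\dim \calMovl_{T;\mho}(\bx) = \dim \calM_{q;\mho}(\bx) - |E_\ipl(T)| - 2|E_\ird(T)|
\end{align*}
whenever the stratum is nonempty, with the usual sign convention that negative expected dimension forces emptiness. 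The codimension count here reflects the fact that each plain internal edge removes one gluing parameter and each round internal edge removes two (matching the codimension of $\calRovl_T$ in $\calRovl_{k+1,q}$), while the fiber products with $\mho$ remain transverse thanks to the freedom in choosing the Hamiltonian perturbation data away from the disk-like neighborhoods.

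Next, I would invoke Gromov compactness for the ambient moduli space, combined with the weak $\Jo$-convexity of $\bdy M$ to prevent escape to infinity and the closedness of the fiber product condition with $i_{\mho}: \mho \to M$, to conclude that $\ovl{\calM}_{q;\mho}(\bx)$ is compact. The finiteness of $\calM_{q;\mho}(\bx)^{\oo}$ is then immediate: every proper boundary stratum has codimension at least one and hence strictly negative expected dimension, so by regularity is empty. For the one-dimensional claim, the codimension formula forces any stratum contributing to $\bdy \ovl{\calM}_{q;\mho}(\bx)^{\nn}$ to satisfy $|E_\ipl(T)| + 2|E_\ird(T)| = 1$, which isolates precisely the trees with one plain internal edge and no round internal edges; strata involving sphere bubbles, colliding interior marked points, or more than one boundary node are thus absent for codimension reasons, and the semistable-but-not-stable configurations are similarly excluded once one accounts for the automorphism groups of the unstable components.

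The main obstacle is the final step: upgrading this stratified description to a genuine topological manifold-with-boundary structure. Here one applies the gluing maps $\phi_T$ of \S\ref{subsubsec:The moduli space Rkq and its compactification} in the relevant codimension-one case to produce a local half-open interval parameter $\rho_e \in (-\e, 0]$ near each boundary point of $\calMovl_{T;\mho}(\bx)^{\oo}$, combined with a standard gluing analysis (as in \cite[\S 9l]{seidelbook}) showing that the glued curves sweep out a neighborhood in $\calM_{q;\mho}(\bx)^{\nn}$. The additional content over Seidel's treatment is verifying that transversality of the fiber product with $\mho^{\times q}$ survives the gluing procedure; this follows from the fact that the interior marked points lie in the disk-like neighborhoods where the perturbation data is standard and the combined evaluation map remains submersive along the relevant strata.
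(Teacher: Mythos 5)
Your overall strategy matches the intended argument, which the paper itself only sketches by analogy with \cite[\S 9l]{seidelbook}: generic regularity for all strata, the codimension count $|E_\ipl(T)|+2|E_\ird(T)|$ (so that sphere bubbles and collisions of interior marked points sit in codimension two and are invisible to zero- and one-dimensional counts), Gromov compactness plus the maximum principle to get compactness of $\ovl{\calM}_{q;\mho}(\bx)$, and gluing at the codimension-one strata with the $\mho$-constraints persisting because transversality is an open condition.

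There is, however, one genuine error: the claim that ``the semistable-but-not-stable configurations are similarly excluded once one accounts for the automorphism groups of the unstable components.'' The trees in $\calTRsemi{k+1,q}\setminus\calTR{k+1,q}$ with one plain internal edge and no round internal edges include the configuration where a nonconstant Floer strip (a valency-two plain vertex carrying no interior marked points) breaks off at one of the chords $x_1,\dots,x_k$ or at $x_0$. Nothing excludes these: after dividing by the translation action such a strip is rigid precisely in the situation at hand, and these breakings are exactly the codimension-one degenerations that produce the $\mu^1$-type terms in the $\calA_\infty$ relations --- indeed the paper's own enumeration of boundary trees allows $c=1$, $a=0$, which is such a tree. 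What stability considerations rule out are only constant (ghost) components on unstable domains; the stability in the boundary description must be read as stability of the broken map, i.e.\ the union should effectively run over semistable trees with one plain internal edge and no round internal edges. An argument that literally removed those strata would leave the one-dimensional moduli spaces with unaccounted ends and would break the subsequent verification of the $\calA_\infty$ equations. A smaller cosmetic point: for the constrained fiber products what you need and can arrange generically (and what survives gluing for large gluing parameter) is transversality of $\ev_q$ to $i_\mho^{\times q}$, not submersivity of $\ev_q$. Apart from these points the proposal is sound and follows the same route as the paper.
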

\begin{proof}
The compactness statements follow directly from Gromov's compactness theorem.
As for the description of $\bdy \ovl{\calM}_{q;\mho}(\bx)^{\nn}$, this requires gluing, which describes a neighborhood of the moduli space $\ovl{\calM}_{q;\mho}(\bx)^{\nn}$ near a given boundary stratum in terms of gluing coordinates in $(-\eps,0)$ for each boundary node and in $\D_\eps^2$ for each interior node, for $\eps > 0$ sufficiently small.
The gluing analysis along boundary nodes as the same as for Proposition \ref{prop:compact top mfd with corners}, which plays a fundamental role in defining Lagrangian Floer homology \cite{floer1988morse}.
As for gluing along interior nodes, since our Hamiltonian perturbations are trivial near the interior marked points this is precisely the variety encountered in Gromov--Witten theory; see e.g. \cite[\S10]{mcduff2012j}.
\end{proof}
\begin{remark}\label{rmk:disj_from_cycle}
Consider a Lagrangian $L$ which is disjoint from $\mho$. By picking suitable Floer data and perturbation data, we can arrange that $\calM_{q;\mho}(\bx) = \nil$ for all $q \geq 1$ and $\bx = (x_0,\dots,x_k)$ with $x_0,\dots,x_k \in \genHLL{L}{L}$ for $k \geq 0$. In particular, the case $k = 0$ says that we have $\calM_{q;\mho}(x_0) = \nil$, i.e. there are nontrivial operations involving only one boundary puncture. From now on we will assume this is the case. 

Indeed, by the Weinstein neighborhood theorem we can find a Liouville subdomain $M' \subset M$ which contains $L$ and is disjoint from $\mho$, and is identified with the $\epsilon$ disk cotangent bundle of $L$ for some Riemannian metric on $L$ and $\epsilon > 0$ sufficiently small.
We can choose Floer data and perturbation data such that all Hamiltonians vanish near $\bdy M'$, and all almost complex structures are contact type near $\bdy M'$. Then, as in the proof of Lemma~\ref{lemma:Liouville subdomains}, the integrated maximum principle guarantees that all of the Floer solutions comprising $\calM_{q;\mho}(\bx)$ are entirely contained in $M'$.
\end{remark}

\subsubsection{The bulk deformed Fukaya category}\label{subsubsec:the bdfuk}

The bulk deformed Fukaya category $\fuk_\mho(M,\theta)$ is the $\calA_\infty$ category over $\KL$ with:
\begin{itemize}
\item
objects given by closed exact Lagrangian branes in $(M,\theta)$ which are disjoint from $\mho$
\item
for objects $L_0,L_1$, $\hom(L_0,L_1)$ is the free $\KL$-module generated by $\genHLL{L_0}{L_1}$
\item
for $k \geq 1$, objects $L_0,...,L_k$, and $x_i \in \genHLL{L_{i-1}}{L_i}$ for $i = 1,...,k$, we set
\begin{align*}
\mu_{\mho}^k(x_k,...,x_1) := \sum_{q=0}^\infty \hbar^q\mu_{q;\mho}^k(x_k,...,x_1),
\end{align*}
where
\begin{align*}
\mu_{q;\mho}^k(x_k,...,x_1) := \frac{1}{q!}\sum_{\substack{x_0 \in \genHLL{L_0}{L_k}\\ u \in \calM_{q;\mho}(x_0,...,x_k)^{\oo}}} \signu x_0.
\end{align*}
\end{itemize}
By Proposition \ref{prop:compactification structure}, the sum defining $\mu^k_{q;\mho}(x_k,...,x_1)$ is finite and hence well-defined.
In fact, the sum defining $\mu_\mho^k(x_k,...,x_1)$ is also finite by index considerations.
Namely, if $\calM_{q;\mho}(x_0,...,x_k)^{\oo}$ is nonempty, 
we must have 
\begin{align*}
|x_0| = |x_1| + ... + |x_k| + 2 - k + q(l-2).
\end{align*}
In particular, the index of $x_0$ is a strictly increasing function of $q$.
Since $\genHLL{L_0}{L_k}$ is finite, this means that $\mu_{q;\mho}^k(x_k,...,x_1)$ must vanish for $q$ sufficiently large.

Using Proposition \ref{prop:compactification structure}, a straightforward analysis of the boundary of $\ovl{\calM}_{q;\mho}(\bx)^{\nn}$ shows that $\fuk_\mho(M,\theta)$ satisfies the $\calA_\infty$ structure equations.
Indeed, notice that an element $T \in \calTR{k+1,q}$ having one plain internal edge and no round internal edges is specified by the following data:
\begin{itemize}
\item
an arbitrary subset of $\{1,...,q\}$, say with $a$ elements for $0 \leq a \leq q$
\item
a subset of $\{1,...,k\}$ of the form $\{b+1,...,b+c\}$ for $0 \leq b \leq k$ and $1 \leq c \leq k-b$.
\end{itemize}
Note that there are $\frac{q!}{a!(q-a)!}$ choices for the former subset. We then find
\begin{align*}
\bdy\ovl{\calM}_{q;\mho}(x_0,...,x_k)^{\nn} = \bigcup_{T,y} \calM_{q-a;\mho}(x_0,...,x_b,y,x_{b+c+1},...,x_k)^\oo \times \calM_{a;\mho}(y,x_{b+1},...,x_{b+c})^\oo,
\end{align*}
and this translates to an equation of the form
\begin{align}\label{eqn:bdAinf}
\sum_{a,b,c}\pm \tfrac{q!}{(q-a)!a!}  (q-a)! \mu_{q-a;\mho}^{k-c+1}(x_k,...,x_{b+c+1},a!\mu_{a;\mho}^{c}(x_{b+c},...,x_{b+1}),x_b,...,x_1) = 0
\end{align}
which is precisely the $\calA_\infty$ structure equation after dividing both sides by $q!$.
\begin{remark}[on signs]
Following the orientation conventions of \cite[\S 11]{seidelbook}, the sign in ~\eqref{eqn:bdAinf} is $(-1)^{|x_1| + \dots + |x_b| + n}$.
Indeed, from the theory of determinant lines for Fredholm operators, we have, for each $(r,u) \in  \calM_{q;\mho}(x_0,\dots,x_k)$, a canonical isomorphism (modulo scaling) \cite[\S12.8]{seidelbook}
\begin{align}\label{eq:sign_iso}
\lambda^{\op{top}}T_{(r,u)}\calM_{q}(\bx) \cong \lam^{\op{top}}T_r\calR_{k+1,q} \otimes \orien(x_o) \otimes \orien(x_1)^{\vee} \otimes \dots \otimes \orien(x_k)^{\vee},
\end{align}
where $\orien(x_0),\dots,\orien(x_k)$ are the orientation spaces as in \S\ref{subsubsec:brane structures} (and we are of course assuming regularity).
After choosing a trivialization of each orientation space and orientations of the moduli spaces $\calR_{k+1,q}$, this determines an orientation on $\calM_{q}(\bx)$, and hence on
$\calM_{q;\mho}(\bx) = \calM_q(\bx) \underset{\ev_q,i_{\mho}^{\times q}} \times \ \mho^{\times q}$ via the orientation on $\mho$.
In turn, this gives rise to a sign whenever $u \in \calM_{q;\mho}(\bx)^{\oo}$ has index zero.
This is (up to an unpleasant but necessary additional combinatorial factor \cite[(12.24)]{seidelbook}), the sign $\signu \in \{-1,1\}$.

Since the isomorphism ~\eqref{eq:sign_iso} behaves naturally with respect to gluing, the main nontrivial bookkeeping issue arises from the orientations on $\calR_{k+1,q}$ for varying $k,q$.
Indeed, recall that in the case of the ordinary Fukaya category, i.e. $q = 0$, there is a natural orientation on $\calR_{k+1}$ induced by the boundary orientation on the disk via the description $\conf_{k+1}(\bdy\D^2)/\pslr$. Under the identifications of codimension one boundary strata with products of open strata of lower dimensions, the boundary orientation differs from the product orientation by an additional sign \cite[\S12.22]{seidelbook}. 
Similarly, in the presence of interior marked points, there are natural orientations on the moduli spaces $\calM_{k+1,q}$, induced by the boundary orientation for points in $\bdy \D^2$ and the complex orientation for points in $\Int \D^2$. 
In the unconstrained case, the same sign discrepancy holds, essentially because the orientations coming from the interior marked points are independent of their ordering.  In the constrained case, the same is true if the codimension of $\mho$ is even, whereas if $\mho$ is odd transpositions flip the sign.
\end{remark}
\begin{remark}
Note that we are counting curves with $q$ ordered marked points and then dividing by $q!$, which heuristically is the same as counting curves with $q$ unordered marked points.
However, our approach allows more freedom in choosing perturbation data and avoids working with orbifolds.
\end{remark}
\begin{remark}
If $l = 2$, the index argument given above fails to establish convergence of the sum defining $\mu^k$. However, see \S\ref{subsubsec:twisting versus bulk deforming} below.
\end{remark}

The argument in \S\ref{subsubsec:Independence of Floer data, strip-like ends, and perturbation data} applies mutatis mutandis to show that $\fuk_{\mho}(M,\theta)$ is independent of the choice of Floer data, strip-like ends, and perturbation data up to $\calA_{\infty}$ quasi-isomorphism. Indeed, we just need to check that $(L,0)$ and $(L,1)$ are still quasi-isomorphic objects, and by Remark~\ref{rmk:disj_from_cycle} this reduces to the argument for the undeformed case.
Similarly, the bulk deformed analogue of Lemma~\ref{lemma:Liouville subdomains} immediately holds, provided that the cycle we take in $M'$ is the restriction of $\mho$.
It is also natural to expect that $\fuk_{\mho}(M,\theta)$ depends only on the homology class $\mho \in H_{2n-l}(M,\bdy M)$, but this is somewhat more subtle since our definition only allows Lagrangians in the complement of $\mho$. At any rate we do not directly need this for our intended applications; see however \S\ref{subsubsec:independence of mho} for the analogous discussion for bulk deformed symplectic cohomology.

 \subsection{Bulk deformed symplectic cohomology}
\label{subsec:bulk deformed sh}

As before, let $(M,\theta)$ be a Liouville domain and let $i_\mho: (\mho,\bdy \mho) \rightarrow (M,\bdy M)$ be a smooth codimension $l$ cycle with $l \geq 4$ even.
In this subsection we construct $\sh_\mho(M,\theta)$, the symplectic cohomology of $(M,\theta)$ bulk deformed by $\mho$. 
Our construction follows the same direct limit formalism described in \S\ref{subsubsec:symplectic cohomology formalism}.
In fact, essentially every part of the standard symplectic cohomology package has a close analogue in the context of bulk deformations, although
some additional care is need to setup up the relevant moduli spaces.
We define $\sh_\mho$ as a direct limit
\begin{align*}
\sh_\mho(M,\theta) := \lim_{\tau \rightarrow \infty}\hf_\mho(H)
\end{align*}
over generic $H \in C^{\infty}(S^1,\calHtau)$.
Here $\hf_\mho(H)$ is the bulk deformed analogue of $\hf(H)$ and we have a bulk deformed continuation map $\Phi_\mho: \hf_\mho(H_+) \rightarrow \hf_\mho(H_-)$ 
whenever the slope at infinity of $H_-$ is larger than that of $H_+$.
More specifically, $\hf_\mho(H)$ is the cohomology of a cochain complex $\cf_\mho(H)$ over $\KL$ where:
\begin{itemize}
\item
the underlying $\KL$-module $\KL\langle\calP_H\rangle$ is freely generated by $\calP_H$
\item
the differential is of the form 
\begin{align*}
\delta_\mho = \delta_{0;\mho} + \hbar \delta_{1;\mho} + \hbar^2 \delta_{2;\mho} + ...,
\end{align*}
where $\delta_{0;\mho}$ is just the usual Floer differential and each $\delta_{q;\mho}$ is a linear map $\KL_0\langle\calP_H\rangle \rightarrow \KL_0\langle\calP_H\rangle$ of degree $1 + q(l-2)$.
\end{itemize}
In particular, since $\calP_H$ is finite, index considerations show that $\delta_{q;\mho}$ vanishes for $q$ sufficiently large. 
Note that $\delta_\mho$ has degree one since $\hbar$ has degree $2-l$.
Similarly, the bulk deformed continuation maps are of the form 
\begin{align*}
\Phi_{\mho} = \Phi_{0;\mho} + \hbar \Phi_{1;\mho} + \hbar^2 \Phi_{2;\mho} + ...,
\end{align*}
where $\Phi_{0;\mho}$ is a usual continuation map and each $\Phi_{q;\mho}$ is a linear map $\KL_0\langle \calP_H\rangle \rightarrow \KL_0\langle \calP_H\rangle$ of degree $q(l-2)$.
In particular, $\Phi_{q;\mho}$ vanishes for $q$ sufficiently large and $\Phi_{\mho}$ has degree zero.

As in \S\ref{subsubsec:symplectic cohomology formalism},
the map $\delta_{0;\mho}$ implicitly depends on a choice of $J \in C^{\infty}(S^1,\calJ)$ and the map $\Phi_{0;\mho}$ implicitly depends on a choice of monotone homotopy from $(H_-,J_-)$ to $(H_+,J_+)$.
Naively, $\delta_{q;\mho}$ and $\Phi_{q;\mho}$ are defined by counting solutions of the Floer equation and continuation map equation respectively for curves with $q$ point constraints in $\mho$. 
More precisely, as for $\fuk_\mho$, we make domain-dependent perturbations of these equations,
and these perturbations should be suitably compatible with various gluing maps in order to achieve the desired structure equations.
We next describe this perturbation scheme in detail.

\subsubsection{The moduli space $\calcyl_{q+2}$ and its compactification}\label{subsubsec:the moduli space calcyl}
We begin by introducing the moduli spaces which are relevant to the maps $\delta_{q;\mho}$. For $q \geq 1$, let $\calcyl_{q+2}$ denote the moduli space of Riemann spheres
with $q+2$ ordered marked points, the first of which is equipped with an asymptotic marker, modulo biholomorphisms. We declare the first marked point, called the ``output", to be negative, 
the second marked point, called the ``input", to be positive, and the rest of the marked points also to be positive.
Note that $\calcyl_{q+2}$ is a smooth manifold of dimension $2q-1$ and that
the definition of $\calcyl_{q+2}$ is almost the same as $\calsphere_{q+2}$ apart from the asymptotic markers. However, we will prefer to view $\calcyl_{q+2}$ as a certain moduli space of cylinders.
Let $\calcyluniv_{q+2} \rightarrow \calcyl_{q+2}$ denote the universal family where the fiber $\calcyluniv_r$ over $r \in \calcyl_{q+2}$ is the corresponding Riemann cylinder given by puncturing the input and output marked points. 
Concretely, we can take
\begin{align*}
\calcyl_{q+2}= \conf_{q}(\R \times S^1)/\R,\;\;\;\;\; \calcyluniv_{q+2} = \conf_{q}(\R \times S^1) \times_{\R} (\R \times S^1),
\end{align*}
where $\R$ acts diagonally by translating each copy of $\R \times S^1$.
Here the output and input correspond to $s = -\infty$ and $s=+\infty$ respectively and the asymptotic marker corresponds to $1 \in S^1$.

Let $\calTcyl{q+2}$ denote the set of stable trees with $q+2$ ordered external edges.
This is of course the same as $\calTsphere{q+2}$, but we prefer to view an element of $\calTcyl{q+2}$ as a stable tree $T$ where:
\begin{itemize}
\item
the first external edge is called the ``output", the second external edge is called the ``input", and the remaining $q$ external vertices are equipped with an ordering
\item
the edges of $T$ are oriented away from the output
\item
the edges lying on the path between the output and input are ``plain" and the remaining edges are ``round"
\end{itemize}
As in \S\ref{subsubsec:The moduli space Rkq and its compactification}, let $E_\ipl(T)$ and $E_\ird(T)$ denote the internal edges of $T$ which are plain and round respectively, and define $V_\ipl(T)$ and $V_\ird(T)$ similarly.
Following the usual shorthand, let $\vipcyl$ denote the union of $V_\ipl(T)$ for all $T \in \calTcyl{q+2}$ and $q+2 \geq 3$, and define $\vircyl$ similarly.
For $v \in \vircyl$, define $\calsphere_{E(v)}$ as in \S\ref{subsubsec:the sphere moduli space} and equip $\calsphereuniv_{E(v)} \rightarrow \calsphere_{E(v)}$ with fiberwise cylindrical ends.
For $v \in \vipcyl$, we define $\calcyl_{E(v)}$ just like $\calcyl_{|v|}$ except that instead of ordering the marked points we:
\begin{itemize}
\item
index the output by the incoming plain edge at $v$
\item
index the input by the outgoing plain edge at $v$
\item 
index the remaining $|v|_\rd$ marked points by the set $E_\rd(v)$.
\end{itemize}
For each $v \in \vicyl$, we endow the universal family $\calcyluniv_{E(v)} \rightarrow \calcyl_{E(v)}$ with fiberwise cylindrical ends
\begin{align*}
\epsilon_0': \calcyl_{E(v)} \times \R_- \times S^1 \hookrightarrow \calcyluniv_{E(v)}\\
\epsilon_1': \calcyl_{E(v)} \times \R_+ \times S^1 \hookrightarrow \calcyluniv_{E(v)}
\end{align*}
and 
\begin{align*}
\epsilon_e: \calcyl_{E(v)} \times \R_+ \times S^1 \hookrightarrow \calcyluniv_{E(v)},\;\;\;\;\; e \in E_\rd(v),
\end{align*}
where $\epsilon_0'$ is asymptotic to the output puncture, $\epsilon_1'$ is asymptotic to the input puncture, and $\epsilon_e'$ is asymptotic to the marked point indexed by $e$.
As in \S\ref{subsubsec:the sphere moduli space}, the asymptotic marker at the output puncture naturally induces asymptotic markers at the input puncture and the remaining marked points and we require the cylindrical ends to align with these.

For $T \in \calTcyl{q+2}$ and $\e > 0$ small, set
\begin{align*}
\calcylovl_T &:= \left(\prod_{v \in V_\ipl(T)} \calcyl_{E(v)}\right) \times \left(\prod_{v \in V_\ird(T)} \calsphere_{E(v)} \right)\\
\calcylovl_T^\e &:= \calRovl_T  \times (-\e,0]^{E_\ipl(T)} \times \left( \D^2_\e\right)^{E_\ird(T)}.
\end{align*}
Using the universal cylindrical ends, for each $e \in V_\ii(T)$ we have a gluing map
\begin{align*}
\phi_{T,e}: \{r \in \calcylovl_T^\e\;:\; \rho_e \neq 0\} \rightarrow \calcylovl_{T/e}^\e.
\end{align*}
Here for $e \in E_\pl(T)$ we have $\rho_e \in (-\e,0)$ and we glue by aligning the asymptotic markers at either end, whereas for $e \in E_\rd(T)$ we have $\rho_e \in \D^2_\e \setminus \{0\}$ and the $S^1$ factor corresponds to the angle between the asymptotic markers.
We define the compactification of $\calcyl_{q+2}$ as a topological space by
\begin{align*}
\calcylovl_{q+2} := \left( \coprod_{T \in \calTcyl{q+2}} \calRovl_T^\e \right) / \sim,
\end{align*}
where $r \sim \phi_{T,e}(r)$ for any $r$ in the domain of $\phi_{T,e}$.

From now on assume that, for each $q \geq 1$, the cylindrical ends at the output and input and the disk-like neighborhoods at the remaining marked points of $\calcyluniv_{q+2} \rightarrow \calcyl_{q+2}$ are consistent in the usual sense.

\subsubsection{The moduli space $\calcont_{q+2}$ and its compactification}\label{subsubsec:the moduli space calcont}

Next, we introduce the moduli spaces relevant to the maps $\Phi_{q;\mho}$.
For $q \geq 0$, let $\calcont_{q+2}$ denote the moduli space of Riemann spheres with $q+2$ ordered marked points, the first of which is equipped with an asymptotic marker, and a {\em sprinkle}, modulo biholomorphisms.
Before explaining what we mean by a sprinkle, some preliminary comments are in order.
Let $S$ be a Riemann cylinder representing an element of $\calC_{q+2}$, equipped with the cylindrical ends $\epsilon_0',...,\epsilon_{q+1}'$ induced from the universal family. 
Observe that there is well-defined line $\R \cong L \subset S$ which aligns with the asymptotic markers at the output and input punctures.
Namely, we take the preimage of $\R \times \{1\}$ under any biholomorphism $\psi: S \cong \R \times S^1$ which sends the asymptotic marker at the output puncture to $1 \in S^1$ at $s = -\infty$.
Naively, a sprinkle is just a point $p \subset L$, the role being to break the translational symmetry of $S$. Indeed, in order to define continuation maps of degree zero we need some mechanism for increasing the dimension of $\calcyl_{q+2}$ by one.

Unfortunately, this definition of sprinkle does not quite play well with gluing, so we will need to slightly bend the definition. 
We adapt the following concept and terminology from \cite{abouzaid2010open}.
Define a {\em popsicle stick} for $S$ to be 
a line $\R \cong L \subset S$ such that:
\begin{itemize}
\item
$(\epsilon_0')^{-1}(L)$ agrees with $\{1\} \times \R_-$ near $s = -\infty$
\item
$(\epsilon_1')^{-1}(L)$ agrees with $\{1\} \times \R_+$ near $s = +\infty$
\item
$\psi(L)$ is of the form $\{ t = \beta(s)\}$ for some function $\beta: \R \rightarrow \Op(1) \subset S^1$.
\end{itemize}
Given a popsicle stick $L \subset S$, we define a sprinkle to be simply a point $p \in L$.
We pick, for each $v \in \vipcyl$, fiberwise popsicle sticks for the universal family $\calcyluniv_{E(v)} \rightarrow \calcyl_{E(v)}$, and we assume that these are consistent with respect to gluing.
We  use these universal popsicle sticks to make sense of sprinkles and hence the preceding definition of $\calcont_{q+2}$ for $q \geq 1$.
Regarding the case $q = 0$, we similarly define $\calcont_{2}$ by endowing the cylinder $\R \times S^1$ with the standard popsicle stick $\R \times \{1\}$.

Before we can define $\calcontovl_{q+2}$, we need to discuss popsicle sticks for the set-indexed versions of $\calcyl_{q+2}$, and this in turn requires slightly more care 
with cylindrical ends.
Let $\calTcont{q+2}$ be defined in the same way as $\calTcyl{q+2}$, except that one plain internal vertex  $v_\spr \in V_\ipl$ is designated as the ``sprinkle vertex", and $v_\spr$ is allowed to have valency two.
As the usual shorthand, let $\vipcont$ denote the union of $E_\ipl(T)$ over all $T \in \calTcont{q+2}$ and $q \geq 0$,
with $\vircont$ defined similarly.
For $v \in \vipcont$ with $|v| \geq 3$,
we define $\calcyl_{E(v)}$ as in \S\ref{subsubsec:the moduli space calcyl},
and we equip $\calcyluniv_{E(v)} \rightarrow \calcyl_{E(v)}$ with fiberwise cylindrical ends and popsicle sticks as follows:
\begin{itemize}
\item
If $v$ is a vertex of the tree $T \in \calTcont{q+2}$ and $T$ has a sprinkle vertex of valency at least three,
then we forget the sprinkle, viewing $T$ as an element of $\calTcyl{q+2}$,
and we take the induced cylindrical ends and popsicle sticks from $\calcyluniv_{E(v)} \rightarrow \calcyl_{E(v)}$
with $v$ viewed as an element of $\vicyl$.
\item
Otherwise, if $T$ has a sprinkle vertex of valency two (necessarily distinct from $v$),
then we contract the edge preceding the sprinkle, view the result as an element of $\calTcyl{q+2}$, 
and take the corresponding induced cylindrical ends and popsicle sticks.
\end{itemize}
Similarly, for $v \in \vircont$ we define $\calsphere_{E(v)}$ as before and we use the above prescription to induce cylindrical ends on each family $\calsphereuniv_{E(v)} \rightarrow \calsphere_{E(v)}$.
Now for a sprinkle vertex $v_\spr \in \vicont$ with $|v_\spr| \geq 3$, we denote by $\calcont_{E(v_\spr)}$ the set-indexed analogue of $\calcont_{|v_\spr|}$, defined using the above universal popsicle sticks.
We endow the family $\calcontuniv_{E(v_\spr)} \rightarrow \calcont_{E(v_\spr)}$ with the cylindrical ends induced from $\calcyluniv_{E(v_\spr)} \rightarrow \calcyl_{E(v_\spr)}$.
Finally, in the case that $v_\spr$ has valency two, we define $\calcont_{E(v_\spr)}$ via the standard popsicle stick $\R \times \{1\} \subset \R \times S^1$.

For $T \in \calTcont{q+2}$ and $\e > 0$ small, set
\begin{align*}
\calcontovl_T &:= \calcont_{E(v_\spr)} \times 
\left(\prod_{\substack{v \in V_\ipl(T) \\ v \neq v_\spr}} \calcyl_{E(v)}\right) \times \left(\prod_{v \in V_\ird(T)} \calsphere_{E(v)} \right)\\
\calcontovl_T^\e &:= \calRovl_T  \times (-\e,0]^{E_\ipl(T)} \times \left( \D^2_\e\right)^{E_\ird(T)}.
\end{align*}
By design, we can easily incorporate sprinkles into the gluing construction, at least for gluing parameters sufficiently close to $0$.
For each $e \in V_\ii(T)$ we therefore have a gluing map
\begin{align*}
 \phi_{T,e}: \Op(\calcontovl_T) \subset \{ r \in \calcontovl_T^\e\;:\; \rho_e \neq 0\} \rightarrow \calcontovl_{T/e}.
\end{align*}
Adapting the usual outline, we use these to define $\calcontovl_{q+2}$ as a topological space.

\begin{remark}
The conditions in the definition of a popsicle stick for $s$ near $\pm \infty$ mean that when we glue two Riemann surfaces with popsicle sticks, the two popsicle sticks overlap in the glued surface and hence combine to give a well-defined popsicle stick.
Naively, given a Riemann cylinder with marked points, we could try to take the popsicle stick for a cylinder with marked points to be simply the ``standard popsicle stick'' $\R \times \{1\}$, but a priori this would not have suitable compatibility with our previously chosen cylindrical ends, and these dictate how to glue. 

Alternatively, we could insist on using the lines $\R \times \{1\}$, and then retroactively choose our cylindrical ends more restrictively so as to make these legitimate popsicle sticks. This approach is reasonable since we only need to achieve compatibility with the cylindrical ends at the input and output punctures, but this does not generalize to settings with multiple input punctures, in which case different input punctures place competing requirements on the popsicle stick at the output puncture. Apart from swapping cylindrical ends with strip-like ends, this is precisely the setting in \cite[\S2d]{abouzaid2010open}.
\end{remark}

\subsubsection{Consistent universal perturbation data}\label{subsubsec:consistent universal perturbation data}

In order to ensure a maximum principle, we need to pick perturbation data for $\calcyl_{q+2}$ and $\calcont_{q+2}$ with slightly more care than in the case of $\fuk_\mho$.
Consider a nondegenerate $H_0 \in C^{\infty}(S^1,\calHtau)$ for $\tau > 0$,  along with an accompanying generic $J_0 \in C^{\infty}(S^1,\calJ)$.
 Suppose $S$ represents an element of $\calcyl_{q+2}$, and let $\epsilon_-',\epsilon_+'$ denote the induced cylindrical ends at the output and input punctures respectively.
A perturbation datum for $S$ consists of 
a pair $(K,J)$ with $K \in \Omega^1(S,\calH)$ and $J \in C^{\infty}(S,\calJ)$ such that
\begin{itemize}
\item
$(\epsilon_\pm')^*K \equiv H_0 \otimes dt$ and $(\epsilon_\pm')^*J \equiv J_0$ 
\item
on each disk-like neighborhood of $S$ we have $K \equiv 0$ and $J \equiv \Jo$
\item 
$K$ is of the form $H \otimes \gamma$, where $H \in C^{\infty}(S,\calH_\tau)$ and $\gamma$ is a closed one-form.
\end{itemize}
Given $(H_0,J_0)$, we pick fiberwise perturbation data for each of the families
\begin{itemize}
\item
$\calcyluniv_{E(v)} \rightarrow \calcyl_{E(v)}$ for each $v \in \vipcyl$ 
\item
$\calsphereuniv_{E(v)} \rightarrow \calsphere_{E(v)}$ for each $v \in \vircyl$.
\end{itemize}
We assume these satisfy the analogues of the consistency conditions described in \S\ref{subsubsec:bdfuk consistent universal perturbation data} and are also invariant under the identifications mentioned at the end of \S\ref{subsubsec:bdfuk consistent universal strip-like ends and disk-like neighborhoods}.

Similarly, consider nondegenerate $H_- \in \calH_{\tau_-}$ and $H_+ \in \calH_{\tau_+}$ for $\tau_- \geq \tau_+$, along with accompanying generic $J_{_-},J_{_+} \in C^{\infty}(S^1,\calJ)$.
Suppose $S$ represents an element of $\calcont_{q+2}$, and let $\epsilon_-',\epsilon_+'$ denote the induced cylindrical ends at the output and input punctures respectively.
A perturbation datum for $S$ consists of 
a pair $(K,J)$ with $K \in \Omega^1(S,\calH)$ and $J \in C^{\infty}(S,\calJ)$ such that
\begin{itemize}
\item
 $(\epsilon_\pm')^*K \equiv H_\pm dt$ and $(\epsilon_\pm')^*J \equiv J_{\pm}$
\item
on each disk-like neighborhood of $S$ we have $K \equiv 0$ and $J \equiv \Jo$
\item 
$K$ is of the form $H\otimes \gamma$, where $H \in C^\infty(S,\calH_{\tau})$ for a function $\tau: S \rightarrow \R_+$ and $\gamma$ is a one-form satisfying $d(\tau \gamma) \leq 0$.
\end{itemize}
We assume that we have $(\epsilon'_\pm)^*H \equiv H_\pm$ and $(\epsilon'_\pm)^*\gamma \equiv dt$,
and we can also take $\gamma$ to be of the form $\gamma = \eta/\tau$,
in which case the conditions on $\eta$ are
\begin{itemize}
\item
$(\epsilon_\pm')^*\eta \equiv \tau_\pm dt$
\item
$d\eta \leq 0$.
\end{itemize}
Such an $\eta$ exists since $\tau_- \geq \tau_+$.
Notice that $K$ satisfies the condition for a maximum principle described in \cite[Remark 1.6.14]{abouzaid2013symplectic}.
\begin{remark}
In the case that $\gamma \equiv dt$, the condition $d(\tau\gamma) \leq 0$ becomes the familiar inequality $\bdy_s \tau \leq 0$ for monotone continuation maps.
\end{remark}

Now suppose we have already chosen perturbation data relevant to both $(H_-,J_-)$ and $(H_+,J_+)$.
We follow the rule from \S\ref{subsubsec:the moduli space calcont}
to induce fiberwise perturbation data for each of the families
\begin{itemize}
\item
$\calsphereuniv_{E(v)} \rightarrow \calsphere_{E(v)}$ for each $v \in \vircyl$
\item
$\calcyluniv_{E(v)} \rightarrow \calcyl_{E(v)}$ for each non-sprinkle vertex $v \in \vipcyl$,
\end{itemize}
where the data relevant to $v$ is induced from our choices for either $(H_-,J_-)$ or $(H_+,J_+)$, depending on whether $v$ comes before or after the sprinkle vertex.
We also pick fiberwise perturbation data for the family 
$\calcontuniv_{E(v_\spr)} \rightarrow \calcont_{E(v_\spr)}$ for each sprinkle vertex $v_\spr \in \vipcont$.
Together these should satisfy the usual consistency conditions for perturbation data and be invariant under the usual identifications.

\subsubsection{The moduli spaces $\calM_{q;\mho}(\gamma_-,\gamma_+)$}

Now suppose we have $H_0$ and $J_0$ as in \S\ref{subsubsec:consistent universal perturbation data}, 
and assume that have made corresponding choices of perturbation data.
For $q \geq 1$, suppose that $S$ represents an element of $\calcyl_{q+2}$, and assume that $S$ is equipped with the cylindrical ends $\epsilon_-',\epsilon_+'$ and perturbation datum $(K,J)$ induced from our universal choices.
As usual, let $Y$ denote the $(d\theta)$-dual of $K$.
For $\gamma_-,\gamma_+ \in \calP_{H_0}$, we denote by $\calM_S(\gamma_-,\gamma_+)$ the space of maps $u: S \rightarrow \wh{M}$
which satisfy 
\begin{itemize}
\item
$(Du - Y)^{0,1} = 0$
\item
$\lim_{s \rightarrow -\infty} (u \circ \epsilon_0')(s,\cdot) = \gamma_-$
\item
$\lim_{s \rightarrow +\infty} (u \circ \epsilon_1')(s,\cdot) = \gamma_+$.
\end{itemize}
Set 
\begin{align*}
\calM_q(\gamma_-,\gamma_+) := \{(r,u)\;:\; r \in \calcyl_{q+2},\; u \in \calM_{\calcyluniv_r}(\gamma_-,\gamma_+)\}.
\end{align*}
We also set $\calM_0(\gamma_-,\gamma_+) := \calM(\gamma_-,\gamma_+)$, the moduli space of unparametrized Floer trajectories as defined in \S\ref{subsubsec:symplectic cohomology formalism}.

Similarly, suppose we have $H_\pm$ and $J_\pm$ as in \S\ref{subsubsec:consistent universal perturbation data}, together with the corresponding choices of perturbation data.
For $\gamma_- \in \calP_{H_-}$ and $\gamma_+ \in \calP_{H_+}$, we define $\calM_q(\gamma_-,\gamma_+)$ for $q \geq 0$ in the same way by replacing $\calcyl_{q+2}$ with $\calcont_{q+2}$.
In particular, the space $\calM_0(\gamma_-,\gamma_+)$ can be viewed as a slight generalization of the space of continuation map trajectories from $\gamma_-$ and $\gamma_+$.

In either of the two cases above, evaluating at the marked points gives a map 
\begin{align*}
\ev_q: \calM_{q}(\gamma_-,\gamma_+) \rightarrow M^{\times q}
\end{align*}
and we set
\begin{align*}
\calM_{q;\mho}(\gamma_-,\gamma_+) := \calM_q(\gamma_-,\gamma_+) \underset{\ev_q,i_{\mho}^{\times q}} \times \mho^{\times q}.
\end{align*}
The analogue of Proposition \ref{prop:bd moduli spaces are regular} is:
\begin{prop}
For generic perturbation data, the moduli spaces $\calM_{q;\mho}(\gamma_-,\gamma_+)$ are regular and hence smooth manifolds.
\end{prop}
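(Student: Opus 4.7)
The argument proceeds by a standard Sard--Smale scheme analogous to Proposition \ref{prop:moduli spaces are topological manifolds} and Proposition \ref{prop:bd moduli spaces are regular}, with the additional ingredient of transversality of the evaluation map to $i_{\mho}^{\times q}$. The plan is to prove regularity inductively: once the relevant moduli spaces have been regularized for all $q' < q$, the consistency conditions of \S\ref{subsubsec:consistent universal perturbation data} determine the perturbation data on all lower strata of $\calcylovl_{q+2}$ (or $\calcontovl_{q+2}$), so that only the open top-dimensional stratum remains free. A diagonal intersection over the countable collection of asymptotic data, slopes, and reference pairs $(H,J)$ at each stage will then produce a comeager set of admissible perturbation data simultaneously regularizing all the spaces in question.

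For a fixed open stratum, I would introduce a separable Banach manifold $\calB$ of admissible perturbation data (respecting the already-fixed conditions on the cylindrical ends and disk-like neighborhoods) and form the universal moduli space $\calM_q^{\univ}(\gamma_-,\gamma_+)$ consisting of tuples $(\mathbf{p},r,u)$ with $\mathbf{p} \in \calB$ solving the inhomogeneous Cauchy--Riemann equation for the corresponding parameter. The linearization of the defining equation is Fredholm by nondegeneracy of $\gamma_\pm$, and its surjectivity is shown by the standard perturbation argument: the freedom to vary $K$ and $J$ on the complement of the cylindrical ends and disk-like neighborhoods, combined with somewhere injectivity of Floer-type solutions away from these regions (which in the exact setting is a consequence of unique continuation and the asymptotic nondegeneracy), produces enough infinitesimal deformations to fill the cokernel. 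This makes $\calM_q^{\univ}(\gamma_-,\gamma_+)$ a smooth Banach manifold, and Sard--Smale applied to the projection onto $\calB$ yields a comeager set of $\mathbf{p}$ for which $\calM_q(\gamma_-,\gamma_+)$ itself is regular.

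To transfer regularity to the fiber product with $\mho^{\times q}$, I would strengthen the argument by showing that the total evaluation map $\ev_q^{\univ}: \calM_q^{\univ}(\gamma_-,\gamma_+) \to M^{\times q}$ is a submersion at every point. The key observation is that perturbing $\mathbf{p}$ in a small collar just outside any single disk-like neighborhood translates the restriction of $u$ to that neighborhood in an arbitrary direction of $T_{u(p_i)}M$; taking independent perturbations near each of the $q$ marked points yields full surjectivity of $d\ev_q^{\univ}$. Consequently, $(\ev_q^{\univ})^{-1}(\mho^{\times q})$ is a Banach submanifold, and a second application of Sard--Smale to its projection onto $\calB$ produces the claimed comeager set of perturbation data. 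The main technical delicacy lies in the somewhere-injectivity step, since arbitrarily many interior marked points could in principle permit branched multiple covers; however, in the Floer setting the genuine $S$-dependence of $(K,J)$ together with the nondegeneracy of $\gamma_\pm$ rule out such pathologies via the standard argument used for transversality in Hamiltonian Floer homology, which applies verbatim here since each of our cylindrical/continuation moduli spaces retains a translation-free parametrization of its domain.
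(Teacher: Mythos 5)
Your plan is essentially the argument the paper has in mind: the paper states this proposition without proof, treating it as the standard universal-moduli-space/Sard--Smale transversality scheme already invoked for Proposition \ref{prop:moduli spaces are topological manifolds} and Proposition \ref{prop:bd moduli spaces are regular}, with the additional step of making the evaluation map at the interior marked points transverse to $i_\mho^{\times q}$ --- exactly the two-stage argument you outline, and your remarks about the constrained form of the data (vanishing on disk-like neighborhoods, fixed behavior on the cylindrical ends, the $d(\tau\gamma)\leq 0$ condition) correctly identify where the remaining freedom lives. The one loose point is your mechanism for surjectivity of $d\ev_q^{\univ}$: perturbing the data in a collar does not literally ``translate the restriction of $u$'' (solutions respond globally to a change of data); the standard repair is to show that the combined linearization $(\delta\mathbf{p},\xi)\mapsto\bigl(\text{linearized equation},\,\xi(p_1),\dots,\xi(p_q)\bigr)$ is surjective, using that vanishing at finitely many points is a closed finite-codimension condition on $W^{1,p}$ sections and that variations of $(K,J)$ away from the ends and disk-like neighborhoods still kill the (possibly enlarged) cokernel, after which your fiber-product and diagonal/comeager-intersection steps go through as written.
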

\noindent We also define the set-indexed analogues of $\calM_q$ and $\calM_q(\gamma_-,\gamma_+)$ in a similar fashion.

\subsubsection{The compactification $\calMovl_{q;\mho}(\gamma_-,\gamma_+)$}

For $\gamma_-,\gamma_+ \in\calP_{H_0}$, a {\em stable broken pseudoholomorphic cylinder with asymptotics $(\gamma_-,\gamma_+)$ and $q$ interior marked points} consists of:
\begin{itemize}
\item
$T \in \calTcylsemi{q+2}$
\item
$\gamma_e \in \calP_{H_0}$ for each $e \in E_\pl(T)$, such that $\gamma_e = \gamma_-$ if $e$ is the output edge and $\gamma_e = \gamma_+$ is $e$ is the input edge
\item
$u_v \in \calM_{E_\rd(v)}(\gamma_{e_-(v)},\gamma_{e_+(v)})$ for each $v \in V_\ipl(T)$, where
$e_-(v)$ and $e_+(v)$ denote the edges directly preceding and following $v$ respectively
\item 
$u_v \in \calM_{E(v)}$ for each $v \in V_\ird(T)$
\item for each $e \in E_\ird(T)$, say
 with endpoints corresponding to the marked points $p$ and $p'$ of $u_{\iv(e)}$ and $u_{\tv(e)}$ respectively, we have $u_{\iv(e)}(p) = u_{\tv(e)}(p')$.
\end{itemize}

Let $\calMovl_T(\gamma_-,\gamma_+)$ denote the space of stable broken pseudoholomorphic cylinders with asymptotics $(\gamma_-,\gamma_+)$ and $q$ interior marked points which are modeled on $T \in \calTcylsemi{q+2}$. Evaluating at the marked points indexed by round leaves, we get a map
\begin{align*}
\ev_q: \calMovl_T(\gamma_-,\gamma_+) \rightarrow M^{\times q}.
\end{align*}
We set
\begin{align*}
\calMovl_{T;\mho}(\gamma_-.\gamma_+) := \calMovl_T(\gamma_-,\gamma_+) \underset{\ev_q,i_{\mho}^{\times q}}\times\mho^{\times q}
\end{align*}
and
\begin{align*}
\calMovl_{q;\mho}(\gamma_-,\gamma_+) := \coprod_{T \in \calTcylsemi{q+2}}  \calMovl_{T;\mho}(\gamma_-,\gamma_+),
\end{align*}
equipped with the Gromov topology.
In this context, the basic gluing result is:
\begin{prop}
$\calM_{q;\mho}(\gamma_-,\gamma_+)^{\oo}$ is a finite set of points, and $\ovl{\calM}_{q;\mho}(\gamma_-,\gamma_+)^{\nn}$ is a compact one-dimensional topological manifold with boundary given by
\begin{align*}
\bdy \ovl{\calM}_{q;\mho}(\gamma_-,\gamma_+)^{\nn} = \coprod_{T} \calMovl_{T;\mho}(\gamma_-,\gamma_+)^{\oo},
\end{align*}
where the disjoint union is over all $T \in \calTcyl{q+2}$ having one plain internal edge and no round internal edges.
\end{prop}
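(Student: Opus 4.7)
The plan is to mirror the proof of the analogous Proposition \ref{prop:compactification structure} for disks, adapted to the cylindrical setting. Three ingredients are needed: (i) a global compactness result for $\calMovl_{q;\mho}(\gamma_-,\gamma_+)$ in the Gromov topology, (ii) a codimension count that discards all but the advertised boundary strata in the one-dimensional part, and (iii) a gluing theorem producing collar neighborhoods at each codimension-one stratum.

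For compactness, I would use the fact that the perturbation data fixed in \S\ref{subsubsec:consistent universal perturbation data} was specifically engineered so that the Hamiltonian one-form $K = H \otimes \gamma$ obeys the maximum principle on the cylindrical end $\bdy M \times [0,\infty)$. Combined with the weak $\Jo$-convexity of $\bdy M$ and the vanishing of $K$ on the disk-like neighborhoods, this confines every element of $\calM_{q;\mho}(\gamma_-,\gamma_+)$ to a fixed compact subset of $\wh{M}$. A standard energy identity then bounds the Floer energy of each curve in terms of the action difference $A_{H_0}(\gamma_+) - A_{H_0}(\gamma_-)$, so Gromov compactness yields sequential limits in the form of stable broken configurations as enumerated above. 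The absence of boundary strata in which a marked point collides with the boundary is automatic since we are dealing with cylinders (not disks).

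Next is the codimension count. By the cylindrical analogue of Proposition \ref{prop:bd moduli spaces are regular}, for generic perturbation data every stratum $\calMovl_{T;\mho}(\gamma_-,\gamma_+)$ is a smooth manifold of expected dimension
\begin{align*}
\dim \calMovl_{T;\mho}(\gamma_-,\gamma_+) = \dim \calM_{q;\mho}(\gamma_-,\gamma_+) - |E_\ipl(T)| - 2|E_\ird(T)|.
\end{align*}
The $|E_\ipl(T)|$ term reflects imposing one matching condition at each plain node (modulo the remaining translation on the glued cylinder), while the $2|E_\ird(T)|$ term comes from the fact that $\calsphere_{E(v)}$ has two fewer real dimensions than $\calcyl_{E(v)}$, the latter carrying an extra $\R$-translation and $S^1$-rotation of the asymptotic marker. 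Hence inside the closure of the one-dimensional part, only strata with $|E_\ipl(T)| = 1$ and $|E_\ird(T)| = 0$ survive, which is exactly the asserted boundary; all other strata have codimension at least two and cannot appear as limits of $1$-parameter families.

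For gluing, given such a $T$, standard Floer gluing produces a map $\phi_T: \calMovl_{T;\mho}(\gamma_-,\gamma_+)^\oo \times (-\e,0) \hookrightarrow \calM_{q;\mho}(\gamma_-,\gamma_+)^\nn$ which is a homeomorphism onto an open neighborhood of $\calMovl_{T;\mho}(\gamma_-,\gamma_+)^\oo$ in $\calMovl_{q;\mho}(\gamma_-,\gamma_+)^\nn$. The consistency conditions on universal perturbation data from \S\ref{subsubsec:consistent universal perturbation data} are precisely what is needed to identify the perturbation datum produced by gluing with the one inherited from $\calcyl_{q+2}$, so the usual Newton--Picard iteration goes through. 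The main obstacle will be verifying compatibility of the gluing with the transverse fiber product against $\mho^{\times q}$: one must argue that the evaluation maps at the interior marked points vary smoothly with the gluing parameter, and that the resulting fiber product remains transverse for small $\e$. This is routine but delicate, and it leans on the fact that the disk-like neighborhoods around interior marked points are $\Jo$-standard and left untouched by the gluing procedure, so that the interior marked points behave exactly as in the closed Gromov--Witten setting.
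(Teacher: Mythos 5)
Your proposal is correct and follows essentially the same route the paper (implicitly) takes: the paper states this proposition without proof as the standard compactness-plus-gluing result, deferring to the pattern of \cite[\S 9l]{seidelbook} and the earlier disk case, which is exactly the three-step argument (maximum principle and energy bounds giving Gromov compactness, generic regularity of all strata so that only configurations with one plain internal edge and no round internal edges appear in codimension one, and standard gluing with the consistency conditions ensuring compatibility with the fiber product over $\mho^{\times q}$) that you outline. The only caveat is cosmetic: your heuristic for why round internal edges have codimension two (comparing dimensions of $\calsphere_{E(v)}$ and $\calcyl_{E(v)}$) should really be phrased as the usual statement that interior nodes are codimension two in the domain moduli together with regularity of the perturbed sphere components, but this does not affect the argument.
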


Similarly, we define $\calTcontsemi{q+2}$ in the same way as $\calTcont{q+2}$ except that we allow non-sprinkle plain vertices of valency two (note that a sprinkle vertex of valency two is already stable).
For $\gamma_- \in \calP_{H_-}$ and $\gamma_+ \in \calP_{H_+}$,
we define $\calMovl_{q;\mho}(\gamma_-,\gamma_+)$ following the same pattern as in the previous paragraph, and the basic gluing result is:
\begin{prop}
$\calM_{q;\mho}(\gamma_-,\gamma_+)^{\oo}$ is a finite set of points, and $\ovl{\calM}_{q;\mho}(\gamma_-,\gamma_+)^{\nn}$ is a compact one-dimensional topological manifold with boundary given by
\begin{align*}
\bdy \ovl{\calM}_{q;\mho}(\gamma_-,\gamma_+)^{\nn} = \coprod_{T} \calM_{T;\mho}(\gamma_-,\gamma_+)^{\oo},
\end{align*}
where the disjoint union is over all $T \in \calTcont{q+2}$ having one plain internal edge and no round internal edges.
\end{prop}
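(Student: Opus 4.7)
The plan is to follow the standard trident of Floer-theoretic arguments — a priori energy and $C^0$ estimates, Gromov compactness, and gluing — adapted to the bulk-deformed continuation-map setting. Since the statement closely parallels Propositions analogous to it earlier in the text (for $\calM(\bx)$ in the Fukaya case and $\calM_{q;\mho}(\bx)$ in the bulk-deformed Fukaya case), I will emphasize only the features that are genuinely new, namely the maximum principle for the continuation equation on the conical end of $\wh M$ and the incorporation of the interior marked-point fiber product with $\mho$.

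First I would establish a priori bounds. The topological energy of $u \in \calM_q(\gamma_-,\gamma_+)$ is controlled in the usual way by the actions of $\gamma_\pm$ together with the curvature term $d(H\otimes \gamma) = d(\tau \eta/\tau)$; the assumption $d(\tau\gamma) \leq 0$ from \S\ref{subsubsec:consistent universal perturbation data} ensures the geometric energy $\tfrac12\int|du-Y\otimes dt|^2$ is uniformly bounded. For the $C^0$ bound, I would invoke the integrated maximum principle of \cite[Lemma 7.2]{abouzaid2010open} in the form relevant to continuation maps (cf.\ \cite[Remark 1.6.14]{abouzaid2013symplectic}): the shape of $K$ at infinity and the contact-type condition on $J$ along $\bdy M$ prevent $u$ from touching $\bdy M \times \{r\}$ for $r \gg 0$, so $\op{Im}(u)$ stays in a fixed compact subset of $\wh M$. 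The interior marked-point constraints from $i_\mho\colon \mho \to M$ are automatic since $\mho$ itself is compact.

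Next I would combine these bounds with the usual Gromov compactness machinery for domain-dependent pseudoholomorphic curves in order to identify the limits of sequences in $\calM_{q;\mho}(\gamma_-,\gamma_+)$. A sequence can degenerate through (i) breaking at the output or input cylindrical end, producing a plain internal edge, (ii) bubbling off of a sphere at an interior marked point, producing a round internal edge, (iii) interior marked points colliding or drifting to the boundary stratum, producing further plain or round internal edges, or (iv) the sprinkle running off to $\pm\infty$, which corresponds to breaking precisely one of the two plain ends incident to $v_\spr$. These phenomena are exactly the strata parametrized by $\calTcontsemi{q+2}$, with the fiber-product constraint against $i_\mho^{\times q}$ persisting in the limit. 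Standard transversality for the associated linearized operators, applied inductively over strata using the flexibility of our consistent universal perturbation data, implies that for generic choices the stratum $\calMovl_{T;\mho}(\gamma_-,\gamma_+)$ is a smooth manifold of virtual dimension
\begin{align*}
|\gamma_-| - |\gamma_+| - 1 - q(l-2) - |E_\ipl(T)| - 2|E_\ird(T)|,
\end{align*}
where the $-1$ reflects the single sprinkle degree of freedom. In particular $\calM_{q;\mho}(\gamma_-,\gamma_+)^\oo$ is discrete and compact, hence finite; and in the one-dimensional moduli space $\calMovl_{q;\mho}(\gamma_-,\gamma_+)^\nn$ all strata with $|E_\ipl(T)| + 2|E_\ird(T)| \geq 2$ are empty for generic perturbation data, so only the advertised boundary strata — those with one plain internal edge and no round internal edges — survive.

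Finally I would produce gluing charts: for $T$ with a single plain internal edge and no round internal edges, consistency of universal strip-like/cylindrical ends and perturbation data allows one to apply standard linear gluing (as in \cite[\S9]{seidelbook}, adapted to cylindrical ends and continuation equations) to obtain, for $\e > 0$ small, an embedding
\begin{align*}
\phi_T: \calMovl_{T;\mho}(\gamma_-,\gamma_+)^\oo \times [0,\e) \hookrightarrow \calMovl_{q;\mho}(\gamma_-,\gamma_+)^\nn,
\end{align*}
restricting to the identity on $\calMovl_{T;\mho}(\gamma_-,\gamma_+)^\oo \times \{0\}$ and covering a neighborhood of the boundary stratum in $\calMovl_{q;\mho}(\gamma_-,\gamma_+)^\nn$. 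Together with the interior manifold structure coming from regularity of $\calM_{q;\mho}(\gamma_-,\gamma_+)^\nn$, these charts endow $\calMovl_{q;\mho}(\gamma_-,\gamma_+)^\nn$ with the structure of a compact topological $1$-manifold with the claimed boundary. The main obstacle is ensuring compatibility of the $\mho$-fiber product with the gluing construction — i.e.\ that the evaluation map $\ev_q$ remains transverse to $i_\mho^{\times q}$ in a neighborhood of the boundary stratum; this is handled by choosing the consistent universal perturbation data on the relevant codimension-one strata first and extending inward, exactly as in the proof of Proposition \ref{prop:compactification structure}.
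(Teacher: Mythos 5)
Your outline is essentially the argument the paper has in mind: the proposition is stated there without proof, as the continuation-map analogue of Proposition \ref{prop:compact top mfd with corners} and Proposition \ref{prop:compactification structure}, and the intended ingredients are exactly the ones you assemble --- energy and $C^0$ control coming from the condition $d(\tau\gamma)\leq 0$ and the contact-type almost complex structures at infinity, Gromov compactness organized by the trees in $\calTcontsemi{q+2}$, stratumwise generic regularity of the consistent universal perturbation data (so that strata of relative codimension at least two are empty in the zero- and one-dimensional cases), and standard gluing along the codimension-one strata, with transversality of $\ev_q$ to $i_\mho^{\times q}$ arranged compatibly with the gluing. One small correction: in your virtual dimension formula the sprinkle contributes $+1$, not $-1$; since $\calcont_{q+2}$ has dimension one greater than $\calcyl_{q+2}$, the open stratum has dimension $|\gamma_-|-|\gamma_+|-q(l-2)$, matching the degree $q(l-2)$ of $\Phi_{q;\mho}$, whereas the formula you wrote is the one for the cylinder (differential) case. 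This slip does not affect your argument, because the identification of the boundary only uses the relative codimension $|E_\ipl(T)|+2|E_\ird(T)|$ of the strata.
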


\subsubsection{The bulk deformed differential and continuation map}

We now define the promised map $\delta_{q;\mho}: \KL_0\langle \calP_H\rangle \rightarrow \KL_0\langle \calP_H\rangle$ on an orbit $\gamma_+ \in \calP_H$ by
\begin{align*}
\delta_{q;\mho}(\gamma_+) = \dfrac{1}{q!}\sum_{\substack{\gamma_- \in \calP_H\\ u \in \calM_q(\gamma_-,\gamma_+)^\oo}} s(u)\gamma_-.
\end{align*}
A simple analysis of $\bdy\ovl{\calM}_{q;\mho}(\gamma_-,\gamma_+)^{\nn}$ shows that the resulting $\delta_\mho$ satisfies $\delta_\mho^2 = 0$.
Indeed, algebraically this corresponds to special case of our argument in \S\ref{subsubsec:the bdfuk} showing that the differential on the $\bdfuk$ squares to zero.

Similarly, we define the map $\Phi_{q;\mho}: \KL_0\langle \calP_{H_+}\rangle \rightarrow \KL_0\langle \calP_{H_-}\rangle$ on an orbit $\gamma_+ \in \calP_{H_-}$ by
\begin{align*}
\Phi_{q;\mho}(\gamma_+) = \dfrac{1}{q!}\sum_{\substack{\gamma_- \in \calP_{H_-}\\ u \in \calM_q(\gamma_-,\gamma_+)^\oo}} s(u)\gamma_-.
\end{align*}
In this case a similar analysis of $\bdy\ovl{\calM}_{q;\mho}(\gamma_-,\gamma_+)^{\nn}$
shows that $\Phi_\mho \circ \delta_\mho = \delta_\mho \circ \Phi_\mho$, i.e. $\Phi_\mho$ is a chain map.
Intuitively, we view elements in the one-dimensional moduli space as cylinders with several marked points (unordered, thanks to the factorial) and a sprinkle. These degenerate into pairs of rigid cylinders, with the marked points distributed between them and the sparkle landing on one of them; the fact the total signed count of boundary points in the moduli space is zero translates into the chain map relation.

\subsubsection{Invariance properties}\label{subsubsec:invariance properties}

By adapting the same outline we have been following to other standard pieces of Floer theory, one can prove the following facts:
\begin{itemize}
\item
The monotone continuation map $\Phi_\Omega$ is independent of the various choices involved in its construction up to chain homotopy.
\item
The composition of two monotone continuation maps is again a monotone continuation map up to chain homotopy.
\end{itemize}
In the first case, given two different constructions of $\Phi_\Omega$, one picks data which now depends on an additional parameter $r \in [0,1]$ and interpolates between the two corresponding families of choices. After setting up the compactified moduli spaces appropriately, the count of solutions of the parametrized problem gives precisely a chain homotopy between the two constructions of $\Phi_\Omega$.
In the second case, one considers a moduli space similar to $\calcont$ except with {\em two} sprinkles. 
By requiring the sprinkles to be distinct and with the first one closer to the input puncture, we get a compactification which includes (a) cylinders where the two sprinkles coincide, which is just a copy of $\calcont$, and (b) broken cylinders with the two sprinkles separated into different components.
In this case counting solutions induces a chain homotopy between a continuation map and a composition of two continuation maps.

In particular, the monotone continuation maps form a directed system.
It then follows from the direct limit formalism that $\sh_\mho(M,\theta)$ is independent of all choices of cylindrical ends and perturbation data.
To see that it also does not depend on the choice of reference almost complex structure $\Jo$, we can similarly incorporate varying $\Jo$ into the direct limit formalism. Namely, we can use different reference almost complex structures for different Hamiltonians $H_-$ and $H_+$, and then generically interpolate between these two when constructing the continuation map $\cf_\mho(H_+) \rightarrow \cf_\mho(H_-)$.

\subsubsection{Independence of $\mho$}\label{subsubsec:independence of mho}

The type of argument discussed in \S\ref{subsubsec:invariance properties} can also be used to show that $\sh_\mho(M,\theta)$ is invariant under smooth homotopies of the cycle $\mho$.
In fact, one can also show that  $\sh_\mho(M,\theta)$ depends only on the homology class of $\mho$ via a change of coordinates which resembles the argument in \S\ref{subsubsec:independence of Omega} for the twisted case.
Namely, suppose that $B$ is a smooth oriented manifold with corners whose boundary is of the form $\bdy B = \mho \cup C$, where $C$ is a smooth manifold with boundary $\bdy C = \bdy \mho$ and $\mho$ and $C$ are otherwise disjoint.
Let $i_B: B \rightarrow M$ be a smooth map such that $(i_B)|_{\mho} = i_\mho$
and $i_B(C) \subset \bdy M$.
In this situation we argue that $\sh_{\mho}(M,\theta)$ reduces to $\sh(M,\theta)$, the undeformed version of symplectic cohomology (defined over $\KL$).

\begin{remark}[aside on pseudocycles]
Strictly speaking, our argument will only show that $\sh_\mho(M,\theta)$ is independent of the bordism class of $\mho$. In order to get to the level of ordinary homology,
one can make use of {\em pseudocycles} and bordisms thereof, as discussed in \cite[\S 6.5]{mcduff2012j}. 
Namely, by \cite{schwarz1999equivalences}, pseudocycles up to bordism are equivalent to integral homology classes.
Moreover, one can easily adapt \cite[Definition 6.5.1]{mcduff2012j} to define a smooth pseudocycle 
$i_\mho: (\mho,\bdy \mho) \rightarrow (M,\bdy M)$. Basically,  we allow $\mho$ to be noncompact,
 but each noncompact end has image of codimension at least two. 
For such an $\mho$ we can proceed exactly as in the case of a smooth cycle, with the additional ends effectively invisible to all of our curve counts because of their high codimension.
\end{remark}

Fix a nondegenerate Hamiltonian $H \in \calHtau$ for some $\tau > 0$, along with all the auxiliary data needed to define $\cf_\mho(H)$. We consider a certain moduli space of pseudoholomorphic cylinders with one point constraint in $B$ and $q-1$ point constraints in $\mho$, defined by
\begin{align*}
\calM_{\substack{q;B}}(\gamma_-,\gamma_+) := \calM_q(\gamma_-,\gamma_+) \underset{\ev_q, i_B \times i_\mho^{\times (q-1)}}\times (B \times \mho^{\times (q-1)})
\end{align*}
where the fiber product is with respect to the maps
\begin{align*}
\ev_q: \calM_q(\gamma_-,\gamma_+) \rightarrow M^{\times q}\;\;\;\;\;\text{and}\;\;\;\;\;
i_B \times i_\mho^{\times (q-1)}: B \times \mho^{\times (q-1)} \rightarrow M^{\times q}.
\end{align*}
Now for each $q \geq 1$ we define a linear map $F_q: \KL_0\langle \calP_H \rangle \rightarrow \KL_0\langle \calP_H \rangle$ as follows. For $\gamma_+ \in \calP_H$, set
\begin{align*}
F_q(\gamma_+) := \dfrac{1}{(q-1)!}\sum_{\substack{\gamma_- \in \calP_H\\ u \in \calM_{q;B}(\gamma_-,\gamma_+)^\oo}} s(u) \gamma_-.
\end{align*}
Following the usual pattern we can construct $\calMovl_{q;B}(\gamma_-\gamma_+)^\nn$ as a compact oriented topological one-manifold with boundary.
In essence the boundary consists of once-broken cylinders, with the $q$ point constraints distributed arbitrarily between the two components, as well as unbroken cylinders with $q$ point constraints in $\mho$. 
After accounting for the orientations and orderings of marked points, this can be summarized by the following structure equation:
\begin{align}\label{eqn:have}
\delta_{q;\mho} = \dfrac{1}{q} \sum_{1 \leq i \leq q} \left( -F_i \circ \delta_{q-i;\mho} + \delta_{q-i;\mho}\circ F_i\right).
\end{align}

Using the maps $F_q$, one can define a chain isomorphism $\cf_{\mho}(H) \rightarrow \cf(H)$ for each $H \in C^{\infty}(S^1,\calHtau)$.
This induces an isomorphism $\hf_{\mho}(H) \cong \hf(H)$, which in turn naturally commutes with continuation maps to induce an isomorphism $\sh_{\mho}(M,\theta) \cong \sh(M,\theta)$.
The map $\cf_{\mho}(H) \rightarrow \cf(H)$ can be made explicit, but the combinatorics are significantly more complicated than the analogous situation for twisted symplectic cohomology 
discussed in \S\ref{subsubsec:independence of Omega}. 
This is most naturally understood from the point of deformation theory via differential graded Lie algebras, as we now explain.

Recall that the differential on $\cf_\mho(H)$ is of the form $\bdy + \sum_{q=1}^{\infty}\hbar^q \delta_{q;\mho}$, where $\bdy := \delta_{0;\mho}$ denotes the undeformed differential on $\cf(H)$. 
Let $\End(\cf(H))$ denote the space of linear endomorphisms of $\cf(H)$. 
This is naturally a differential graded Lie algebra (DGLA), with differential 
$$\delta(A) = \bdy \circ A - (-1)^{|A|} A \circ \bdy$$
and Lie bracket
$$ [A,B] = A \circ B - (-1)^{|A||B|} B \circ A$$
for $A,B \in \End(\cf(H))$.
This DGLA controls deformations of the chain complex $\cf(H)$,
one consequence of which is that $m := \sum_{q=1}^{\infty}\hbar^q \delta_{q;\mho}$ is a Maurer--Cartan element in $\End(\cf(H))$, i.e. it satisfies
$$\delta(m) + \tfrac{1}{2}[m,m] = 0,$$
or equivalently
$$\bdy \circ m + m \circ \bdy + m \circ m = 0.$$

Now let $\KL[t,dt]$ denote the graded commutative algebra freely generated by formal variables $t,dt$ with $|t| = 0$ and $|dt| = 1$.
Following e.g. \cite{manetti2005deformation}, we equip the tensor product $\End(\cf(H)) \otimes_{\KL} \KL[t,dt]$ with the structure of a DGLA, with differential
$$ \delta(A(t) + B(t)dt) = \delta A(t) + (-1)^{|A(t)|} \dot{A}(t)dt + \delta B(t) dt$$
and 
Lie bracket 
$$ [A(t) + B(t)dt, P(t) + Q(t)dt] = [A(t),B(t)] + [A(t),Q(t)]dt + (-1)^{|P(t)|}[B(t),P(t)]dt $$
for $A(t) + B(t)dt, P(t) + Q(t)dt \in \End(\cf(H)) \otimes \KL[t,dt]$.
In particular, an element $A(t) + B(t)dt$ is Maurer--Cartan if and only 
\begin{enumerate}
\item
$A(t) \in \End(\cf(H))$ is Maurer--Cartan for each $t \in [0,1]$
\item we have
\begin{align}\label{eqn:want}
\dot{A}(t) = \delta B(t) + [A(t),B(t)]. 
\end{align}
\end{enumerate}

Put
$$M := \sum_{q=1}^\infty t^q\hbar^q \delta_{q;\mho} + \left( \sum_{q=1}^{\infty} t^{q-1}\hbar^q F_q\right )dt.$$
We claim that $M$ is a Maurer--Cartan element in $\End(\cf(H))$.
Indeed, ~\eqref{eqn:have} gives precisely the $t^{q-1}$ term of ~\eqref{eqn:want}.
The above shows that $M|_{t = 1} = m$ and $M|_{t =0} = 0$ are homotopic in $\End(\cf(H))$.
According to \cite[Thm. 5.5]{manetti2005deformation}, this implies that they are gauge equivalent, i.e. we have
$$  e^a \ast m := m + \frac{e^{[a,-]} - \op{Id}}{[a,-]}([a,m] - \delta m)  =  0  $$
for some $a \in \End(\cf(H))$.
This is equivalent to $e^a (\bdy + m) e^{-a} = \bdy$ (see \cite{manetti2005deformation}), and hence $e^a$ gives the desired chain isomorphism $\cf_{\mho}(H) \rightarrow \cf(H).$

\subsection{Some functoriality properties}\label{subsec:some functoriality properties}

We describe here some additional context which is helpful for interpreting the results in this paper.
We first discuss the transfer map for twisted and bulk deformed symplectic cohomology. Among other things, this is used to prove that these are invariant under symplectomorphisms which preserve the (co)homology class of $\Omega$ or $\mho$.
We then briefly discuss twisted and bulk deformed wrapped Floer cohomology and their module structures over symplectic cohomology. 
Since the computational techniques in this paper apply most directly to wrapped Floer cohomology, this will allow us 
conclude that symplectic cohomology is nontrivial whenever wrapped Floer cohomology is nontrivial.

\subsubsection{The transfer map}

Let $(M,\theta)$ be a Liouville domain and let $(W,\la) \subset (M,\theta)$ be a Liouville subdomain.
In particular this means that $(M,\theta)$ is itself a Liouville domain, and we assume for simplicity that $\la = \theta|_W$.
In this case there is a transfer map, first constructed by Viterbo in \cite{viterbo1999functors}, which is a unital $\field$-algebra map $\sh(M,\theta) \rightarrow \sh(W,\la)$.
One basic consequence is that $\sh(M,\theta)$ is an exact symplectomorphism invariant of $(\wh{M},d\wh{\theta})$.
Namely, using the transfer map we can alternatively define $\sh(M,\theta)$ as the inverse limit of the symplectic cohomologies over all Liouville subdomains in $(\wh{M},\wh{\theta})$.
This alternative definition of symplectic cohomology is straightforwardly equivalent to the standard definition, and moreover it manifestly only depends on the exact symplectomorphism type of $(\wh{M},\wh{\theta})$. 
In fact, by \cite[Lemma 11.2]{cieliebak2012stein}, if two finite type Liouville manifolds are symplectomorphic then they are actually exact symplectomorphic, so this means that $\sh(M,\theta)$ is invariant under general symplectomorphisms of $(\wh{M},d\wh{\theta})$.
 
 To construct the transfer map, the basic idea is to utilize the action filtration on the symplectic cochain complex. One
 proceeds by considering a special class of ``step-shaped" Hamiltonians on $\wh{M}$ which are approximately zero in $\Int(W)$, linear with some slope $\tau_a > 0$ near $\bdy W$, approximately constant in $\Int(M) \setminus W$, and linear with some slope $\tau_b > 0$ on $\wh{M} \setminus \Int(M)$.
 Such a Hamiltonian belongs to the class $\calH_{\tau_b}$, and therefore we can compute $\sh(M,\theta)$ via a sequence of such Hamiltonians with $\tau_b \rightarrow \infty$.
 Moreover, the actions of orbits of $H$ are essentially controlled by the slopes $\tau_a$ and $\tau_b$, 
 and with some care we find a sequence of such Hamiltonians such that $\tau_a,\tau_b \rightarrow \infty$
 and any orbit in $\Int(W)$ has negative action while any orbit in $\wh{M} \setminus \Int(W)$ has positive action.
 This means that the orbits in $\wh{M} \setminus \Int(W)$ form a subcomplex, and hence the orbits in $\wh{W}$ form a quotient complex.
 In fact, by further picking almost complex structures on $\wh{M}$ which are contact type near $\bdy W$,
 the integrated maximum implies that Floer trajectories between orbits in $W$ are entirely contained in $W$.
 That is, the quotient complex is indistinguishable from the Floer complex of a certain Hamiltonian in $\calH_{\tau_a}$.
 Finally, after passing to cohomology, arguing similarly for monotone continuation maps and taking a direct limit, the quotient map induces the transfer map $\sh(M,\theta) \rightarrow \sh(W,\la)$.
 
Now suppose the $\Omega$ is a closed two-form on $M$. Ritter shows in \cite{ritter2013topological} that there is also a twisted transfer map of the form $\sh_\Omega(M,\theta) \rightarrow \sh_{\Omega|_W}(W,\la)$. 
Since the relevant Hamiltonian orbits, pseudoholomorphic cylinders, and action values are the same as in the untwisted case, one just needs to think a little bit about the role of $\Omega$ and $\Omega|_W$ to see that the same proof outlined above still holds. 

In fact, if $i_\mho: (\mho,\bdy \mho) \rightarrow (M,\bdy M)$ is a smooth cycle of codimension $l > 2$, say transverse to $\bdy W$, we can apply essentially the same proof to construct a transfer map
$\sh_\mho(M,\theta)\rightarrow \sh_{\mho|_W}(W,\la)$.
Indeed, the bulk deformed symplectic cochain complexes of $M$ and $W$ are generated by the same types of Hamiltonian orbits, and we can take all Hamiltonian terms in the constructions of $\delta_\mho$ and $\Phi_\mho$ to be of the same step-shaped form.
In order to arrange that the action filtration also behaves as expected, we need to pick perturbation data with slightly more care.
Recall that the action of a loop $\gamma: S^1 \rightarrow \wh{M}$ with respect to the time-dependent Hamiltonian $H$ is given with our conventions by 
\begin{align*}
\calA_H(\gamma) := -\int_{S^1} \gamma^*\theta + \int_0^1 H(\gamma(t))dt.
\end{align*}
If $u: S \rightarrow \wh{M}$ is a pseudoholomorphic marked cylinder as in the construction of $\delta_\mho$ or $\Phi_\mho$ with asymptotic orbits $\gamma_\pm$, we need $\calA_{H_-}(\gamma_-) \geq \calA_{H_+}(\gamma_+)$.
The conditions described in \S\ref{subsubsec:consistent universal perturbation data} suffice for a maximum principle, but they do not guarantee that the differential and continuation maps increase action since they only apply on the cylindrical end $\wh{M} \setminus M$.
The stronger condition needed for an action filtration is 
\begin{align*}
dK(\cdot,p) \leq 0 \;\;\;\;\;\text{for all}\;\;\;\;\; p \in \wh{M},
\end{align*}
where $K(\cdot,p)$ denotes the one-form on $\wh{M}$ obtained by evaluating Hamiltonian functions at $p$.
Note that the condition $d(\tau\gamma) \leq 0$ from \S\ref{subsubsec:consistent universal perturbation data} 
is essentially equivalent to the above condition holding on the cylindrical end.
For example, when $K = Hdt$ for $H$ a family of Hamiltonians depending only on the $s$ coordinate,
this condition becomes $\bdy_sH \leq 0$.

To see that such a condition can be satisfied, suppose that we have functions $H_\pm: \wh{M} \rightarrow \R$ with $H_- \geq H_+$. 
It will suffice to arrange that our perturbation term $K \in \Omega^1(S,\calH)$ satisfies:
\begin{itemize}
\item
 $(\epsilon_\pm')^*K \equiv H_\pm dt$
\item
on each disk-like neighborhood of $S$ we have $K \equiv 0$
\item 
$dK(-,p) \leq 0$ for all $p \in \wh{M}$.
\end{itemize} 
Let $\wt{\epsilon}'_-$ be an extension of the negative cylindrical $\epsilon'_-$ end to $(-\infty,\delta) \times [0,1]$ for some small $\delta > 0$.
Let $\gamma$ be a closed one-form on $S$ satisfying $(\epsilon_+')^*\gamma = dt$ and $(\wt{\epsilon}_-')^*\gamma = dt$.
Then it suffices to take $K$ of the form $H \otimes \gamma$, such that 
$H \in C^{\infty}(S,\calH)$ is equal to $H_+$ outside of the image of $\wt{\epsilon}_-'$,
while near the negative cylindrical end we have $\bdy_s (H_- \circ \wt{\epsilon}'_-) \leq 0$.
Note that the space of all such $K$ satisfying the above conditions is convex and hence contractible, so as before we can also find allowable perturbation data in families.

\subsubsection{Wrapped Floer cohomology as a module over symplectic cohomology}\label{subsubsec:wrapped Floer as a module}

Let $L$ be an exact Lagrangian in a Liouville domain $(M,\theta)$ with Legendrian boundary $\bdy L$ in $(\bdy M, \theta|_{\bdy M})$.
We further assume that $\theta|_L$ vanishes near $\bdy L$ (this can always be a achieved by a suitable Hamiltonian isotopy - see \cite[Lemma 4.1]{abouzaid2010open} and also \cite{ritter2013topological}),
and that the Reeb chords of $\theta|_{\bdy M}$ with endpoints on $L$ are nondegenerate.
In this case, near $\bdy M$ we have that $L$ is a cylinder over $\bdy L$ with respect to the Liouville flow, and therefore we can naturally complete $L$ to a noncompact Lagrangian $\wh{L} \subset \wh{M}$.
The self wrapped Floer cohomology of $L$, denoted by $\hw(L,L)$, is the open string analogue of $\sh(M,\theta)$.
It is defined in close analogy with symplectic cohomology by taking a direct limit over linear Hamiltonians $H \in \calHtau$ of the Floer cohomology $\field$-modules $\hf(L,L;H)$.

Slightly more generally, if $\Omega$ is a closed two-form on $M$ with support disjoint from $\Op(L)$, we can define the twisted wrapped Floer cohomology $\hw_\Omega(L,L)$ by using $\K$ coefficients and weighting counts of Floer strips $u$ by $t^{\int u^*\Omega}$.
Similarly, if $i_\mho: (\mho,\bdy \mho) \rightarrow (M,\bdy M)$ is a smooth cycle of codimension $l > 2$, 
we define $\hw_\mho(L,L)$ as an $\KL$-module by following a similar outline to the one we used for $\sh_\mho(M,\theta)$ in \S\ref{subsec:bulk deformed sh}.
In this case the domains of the relevant curves are Riemann disks with one output boundary puncture, one input boundary puncture, and some number $q \geq 0$ of interior marked points, and the corresponding moduli space is modeled on $\calR_{2,q}$.

As explained for example in \cite{ritter2013topological}, $\hw(L,L)$ admits the structure of a unital $\field$-module over $\sh(M,\theta)$.
Namely, by counting pseudoholomorphic maps of the form $\D^2 \setminus \{-1,1,0\} \rightarrow \wh{M}$
which are asymptotic to a Hamiltonian orbit at $0$ and to Hamiltonian chords with endpoints on $L$ at $-1$ and $1$,
after passing to cohomology and taking a direct limit
we get a map of the form 
\begin{align*}
\sh(M,\theta) \otimes \hw(L,L) \rightarrow \hw(L,L).
\end{align*}
Note that we are using a fixed conformal structure on the domain disk and therefore this is a map a degree zero (compare this to the closed-open map as in \cite{ganatra2013symplectic}).
 One can also check that it behaves as expected with respect to produce structures and units.

Similarly, in the twisted or bulk deformed settings we can construct maps unital algebra maps of the form
\begin{align*}
\sh_\Omega(M,\theta) \otimes \hw_\Omega(L,L) \rightarrow \hw_\Omega(L,L)
\end{align*}
and
\begin{align*}
\sh_\mho(M,\theta) \otimes \hw_\mho(L,L) \rightarrow \hw_\mho(L,L).
\end{align*}
In any of these contexts, a standard consequence of unitality is that vanishing symplectic cohomology implies vanishes wrapped Floer cohomology. Contrapositively, the existence of a Lagrangian as above with nontrivial self wrapped Floer cohomology implies that the ambient Liouville domain has nontrivial symplectic cohomology.

\subsection{Some broad view remarks and conjectures}

\subsubsection{Derived local systems}

As an extension of Remark \ref{rem:local system}, bulk deformed symplectic cohomology can viewed as symplectic cohomology with coefficients in a certain {\em derived local system}.
Just as local systems on a topological space $X$ correspond to modules over the fundamental group, derived local systems correspond to chain complexes over $C_*(\Omega X)$, where $\Omega X$ denotes the based loop space of $X$.
Roughly, a derived local system associates to every point of $X$ a chain complex over $C_*(\Omega X)$ and to every $p$-dimensional family of paths from $p$ to $p'$ a degree $p$  map between the associated chain complexes. In the case of bulk deforming by a cycle $\mho$ in $X$, the associated derived local system on $\Omega X$ is given by viewing a family of paths in $\Omega X$ as a family of cylinders in $X$ and then considering the intersection number with $\mho$.

\subsubsection{Twisting versus bulk deforming}\label{subsubsec:twisting versus bulk deforming}

Suppose that we try to apply the above the construction of $\sh_\mho(M,\theta)$ in the case where $\mho$ is of codimension $l = 2$. In this case we do not have a priori convergence of the sums defining $\delta_\mho$. In fact, suppose that $u: \R \times S^1 \rightarrow \wh{M}$ is an isolated pseudoholomorphic curve which is transverse to $\mho$.
If we imagine taking perturbation data which is independent of the location of the marked points, we see that each intersection point of $u$ 
 with $\mho$ contributes an infinite sum to $\delta_\mho$ of the form 
\begin{align*}
1 + \hbar + \hbar^2/2! + ... = e^{\hbar}.
\end{align*}
Therefore heuristically we have
\begin{align*}
\delta_\mho(\gamma_+) = \sum_{\substack{\gamma_-\\ u \in \calM(\gamma_-,\gamma_+)^\oo}}  e^{\hbar [u]\cdot [\mho]}\gamma_-.
\end{align*}
In particular, if there is an element $t \in \KL$ such that $e^t = \hbar$,
then this becomes
\begin{align*}
\delta_\mho(\gamma_+) = \sum_{\substack{\gamma_-\\ u \in \calM(\gamma_-,\gamma_+)^\oo}}  t^{ [u]\cdot [\mho]}\gamma_-.
\end{align*}
Modulo replacing $\mho$ with a Poincar\'e dual two-form $\Omega$, this looks just like the twisted differential for $\sh_{\Omega}(M,\theta)$.

\subsubsection{The $\calL_\infty$ structure on symplectic cohomology}

By work of Fabert \cite{fabert2013} and ongoing work of Borman--Sheridan,
there is an $\calL_\infty$ structure underlying symplectic cohomology.
In particular, there is a standard procedure to deform the differential of this $\calL_\infty$ algebra via any element which satisfies the Maurer--Cartan equation. 
It seems plausible that any cohomology class $c$ in $(M,\theta)$ can be represented by such a Maurer--Cartan element (say as a linear combination of Morse critical points), and that the resulting deformed symplectic cohomology is isomorphic to our construction of the symplectic cohomology bulk deformed by $c$.

Furthermore, recall that there is a closed-open map \cite{abouzaid2010geometric,ganatra2013symplectic} from the symplectic cohomology of $(M,\theta)$ to the Hochschild cohomology of the wrapped Fukaya category of $(M,\theta)$.
It also seems plausible that this map could be upgraded to chain level homomorphism of $\calL_\infty$ algebras, and hence that the Maurer--Cartan element corresponding to $c$ pushes forward to a Maurer--Cartan in the Hochschild cochains of the  wrapped Fukaya category.
In this case it seems natural to ask whether the Fukaya category of $(M,\theta)$ bulk deformed by (the Poincar\'e dual to) $c$ can be viewed as the deformed $\calA_\infty$ category with respect to this Maurer--Cartan element.

\section{Lefschetz fibrations}\label{sec:Lefschetz fibrations}

The main goal of this section is to understand pseudoholomorphic sections of certain Lefschetz fibrations with boundary conditions specified by matching cycles. 
The material is mostly minor variations of facts from the literature, except for \S\ref{subsec:the model computation},
where we make a seemingly new observation about sections of the model Lefschetz fibration.
We will subsequently rehash these results into statements about Fukaya categories in \S\ref{sec:squared dehn twists}.

\subsection{The basics}\label{subsec:the basics}

We begin with the basic notions of Lefschetz fibrations insofar as they will be used in this paper. Roughly, a Lefschetz fibration is a map $E \rightarrow \D^2$ whose singularities look like those of a complex Morse function. 
Here $\D^2$ denotes the closed unit disk, the total space $E$ will generally be a compact manifold with corners, and we will call the Lefschetz fibration ``exact" if every regular fiber is endowed with the structure of a Liouville domain. 
We next give a formal definition, with the caveat that the precise nuances will not play an essential role. To first establish some notation, let:
\begin{itemize}
\item
$\pi_\std: \C^n \rightarrow \C$ denote the model Lefschetz map, given by $\pi_\std(z_1,...,z_n) = z_1^2 + ... + z_n^2$
\item
$\Theta_\std := \frac{i}{4} \sum_{i=1}^n \left( z_i d\ovl{z_i} - \ovl{z_i}dz_i\right)$ denote the standard K\"ahler potential on $\C^n$.
\end{itemize}
\begin{definition}\label{def:lefschetz fibration}
An {\em exact Lefschetz fibration over $\D^2$} is a triple $(E^{2n},\Theta,\pi)$, where:
\begin{enumerate}
\item
$E^{2n}$ is a compact manifold with corners, $\Theta$ is a one-form on $E^{2n}$, and $\pi: E^{2n} \rightarrow \D^2$ is a smooth map
\item 
{\em Compatibility with $\Theta$}: $d\Theta$ is nondegenerate on the vertical tangent space $\ker(D_p\pi)$ for all nonsingular points $p \in E$ of $\pi$. Also, for each $z \in \D^2$, $\Theta_z := \Theta|_{E_z}$ restricts to a positive contact form along the boundary of the fiber $E_z := \pi^{-1}(z)$.
\item
{\em Lefschetz type singularities}: $\pi$ has finitely many critical points $p_1,...,p_k$ which lie in the interior of $E$ and map to pairwise distinct critical values. Each critical point $p_i$ and its critical value $q_i := \pi(p_i)$ 
have neighborhoods which are oriented diffeomorphic to neighborhoods of the origin in $\C^n$ and $\C$ respectively, such that $\pi$ is identified with $\pi_\std$ and $\Theta$ is identified with $\Theta_\std$.
\item
{\em Triviality of the horizontal boundary}: For any $p \in \bdy_hE$, the horizontal tangent space, consisting of all vectors in $T_pE$ which are $d\Theta$-orthogonal to $\ker(D_p\pi)$, is tangent to $\bdy_hE$.
Here the {\em vertical boundary} of $E$ is $\bdy_vE := \pi^{-1}(\bdy \PP)$ and the {\em horizontal boundary} is $\bdy_hE := \bdy E \setminus \Int(\bdy_v E)$.
\end{enumerate}
\end{definition}

In the case that $(E,\Theta)$ is in fact a Liouville domain with corners, meaning that $d\Theta$ is nondegenerate and the Liouville vector field $Z_\Theta$ is outwardly transverse along each boundary face of $E$, we call $(E,\Theta,\pi)$ a {\em Liouville Lefschetz fibration}. 
For any exact Lefschetz fibration $(E,\Theta,\pi)$ over $\D^2$, one can check that $d\Theta + Cd\theta_\std$ is nondegenerate for all $C > 0$ sufficiently large, where $\theta_\std$ denotes the standard Liouville one-form on $\D^2$. In particular, this makes $(E,\Theta + C\theta_\std)$ into a Liouville domain with corners.
After smoothing the corners we get a Liouville domain which is determined up to Liouville deformation equivalence by $(E,\Theta,\pi)$.

\sss

Let $(E,\Theta,\pi)$ be an exact Lefschetz fibration over $\D^2$ with critical values $q_1,...,q_k \in \Int(\D^2)$ as in Definition \ref{def:lefschetz fibration}.
Away from the critical points of $\pi$, there is a symplectic connection which associates to each $p \in E$ the horizontal tangent space in $T_pE$.
Using this connection, we can associate to any immersed path $\gamma: [0,1] \rightarrow \D^2$ a parallel transport map $\Phi_{\gamma}: E_{\gamma(0)} \rightarrow E_{\gamma(1)}$ which is well-defined away from the critical points of $\pi$.
In particular, if $\gamma$ is disjoint from $\{q_1,...,q_k\}$, $\Phi_{\gamma}$ is an exact symplectomorphism $(E_{\gamma(0)},\Theta_{\gamma(0)}) \cong (E_{\gamma(1)},\Theta_{\gamma(1)})$.

Now assume there is a distinguished base point $* \in \bdy \D^2$.
A path $\eta: [0,1] \rightarrow \D^2$ is called a {\em vanishing path} if $\eta(0) = *$, $\eta(1)$ is a critical value of $\pi$, and $\eta|_{(0,1)}$ is an embedding into $\Int(\D^2) \setminus \{q_1,...,q_k\}$. To any vanishing path we can associate:
\begin{itemize}
\item
the {\em thimble} $T_\eta$, which is the Lagrangian disk in $E$ consisting of all points which lie over $\eta$ and get parallel transported along $\eta$ to the unique critical point of $\pi$ in $E_{\eta(1)}$\footnote{Strictly speaking, this should be interpreted using parallel transport away from $E_{\eta(1)}$ and then taking the limit as we approach $E_{\eta(1)}$.}
\item
the {\em vanishing cycle} $V_\eta$, which is the exact Lagrangian sphere $T_\eta \cap E_*$ in $(E_*,\Theta_*)$.
\end{itemize}
Each vanishing cycle is naturally equipped with a {\em framing}, meaning a parametrization $S^n \overset{\cong}\rightarrow V_\eta$, defined up to precomposing with an orthogonal diffeomorphism of $S^n$ (see \cite[\S 16a]{seidelbook}).
We define a {\em basis of vanishing paths} to be a collection of vanishing paths $\eta_1,...,\eta_k$ which intersect pairwise only at $*$, and such that $\eta_i(1) = q_i$ for $i = 1,...,k$. Here $\eta_1,...,\eta_k$ are ordered clockwise according to the local orientation at $*$.
We get a corresponding collection of vanishing cycles $V_{\eta_1},...,V_{\eta_k}$ in $(E_*,\Theta_*)$, which we call a {\em basis of vanishing cycles}. 
Conversely, given any list of framed exact Lagrangians spheres $V_1,..,V_k$ in a Liouville domain $(M,\theta)$, we can construct a Liouville Lefschetz fibration over $\D^2$ such that a regular fiber is identified with $(M,\theta)$ and a basis of vanishing cycles is identified with $V_{\eta_1},...,V_{\eta_k}$.

\subsection{Matching cycles}
 The {\em matching cycle construction} provides a rather combinatorial approach to producing Lagrangian spheres in the total space of a Liouville Lefschetz fibration $(E,\Theta,\pi)$.
Namely, let $\gamma$ be an embedded path in the base which intersects the critical values of $\pi$ at precisely $\gamma(0)$ and $\gamma(1)$.
As in the vanishing cycle construction, we get Lagrangian disks $T_0$ and $T_1$ by considering those points in $E$ which lie above $\gamma_{[0,1/2]}$ and $\gamma_{[1/2,1]}$ respectively and are parallel transported to the corresponding critical point. The boundaries $V_0 = \bdy T_0$ and $V_1 = \bdy T_1$ are framed exact Lagrangian spheres in $(E_{\gamma(1/2)},\Theta_{\gamma(1/2)})$.
If $V_0$ and $V_1$ coincide (including their framings), we call $\gamma$ a {\em naive matching path}. In this case the union $L_\gamma := T_0 \cup T_1$ is a (framed) exact Lagrangian sphere in $(E,\Theta)$, which we call the {\em naive matching cycle} associated to $\gamma$. 

More generally, suppose that $V_0$ and $V_1$ are isotopic through framed exact Lagrangians in $(E_{\gamma(1/2)},d\theta|_{\gamma(1/2)})$. Let $I$ denote such an isotopy, called a {\em matching isotopy}. In this case $\gamma$ is called a {\em matching path}, and we can still construct a {\em matching cycle} after suitably deforming $(E,\Theta,\pi)$.
More precisely, as explained in \cite[\S 16g]{seidelbook}, we can find a deformation $(E,\Theta_t,\pi_t)$, constant outside of $\Op(E_{\gamma(1/2)})$, after which $\gamma$ becomes a naive matching path.
In fact, fixing the homotopy class of the matching isotopy $I$ rel endpoints, the resulting $L_\gamma$ is well-defined up to a further deformation and simultaneous isotopy of $L_\gamma$ through framed exact Lagrangians. 
Alternatively, we can pull back $L_\gamma$ to the original $(E,\Theta)$ after applying a Moser-type isotopy, and the result is well-defined up to isotopy through framed exact Lagrangians.

A particularly nice situation is when a Liouville Lefschetz fibration $(E,\Theta,\pi)$ has a fiber which itself admits an auxiliary Liouville Lefschetz fibration, and the vanishing cycles of the former are matching cycles in the latter. In this case we say that $(E,\Theta,\pi)$ is a {\em matching type} Lefschetz fibration. Note that this is just a slightly less sophisticated analogue of a {\em Lefschetz bifibration} as considered in \cite[15e]{seidelbook}. Our main interest in matching type Lefschetz fibrations is that questions about Floer theory of the vanishing cycles can sometimes be converted to questions about pseudoholomorphic sections of the auxiliary Lefschetz fibration.

\sss

Finally, it will be important for us to understand the effect of Dehn twisting one matching cycle about another, at least in the following special case. Consider two matching paths $\gamma_0$ and $\gamma_1$ which satisfy $\gamma_0(1) = \gamma_1(0)$ and are otherwise disjoint. Let $\gamma$ denote the path obtained by concatenating $\gamma_0$ and $\gamma_1$ and then taking a small right-handed pushoff to disjoin it from $\gamma_0(1)$. 
It is easy to check that $\gamma$ is again a matching path, and after taking care with framings and matching isotopies, we have
\begin{prop}{\cite[Lemma 16.13]{seidelbook}}\label{prop:matching cycle half-twist}
The matching cycle $L_\gamma$ is, up to Hamiltonian isotopy, given by Dehn twisting $L_{\gamma_0}$ about $L_{\gamma_1}$.
\end{prop}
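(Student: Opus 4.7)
The plan is to localize the comparison to a small neighborhood of the fiber above the common critical value $q := \gamma_0(1) = \gamma_1(0)$, where $(E,\Theta,\pi)$ has a standard model. Outside a small disk $U \ni q$ the path $\gamma$ coincides with $\gamma_0 \cup \gamma_1$, so the portions of $L_\gamma$ lying over $\D^2 \sm U$ are canonically identified with the corresponding portions of $L_{\gamma_0} \cup L_{\gamma_1}$. By choosing $\tau_{L_{\gamma_1}}$ to be supported in a Weinstein neighborhood of $L_{\gamma_1}$ projecting into a thin strip around $\gamma_1$, one likewise sees that $\tau_{L_{\gamma_1}}(L_{\gamma_0})$ agrees with $L_{\gamma_0}$ outside such a strip. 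All the nontrivial content of the comparison therefore takes place in $\pi^{-1}(U)$, where $L_{\gamma_0}$ and $L_{\gamma_1}$ meet only at the critical point above $q$.

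Within $\pi^{-1}(U)$, Definition \ref{def:lefschetz fibration}(3) identifies $(\pi,\Theta)$ with $(\pi_\std,\Theta_\std)$ on $\C^n$. In this local model, a radial ray at angle $\theta$ from the origin has thimble $\{e^{i\theta/2}x \,:\, x \in \R^n,\; |x|^2 \in \R_{\geq 0}\}$, so the local pieces of $L_{\gamma_0}$ and $L_{\gamma_1}$ become two such thimbles meeting transversely at $0$. The right-handed pushoff replaces the corner at $0$ by a smooth arc traversing the upper half of a small loop about the origin. By the Picard--Lefschetz description of the symplectic monodromy, parallel transport along a full counterclockwise loop around $0$ realizes the generalized Dehn twist around the vanishing cycle; the half loop therefore transports the tip of the $\gamma_0$-thimble into a Lagrangian disk whose boundary in a reference fiber is the image of the $\gamma_0$-vanishing cycle under a half-twist. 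Gluing back to the $\gamma_1$-thimble, a Moser argument identifies the local piece of $L_\gamma$ with the Lagrangian connect sum of the two thimbles at their common intersection point. Passing to a Weinstein neighborhood $T^*S^n$ of $L_{\gamma_1}$, in which the nearby piece of $L_{\gamma_0}$ appears as a cotangent fiber, then identifies this connect sum with $\tau_{L_{\gamma_1}}(L_{\gamma_0})$ up to Hamiltonian isotopy, by the standard normal form for the generalized Dehn twist on $T^*S^n$.

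The main technical obstacle is the bookkeeping of framings and matching isotopies. Matching cycles carry framings defined only up to $O(n+1)$, and both matching cycles and the Dehn twist are well-defined only up to a Liouville-type deformation of $(E,\Theta,\pi)$ together with a simultaneous Lagrangian isotopy. One must verify that the framing ambiguities introduced by the half-pushoff are absorbed by the $O(n)$-symmetries of $\pi_\std$, so that the final isotopy class is genuinely $\tau_{L_{\gamma_1}}(L_{\gamma_0})$ rather than a competitor differing by an orientation-reversing automorphism of the framing sphere. Once the framings are aligned in the standard model, a Moser-type argument confines all required isotopies to $\pi^{-1}(U)$ and completes the identification.
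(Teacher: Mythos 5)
The paper does not prove this statement at all: it is quoted directly from \cite[Lemma 16.13]{seidelbook}, where the argument runs quite differently from yours. Seidel shows that the Dehn twist $\tau_{L_{\gamma_1}}$ admits a representative fibered over the half-twist of the base along the matching path $\gamma_1$, and that matching cycles are natural under such fibered symplectomorphisms; since the half-twist along $\gamma_1$ carries $\gamma_0$ precisely to the right-handed pushoff $\gamma$ of the concatenation, one gets $\tau_{L_{\gamma_1}}(L_{\gamma_0}) \simeq L_\gamma$ with the handedness built in. (The remark immediately following the proposition in the paper is exactly this mechanism.) Your proposal instead localizes to the critical fiber and tries to identify $L_\gamma$ with a Lagrangian connected sum of the two thimbles and then with $\tau_{L_{\gamma_1}}(L_{\gamma_0})$.

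As written this has a genuine gap at its final and decisive step. The identification of the connected sum (surgery) of the two thimbles with $\tau_{L_{\gamma_1}}(L_{\gamma_0})$ is not a consequence of ``the standard normal form for the generalized Dehn twist on $T^*S^n$'': the normal form describes the map $\tau$, but the statement that the image of a Lagrangian meeting $L_{\gamma_1}$ transversely in one point is Hamiltonian isotopic to the Lagrangian surgery, with the correct handedness (i.e.\ $\tau$ rather than $\tau^{-1}$), is itself a theorem of essentially the same depth as the proposition, and its standard proofs go either through a detailed local computation or through precisely the half-twist/matching-cycle argument you are trying to avoid; invoking it here makes the argument circular or at best incomplete. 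Two further points need repair. First, the comparison cannot be confined to $\pi^{-1}(U)$ for a small disk $U \ni q$: over $\gamma_1 \sm U$ the matching cycle $L_\gamma$ literally coincides with a thimble piece of $L_{\gamma_1}$, whereas $\tau_{L_{\gamma_1}}(L_{\gamma_0})$ is only a nearby copy inside the Weinstein neighborhood of $L_{\gamma_1}$, so the required Hamiltonian isotopy is necessarily spread over a neighborhood of all of $L_{\gamma_1}$, not just over $U$. Second, the ambiguity you propose to absorb by $O(n)$-symmetries of the framing is not the dangerous one: the real competitor is $\tau^{-1}_{L_{\gamma_1}}(L_{\gamma_0})$, corresponding to a left-handed pushoff, and distinguishing the two requires tracking the orientation of the pushoff through the monodromy computation, which the sketch does not do.
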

\begin{remark}
More generally, half-twisting about a matching path in the base of a Lefschetz fibration induces a Dehn twist about the corresponding matching cycle in the total space.
\end{remark}

\subsection{Boundary conditions and pseudoholomorphic sections}\label{subsection:boundary conditions and pseudoholomorphic sections}

Let $(E,\Theta,\pi)$ be a Lefschetz fibration over $\D^2$.
In order to discuss pseudoholomorphic sections, we first need to explain what types of boundary conditions we wish to allow.
By a {\em polygon in $\D^2$} we mean a simply-connected closed subset $\PP \subset \D^2$ whose boundary is smooth apart from $d \geq 0$ (convex) corners $v_1,...,v_d \in \bdy \PP$, each of which is a critical value of $\pi$.
In the sequel we will sometimes focus our attention on the part of the Lefschetz fibration lying above $\PP$,
and we refer to any restricted triple of the form $(E_\PP := \pi^{-1}(\PP),\Theta|_{E_{\PP}},\pi|_{E_{\PP}})$ as a {\em Lefschetz fibration over $\PP$}.

\begin{definition}
An {\em exact Lefschetz boundary condition over $\PP$} is an immersed submanifold ${Q \subset \pi^{-1}(\bdy \PP)}$ such that:
\begin{itemize}
\item for each $z \in \bdy \PP \setminus \{v_1,...,v_d\}$, the fiber $Q_z := Q \cap E_z$ is a connected exact Lagrangian submanifold of $(E_z,\Theta_z)$
\item for each vertex $v_i \in \bdy \PP$, $Q_{v_i}$ is the unique critical point of $\pi$ in $E_{v_i}$
\item if $\gamma: [0,1] \hookrightarrow  \bdy \PP$ is an embedded path which is disjoint from $\{v_1,...,v_d\}$ except for possibly $\gamma(1)$,
then $Q_{\gamma(0)}$ is mapped to $Q_{\gamma(1)}$ under parallel transport.
\end{itemize}
\end{definition}
In particular, if $\PP$ has no vertices, for example if it is the entire disk $\D^2$, then $\pi|_Q: Q \rightarrow \bdy \D^2$ is a smooth fiber bundle and $Q$ is precisely a ``Lagrangian boundary condition" as in \cite[\S 17a]{seidelbook}.
On the other hand, if $\PP$ has at least two vertices then $Q$ is simply a union of naive matching cycles.
In general, if $(E,\Theta,\pi)$ is a Liouville Lefschetz fibration the parallel transport condition implies that $Q$ is an immersed exact Lagrangian submanifold of $(E,\Theta)$.

\sss

We also need to take some care in picking almost complex structures.
Let $(E,\Theta,\pi)$ be an exact Lefschetz fibration over $\D^2$, and let $J_\std$ and $i_\std$ denote the standard almost complex structures on $\C^n$ and $\C$ respectively.
We also denote the standard almost complex structure on $\D^2$ again by $i_\std$.
Let $\calJ_\pi$ denote the space of almost complex structures $J$ on $E$ such that:
\begin{enumerate}
\item
$D\pi \circ J = i_{\std} \circ D\pi$.
\item
for any $p \in E$, $(d\Theta)(\cdot,J\cdot)$ is symmetric and positive definite when restricted to the vertical tangent space $\ker(D_p\pi)$
\item 
on $\Op(\bdy_hE)$, we have $\Theta \circ J = d(e^r)$, where $r$ is the fiberwise collar coordinate
as in \S\ref{subsubsec:Liouville subdomains}
\item
each critical point $p$ of $\pi$ and its critical value $q = \pi(p)$ have model neighborhoods as in Definition \ref{def:lefschetz fibration}(3),(4) such that $J$ and $i$ are identified with the restrictions of $J_\std$ and $i_\std$ respectively.
\end{enumerate}
Note in particular that $J$ restricts to a contact type compatible almost complex structure on each regular fiber of $\pi$. A basic fact is that $\calJ_\pi$ is contractible.
By restricting this data, in particular replacing $i_{\std}$ with its restriction $i_{\PP}$ to $\PP$, we similarly define $\calJ_\pi$ when $(E,\Theta,\pi)$ is a Lefschetz fibration over a polygon $\PP \subset \D^2$.

\sss

Now suppose that $Q$ is an exact Lefschetz boundary condition over $\PP$ and we have $J \in \calJ_\pi$.
The space $\calM_{Q,J}$ of {\em pseudoholomorphic sections with boundary condition $Q$} is by definition the space of maps $u: \PP \rightarrow E$ such that:
\begin{enumerate}
\item
$\pi(u(z)) = z$ for all $z \in \PP.$
\item
$Du \circ i_\PP = J \circ Du$.
\item
$u(\bdy \PP) \subset Q$.
\end{enumerate}
The conditions on $\calJ_\pi$ ensure a maximum principle for the elements of $\calM_{Q,J}$ and the exactness assumptions rule out bubbling, so $\calM_{Q,J}$ is compact by a version of Gromov's compactness theorem. Moreover, $\calM_{Q,J}$ is regular and hence a smooth manifold for generic $J$. More precisely, let $\calJ_\pi^\reg \subset \calJ_\pi$ denote the subspace of those $J$ for which $\calM_{Q,J}$ is regular.
 Then for any $J \in \calJ_\pi$ and any nonempty open set $U \subset \Int(\PP)$, there is a $J' \in \calJ_\pi^\reg$ which is $C^\infty$-close to $J$ and coincides with $J$ outside of $\pi^{-1}(U)$.
In fact, after a small local deformation of $(E,\Theta,\pi)$, we can further assume that $J'$ is {\em horizontal}, meaning that $d\Theta(\cdot,J\cdot)$ is symmetric (see \cite[Lemma 2.4]{seidel2003long} and \cite[\S 17a]{seidelbook}).
In particular, if $(E,\Theta,\pi)$ is a Liouville Lefschetz fibration, such an almost complex structure is compatible with $d\Theta$.
From now on we will assume that chosen elements of $\calJ_\pi^\reg$ are horizontal unless stated otherwise. 

For each regular value $p \in \bdy \PP$, evaluating sections at $p$ gives a map
$\ev_p: \calM_{Q,J} \rightarrow E_p$. 
By a standard homotopy argument, the cobordism class of $\ev_p$ (and in particular its homology class) is invariant under suitable deformations of $(E,\Theta,\pi)$, $Q$, and $J$.
Moreover, we can arrange that $\ev_p$ is transverse to any fixed smooth cycle in $E_p$ after performing a $C^\infty$-small deformation of $J$ supported in $\pi^{-1}(U)$,
for any given open subset $U \subset \Int(\PP)$ (see \cite[Lemma 2.5]{seidel2003long}).

\subsection{Gluing pseudoholomorphic sections}

We will make use of a gluing theorem for pseudoholomorphic sections of Lefschetz fibrations.
This was introduced by Seidel in \cite{seidel2003long} to establish a long exact sequence in Floer cohomology.
Although we will only need to apply it in a special case, we state the result in somewhat greater generality. 
Strictly speaking our stated version is more general than the setup in \cite{seidel2003long} since we allow immersed boundary conditions, but there is no essential difference since the gluing and perturbations take place away from the corner points of $\PP$ and $\PP'$.
 Let $(E,\Theta,\pi),(E',\Theta',\pi')$ be exact Lefschetz fibrations over $\PP,\PP'$ with exact Lefschetz boundary conditions $Q,Q'$ respectively. Fix non-vertex points $p \in \bdy\PP$ and $p' \in \bdy\PP'$ and strip-like ends 
\begin{align}
\epsilon_p: \R_+ \times [0,1] \hookrightarrow \PP,\;\;\;\;\; \epsilon_{p'}: \R_- \times [0,1] \hookrightarrow \PP'
\end{align}
at $p$ and $p'$ respectively.
The glued polygon $\PP\bcs\PP'$ is formed by removing $\epsilon_p([\ell,+\infty) \times [0,1])$ and $\epsilon_{p'}((-\infty,-\ell]\times [0,1])$ from $\PP \setminus \{p\}$ and $\PP' \setminus \{p'\}$, and then identifying the remaining surfaces via $\epsilon_p(s,t) \sim \epsilon_{p'}(s-\ell,t)$. 
As usual, this construction involves a choice of gluing parameter $\ell \in (0,\infty)$.

Now suppose there is an exact symplectomorphism $\Phi$ from $(E_p,\Theta_p)$ to $(E_{p'},\Theta'_{p'})$ which satisfies $\Phi(Q_p) = Q'_{p'}$. In this case we can form the fiber connect sum $E\bcs E'$, which is equipped with an exact Lefschetz fibration $\pi\bcs\pi': E\bcs E' \rightarrow \PP \bcs \PP'$ and an exact Lefschetz boundary condition $Q \bcs Q'$.
The construction is straightforward if we assume that there exists a neighborhood $U$ of $p$ and a diffeomorphism $F: E_p \times U \rightarrow \pi^{-1}(U)$
such that $F^*\Theta = \Theta_p$ and $F^{-1}(Q)  = Q_p \times (U \cap \bdy \PP)$, and similarly near $p'$. 
In general, we can reduce to this situation by suitable deformations (see \cite[\S 2.1]{seidel2003long}).

Take $J \in \calJ_\pi^\reg$ and $J' \in \calJ_{\pi'}^\reg$.
Assume $F^*J$ is a split almost complex structure on $E_p \times U$,
and similarly for $J'$ near $p'$.
Assume also that $\Phi_*J|_{E_p} = J'|_{E_{p'}}$.
In this case, $J$ and $J'$ can be naturally glued to form $J \bcs J' \in \calJ_{\pi\bcs\pi'}$.
As mentioned at the end of \S\ref{subsection:boundary conditions and pseudoholomorphic sections}, we can further arrange that $\Phi \circ \ev_p: \calM_{Q,J} \rightarrow E_{p}$ 
and $\ev_{p'}: \calM_{Q',J'} \rightarrow E'_{p'}$ are mutually transverse.

\begin{prop}\cite[\S 2]{seidel2003long}\label{prop:gluing for sections}
For $\ell$ sufficiently large, we have $J \bcs J' \in \calJ^\reg_{\pi\bcs\pi'}$,
and there is a diffeomorphism
\begin{align*}
\calM_{Q\bcs Q',J\bcs J'} \cong \calM_{Q,J} \underset{\Phi\circ \ev_p,\ev_{p'}}\times \calM_{Q',J'},
\end{align*}
where the right hand side denotes the fiber product with respect to $\Phi \circ \ev_p$ and $\ev_{p'}$.
\end{prop}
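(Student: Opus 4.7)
\smallskip

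The plan is to prove this by the standard pregluing plus Newton iteration scheme familiar from Floer-theoretic gluing, followed by a compactness argument to establish surjectivity. First I would construct the \emph{pregluing map}: given $(u,u') \in \calM_{Q,J} \times \calM_{Q',J'}$ with $\Phi(u(p)) = u'(p')$, I use the product structure $F$ near $p$ and the analogous one near $p'$ to note that, in the given trivializations, $u$ and $u'$ restrict on the strip-like ends to maps of the form $(s,t) \mapsto (x(s,t), \epsilon_p(s,t))$ and $(s,t)\mapsto (x'(s,t), \epsilon_{p'}(s,t))$ with $x,x'$ converging exponentially to the constant maps $u(p)$ and $u'(p')$. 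Using a smooth cutoff in the neck $\epsilon_p([0,\ell]\times [0,1]) \sim \epsilon_{p'}([-\ell,0]\times [0,1])$, I interpolate between these local expressions to build an approximate section $u\bcs_\ell u'$ of $\pi \bcs \pi'$, with $Q\bcs Q'$ boundary conditions, whose failure to be $J\bcs J'$-holomorphic decays like $e^{-c\ell}$ in the relevant Sobolev norm.

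Next I would run the Newton/Picard iteration. The linearized $\bar\partial$-operator $D_u$ on sections of $u^*TE^{\mathrm{vert}}$ with $TQ$-boundary values is surjective by the assumption $J \in \calJ_\pi^\reg$, and similarly for $D_{u'}$. Combining right inverses of $D_u$ and $D_{u'}$ via the same cutoff construction, one obtains an approximate right inverse $Q_\ell$ for the linearization $D_{u\bcs_\ell u'}$ whose operator norm is bounded \emph{uniformly in $\ell$}, as long as one works in weighted Sobolev norms with exponential weights on the neck matching the asymptotic decay rates. The key analytic lemma is that the kernel of $D_{u\bcs_\ell u'}$ differs from $\ker D_u \oplus \ker D_{u'}$ by a gluing correction whose dimension is controlled by the codimension of the diagonal cut out by $\Phi\circ \ev_p = \ev_{p'}$; given the transversality of the evaluation maps this reproduces the expected Fredholm index and yields regularity, i.e.\ $J\bcs J' \in \calJ^\reg_{\pi\bcs\pi'}$ for $\ell$ large. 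The quantitative inverse function theorem then produces, for each $\ell$ sufficiently large, a genuine pseudoholomorphic section $G_\ell(u,u')$ near $u\bcs_\ell u'$, with the assignment smooth in $(u,u')$ and injective.

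Finally I would prove surjectivity, which I expect to be the main obstacle. Suppose some sequence $v_n \in \calM_{Q\bcs Q',J\bcs J'}$ with neck parameter $\ell_n \to \infty$ does \emph{not} lie in the image of $G_{\ell_n}$. The contact-type behavior at the horizontal boundary and the exact setup rule out sphere and disk bubbling, and the maximum principle from the structure of $\calJ_\pi$ confines $v_n$ to well-behaved regions. A Gromov-style compactness argument applied to the sequence, after restricting to the two halves of $\PP\bcs\PP'$ and reparametrizing the neck, extracts a limit consisting of a section of $(E,\Theta,\pi)$ over $\PP$ and a section of $(E',\Theta',\pi')$ over $\PP'$ whose asymptotic values over $p$ and $p'$ agree under $\Phi$ — that is, an element $(u_\infty,u'_\infty)$ of the fiber product. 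A standard uniqueness-of-gluing argument (comparing $v_n$ with $G_{\ell_n}(u_\infty,u'_\infty)$ in weighted norms and invoking the uniform invertibility of the linearization on an exponentially decaying neighborhood) then produces a contradiction. Combined with the injective smooth map $G_\ell$ from the fiber product to $\calM_{Q\bcs Q',J\bcs J'}$, this yields the desired diffeomorphism. The most delicate points are the choice of exponential weights on the neck (which must be adapted to the spectrum of the asymptotic operators at $p$, $p'$) and verifying that the local product structures assumed near $p$ and $p'$ extend to a product structure for $J$ and $J'$ as stated, so that the cut-and-paste of the linearized operators actually works.
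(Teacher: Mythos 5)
Your outline is essentially the argument this statement rests on: the paper itself gives no proof but defers to Seidel's \cite[\S 2]{seidel2003long}, where the result is established by exactly this scheme — pregluing in the trivialized product neighborhood of the fibres over $p$ and $p'$ (where sections over the long neck are exponentially close to constants on $Q_p$), a uniformly invertible linearized operator yielding regularity and a gluing map, the fiber-product/matching condition arising from the transverse evaluation maps, and surjectivity by a stretching-the-neck compactness plus uniqueness-of-gluing argument. So your proposal is correct and takes essentially the same route as the cited proof.
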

\noindent As a basic further observation, suppose $q \in \bdy\PP$ is a regular value which is disjoint from the gluing region. 
Then the evaluation map $\ev_q: \calM_{Q\bcs Q',J\bcs J'} \rightarrow (E \bcs E')_q = E_q$ is identified with the composition of the projection $ \calM_{Q,J} \underset{\Phi\circ \ev_p,\ev_{p'}}\times \calM_{Q',J'} \rightarrow \calM_{Q,J}$ and the evaluation map $\ev_q: \calM_{Q,J} \rightarrow E_q$.

\subsection{The model computation}\label{subsec:the model computation}

We conclude this section by discussing sections of the model Lefschetz fibration $\pi_\std: E_\std^{2n} \rightarrow \D^2$, where
\begin{align*}
E_\std^{2n} := \{x \in \C^n\;:\; |\pi_\std(x)| \leq 1,\; ||x||^4 - |\pi_\std(x)|^2 \leq 4\}
\end{align*}
and $E_\std$ is equipped with the restriction of the standard K\"ahler potential $\Theta_\std$.
For $z \in \C$, let
\begin{align*}
\Sigma_z^{n-1} := \{\pm \sqrt{z}x\;: \; x \in S^{n-1} \subset \R^n \subset \C^n\}.
\end{align*}
Note that $\pi_\std(\Sigma_z) = z$. In fact, for any $z \neq 0$, $\Sigma_z$ is an exact Lagrangian sphere in the fiber $(\pi_\std^{-1}(z),\Theta_\std|_{\pi_\std^{-1}(z)})$ of the model Lefschetz map.
We consider the model exact Lefschetz boundary condition $Q_\std$, given by
\begin{align*}
Q_\std := \bigcup_{z \in \bdy \D^2}\Sigma_z.
\end{align*}
In this special case, the space $\calM_{Q_\std,J_\std}$ of $(i_\std,J_\std)$-pseudoholomorphic sections with boundary condition $Q_\std$ can be explicitly computed:
\begin{lemma}{\cite[Lemma 2.16]{seidel2003long}}
$\calM_{Q_\std,J_\std}$ consists of the maps $u_a(z) = za + \ovl{a}$, for $a \in \C^n$ such that $\pi_\std(a) = 0$ and $||a||^2 = 1/2$. Moreover, each $u_a$ is regular.
\end{lemma}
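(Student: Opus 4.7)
My plan is to prove both halves via Schwarz reflection, with regularity following from an application of the same trick to the linearized equation. For the easy direction, that each $u_a$ is a $J_\std$-holomorphic section with boundary in $Q_\std$: expanding $\pi_\std(za + \ovl a) = z^2\pi_\std(a) + 2z|a|^2 + \overline{\pi_\std(a)}$ and substituting $\pi_\std(a) = 0$, $|a|^2 = 1/2$ gives $\pi_\std\circ u_a = \op{id}$, while $J_\std$-holomorphicity is automatic from the complex-linear expression in $z$; finally on $\bdy\D^2$ one computes $e^{-i\theta/2}u_a(e^{i\theta}) = 2\op{Re}(e^{i\theta/2}a) \in \R^n$ with norm $1$ by another use of the same two constraints.

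For the converse, the key observation is that the boundary condition $u(\bdy\D^2) \subset Q_\std$, together with $u$ being a section, is equivalent to the pointwise vector identity $u(z) = z\,\overline{u(z)}$ on $|z|=1$, since $u(e^{i\theta}) = \pm e^{i\theta/2}x$ with $x \in \R^n$ forces $\overline{u(e^{i\theta})} = e^{-i\theta}u(e^{i\theta})$. Given this, $\tilde{u}(z) := z\,\overline{u(1/\overline z)}$ is a holomorphic map on $\{|z|>1\}$ agreeing with $u$ on the unit circle, so $u$ extends to an entire map $\C \to \C^n$. Since $|\tilde u(z)| \leq |z|\cdot \max_{\D^2}|u|$, the extension grows at most linearly at infinity and is therefore affine: $u(z) = za + b$. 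Re-imposing $u(z) = z\,\overline{u(z)}$ on $|z|=1$ forces $b = \ovl a$, and $\pi_\std(za + \ovl a) \equiv z$ then forces $\pi_\std(a) = 0$ and $|a|^2 = 1/2$.

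For regularity, the linearization $D_{u_a}\ovl\partial$ acts on holomorphic maps $\xi: \D^2 \to \C^n$ satisfying the vertical constraint $u_a \cdot \xi \equiv 0$ (complex bilinear), with the linearized totally real boundary condition $\xi(z) = z\,\overline{\xi(z)}$ on $\bdy\D^2$ obtained by the same calculation as above. Schwarz reflection again forces $\xi(z) = zc + \ovl c$ for some $c \in \C^n$, and expanding $u_a\cdot \xi \equiv 0$ in powers of $z$ produces the constraints $a\cdot c = 0$ and $\op{Re}(\ovl a \cdot c) = 0$, which are exactly the three real linear conditions cutting out $T_a P$ for $P := \{a \in \C^n : \pi_\std(a) = 0,\; |a|^2 = 1/2\}$. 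Using that the hypotheses on $a$ force $\op{Re}(a)$ and $\op{Im}(a)$ to be orthogonal vectors in $\R^n$ of equal length $1/2$, I would verify these three constraints are $\R$-linearly independent, giving $\dim_\R \ker D_{u_a}\ovl\partial = 2n-3$. Surjectivity then follows from the Riemann--Roch formula $\op{ind}\, D_{u_a}\ovl\partial = (n-1) + \mu$ once one confirms the Maslov index of the totally real subbundle $T\Sigma_\bullet \subset u_a^*T^{\op{vert}}E_\std$ along $\bdy\D^2$ equals $n-2$. The Maslov computation --- carried out via an explicit trivialization of the vertical bundle along $u_a$ and tracking the winding of $T\Sigma_{e^{i\theta}}$ as $\theta$ sweeps $\bdy\D^2$ --- is the main technical step; everything else is an elementary application of Schwarz reflection.
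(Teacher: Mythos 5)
The paper itself offers no proof here—the lemma is imported verbatim from Seidel's long exact sequence paper—and your argument is essentially the original one: the boundary condition becomes $u(z)=z\,\overline{u(z)}$ on $|z|=1$, reflection plus linear growth forces $u$ affine with $b=\ovl a$, and regularity is obtained by matching the explicitly computed kernel against the index. The one step you defer, $\mu=n-2$, does check out, and you can avoid the winding computation: since $\|u_a(z)\|^2=(1+|z|^2)/2>0$, pairing with $u_a(z)$ via the complex bilinear form gives a holomorphic surjection of bundle pairs from $\bigl(\underline{\C^n},\,e^{i\theta/2}\R^n\bigr)$ (Maslov index $n$) onto $\bigl(\underline{\C},\,e^{i\theta}\R\bigr)$ (Maslov index $2$) whose kernel is exactly $\bigl(u_a^*T^{\mathrm{vert}}E_\std,\,T\Sigma_{e^{i\theta}}\bigr)$, so additivity gives $\mu=n-2$, the index is $(n-1)+(n-2)=2n-3$, and since your kernel $T_aP$ already has real dimension $2n-3$ the linearized operator is surjective.
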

Let $\ev_1: \calM_{Q_\std,J_\std} \rightarrow \Sigma_1$ denote the evaluation map at $1 \in \bdy \D^2$. We point out that $\ev_1$ can be identified with the projection $S(T^*S^{n-1}) \rightarrow S^{n-1}$, where $S(T^*S^{n-1})$ denotes the unit sphere cotangent bundle of $S^{n-1}$. In particular, $\ev_1$ is null-cobordant.
Slightly more generally, let $(E,\Theta,\pi)$ be any exact Lefschetz fibration over $\D^2$ with a single critical point, and let $Q$ be an exact Lefschetz boundary condition which is ``standard", meaning
that $Q_*$ is Hamiltonian isotopic to the vanishing cycle in $(E_*,d\Theta_*)$.
In this case, by deforming $(E,\Theta,\pi)$ and $Q$ to the preimage of a small disk around the critical value and applying a maximum principle, we are essentially reduced to the situation of the model Lefschetz fibration.
Using this, Seidel proves:
\begin{prop}{\cite[Proposition 2.13]{seidel2003long}}\label{prop:standard fibrations} For any $J \in \calJ_\pi^\reg$ and $p \in \bdy \D^2$, the evaluation map $\ev_p: \calM_{Q,J} \rightarrow E_p$ is null-cobordant. 
\end{prop}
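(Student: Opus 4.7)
The plan is to deform $(E, \Theta, \pi, Q, J)$ to the standard model of the preceding lemma, using the fact noted at the end of \S\ref{subsection:boundary conditions and pseudoholomorphic sections} that the cobordism class of $\ev_p$ is invariant under suitable deformations of the data. Pick a vanishing path $\gamma$ from $* \in \bdy \D^2$ to the unique critical value $q$, with associated framed vanishing cycle $V_\gamma \subset (E_*, \Theta_*)$. By hypothesis $Q_*$ is Hamiltonian isotopic to $V_\gamma$; extending such a Hamiltonian isotopy on $E_*$ to $E$ by parallel transport along $\bdy \D^2$ provides an isotopy of Lefschetz boundary conditions after which $Q_* = V_\gamma$ exactly. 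The cobordism class of $\ev_p$ is unchanged.

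Next I would normalize the Lefschetz fibration near $q$ to coincide with the model $\pi_\std$ and choose $J \in \calJ_\pi^\reg$ which equals $J_\std$ on the preimage of a small disk $U \subset \D^2$ around $q$, and is horizontal (and of product form in a suitable trivialization) over $\D^2 \setminus U$. The key step, and the main technical obstacle of the plan, is to show that every $u \in \calM_{Q,J}$ is contained in $\pi^{-1}(U)$. With $J$ horizontal outside $U$, composing a pseudoholomorphic section $u$ with a local fiberwise trivialization of the symplectic connection over $\D^2 \setminus U$ produces a holomorphic map from an annular region into a single fiber with boundary lying on the exact Lagrangian $V_\gamma$ (parallel-transported around $\bdy \D^2$); exactness of $V_\gamma$ together with a standard energy computation forces this projected map to be constant, so $u$ must be a flat section over $\D^2 \setminus U$, which in turn is incompatible with the Lefschetz geometry unless $u$ extends into $U$. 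An integrated maximum principle near $\bdy_h E$, using the contact type behavior of $\Theta$ imposed in the definition of $\calJ_\pi$, rules out sections escaping along the horizontal boundary.

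Once localization is established, the gluing statement of Proposition \ref{prop:gluing for sections} identifies $\calM_{Q,J}$ with the model moduli space $\calM_{Q_\std, J_\std}$, and under this identification $\ev_p$ becomes the evaluation map $\ev_1 \colon \calM_{Q_\std, J_\std} \to \Sigma_1$ from the preceding lemma. By that lemma this map is the canonical projection of the unit sphere bundle $S(T^*S^{n-1}) \to S^{n-1} \cong \Sigma_1$, which extends over the disk bundle $D(T^*S^{n-1})$ and is therefore null-cobordant. Combined with the deformation invariance used to set $J = J_\std$, this establishes null-cobordism of $\ev_p$ for any $J \in \calJ_\pi^\reg$.
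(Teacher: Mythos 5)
Your overall strategy -- deform to the model fibration, invoke deformation invariance of the cobordism class of $\ev_p$, and conclude from the model computation that the evaluation map is the sphere-bundle projection $S(T^*S^{n-1}) \rightarrow S^{n-1}$, which bounds the disk bundle -- is the same reduction the paper (following Seidel) sketches. The gap is in your localization step. First, the claim that every $u \in \calM_{Q,J}$ is contained in $\pi^{-1}(U)$ cannot be what you mean: $u$ is a section, so $\pi \circ u = \mathrm{id}_{\D^2}$ and its image projects onto all of $\D^2$, not just $U$. Presumably you intend that $u$ is horizontal over $\D^2 \setminus U$, but the argument offered for this is unsound: after trivializing over the annulus, only the outer boundary circle of the annular domain is constrained to lie on the exact Lagrangian, while the inner circle $\bdy U$ carries no Lagrangian condition, so Stokes' theorem leaves an uncontrolled term $\int u^*\Theta$ over $u(\bdy U)$ and exactness does not force the projected map to be constant. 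Indeed the claim contradicts the model itself: the sections $u_a(z) = za + \ovl{a}$ of $(E_\std,\Theta_\std,\pi_\std)$, with $J_\std$ horizontal, are not horizontal over the complement of any small disk around the critical value, yet they are exactly the elements of $\calM_{Q_\std,J_\std}$.

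Second, Proposition \ref{prop:gluing for sections} does not apply in the way you use it: it glues two Lefschetz fibrations over polygons along strip-like ends at boundary base points $p,p'$, not along an interior circle $\bdy U$, and its conclusion is a fiber product of two section spaces, not an identification of $\calM_{Q,J}$ with the model moduli space alone. A correct implementation along your lines would write $\D^2$ as a boundary connect sum of a disk containing the critical value, over which the fibration and boundary condition are deformed to $(E_\std,Q_\std)$, and a disk containing no critical values; for the latter piece, with horizontal $J$ and a boundary condition swept out by parallel transport of the vanishing cycle, an energy/exactness argument (now legitimate, since the entire boundary lies on the Lagrangian boundary condition) shows all sections are horizontal, so that moduli space is a copy of the vanishing cycle and the fiber product of Proposition \ref{prop:gluing for sections} collapses to $\calM_{Q_\std,J_\std}$ with $\ev_p$ the sphere-bundle projection. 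Alternatively one can follow the paper's sketch directly: deform $Q$ and the fibration into the preimage of a small disk around the critical value and use the maximum principle built into $\calJ_\pi$ to confine sections to the region where everything is standard. With either repair, your final step (null-cobordism via the disk cotangent bundle, transported back by deformation invariance to arbitrary $J \in \calJ_\pi^\reg$) is correct.
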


\sss

Proposition \ref{prop:standard fibrations} can be interpreted as a triviality  
statement for sections of Lefschetz fibrations with standard boundary conditions.
In contrast, we will show that the space of sections becomes nontrivial after turning on a suitable B-field or bulk deformation.
Assume now that $n$ is even, say $n = 2m$. 
Let $\eta_\std: [0,1] \rightarrow \D^2$ be the vanishing path for $(E_\std,\Theta_\std,\pi_\std)$ which lies on the real line, say given by $\eta_\std(t) = 1-t$, and let $T_{\eta_\std} = \cup_{z \in [0,1]}\Sigma_z$ be the associated thimble.
Although $T_{\eta_\std}$ is Lagrangian, we view it as just a smooth cycle. As defined it intersects $Q_\std$ in the entire sphere $\Sigma_1$, but we can disjoin it from $Q_\std$ by a small perturbation.
Explicitly, consider the perturbation $T_\e \subset E_\std$ given by:
\begin{align*}
T_\e := \left\{ r(\e_c s_1 - i\e t_1,\e_c t_1 + i\e s_1,...,\e_c s_m - i\e t_m,\e_c t_m + i\e s_m) \in \C^n\;:\; \sum_{i=1}^m (s_i^2 + t_i^2) = 1,\; 0 \leq r \leq 1 \right\},
\end{align*}
where $s_1,...,s_m,t_1,...,t_m,r$ are real variables, $\e > 0$ is a small fixed number and $\e_c := \sqrt{1 + \e^2}$. Note that $T_\e$ coincides with $T_{\eta_\std}$ in the limiting case $\e = 0$.
One can easily check that $\pi_\std(T_\e) = [0,1]$ 
and that $T_\e$ is disjoint from $Q_\std$.
\begin{lemma}\label{lem:intersection computation}
There is a unique $(a,z) \in \C^n \times \D^2$ satisfying the system:
\begin{enumerate}
\item
$\pi_\std(a) = 0$
\item
$||a||^2 = 1/2$
\item
$u_{a}(1) = (1,0,...,0)$
\item
$u_{a}(z) \in T_\e$.
\end{enumerate}
Moreover, this solution is regular: the space of pairs $(a,z)$ satisfying (1),(2),(3), equipped with the map to $\C^n$ sending $(a,z)$ to $u_a(z)$, is tranverse to $T_\eps$.
\end{lemma}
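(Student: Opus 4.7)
The plan is to reduce the system to an explicit form using each condition in turn, arrive at a single quadratic equation whose unique root in $(0,1)$ pins down the solution, and then verify regularity by a direct tangent space computation.

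First, I would write $a = x + iy$ with $x, y \in \R^n$ and exploit the conditions (1), (2), (3) together. Condition (3) reads $u_a(1) = a + \bar{a} = 2x = (1, 0, \ldots, 0)$, immediately fixing $x = (1/2, 0, \ldots, 0)$. Condition (1) splits into $\|x\|^2 = \|y\|^2$ and $x \cdot y = 0$, which then force $\|y\|^2 = 1/4$ and $y_1 = 0$, after which condition (2) becomes automatic. Hence $a$ is constrained to the $(n-2)$-sphere $\{(1/2, iy_2, \ldots, iy_n) : \sum_{j \geq 2} y_j^2 = 1/4\}$, and the remaining unknowns are $(y_2, \ldots, y_n)$ and $z \in \D^2$.

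Next, I would impose condition (4). A direct expansion gives $\pi_\std(u_a(z)) = z^2\,\pi_\std(a) + 2z\|a\|^2 + \overline{\pi_\std(a)} = z$, and from the defining formula for $T_\e$ together with the identity $\e_c^2 - \e^2 = 1$ one checks $\pi_\std|_{T_\e} = r^2$. Thus $z = r^2 \in [0, 1]$. Writing $u_a(z) = \bigl((z+1)/2,\, iy_2(z-1), \ldots, iy_n(z-1)\bigr)$ and matching with the parametrization of $T_\e$ pair by pair in coordinates $(2k-1, 2k)$, the first pair determines $t_1 = 0$, $s_1$, and $y_2$ in terms of $z$, while each subsequent pair forces $s_k = t_k = 0$ and $y_{2k-1} = y_{2k} = 0$. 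The sphere constraint $\sum y_j^2 = 1/4$ then collapses to a single quadratic in $z$, whose unique root in $(0,1)$ is $z = (\e_c - \e)/(\e_c + \e)$; the endpoints $z = 0, 1$ are easily excluded. This gives a single solution $a = (1/2, -i/2, 0, \ldots, 0)$ together with this explicit value of $z$.

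Finally, regularity amounts to showing that the evaluation map $F: S^{n-2}(1/2) \times \D^2 \to E_\std^{2n}$, $(y, z) \mapsto u_a(z)$, is transverse to $T_\e$ at the unique intersection point $p$. Both $d_{(y,z)}F$ and $T_p T_\e$ contribute $n$ real dimensions, summing to $\dim_\R E_\std^{2n} = 2n$. The key observation is that at the solution all tangent vectors on both sides are supported in a single coordinate pair, so the Jacobian is block diagonal with $4 \times 4$ blocks: the first pair receives contributions from $\partial_z, \partial_r, \partial_{t_1}$, while each later pair receives $\partial_{y_{2k-1}}, \partial_{y_{2k}}, \partial_{s_k}, \partial_{t_k}$. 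A direct determinant expansion in each block yields a nonzero factor of the form $r(\e_c + \e)^2$ or $\e_c^2 (z-1)^2$, and nonvanishing of the product establishes regularity. The main obstacle is the coordinate bookkeeping across the coordinate pairs, which becomes manageable thanks to the pairwise structure of $T_\e$ together with the compatibility of $\pi_\std$ with the pair decomposition of $\C^n \cong \C^{2m}$.
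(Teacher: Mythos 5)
Your existence--uniqueness computation is essentially the paper's own argument: both proofs reduce to matching $u_a(z)=za+\bar a$ coordinate--pair by coordinate--pair against the explicit parametrization of $T_\e$, observe that $z=\pi_\std(u_a(z))\in[0,1)$, force $t_1=s_2=t_2=\cdots=s_m=t_m=0$ and all imaginary parts of $a$ except one to vanish, and solve for $z=(\e_c-\e)/(\e_c+\e)$, $a=(1/2,-i/2,0,\ldots,0)$; your preliminary reduction via (1)--(3) to the sphere $\{(1/2,iy_2,\ldots,iy_n):\sum_j y_j^2=1/4\}$ is only a cosmetic reordering of the same steps. One small but real omission: once you use $\sum_j y_j^2=1/4$ to pin down $z$, the system is still overdetermined, since the parametrization of $T_\e$ also imposes $\sum_i(s_i^2+t_i^2)=1$, i.e.\ $s_1^2=1$ with $s_1=(z+1)/(2\sqrt{z}\,\e_c)$ determined by $z$; you must check that this is compatible with your value of $z$, which holds precisely because $\e_c^2-\e^2=1$ (this is the consistency check with which the paper's proof ends). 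Without that line you have uniqueness of a candidate but not existence. Concerning regularity, the paper's written proof says nothing, so your block-diagonal Jacobian computation is an addition rather than a reproduction, and as sketched it is correct: at the solution the image of $dF$ and $T_pT_\e$ decompose along the coordinate pairs, the first $4\times 4$ block has determinant a nonzero multiple of $\sqrt{z}\,(\e_c+\e)^2$ (coming from $\partial_{\operatorname{Re}z},\partial_{\operatorname{Im}z},\partial_r,\partial_{t_1}$), and each later block a nonzero multiple of $z\,\e_c^2(z-1)^2$, so transversality holds; you should just also record that the cut by condition (3) is itself transverse (immediate, since it fixes the real parts of the coordinates of $a$), so that restricting to the sphere of $y$'s captures the full linearized problem.
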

\noindent Note that the first two conditions are equivalent to $u_{a}$ being an element of $\calM_{Q_\std,J_\std}$.
\begin{proof}
For $a = (a_1,...,a_n)$, the condition $u_a(1) = (1,0,...,0)$ amounts to the equations
\begin{align*}
\op{Re}(a_1) = 1/2,\;\;\; \op{Re}(a_2) = ... = \op{Re}(a_n) = 0,
\end{align*}
so we can write $a = (1/2 + ic_1,ic_2,...,ic_n)$ for $c_1,...,c_n \in \R$.
Now consider the condition $u_a(z) \in T_\e$.
Since $\pi_\std(T_\e) = [0,1]$ and $T_\e \cap \Sigma_1 = \nil$, we can write $z = R$ for some $R \in [0,1)$.
We then have
\begin{align*}
u_a(R) = (R/2 + ic_1R + 1/2 - ic_1,ic_2R - ic_2,...,ic_nR - ic_n),
\end{align*}
and this must be of the form
\begin{align*}
\sqrt{R}(\e_c s_1 - i\e t_1,\e_c t_1 + i\e s_1,..., \e_c s_m - i\e t_m, \e_c t_m + i\e s_m)
\end{align*}
for some $s_1,t_1,...,s_m,t_m \in \R$ satisfying $\sum_{i=1}^m(s_i^2 + t_i^2) = 1$.
In particular, comparing real and imaginary parts, we have:
\begin{alignat*}{5}
R/2 + 1/2 &= \sqrt{R} \ec s_1  \quad& c_1 R - c_1 &= -\sqrt{R}\e t_1\\
0 &= \sqrt{R}\ec t_1  \quad& c_2 R - c_2 &= \sqrt{R}\e s_1\\
0 &= \sqrt{R}\ec s_2  \quad& c_3 R - c_3 &= -\sqrt{R}\e t_2\\
0 &= \sqrt{R}\ec t_2  \quad& c_4 R - c_4 &= \sqrt{R}\e s_2\\
&...  \quad& ... & \\
0 &= \sqrt{R}\ec t_m  \quad&  c_n R - c_n &= \sqrt{R}\e s_m.\\
\end{alignat*}
This forces $s_2,...,s_m,t_1,...,t_m,c_1,c_3,...,c_n$ to vanish.
Combining the remaining nontrivial equations with the conditions $\pi_\std(a) = 0$ and $||a|| = 1/2$, the system reduces to:
\begin{align*}
c_2^2 = 1/4\\
R/2 + 1/2 = \sqrt{R} \e_c s_1\\
c_2 R - c_2 = \sqrt{R} \e s_1\\
s_1^2 = 1\\
0 \leq R < 1.
\end{align*}
Substituting $c_2 = \pm 1/2$ into the second and third equation yields
$R = \dfrac{\pm \e_c + \e}{\pm \e_c - \e}$. The condition $R < 1$ forces the bottom sign, 
and also $s_1 = 1$.
This determines all of the variables, and we just need to check that the remaining equation $-R/2 + 1/2 = \sqrt{R}\e$ is consistent with $R = \dfrac{\e_c - \e}{\e_c + \e}$.
This follows using $\e_c^2 - \e^2 = 1$.

\sss

As for the regularity statement, put $$\calS := \left\{(a,z) \in \C^n \times \D^2\;:\; \pi_\std(a) = 0,\; ||a||^2 = 1/2,\; u_a(1) = (1,0,...,0)\right\}.$$
Note that this can be identified with a fiber of the projection map $S(T^*S^{n-1}) \rightarrow S^{n-1}$,
and in fact we have
$$ \calS = \left\{(1/2,ic_2,...,ic_n,z) \in \C^n \times \D^2\; : \; \sum_{i=2}^n c_i^2 = 1/4\right\}.$$
Let $u: \calS \rightarrow \C^n$ denote the map sending $(a,z)$ to $u_a(z) = za + \ovl{a} = (\tfrac{z+1}{2},ic_2(z-1),...,ic_n(z-1)).$
Put 
$$ \calT := \left\{ (r,s_1,t_1,\dots,s_m,t_m)\;:\; \sum_{i=1}^m(s_i^2 + t_i)^2 = 1,\; 0 \leq r \leq 1 \right\},$$
and let $v: \calT \rightarrow \C^n$ be the map defined by
$$v(r,s_1,t_1,\dots,s_m,t_m) = r(\e_c s_1 - i\e t_1,\e_c t_1 + i\e s_1,...,\e_c s_m - i\e t_m,\e_c t_m + i\e s_m).$$
By the above, the images of $u$ and $v$ intersect uniquely at the point $p^0 = \sqrt{\tfrac{\eps_c -\eps}{\eps_c + \eps}}(\eps_c,i\eps,0,...,0) \in \C^n$,
and we have
$u(a^0,z^0) = p^0$ for 
$a^0 := (1/2,-i/2,0,...,0) \in \C^n$ and $z^0 = \tfrac{\eps_c - \eps}{\eps_c + \eps},$
and we have $v(r^0,s_1^0,t_1^0,...,s_m^0,t_m^0) = p^0$ for
$r^0 = \sqrt{\tfrac{\eps_c -\eps}{\eps_c + \eps}}$, $s_1^0 = 1$, and $s_2^0 = ... = s_m^0 = t_1 = ... = t_m = 0$.
We need to show that the linearized images $u_*(T_{(a^0,z^0)}\calS), v_*(T_{(r^0,s_1^0,t_1^0,...,s_m^0,t_m^0)}\calT) \subset T_{p^0}\C^n$ intersect transversely.

Let $z = x + iy$ denote the coordinates on $\D^2$, and let $(z_1,...,z_n) = (x_1 + iy_1,...,x_n + iy_n)$ denote the coordinates on $\C^n$.
The tangent space $T_{(a^0,z^0)}\calT$ is naturally identified with the span of $\bdy_{x},\bdy_y,\bdy_{c_3},...,\bdy_{c_n}$, and at the point $(a^0,z^0)$ we have
\begin{align*}
u_*\bdy_x &= \tfrac{1}{2}\bdy_{x_1} - \tfrac{1}{2}\bdy_{y_2}\\
u_*\bdy_y &= \tfrac{1}{2}\bdy_{y_1} + \tfrac{1}{2} \bdy_{x_2}\\
u_*\bdy_{c_3} &= \tfrac{-2\eps}{\eps_c + \eps}\bdy_{y_3}\\
...\\
u_*\bdy_{c_n} &=  \tfrac{-2\eps}{\eps_c + \eps}\bdy_{y_n}.
\end{align*}
Similarly, the tangent space $T_{(r^0,s_1^0,t_1^0,...,s_m^0,t_m^0)}\calT$ is naturally identified with the span of $\bdy_r,\bdy_{s_2},...,\bdy_{s_m},\bdy_{t_1},...,\bdy_{t_m}$, and at the point $(r^0,s_1^0,t_1^0,...,s_m^0,t_m^0)$ we have
\begin{align*}
v_*\bdy_r &= \eps_c\bdy_{x_1} + \eps\bdy_{y_2}\\
v_* \bdy_{t_1} &= -r^0\eps\bdy_{y_1} + r^0\eps_c\bdy_{x_2}\\
v_*\bdy_{s_2} &= r^0\eps_c\bdy_{x_3} + r^0\eps\bdy_{y_4}\\
...\\
v_*\bdy_{s_m} &= r^0\eps_c\bdy_{x_{n-1}} + r^0\eps\bdy_{y_n}\\
v_*\bdy_{t_m} &= -r^0\eps\bdy_{y_{n-1}} + r^0\eps_c\bdy_{x_n}.
\end{align*}
By inspection, these two tangent spaces span $T_{p^0}\C^n$, and hence by dimension considerations they intersect transversely.
\end{proof}

Specializing to the case $2n = 4$, $\calM_{Q_\std,J_\std}$ is a disjoint union of two circles, say $C_0$ and $C_1$, and
$\ev_1: \calM_{Q_\std,J_\std} \rightarrow \Sigma_1$ is a two-fold covering map.
Let $\Omega_\e$ be a closed two-form representing the Poincar\'e dual of $T_\e$ as an element of
$H^2(E_\std,Q_\std;\R)$.
Lemma \ref{lem:intersection computation} shows that, up to switching the roles of $C_0$ and $C_1$, we have
\begin{align*}
\int u^*\Omega_\e = 0\;\; \text{   for   } u \in C_0\\
\int u^*\Omega_\e \neq 0\;\; \text{   for   } u \in C_1.
\end{align*}

\section{Squared Dehn twists, B-fields, and bulk deformations}\label{sec:squared dehn twists}

\subsection{A quasi-isomorphism criterion}\label{subsec:quasi-iso criterion}

In this subsection we give a simple quasi-isomorphism criterion for two Lagrangians. The premise is roughly that we can check quasi-isomorphism of two Lagrangians by counting certain pseudoholomorphic strips with a boundary marked point, rather than counting pseudoholomorphic triangles.
In more detail, 
let $L_0$ and $L_1$ be Lagrangian spheres in a Liouville domain with trivial first Chern class,
and let $J$ be a $(d\theta)$-compatible almost complex structure which is contact type near $\bdy M$.
Assume $L_0$ and $L_1$ intersect transversely in precisely two points $a$ and $b$, and set $\calM(a,b)$ to be the space of maps $u: \R \times [0,1] \rightarrow M$ such that:
\begin{itemize}
\item
$u$ is $(i_\std,J)$-holomorphic
\item
$u(\R \times \{0\}) \subset L_0$ and $u(\R \times \{1\}) \subset L_1$
\item $\lim\limits_{s \rightarrow -\infty} u(s,\cdot) = a$ and $\lim\limits_{s \rightarrow +\infty} u(s,\cdot) = b$.
\end{itemize}
Assume that $\calM(a,b)$ is regular.
Note that $a$ and $b$ define Floer cochains $a,b \in CF(L_0,L_1)$ after choosing Floer data such that $H_{L_0,L_1} \equiv 0$ and $J_{L_0,L_1} \equiv J$.
Reversing the order of $L_0$ and $L_1$, we also have dual cochains
$a^\vee,b^\vee \in CF(L_1,L_0)$
 with respect to the Floer data $H_{L_1,L_0} \equiv 0$ and $J_{L_1,L_0} \equiv J$.
In particular, choosing gradings on $L_0$ and $L_1$, we have
associated degrees $|a|,|b|,|a^\vee|,|b^\vee| \in \Z$, with $|a^\vee| = n - |a|$ and $|b^\vee| = n - |b|$.
After a shift of gradings, we can further assume that $|b| = 0$. 

\begin{prop}\label{prop:quasi-isomorphism criterion twisted}
Let $\ev_{(0,0)}: \calM(a,b) \rightarrow L_0$ denote the evaluation map at the point
$(0,0) \in \R \times [0,1]$, let $p \in L_0 \setminus \{a.b\}$ be a regular value of $\ev_{(0,0)}$, and set $\calM^p(a,b) := \ev^{-1}_{(0,0)}(p)$.
Let $\Omega$ be a closed two-form on $M$ with support disjoint from $\Op(L_0 \cup L_1)$.
Then $L_0$ and $L_1$ are quasi-isomorphic in $\twfuk(M,\theta)$ if the quantity
\begin{align*}
\sum_{u \in \calM^p(a,b)^{\oo}} t^{u^*\Omega}\signu \in \K \tag{$*$}
\end{align*}
is nonzero.
\end{prop}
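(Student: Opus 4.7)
The plan is to exhibit $b \in CF(L_0,L_1)$ as a cocycle representing an invertible element in the cohomology category $H\twfuk(M,\theta)$, which forces $L_0 \simeq L_1$. First, $\calM^p(a,b)^{\oo}$ being zero-dimensional requires $|a|-|b|-n = 0$, so the normalization $|b|=0$ forces $|a|=n$, and hence $|a^\vee|=0$, $|b^\vee|=n$. I would choose Floer data with $H_{L_0,L_1} \equiv 0$, so that $CF(L_0,L_1)$ and $CF(L_1,L_0)$ are rank-two complexes concentrated in degrees $\{0,n\}$; since $n \geq 2$, there is no room for a nontrivial twisted differential $\twmu^1$ on either complex, and so $a,b,a^\vee,b^\vee$ are all cocycles. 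Standard arguments (using a small Hamiltonian supported in a Weinstein neighborhood of $L_i$, in which $\Omega$ vanishes by hypothesis) yield $HF_\Omega(L_i,L_i) \cong H^*(L_i;\K) \cong H^*(S^n;\K)$ as a graded unital algebra; in particular, $HF^0_\Omega(L_i,L_i) = \K\cdot e_{L_i}$ is one-dimensional.

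The crux is the identification $[\twmu^2(a^\vee,b)] = \pm(*)\cdot e_{L_0}$ in $HF^0_\Omega(L_0,L_0)$. Since this space is one-dimensional, write $[\twmu^2(a^\vee,b)] = \lambda\,e_{L_0}$ for some $\lambda\in\K$. The scalar $\lambda$ can be extracted by the Poincar\'e--Floer pairing on $HF_\Omega(L_0,L_0)$, which matches $e_{L_0}$ nondegenerately with the top class $[\pt]_{L_0}\in HF^n_\Omega(L_0,L_0)$; thus $\lambda = \langle \twmu^2(a^\vee,b),[\pt]_{L_0}\rangle$ up to a normalization constant. Cyclic symmetry of this three-point Poincar\'e pairing, which reflects the Frobenius/Calabi--Yau structure on the Floer theory of closed exact Lagrangians, rewrites this as $\pm\langle\twmu^2(b,[\pt]_{L_0}),a^\vee\rangle$, and the latter simply reads off the coefficient of $a$ in $\twmu^2(b,[\pt]_{L_0})\in HF^n_\Omega(L_0,L_1) = \K\cdot a$ (since the pairing $HF^n_\Omega(L_0,L_1)\otimes HF^0_\Omega(L_1,L_0)\to\K$ is nondegenerate on the dual bases $\{a\}$ and $\{a^\vee\}$). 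Representing $[\pt]_{L_0}$ in Morse--Bott fashion by the geometric point $p\in L_0$, this coefficient is computed by the usual triangle count for $\twmu^2$ with one corner constrained to pass through $p$; in the collapse-of-Hamiltonian limit, the triangle degenerates to a pseudoholomorphic strip from $a$ to $b$ with an unperturbed boundary marked point constraint $u(0,0)=p$, weighted by $t^{\int u^*\Omega}\,s(u)$. This is precisely $(*)$.

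Once $\lambda\neq 0$, invertibility of $[b]$ is a purely algebraic consequence. Setting $\alpha := \lambda^{-1}[a^\vee]$ and $\beta := [b]$, one has $\alpha\circ\beta = e_{L_0}$ in the cohomology category. Since $HF^0_\Omega(L_1,L_1) = \K\cdot e_{L_1}$ is one-dimensional, the composition $\beta\circ\alpha$ equals $c\cdot e_{L_1}$ for some $c\in\K$, and strict associativity of the graded product on $H\twfuk(M,\theta)$ gives
\[
\alpha \;=\; (\alpha\circ\beta)\circ\alpha \;=\; \alpha\circ(\beta\circ\alpha) \;=\; c\,\alpha,
\]
forcing $c=1$ since $\alpha\neq 0$. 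Hence $\beta$ has a two-sided inverse and $L_0\simeq L_1$ in $H\twfuk(M,\theta)$. The main technical obstacle is making the Morse--Bott degeneration of the previous paragraph rigorous: one must construct a one-parameter family of coherent perturbation data interpolating between the triangle count defining $\twmu^2(b,[\pt]_{L_0})$ and the strip-with-marked-point count defining $(*)$, maintaining regularity of interior moduli spaces throughout, and controlling the support of all curves so that the weight $t^{\int u^*\Omega}$ is preserved. Alternatively, one may analyze directly the boundary of the one-parameter moduli space of pairs $(u,q)$, with $u\in\calM(a,b)/\R$ and $q$ a boundary marked point on the $L_0$-arc, matching the boundary contributions term-by-term against the relevant $\twmu^2$-counts.
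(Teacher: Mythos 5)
Your proof is correct, and its skeleton coincides with the paper's: both arguments reduce the statement to showing that $[a^\vee]\cdot[b] \in \Hhom(L_0,L_0)$ is a nonzero multiple of the unit, identify that multiple (up to sign) with $(*)$, and then conclude invertibility of $[b]$ by algebra. The differences lie in how the two steps are executed. For the identification, the paper chooses the Floer datum $H_{L_0,L_0}$ to be a small Morse-type Hamiltonian whose unique minimum is $p$, so that $p$ itself is the cocycle representing $e_{L_0}$ and a single standard gluing argument matches the triangles computing the $p$-coefficient of $\twmu^2(a^\vee,b)$ with the constrained strips in $\calM^p(a,b)$; you instead extract the coefficient by pairing against the point class and invoking cyclic symmetry of the triple product, and then still need essentially the same gluing/degeneration (which you correctly flag as the main technical point) to convert the triangle count with a corner at $p$ into $(*)$. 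Your route therefore uses the Poincar\'e--Floer pairing and its cyclic (Frobenius) symmetry as extra inputs that the paper's tailored choice of Hamiltonian makes unnecessary, without eliminating the gluing step; what it buys is independence from the particular Morse model for $\hom(L_0,L_0)$. For the endgame, the paper follows Keating's idempotent argument, using the ring structure of $\Hhom(L_1,L_1)\cong H(S^n;\K)$ to show that $[b]\cdot x$ is a nonzero idempotent and hence the unit; your argument via one-dimensionality of the degree-zero part together with associativity and cohomological unitality is a clean and slightly more economical alternative, needing only $\Hhom^0(L_1,L_1)=\K\, e_{L_1}$. Both endgames are valid, and your overall argument meets the same standard of rigor as the paper's, since the paper likewise relegates the key degeneration to a standard gluing argument.
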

\begin{proof}
Since $\calM^p(a,b)$ is zero-dimensional, we must have $|a| = n$. 
In particular, for index reasons there are no rigid (up to translation) Floer strips for $CF(L_0,L_1)$ and $CF(L_1,L_0)$, so $a,b,a^\vee,b^\vee$ are actually Floer cocycles.
We can construct $\twfuk(\{L_0,L_1\})$ using the Floer data for $(L_0,L_1)$ and $(L_1,L_0)$
mentioned above.
Regarding the Floer datum for $(L_0,L_0)$, we take $J_{L_0,L_0} \equiv J$, and construct $H_{L_0,L_0}$ as follows.
Let $h: L_0 \rightarrow \R$ be a Morse function with a unique local minimum at $p$.
Let $\wt{h}: T^*L_0 \rightarrow \R$ be the pullback of $h$ under the projection $T^*L_0 \rightarrow L_0$, cut off to zero outside of a small neighborhood of $L_0$.
Now let $H_{L_0,L_0} := \e\wt{h}$ for $\e > 0$ sufficiently small, where we implant $\wt{h}$ into $M$ using a Weinstein neighborhood of $L_0$.
Assuming $h$ is Morse--Smale with respect to the metric $g_{J_0} := (d\theta)(\cdot,J_0\cdot)$,
an argument originating with Floer \cite[Thm 2]{floer1989witten} identifies $\hom(L_0,L_0)$ with the Morse cochain complex of $(h,g_{J_0})$. In particular, $p$ represents the unit $e_{L_0}$ in $\Hhom(L_0,L_0)$.

Now consider the Riemann disk with three boundary marked points and Lagrangian labels $(L_0,L_1,L_0)$, and let $(K_\Delta,J_\Delta)$ denote the associated choice perturbation data for $\twfuk(M,\theta)$.
By a standard gluing argument, for suitable $K$ sufficiently small and $J_\Delta$ sufficiently close to $J_0$,
we can arrange that $\calM(p,b,a^\vee)$ is in bijective correspondence with $\calM^p(a,b)$,
and in fact $(*)$ is precisely the coefficient of $p$ in $\twmu^2(a^\vee,b)$.

Observe that $\Hhom(L_0,L_0) \cong H(L_0;\K)$ 
is the $\K$-algebra generated by $e_{L_0}$ and an element $f_{L_0}$ of degree $n$,
subject to the relations $e_{L_0}^2 = e_{L_0}$, $e_{L_0}f_{L_0} = f_{L_0}e_{L_0} = f_{L_0}$ and $f_{L_0}^2 = 0$.
In particular, $(*)$ nonzero implies that
$[a^\vee]\cdot [b]$ is invertible, and hence $x\cdot [b] = e_{L_0}$ for some $x \in \Hhom(L_1,L_0)$.
Namely, we have $[a^\vee]\cdot [b] = Ce_{L_0}$ for some nonzero $C\in\K$, and we put $x := C^{-1}[a^\vee]$.
Following an argument from \cite[\S 4.4]{keating2014dehn}, it follows that $[b] \cdot x$ is nonzero and idempotent, and this forces $[b] \cdot x = e_{L_0}$.
It follows that $[b]$ and $x$ are inverses.
\end{proof}

There is also the following parallel version in the context of bulk deformations.
\begin{prop}\label{prop:quasi-isomorphism criterion bulk deformed}
Let $p \in L_0 \setminus \{a,b\}$ be a regular value of $\ev_{(0,0)}: \calM(a,b) \rightarrow L_0$, set ${\calM^p_1(a,b) := \ev_{(0,0)}^{-1}(p) \times \R \times [0,1]}$,
and let $\ev: \calM_1^p(a,b) \rightarrow M$ denote the associated evaluation map.
Let $i_{\mho}: (\mho,\bdy \mho) \rightarrow (M \setminus \Op(L_0 \cup L_1),\bdy M)$ be a smooth half-dimensional cycle. Assume $\ev$ and $\mho$ are transverse,
and set $\calM_{1;{\mho}}^p(a,b) := \calM_1^p(a,b) \underset{\ev,i_{\mho}}\times \mho$.
Then $L_0$ and $L_1$ are quasi-isomorphic in $\fuk_\mho(M,\theta)$ if the quantity
\begin{align*}
\sum_{u \in \calM_{1;{\mho}}^p(a,b)^{\oo}}\signu \in \Z\tag{**}
\end{align*}
is nonzero.
\end{prop}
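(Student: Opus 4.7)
The plan is to adapt the proof of Proposition \ref{prop:quasi-isomorphism criterion twisted} to the bulk-deformed setting, replacing the cocycle-weighted strip count by a count of strips with one additional interior marked point constrained to $\mho$. As in the twisted case, I set up Floer data $H_{L_0,L_1} \equiv 0$, $J_{L_0,L_1} \equiv J$, and dual data for $(L_1,L_0)$, together with a Weinstein neighborhood Hamiltonian $\e\wt{h}$ for $(L_0,L_0)$ built from a Morse function $h \colon L_0 \to \R$ with unique minimum at $p$. Since $\mho$ is disjoint from $\Op(L_0 \cup L_1)$, perturbation data on polygon moduli spaces with all boundary labels equal to $L_0$ can be chosen sufficiently concentrated near $L_0$ to guarantee that no such polygon meets $\mho$. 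Consequently all operations $\mu^k_{q;\mho}$ with $q \geq 1$ vanish on $\hom_{\fuk_\mho}(L_0,L_0)$, and the identification $\Hhom_{\fuk_\mho}(L_0,L_0) \cong H^*(L_0;\KL) \cong \KL\langle e_{L_0}, f_{L_0}\rangle/(f_{L_0}^2)$ established in the proof of Proposition \ref{prop:quasi-isomorphism criterion twisted} persists; likewise for $L_1$.

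The key step is to identify $(**)$ with the coefficient of $p$ in $\mu^2_{1;\mho}(a^\vee, b) \in \hom_{\fuk_\mho}(L_0,L_0)$. The same Morse--Bott degeneration argument as in Proposition \ref{prop:quasi-isomorphism criterion twisted} applies, now to pseudoholomorphic triangles with boundary labels $(L_0,L_1,L_0)$, asymptotics $(p,b,a^\vee)$, and one interior marked point constrained to $\mho$. With perturbation data on $\calR_{3,1}(L_0,L_1,L_0)$ appropriately consistent with the Morse--Bott limit and $\e$ sufficiently small, each such triangle degenerates, as its $p$-asymptote collapses onto $L_0$, into a Floer strip $u \in \calM(a,b)$ passing through $p$ at a boundary point together with an interior marked point on $\mho$. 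This yields a signed bijection with $\calM^p_{1;\mho}(a,b)^{\oo}$.

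Combining these, $\mu^2_\mho(a^\vee, b)$ has $e_{L_0}$-coefficient exactly $\hbar \cdot (**)$: for degree reasons the summands $\hbar^q \mu^2_{q;\mho}(a^\vee, b)$ with $q \neq 1$ land either in trivial graded pieces of $CF^*(L_0,L_0)$ or on $f_{L_0}$ rather than $e_{L_0}$. When $(**) \neq 0$, this coefficient is invertible in $\KL$, and since $f_{L_0}^2 = 0$ any element of $H^*(L_0;\KL)$ with invertible $e_{L_0}$-coefficient is invertible. The algebraic argument from \cite[\S 4.4]{keating2014dehn} then yields a two-sided inverse: setting $x := ([a^\vee]\cdot[b])^{-1}\cdot[a^\vee] \in \Hhom_{\fuk_\mho}(L_1,L_0)$, the product $[b]\cdot x \in \Hhom_{\fuk_\mho}(L_1,L_1) \cong H^*(L_1;\KL)$ is a nonzero idempotent, and the only nonzero idempotent in this algebra is $e_{L_1}$, forcing $[b]\cdot x = e_{L_1}$. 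Hence $[b]$ is a quasi-isomorphism $L_0 \to L_1$ in $\fuk_\mho(M,\theta)$.

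The main obstacle will be the signed bijection in the second paragraph. Morse--Bott gluing is standard in the undeformed setting, but here the interior marked point constraint $\mho$ must be preserved throughout the degeneration: perturbation data on $\calR_{3,1}$ near the relevant boundary stratum must be compatible with that on the strip-with-interior-marked-point moduli space so that the $\mho$-intersection point of the triangle continuously becomes that of the limiting strip. This is a modest extension of the consistency framework of Section \ref{sec:bulk deformations} adapted to the Morse--Bott boundary, structurally parallel to the twisted case but requiring additional verification.
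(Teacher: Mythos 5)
Your proposal follows essentially the same route as the paper's proof: choose the same Floer/perturbation data as in the twisted criterion (Morse function on $L_0$ with minimum at $p$, $\mho$ disjoint from $\Op(L_0\cup L_1)$ so that $\Hhom(L_0,L_0)\cong H(L_0;\KL)$), identify $(**)$ via a gluing/degeneration argument with the $p$-coefficient of $\mu^2_{1;\mho}(a^\vee,b)$ (the paper phrases this with $a^\vee/\hbar$ to normalize degrees, which is the same statement), deduce invertibility of the product by the degree bookkeeping in $\KL$, and conclude with the Keating idempotent argument. The extra care you flag about consistency of the perturbation data near the Morse--Bott boundary stratum for the marked-point-constrained triangles is exactly the content the paper compresses into "similar to the proof of Proposition \ref{prop:quasi-isomorphism criterion twisted}," so the argument is correct and matches the paper's.
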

\noindent Note that we are implicitly assuming $n > 2$, since we are bulk deforming by a cycle of codimension $n$. 
\begin{proof}
We can again assume $|b| = 0$, and since $\calM_{1;{\mho}}^p(a,b)$ is zero-dimensional we must have $|a| = 2n - 2$,
and therefore $a,b,a^\vee,b^\vee$ define Floer cocycles.
Similar to the proof of Proposition \ref{prop:quasi-isomorphism criterion twisted},
we can construct $\fuk_\mho(M,\theta)$ such that the quantity $(**)$ is the coefficient of $p$ in $\mu^2_{1;{\mho}}(a^\vee/\hbar,b)$.
We have
$$[a^{\vee}] \cdot [b] = \hbar C e_{L_0} + D f_{L_0}$$
for nonzero $C \in \KL_0$ and some $D \in \KL$.
Since $|a^\vee| = n - |a| = 2-n$ and $|\hbar| = 2- n$, by degree considerations we must have
$|D| = 2 - 2n$, and hence $D$ vanishes unless $2- 2n$ is divisible by $2-n$, in which case we have $D = \hbar^{k}C'$ for $k = \tfrac{2-2n}{2-n}$ and some $C' \in \KL_0$.
In any case, $[a^\vee] \cdot [b]$ is invertible, with inverse $C^{-2}\hbar^{-1}(Ce_{L_0} - \hbar^{k-1}C'f_{L_0})$.
Putting $x := ([a^\vee] \cdot [b])^{-1} \cdot [a^{\vee}]$, we have $x \cdot [b] = e_{L_0}$ and 
$[b] \cdot x \in \Hhom(L_1,L_1)$ is a nonzero idempotent of degree zero. 
Note that $[b] \cdot x$ must be of the form $Ae_{L_1} + Bf_{L_1}$ for some $A \in \KL_0$ and $B \in \KL$ with $|B| = -n$, and the idempotence condition then forces $B = 0$ and $A = 1$.
It follows that $[b]$ and $x$ and inverses.
\end{proof}

\begin{remark}\label{rem:converse to quasi-iso criterion}
A similar analysis shows that the converses of Proposition \ref{prop:quasi-isomorphism criterion twisted} and Proposition
\ref{prop:quasi-isomorphism criterion bulk deformed} also hold (provided we specify $|a| = n$).
\end{remark}

\begin{remark}
There is a Morse--Bott version of the Fukaya category, constructed in detail by Sheridan in \cite[\S 4]{sheridan2011homological}, for which the endomorphism space of any Lagrangian is by definition the Morse complex of a chosen Morse function. One can also adapt twistings and bulk deformations to this Morse--Bott setup.
From this point of view the two propositions above become even simpler.
\end{remark}

\subsection{Completing the proofs of Theorem \ref{thm:main thm twisted} and Theorem \ref{thm:main thm bulk deformed}}
In this subsection we complete the proofs of Theorem \ref{thm:main thm twisted} and Theorem \ref{thm:main thm bulk deformed} from the introduction.
We begin by discussing a special case, namely the example studied by Maydanskiy in \cite{maydanskiy2009} (see also \cite{harris2012distinguishing,murphysiegel}).
This turns out to be in some sense our universal example.

For $n \geq 2$ even, let $A_2^{2n}$ denote the $(2n)$-dimensional $A_2$ Milnor fiber, i.e. the Liouville domain given by plumbing together two copies of the unit disk cotangent bundle $D^*S^n$ of $S^n$.
There is a Liouville Lefschetz fibration $\pi: A_2^{2n} \rightarrow \D^2$ with fiber $D^*S^{n-1}$ and three vanishing cycles, each Hamiltonian isotopic to the zero section.
With respect to this auxiliary Lefschetz fibration, the two core Lagrangian spheres in $A_2^{2n}$, which we will denote by $L$ and $S$, are matching cycles.
The associated matching paths $\gamma_L$ and $\gamma_S$ intersect exactly once at a critical value of $\pi$, as in Figure \ref{matchingcycles}.
Let $\gamma_{L'}$ denote a small pushoff of $\gamma_L$ which has the same endpoints and is otherwise disjoint from $\gamma_L$, and let $L'$ denote the associated matching cycle. After a suitable deformation, we can assume that $\gamma_L,\gamma_L',\gamma_S$ are naive matching paths. 
In particular, $L$ and $L'$ intersect transversely in precisely two points.
\begin{figure}
 \centering
 \includegraphics[scale=.8]{./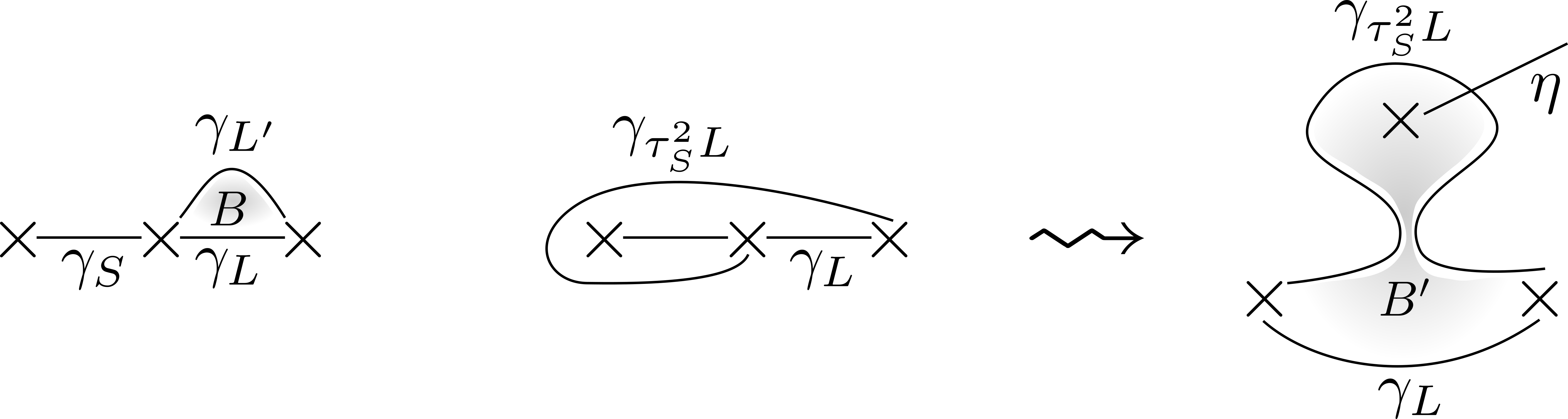}
 \caption{Some matching paths.}
 \label{matchingcycles}
\end{figure} 

Note that $\gamma_L$ and $\gamma_L'$ cobound a bigon $B \subset \D^2$.
Restricting $\pi$ to $E_B := \pi^{-1}(B)$, we get an exact Lefschetz fibration over $B$
with exact Lefschetz boundary condition $L \cup L'$.
Applying Proposition \ref{prop:matching cycle half-twist} twice, we see that $\tau_S^2 L$ is a matching cycle with matching path $\gamma_{\tau_S^2 L}$.
In particular, $\gamma_{\tau_S^2 L}$ and $\gamma_L$ cobound a bigon $B' \subset \D^2$,
and by restricting $\pi$ to $E_{B'} := \pi^{-1}(B')$ we get an exact Lefschetz fibration over $B'$ with exact Lefschetz boundary condition $L \cup \tau_S^2 L$. 
As suggested by Figure \ref{matchingcycles}, $\pi_{B'}$ is, up to deformation, the fiber connect sum of $\pi_B$ with the model Lefschetz fibration $\pi_\std: E_\std \rightarrow \D^2$,
and the boundary condition $L \cup \tau_S^2 L$ is the fiber connect sum of $L \cup L'$ with $Q_\std$.

Consider the case $2n = 4$.
We can combine Proposition \ref{prop:gluing for sections} with Lemma \ref{lem:intersection computation} to understand sections of $\pi_{B'}$ with boundary condition $L \cup \tau_S^2 L$, at least for sufficiently large gluing parameter and suitable $J \in \calJ^\reg_{\pi_B \bcs \pi_\std}$.
Indeed, let $J'$ denote the almost complex structure on $E_B$ induced by $J$ in the gluing limit. Consider $q \in \Int(\gamma_L)$ and $p \in \pi_{B'}^{-1}(q)$, and let 
$\ev_q: \calM_{L \cup \tau_S^2 L,J} \rightarrow \pi_{B'}^{-1}(q)$  and $\ev_q: \calM_{L \cup L',J'} \rightarrow \pi_{B}^{-1}(q)$ denote the natural evaluation maps.
Since $L'$ is a small Hamiltonian pushoff of $L$, the moduli space $\calM^p_{L \cup L',J'}$ of pseudoholomorphic sections of $\pi_B$ passing through $p$ consists of a single regular rigid curve, and hence the fiber product in Proposition~\ref{prop:gluing for sections} reduces $\calM^p_{L \cup \tau_S^2 L,J}$ to the model situation of Lemma~\ref{lem:intersection computation}.
The upshot is that
the moduli space $\calM^p_{L\cup \tau_S^2 L,J} := \ev_q^{-1}(p)$ consists of two points $u_0$ and $u_1$, with $\int u_0^*\Omega_\e = 0$ and
$\int u_1^*\Omega_\e \neq 0$.
Here $\Omega_\e$ is a closed two-form on $A_2^4$ with support disjoint from $\Op(L \cup \tau_S^2 L)$,
implanted from the one on $E_\std$ described at the end of \S\ref{subsec:the model computation}.
In other words, there are precisely two sections of the Lefschetz fibration $\pi_{B'}$ with the given boundary conditions which pass through the generic point $p$, and these are distinguished by $\Omega_\e$.
Proposition \ref{prop:quasi-isomorphism criterion twisted} now shows that $L$ and $\tau_S^2 L$ are quasi-isomorphic in $\twfukblank_{\Omega_\e}(A_2^4)$.

Similarly, for $2n > 4$, Proposition \ref{prop:gluing for sections} and Lemma \ref{lem:intersection computation} together show that $\calM^p_{L\cup \tau_S^2 L,J}$ has dimension $l-2$, and is cut down to a single point after adding an interior point constraint in $T_\e$.
It then follows from Proposition \ref{prop:quasi-isomorphism criterion bulk deformed} that 
$L$ and $\tau_S^2 L$ are quasi-isomorphic in $\fuk_{{T_\e}}(A_2^4)$, where $i_{T_\e}: (T_\e,\bdy T_\e) \rightarrow (A_2^4,\bdy A_2^4)$ is given by the natural extension of $T_\e \subset E_\std$ from \S\ref{subsec:the model computation}.

In any dimension, a similar analysis using Remark \ref{rem:converse to quasi-iso criterion} shows that $L$ and $\tau_S^2 L$ are not quasi-isomorphic in $\fuk(A_2^{2n})$.
This can also be seen more directly by considering a test thimble $T_\eta$ with vanishing path $\eta$ as in Figure \ref{matchingcycles} (see the proof of \cite[Lemma 7.3]{maydanskiy2009}).

Finally, consider the more general situations of Theorem \ref{thm:main thm twisted} and Theorem \ref{thm:main thm bulk deformed}.
By a version of the Weinstein neighborhood theorem (see \cite{symington2001} for the case $n=2$), a neighborhood $U$ of $L \cup S$ is symplectomorphic to a neighborhood of the core spheres in $A_2^{2n}$.
After a deformation we can assume that $U$ is (exact) symplectomorphic to $A_2^{2n}$,
and we can also arrange that $\tau_S^2L$ is contained in $U$.
As in Lemma~\ref{lemma:Liouville subdomains}, the quasi-isomorphism question for $L$ and $\tau_{S^2}L$ is unchanged if we restrict the ambient space from $M$ to $U$.
Identifying $U$ with $A_2^{2n}$, the assumptions on $\Omega$ and ${\mho}$ imply that their restrictions to $U$
are, up to a scaling factor, (co)homologous to $\Omega_\e$ and ${T_\e}$ respectively.
This reduces the quasi-isomorphism question for $L$ and $\tau_S^2 L$ to the universal example already considered.


\section{From the fiber to the total space}\label{sec:from fiber to total space}

\subsection{The general setup}\label{subsec:the general setup}

Consider a Liouville Lefschetz fibration with a fixed basis of vanishing paths.
As mentioned at the end of $\S\ref{subsec:the basics}$, the total space 
is determined up to Liouville deformation equivalence by the ordered list of vanishing cycles in the fiber.
In light of this observation, it is natural to ask precisely how pseudoholomorphic curve counts in the fiber and total space are related.
For example, is it true that the Fukaya category with the vanishing cycles as objects 
determines the wrapped Fukaya category and symplectic cohomology of the total space?

In fact, at least as early as \cite{seidel2009symplectic}, Seidel gave explicit conjectural formulas for these two invariants in terms of the directed Fukaya category $\fukdir(V_1,...,V_k)$ and full Fukaya category $\fuk(V_1,...,V_k)$ of the vanishing cycles  (see $\S\ref{subsec:a picard--lefschetz}$ for definitions).
Namely, Seidel cooks up an auxiliary curved $\calA_\infty$ category $\euD$ which is explicitly defined in terms of $\fukdir(V_1,...,V_k)$ and $\fuk(V_1,...,V_k)$ and involves a formal variable $t$. 
By its construction, the objects of $\fukdir(V_1,...,V_k)$ can be pulled back to modules $\Pi(V_1),...,\Pi(V_k)$ over $\euD$.
The conjectures are then:
\begin{enumerate}
\item
the full subcategory $\wfuk(T_1,...,T_k)$ of the wrapped Fukaya category of the total space with objects the thimbles is quasi-isomorphic to the full subcategory of $\euD$-modules with objects $\Pi(V_1),...,\Pi(V_k)$
\item
the symplectic cohomology of the total space is isomorphic to the Hochschild homology of $\euD$
\end{enumerate}
(here $\euD$-modules and Hochschild homology are defined by taking into account the $t$-adic topology of $\euD$; see \cite[\S 6]{Seidelsubalgebras}).

Using the Legendrian surgery formulas from \cite{bourgeois2012effect} and techniques from symplectic field theory, proofs of both statements are given in the appendix of \cite{bourgeois2012effect}.
As a byproduct of the proof, one gets a rather explicit geometric understanding of $\euD$ in terms of Morse--Bott configurations of curves.
An alternative approach to the first statement has also been announced in the manuscript \cite{abouzaidseidelmanuscript}.
The precise formulations of these statements will not be relevant for us, but an immediate corollary is the following meta-principle, which is also stated as Property 2.6 from \cite{abouzaid2010altering}:
\begin{thm}\label{thm:meta-principle}
The wrapped Floer cohomology $\field$-modules $HW(T_i,T_i)$ depend only on $\fuk(V_1,...,V_k)$ up to order-preserving quasi-isomorphism.
\end{thm}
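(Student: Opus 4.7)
The plan is to deduce this from the structural result recalled in \S\ref{subsec:the general setup}. By either \cite{bourgeois2012effect} or \cite{abouzaidseidelmanuscript}, the full wrapped subcategory $\wfuk(T_1,\ldots,T_k)$ is $\calA_\infty$ quasi-equivalent to the full subcategory of topological $\euD$-modules on the objects $\Pi(V_1),\ldots,\Pi(V_k)$. In particular, for each $i$ there is a quasi-isomorphism of $\field$-modules
\begin{align*}
HW(T_i,T_i) \;\simeq\; \Hhom_{\euD}(\Pi(V_i),\Pi(V_i)),
\end{align*}
so it suffices to show that the right-hand side depends only on $\fuk(V_1,\ldots,V_k)$ up to order-preserving quasi-isomorphism.

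First I would observe that the directed Fukaya category $\fukdir(V_1,\ldots,V_k)$ is extracted from $\fuk(V_1,\ldots,V_k)$ by a purely formal procedure, retaining only morphisms between objects in the prescribed order, so that any order-preserving quasi-isomorphism of the latter induces one of the former. Next, the recipe producing the curved $\calA_\infty$ category $\euD$ together with the modules $\Pi(V_i)$ from the pair $(\fukdir,\fuk)$ is explicit and manifestly functorial for $\calA_\infty$ quasi-isomorphisms of the input data; see \cite[\S 6]{Seidelsubalgebras}. Thus an order-preserving quasi-isomorphism between two ordered Fukaya subcategories yields a quasi-isomorphism of curved $\calA_\infty$ categories $\euD \to \euD'$ carrying $\Pi(V_i)$ to an object quasi-isomorphic to $\Pi(V_i')$. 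A standard application of the transfer principle (compare Lemma \ref{lem:quasi-iso trick}) then produces the desired quasi-isomorphism $\Hhom_{\euD}(\Pi(V_i),\Pi(V_i)) \simeq \Hhom_{\euD'}(\Pi(V_i'),\Pi(V_i'))$.

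The main obstacle is handling the curvature and $t$-adic topology on $\euD$: one must verify that the functoriality above is compatible with the $t$-adic filtration and that the resulting module-level quasi-isomorphisms behave well under $t$-adic completion. These technicalities are essentially built into the conventions used in \cite{bourgeois2012effect, abouzaidseidelmanuscript}, so the bulk of the work is a careful translation between the geometric and algebraic frameworks rather than proving a genuinely new result.
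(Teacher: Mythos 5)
Your argument is essentially the route the paper mentions in \S\ref{subsec:the general setup} only to set aside: it deduces the statement as a corollary of Seidel's conjectural formula (1), i.e.\ the quasi-equivalence between $\wfuk(T_1,\ldots,T_k)$ and modules over the curved category $\euD$, whose only available proofs are the SFT-based appendix of \cite{bourgeois2012effect} and the unpublished manuscript \cite{abouzaidseidelmanuscript}. The paper explicitly notes that the meta-principle is an ``immediate corollary'' of those statements, but then proves it differently, precisely to avoid that dependence: by the paper's own conventions (see the asterisk on Theorem* \ref{thm:compute twsh and bdfuk strong}), a proof resting on SFT transversality or on not-yet-available Picard--Lefschetz results would not support an unstarred theorem, and Theorem \ref{thm:compute twsh and bdfuk weak} is meant to be unconditional. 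So while your reduction is reasonable modulo those inputs (the functoriality of the $\euD$-construction and the passage through the directed subcategory are believable, if stated somewhat breezily), it inherits exactly the foundational caveat the paper is structured to remove.

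The paper's actual proof (\S\ref{subsec:a picard--lefschetz}) uses only published Picard--Lefschetz technology, chiefly \cite{seidelpart1}. One expresses $HW(T_i,T_i)$ as a direct limit of $\Hhom_{\fs(\pi)}(\sigma^k T_i,T_i)$ over the global monodromy functor $\sigma$ with connecting maps induced by the natural transformation $N$; one shows the restriction functor $\EuScript{R_A}:\Fun(\fs(\pi),\fs(\pi))\rightarrow \bimod(\euA)$ is cohomologically full and faithful (using generation of $\fs(\pi)$ by the thimbles and Morita theory); and one invokes the main theorem of \cite{seidelpart1}, which identifies the restricted once-wrapping continuation morphism with the boundary map $\delta:\euA^\vee\rightarrow\euA$, an object built entirely from $\fuk(V_1,\ldots,V_k)$. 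Hence $\delta$ determines $N$ up to quasi-equivalence and therefore determines the $\field$-modules $HW(T_i,T_i)$. The trade-off is clear: your route would give more (the full wrapped category via $\euD$) but rests on inputs the paper treats as conditional, whereas the paper's route gives exactly the module-level statement needed, unconditionally. If you want to keep your approach, you would have to either accept the theorem only in the starred sense or supply the missing foundational justification for statement (1).
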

Here by {\em order-preserving quasi-isomorphism} we mean an $\calA_\infty$ quasi-isomorphism between two $\calA_\infty$ categories, each with $k$ ordered objects, which sends the $i$th object to the $i$th object for $i = 1,...,k$.
In $\S\ref{subsec:a picard--lefschetz}$ we explain how to deduce Theorem \ref{thm:meta-principle} using Lefschetz fibration tools already available in the literature.
In $\S\ref{subsec:incorporating twistings}$ we then discuss extensions of Thereom \ref{thm:meta-principle} in the presence of twistings and bulk deformations.

\sss

We point out that the statement (2) is closely related to (1) by general principles.
Indeed, much is already known or conjecturally known about the relationship between the wrapped Fukaya category of a Lefschetz fibration and the symplectic cohomology of the total space.
For one thing, as described in \S\ref{subsubsec:wrapped Floer as a module} it is a standard observation that the self wrapped Floer cohomology of any object in the wrapped Fukaya category admits the structure of a unital module over symplectic cohomology, and by considering units we immediately have:
\begin{prop}
Nontriviality of the wrapped Fukaya category implies nontriviality of symplectic cohomology.
\end{prop}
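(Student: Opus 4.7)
The plan is to exploit the unital module structure of self wrapped Floer cohomology over symplectic cohomology, which was constructed in \S\ref{subsubsec:wrapped Floer as a module}. The proposition is effectively the contrapositive of the statement that vanishing of $\sh(M,\theta)$ forces vanishing of every $\hw(L,L)$.

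First I would unpack what ``nontriviality of the wrapped Fukaya category'' means: there is some object $L$, i.e.\ an exact Lagrangian with Legendrian boundary satisfying the standing hypotheses of \S\ref{subsubsec:wrapped Floer as a module}, such that $\hw(L,L) \neq 0$. The goal is then to show $\sh(M,\theta) \neq 0$. Assume for contradiction that $\sh(M,\theta) = 0$. Then, viewing $\sh(M,\theta)$ as a unital $\field$-algebra, the unit $1 \in \sh(M,\theta)$ equals zero.

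Next I would invoke the module map
\begin{align*}
\sh(M,\theta) \otimes_\field \hw(L,L) \longrightarrow \hw(L,L)
\end{align*}
from \S\ref{subsubsec:wrapped Floer as a module}, which is a unital action. Unitality here means that $1 \cdot x = x$ for every $x \in \hw(L,L)$, where the product is the module action. Since $1 = 0$ in $\sh(M,\theta)$, we get $x = 1 \cdot x = 0 \cdot x = 0$ for every $x$, so $\hw(L,L) = 0$, contradicting our assumption. The contrapositive is precisely the statement of the proposition.

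The only nontrivial content is the unitality of the module structure, but this is exactly what is asserted (and sketched via a three-punctured disk count) in \S\ref{subsubsec:wrapped Floer as a module}, so no new analytic work is required. If one wanted to be more careful, the main thing to verify is that the element defined by counting rigid pseudoholomorphic maps from $\D^2 \setminus \{-1,1,0\}$ with a Hamiltonian orbit asymptotic representing the unit of $\sh(M,\theta)$ and Hamiltonian chord asymptotics on $L$ does in fact act as the identity on the direct limit $\hw(L,L)$; this is a standard PSS-type argument whose details I would omit. Thus the only step to spell out is the recollection of unitality, and the proof itself is a one-line deduction.
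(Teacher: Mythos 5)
Your argument is correct and is exactly the paper's: the proposition is stated as an immediate consequence of the unital module structure of $\hw(L,L)$ over $\sh(M,\theta)$ from \S\ref{subsubsec:wrapped Floer as a module}, where the contrapositive (vanishing symplectic cohomology forces vanishing wrapped Floer cohomology) is spelled out. Nothing further is needed.
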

In the converse direction, work of Ganatra \cite{ganatra2013symplectic} shows that
the symplectic cohomology of any Liouville manifold is isomorphic to the Hochschild (co)homology of its wrapped Fukaya category, provided a certain nondegeneracy condition holds.
Moreover, it is expected that the Lefschetz thimbles of any Liouville Lefschetz fibration split-generate the wrapped Fukaya category
and satisfy this nondegeneracy condition (this is the subject of work in progress of Abouzaid--Ganatra). In particular, these statements combined would allow us to deduce (2) from (1).

\sss

\subsection{A Picard--Lefschetz approach}\label{subsec:a picard--lefschetz}

In this subsection we sketch a proof of Theorem \ref{thm:meta-principle} using techniques from symplectic Picard--Lefschetz theory. The main ingredient is \cite{seidelpart1}, and the proof will essential follow from suitably interpreting the main result of that paper. 
For completeness we also recall the definitions and main properties of 
the various players in this story.
We make use of various notions from $\calA_\infty$ algebra, such as $\calA_\infty$ bimodules over $\calA_\infty$ categories and functor categories between $\calA_\infty$ categories.

\subsubsection{The full Fukaya category of vanishing cycles $\euB$}

Let $(E^{2n},\Theta,\pi)$ be a Liouville Lefschetz fibration over $\D^2$.
Among other things, this means that $(E^{2n},\Theta)$ is a Liouville domain with corners
and $\Theta$ restricts to a Liouville form on the nonsingular fibers of $\pi: E \rightarrow \D^2$.
Let $\eta_1,...,\eta_k \subset \D^2$ be a basis of vanishing paths, with corresponding vanishing cycles
$V_1,...,V_k \subset E_*$ and thimbles $T_1,...,T_k \subset E$.

The first important algebraic object associated to a Lefschetz fibration is the 
{\em full Fukaya category of vanishing cycles}, which we denote by $\fuk(V_1,...,V_k)$ or simply $\euB$.
Namely, $\fuk(V_1,...,V_k)$ is the full $\calA_\infty$ subcategory of $\fuk(E_*,\Theta_*)$ with objects $V_1,...,V_k$. 
This of course depends on the ambient symplectic manifold $(E_*,\Theta_*)$ but we suppress it from the notation.

\subsubsection{The directed Fukaya category of vanishing cycles $\euA$}

Next, there is the {\em directed Fukaya category of vanishing cycles}, denoted by $\fukdir(V_1,...,V_k)$
or simply $\euA$. It is the subcategory of $\fuk(V_1,...,V_k)$ with objects $V_1,...,V_k$ and morphisms
\begin{itemize}
\item
$\hom_{\euA}(V_i,V_j) := \hom_{\euB}(V_i,V_j)$ if $i < j$
\item
$\hom_{\euA}(V_i,V_j) := \{0\}$ if $i > j$
\item
$\hom_{\euA}(V_i,V_i)$ is generated by a chosen cocycle representative of the cohomological unit in $\hom_{\euB}(V_i,V_i)$.
\end{itemize}
Equivalently, one can define the reduced version $\ovl{\euA}$ by omitting the unit morphisms in $\hom(V_i,V_i)$, and then reproduce $\euA$ up to quasi-isomorphism by formally adjoining strict units (see \cite[\S 2]{seidelpart1}).
This latter approach has the advantage that, assuming the vanishing cycles $V_1,...,V_k$ are in general position\footnote{We say that Lagrangians $V_1,...,V_k$ are in {\em general position} if any two intersect transversely and there are no triple intersections.}, we can define $\euA$ 
without using any Hamiltonian perturbations.
That is, we can construct $\ovl{\euA}$ following the general perturbation strategy for $\fuk$, but picking the Hamiltonians in Floer data and the Hamiltonian-valued one-forms in perturbation data to vanish identically.
This perspective is exploited in \cite{seidelpart1}.
We will assume from now on that $V_1,...,V_k$ are indeed in general position.

\subsubsection{The boundary map $\delta$ of the inclusion $\euA \rightarrow \euB$}\label{subsubsec:boundary map}

It turns out that a lot of the interesting symplectic geometry of $(E,\theta)$ is not contained in either of the abstract categories $\euA,\euB$ individually, but rather in their interaction via the inclusion map $\euA \rightarrow \euB$.
More precisely, restricting $\calA_\infty$ operations makes $\euB$ into an $\euA$-bimodule,
and we also have the diagonal $\euA$-bimodule, which we denote simply by $\euA$.
We can thus view $\euA \rightarrow \euB$ as a homomorphism of $\euA$-bimodules,
and for general reasons there is a quotient $\euA$-bimodule $\euB/\euA$ and a boundary homomorphism
$\delta: \euB/\euA \rightarrow \euA$ (well-defined up to homotopy).
By a version of Poincar\'e duality for Floer theory, $\euB/\euA$ is naturally identified with the dual diagonal bimodule $\calA^\vee$, and therefore we can also view $\delta$ as a bimodule homomorphism $\euA^\vee \rightarrow \euA$.
Our goal is to show that $\delta$ contains the information needed to produce the wrapped Floer cohomology $\field$-modules $HW(T_i,T_i)$ up to isomorphism.
In particular, this will imply that they only depend on $\euB$ up to order-preserving $\calA_\infty$ quasi-isomorphism.

\begin{remark}
As a side note, the relationship between $\delta$ and $\euD$ is explained in \cite{Seidelsubalgebras}.
We observe that $\delta$ induces a natural transformation between the two convolution functors
$\cdot\otimes \euB/\euA$ and $\cdot\otimes \euA$ from $\bimod(\euA)$ to itself. 
Since the latter convolution functor is quasi-equivalent to the identity functor,
we get a natural transformation $\EuScript{N}$ from $\cdot \otimes \euB/\euA$ to the identity. 
There is a notion of {\em localizing $\bimod(\euA)$ along the natural transformation $\EuScript{N}$}.
The main result of \cite{Seidelsubalgebras} states a precise sense in which $\euD$ gives a model for this localization.
In particular, this could be used to reformulate statements (1) and (2) above by replacing $\euD$ with the abstract localization of $\euA$.
\end{remark}

\subsubsection{The Fukaya category of thimbles $\euT$}\label{subsubsec:the fukaya category of thimbles}

So far we have discussed $\euA$ and $\euB$ as algebraic objects associated entirely to the fiber and the vanishing cycles contained it.
We now begin to relate these to invariants of the total space.
When considering pseudoholomorphic curves in $E$ with the thimbles $T_1,...,T_k$ as boundary conditions, we must exercise care in our choice of Hamiltonian terms in order to ensure the maximum principle.
Let $h: \D^2 \rightarrow \R$ be a once-wrapping Hamiltonian, say with $h$ being $C^\infty$ small away from $\bdy \D^2$
and linear with slope $1$ near $\bdy \D^2$.
Let $H = h \circ \pi$ be the pullback to $E$, with associated flow denoted by $\phi_H^t$.
As they stand, the projections $\pi(T_1),...,\pi(T_k)$ intersect nongenerically at $*$, but 
we can easily perturb away this issue using $\phi_H^t$.
Namely, fix small real numbers $0 < c_1 < ... < c_k < \e$, and set
$T_i' := \phi_H^{c_i}(T_i)$ for $i = 1,...,k$.
For generic choices of $c_1,...,c_k$, the perturbed thimbles $T_1',...,T_k'$ are in general position.

We can now construct a directed $\calA_\infty$ category, denoted by $\fukdir(T_1',...,T_k')$ or simply $\euT$, following same approach we took for $\euA$ with trivial Hamiltonian terms.
The techniques of \cite[\S 4,\S 5]{seidelpart1} guarantee the needed compactness in this setting by arguing via the projection $\pi$.
Strictly speaking we should first complete $(E,\Theta,\pi)$ to a Lefschetz fibration over $\C$, but we suppress this and other related technical details for ease of exposition.
Note that the pairwise intersections $T_i' \cap T_j'$ are naturally in bijection with the pairwise intersections $V_i \cap V_j$. In fact 
$\euT$ and $\euA$ are quasi-isomorphic as $\calA_\infty$ algebras, and even coincide on the nose for suitable choices. 
By an according abuse of notation, we will sometimes equate $\euT$ with $\euA$ in the sequel.

\subsubsection{The wrapped bimodules $\euU^c$}\label{subsubsec:the wrapped bimodules}

There is also a total space analogue of $\euB$, viewed as an $\euA$-bimodule, but this now depends on a choice of convention for how to handle ``intersection points at infinity" between the thimbles.
Following \cite{seidelpart1}, we introduce a family of $\euT$-bimodules $\euU^c$ depending on a real parameter $c \in \R$,
defined whenever $c$ is not an integer translate of $c_j - c_i$ for some $i \neq j$.
In general, recall that an $\calA_\infty$ bimodule $\calP$ over $\euA$ consists of a vector space $\euP(Y_0,X_0)$ 
for any two objects $X_0,Y_0$ of $\euA$ and multi-linear maps $\mu_{\euP}^{r|1|s}$ of the form
\begin{align*}
\hom_{\euA}(X_{r-1},X_r) \otimes ... \otimes \hom_{\euA}(X_0,X_1) \otimes \euP(Y_0,X_0) \otimes \hom_{\euA}(Y_1,Y_0) \otimes ... \otimes \hom_{\euA}(Y_s,Y_{s-1}) \rightarrow \euP(Y_s,X_r)
\end{align*}
for all $r,s \geq 0$ and objects $X_i,Y_i$ of $\euA$,
subject to suitable $\calA_\infty$ relations.
As a chain complex, we define $\euU^c(T_i',T_j')$ to be the Floer complex $CF(T_i',T_j')$ with respect to Floer data $(H_{i,j},J_{i,j})$, where $H_{i,j} = cg(t)H$ for $g$ a fixed nondecreasing function $g: [0,1] \rightarrow \R$ which vanishes near $0$ and $1$ and satisfies $\int_0^1g(t) = 1$.
The higher bimodule terms of $\euU^c$ are 
constructed using a slight modification of the usual perturbation scheme for $\fuk$.
Namely, we choose consistent perturbation data over the universal family of Riemann disks $S$ where:
\begin{itemize}
\item $S$ has $a+b+2$ punctures, with one puncture designated as the input and one designated as the output
\item the boundary segments between punctures are labeled by elements of $\{1,...,k\}$ (or equivalently $\{T_1',...,T_k'\}$),
where the labels increase as we follow the boundary orientation from the output to the input, and also from the input to the output.
\end{itemize}
So far this is just equivalent to a labeled version of $\calRuniv_{a+b+2}$.
Note that we can identify any such disk $S$ with the infinite strip $\R \times [0,1]$ with
$a$ punctures on $\R \times \{0\}$ and $b$ punctures on $\R \times \{1\}$.
We then pick the perturbation data on $S$ to be of the following form:
\begin{itemize}
\item
$K = H \otimes cg(t)dt$
\item
$J$ coincides with $J_{i_\pm,j_\pm}$ for $\pm s \gg 0$, where $(i_+,j_+)$ and $(i_-,j_-)$ are the labels at the input and output punctures respectively, and $J$ agrees with the corresponding choices made for $\euT$ near the remaining $p+q$ punctures.
\end{itemize}
The higher bimodule structure maps are then given by counting solutions to the inhomogeneous pseduoholomorphic curve equation with varying domain $S$ and perturbation data as above.
\begin{remark}
Our parameter $c$, which measures how much wrapping is taking place, is comparable to the parameter appearing in \cite[Def. 4.2]{seidelpart1}, the two setups being essentially equivalent after symplectically completing $\D^2$ to $\C$.
\end{remark}

\subsubsection{The continuation homomorphisms $\Gamma^{c_+,c_-}$}\label{subsubsec:continuation homo}

For $c_+ < c_-$, there is a $\euT$-bimodule homomorphism $\Gamma^{c_+,c_-}: \euU^{c_+} \rightarrow \euU^{c_-}$ which generalizes the usual continuation maps in Floer cohomology.
The construction is formally similar to that of $\euT$,
using a similar moduli space to the one in \S\ref{subsubsec:the wrapped bimodules} except that each strip $S \cong \R \times [0,1]$ is now also decorated with a sprinkle $p \in L \cong \R \times \{1/2\}$
(as in \S\ref{subsubsec:the moduli space calcont}, the effect of the sprinkle is to break the $\R$-translation symmetry).
On such a strip $S$, we assume the perturbation data is of form 
\begin{itemize}
\item
$K = H \otimes F(s)g(t)dt$, where $F(s) \equiv c_{\pm}$ for $\pm s \gg 0$, and $F'(s) \leq 0$ for all $s$
\item
$J$ agrees with the corresponding choices made for $\euU^{c_{\pm}}$ for $\pm s \gg \infty$ and those made for $\euT$ near the remaining $a+b$ punctures.
\end{itemize}
The specific form of $K$ and the inequality $F'(s) \leq 0$ effectively guarantee that the maximum principle still holds for solutions of the inhomogenous pseudoholomorphic curve equation.

As explained in \cite[\S 6d]{seidelpart1}, $\Gamma^{c,c}$ is homotopic to the identity and $\Gamma^{c_0,c_2}$ is homotopic to $\Gamma^{c_1,c_2}\circ \Gamma^{c_0,c_1}$, whenever these are defined. Moreover, $\Gamma^{c_+,c_-}$ is a quasi-isomorphism
provided that $\euU^c$ is defined for all $c \in [c_+,c_-]$.
This means that $\euU^c$ only changes for discrete values of $c$, and for $\e$ as in \S\ref{subsubsec:the fukaya category of thimbles} we have natural quasi-isomorphic identifications:
\begin{itemize}
\item
$\euU^{\e}$ with the diagonal $\euT$-bimodule $\euT$
\item
$\euU^{-\e}$ with the dual diagonal $\euT$-bimodule $\euT^\vee$.
\end{itemize}
By (pre)composing $\Gamma^{-\e,\e}$ with these identifications,
we get a $\euA$-bimodule homomorphism $\euA^\vee \rightarrow \euA$.
The main result of \cite{seidelpart1} states that this agrees with $\delta$ from $\S\ref{subsubsec:boundary map}$, at least up to homotopy and precomposing with a quasi-isomorphism from $\euA^\vee$ to itself.
For our purposes this result has the following significance. 
In general, let us say that two morphisms $f:X \rightarrow Y$ and $f': X' \rightarrow Y'$ in a strict category are {\em equivalent} if there are isomorphisms $\Phi: X \rightarrow X'$ and $\Psi: Y' \rightarrow Y$
such that $f = \Psi\circ f'\circ\Phi$.
Similarly, let us say that two closed morphisms $f: X \rightarrow Y$ and $f': X' \rightarrow Y'$ in a cohomologically unital $\calA_\infty$ category are {\em quasi-equivalent} if $[f]$ and $[f']$ are equivalent in the cohomology level category.
Then the main result of \cite{seidelpart1} implies that $\Gamma^{-\e,\e}$ and $\delta$ are quasi-equivalent morphisms in $\bimod(\euA)$

\subsubsection{The Fukaya--Seidel category and global monodromy}
The category $\euA$ sits inside of a bigger $\calA_\infty$ category $\fs(\pi)$, the Fukaya--Seidel category of the Lefschetz fibration $(E,\Theta,\pi)$.
Roughly, the objects of $\fs(\pi)$ are compact Lagrangians in $E$ along with Lefschetz thimbles for any possible choice of vanishing path.
As before, the noncompactness of the thimbles poses some additional technical difficulties, and Seidel circumvents these in \cite{seidelbook} using a branched double cover trick.
As part of the general package, the once-wrapping symplectomorphism $\phi_H^1$ of the total space induces a global monodromy functor 
\begin{align*}
\sigma: \fs(\pi) \rightarrow \fs(\pi),
\end{align*}
along with a continuation-type natural transformation $N$ from $\sigma$ to the identity functor $\identity$ of $\fs(\pi)$.
Let $\fun(\fs(\pi),\fs(\pi))$ denote the $\calA_\infty$ category whose objects are (cohomologically unital) functors $\fs(\pi) \rightarrow \fs(\pi)$ and morphisms are natural transformations (see \cite[\S 2e]{seidelbook}).
The natural transformation $N$ extends the once-wrapping continuation map $\Gamma^{-1+\e,\e}: \euU^{-1+\e} \rightarrow \euU^{\e}$ in the sense that it maps to the morphism $\Gamma^{-1+\e,\e}$ under the restriction functor
\begin{align*}
\EuScript{R_A}: \fun(\fs(\pi),\fs(\pi)) \rightarrow \bimod(\euA).
\end{align*}
Here $\EuScript{R_A}$ is the composition of:
\begin{enumerate}
\item
the natural functor $\fun(\fs(\pi),\fs(\pi)) \rightarrow \bimod(\fs(\pi))$ which on the level of objects is given by pulling back the diagonal $\fs(\pi)$-bimodule on the left side via the functor $\fs(\pi) \rightarrow \fs(\pi)$
\item the restriction functor $\bimod(\fs(\pi)) \rightarrow \bimod(\euA)$ which on the level of objects restricts the bimodule operations from $\fs(\pi)$ to $\euA$.
\end{enumerate}
We also claim that $\EuScript{R_A}$ is cohomologically full and faithful.
Indeed, in the composition above, the first functor is cohomologically full and faithful by \cite[Lemma 2.7]{abouzaid2015khovanov}.
The second functor is a quasi-equivalence by the fact that the thimbles $T_1,...,T_k$ generate
$\fs(\pi)$ (see \cite[Theorem 18.24]{seidelbook}), together with general Morita theory 
for $\calA_\infty$ bimodules (see \cite[\S 4.1]{sheridan2015formulae} and specifically the proof of \cite[Lem. A.3]{sheridan2015formulae}).

\subsubsection{The wrapped Fukaya category}\label{subsubsec:wrapped fuk}

The connection of the above discussion with the wrapped Fukaya category of the total space is as follows.
It is well-known that the wrapped Floer cohomology $\field$-module $HW(T_i,T_i)$ can be computed as a direct limit
\begin{align*}
HW(T_i,T_i) \cong \lim_{k \rightarrow \infty} HF(\phi_{H}^{k+\e} T_i, T_i),
\end{align*}
where the connecting maps in the directed system are continuation maps.
Compared with the general definition of wrapped Floer cohomology for Lagrangians in a Liouville manifold, the content of this statement is that it suffices to use a Hamiltonian which wraps only in the base direction of the Lefschetz fibration (see \cite{mcleanlefschetz} for the symplectic cohomology version).
Equivalently, this means that we have
\begin{align*}
HW(T_i,T_i) \cong \lim_{k \rightarrow \infty} Hhom_{\fs(\pi)}(\sigma^k T_i,T_i),
\end{align*}
where the connecting maps in the above direct limit are induced by precomposition with $N$.

\subsubsection{Putting it all together}

By the discussion in \S\ref{subsubsec:continuation homo},
$\Gamma^{-1+\e,\e}$ factors as the composition $\Gamma^{-\e,\e}\circ \Gamma^{-1+\e,-\e}$,
and $\Gamma^{-1+\e,-\e}$ is a quasi-isomorphism.
In particular, this shows that $\EuScript{R_A}(N) \simeq \Gamma^{-1+\e,\e}$ is quasi-equivalent to $\Gamma^{-\e,\e}$, and hence to $\delta$, as a morphism in $\bimod(\euA)$.
This has the following algebraic consequence.
Since $\EuScript{R_A}$ is cohomologically full and faithful, 
$\delta$ determines $N$ up to quasi-equivalence. 
In particular, at least after passing to cohomology level categories,
$N$ is determined as a natural transformation $\sigma \rightarrow \identity$
up to replacing $\sigma$ and $N$ by $\sigma'$ and $N'$ respectively, such that we have a cohomology level commutative diagram of the form
\begin{align*}
\xymatrix
{
\sigma' \ar^{N'}[rr]\ar_{F}^{\simeq}[d]\ar[d] && \identity\ar^{G}_{\simeq}[d]\\
\sigma \ar_{N}[rr] && \identity,
}
\end{align*}
where $F$ and $G$ are natural quasi-isomorphisms.
In particular, after replacing $F$ by $F\circ G^{-1}$, we can assume that $G$ is the identity natural transformation. 
It is then straightforward to check using the formulation from \S\ref{subsubsec:wrapped fuk} that the resulting $HW(T_i,T_i)$ is isomorphic whether we compute it using $N'$ or $N$.
In summary, it follows that $\delta$ determines $HW(T_i,T_i)$ up to isomorphism.

\subsection{Incorporating twistings and bulk deformations}\label{subsec:incorporating twistings}

As before, let $V_1,...,V_k$ be the vanishing cycles of a Liouville Lefschetz fibration $(E^{2n},\Theta,\pi)$. 
Suppose $\Omega$ is a closed two-form on the fiber whose support is disjoint from the vanishing cycles. In this case there is natural extension of $\Omega$ to a closed two-form $\wt{\Omega}$ on $E$. Namely, if we view $E$ as the result of attaching $k$ critical handles to $E_* \times \D^2$, then we take $\wt{\Omega}$ to be the pullback of $\Omega$ under the projection $E_* \times \D^2 \rightarrow E_*$, extended trivially over the critical handles.  We can further arrange, at least after a suitable deformation of $(E,\Theta,\pi)$, that the support of $\wt{\Omega}$ is disjoint from the Lefschetz thimbles $T_1,...,T_k$.
In this situation we have the following twisted analogue of Theorem \ref{thm:meta-principle}.
It can be proved following the same outline, {\em mutadis mutandis}, twisting fiber invariants by $\Omega$ and total space invariants by $\wt{\Omega}$:
\begin{thm}\label{thm:meta-principle twisted}
The wrapped Floer cohomology $\K$-modules $\twhwblank_{\wt{\Omega}}(T_i,T_i)$ depend only on $\twfuk(V_1,...,V_k)$ up to order-preserving quasi-isomorphism.
\end{thm}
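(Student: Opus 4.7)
The plan is to run the proof of Theorem \ref{thm:meta-principle} verbatim, but replace every Floer-theoretic object built from pseudoholomorphic polygons with its $t^{\int u^*(\cdot)}$-weighted counterpart, using $\Omega$ on objects living in the fiber and $\wt{\Omega}$ on objects living in the total space. Concretely, I would first observe that the extended two-form $\wt{\Omega}$ is disjoint from the thimbles $T_1,\dots,T_k$ and restricts (on the chosen fiber $E_*$) to exactly $\Omega$, which is in turn disjoint from $V_1,\dots,V_k$. Thus the twisted directed and full Fukaya categories $\euA_{\Omega}$ and $\euB_{\Omega}$ of the vanishing cycles and the twisted Fukaya--Seidel category $\fs_{\wt{\Omega}}(\pi)$ are all defined by Section~\ref{subsec:the twisted Fukaya category}, and the twisted wrapped bimodules $\euU^c_{\wt{\Omega}}$ and twisted continuation homomorphisms $\Gamma^{c_+,c_-}_{\wt{\Omega}}$ are constructed by the same perturbation scheme as in \S\ref{subsubsec:the wrapped bimodules}--\S\ref{subsubsec:continuation homo}, with each isolated curve weighted by $t^{\int u^*\wt{\Omega}}$.

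Next I would verify that each structural step in \S\ref{subsec:a picard--lefschetz} survives the twist. The key inputs are: (i) the maximum principle of \cite{seidelpart1}, which is a consequence of the geometry of the perturbation data and is unaffected by multiplicative weights; (ii) the Stokes calculation that compares two families of stripwise weights at the boundary of a $1$-dimensional moduli space, which goes through exactly as in \S\ref{subsec:the twisted Fukaya category} because $\wt{\Omega}$ is closed and vanishes near the Lagrangian boundaries; (iii) the identifications $\euU^{\e}_{\wt{\Omega}} \simeq \euA_{\Omega}$ (diagonal bimodule) and $\euU^{-\e}_{\wt{\Omega}} \simeq \euA_{\Omega}^\vee$ (dual diagonal bimodule), which still hold since the relevant curves are contained near the fiber where $\wt{\Omega}$ reduces to $\Omega$. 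One then obtains a twisted boundary map $\delta_\Omega : \euA_\Omega^\vee \to \euA_\Omega$ inside $\bimod(\euA_\Omega)$, and a twisted global monodromy natural transformation $N_{\wt{\Omega}} : \sigma \to \identity$ in $\fun(\fs_{\wt{\Omega}}(\pi),\fs_{\wt{\Omega}}(\pi))$, whose image under the twisted restriction functor $\EuScript{R}_{\euA,\Omega}$ is quasi-equivalent to $\delta_\Omega$.

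The rest of the argument is formal: one checks that $\EuScript{R}_{\euA,\Omega}$ remains cohomologically full and faithful, using the twisted analogues of \cite[Lemma~2.7]{abouzaid2015khovanov} and \cite[Theorem~18.24]{seidelbook}, so that $\delta_\Omega$ determines $N_{\wt{\Omega}}$ up to replacing $(\sigma, N_{\wt{\Omega}})$ by a quasi-equivalent pair. Finally, the twisted wrapped Floer cohomology can be computed as the direct limit
\begin{equation*}
\twhwblank_{\wt{\Omega}}(T_i,T_i) \cong \lim_{k \to \infty} H\!\hom_{\fs_{\wt{\Omega}}(\pi)}(\sigma^k T_i, T_i),
\end{equation*}
with connecting maps induced by precomposition with $N_{\wt{\Omega}}$; as before, this direct limit is invariant under the allowed replacement, so $\twhwblank_{\wt{\Omega}}(T_i,T_i)$ depends only on $\delta_\Omega$, hence only on $\twfuk(V_1,\dots,V_k)$ up to order-preserving quasi-isomorphism.

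The step I expect to demand the most care is verifying that Seidel's identification of $\Gamma^{-\e,\e}$ with the boundary map $\delta$ of \cite{seidelpart1} still holds after twisting. The proof in \cite{seidelpart1} ultimately compares two bimodule homomorphisms by interpolating between perturbation data and running a parametrized moduli space argument; one must check that the $t^{\int u^*\wt{\Omega}}$-weights assigned along this interpolation are consistent, i.e.\ that the relevant parametrized moduli spaces bound in a way compatible with the weights. Because $\wt{\Omega}$ is closed and its support avoids both the thimbles and the wrapping region (after a small deformation of $(E,\Theta,\pi)$ if necessary), the Stokes argument already used to prove $\delta_{\wt{\Omega}}^2 = 0$ extends line by line; however, making this bookkeeping precise across the branched double cover trick used to handle noncompact thimbles is the main technical obstacle.
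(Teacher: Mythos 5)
Your proposal is correct and follows essentially the same route as the paper, whose proof of this statement is precisely to rerun the Picard--Lefschetz argument of \S\ref{subsec:a picard--lefschetz} \emph{mutatis mutandis}, weighting fiber-level curves by $t^{\int u^*\Omega}$ and total-space curves by $t^{\int u^*\wt{\Omega}}$. Your added care about the Stokes bookkeeping, the identifications $\euU^{\pm\e}$ with the (dual) diagonal bimodule, and the compatibility of the twist with the comparison $\Gamma^{-\e,\e}\simeq\delta$ from \cite{seidelpart1} is exactly the content the paper leaves implicit.
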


Similarly, if $i_\mho: (\mho,\bdy \mho) \rightarrow (E_*\bdy E_*)$ is a smooth cycle of codimension $l > 2$ which is disjoint from the vanishing cycles,
there is a natural extension $i_{\wt{\mho}}: \wt{\mho} \rightarrow E$ to a codimension $l$
cycle in $E$, where $\wt{\mho} := \mho \times \D^2$ (modulo smoothing corners).
We can also assume that ${\wt{\mho}}$ is disjoint from the thimbles $T_1,...,T_k$.
The bulk deformed analogue of Theorem \ref{thm:meta-principle} in this situation is:
\begin{thm}\label{thm:meta-principle bulk deformed}
The wrapped Floer cohomology $\KL$-modules $\hw_{{\wt{\mho}}}(T_i,T_i)$ depend only on $\fuk_\mho(V_1,...,V_k)$ up to order-preserving quasi-isomorphism.
\end{thm}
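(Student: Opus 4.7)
The plan is to follow the outline of Section \ref{subsec:a picard--lefschetz} essentially verbatim, replacing each moduli space of pseudoholomorphic polygons or strips by its bulk-deformed analogue from Section \ref{sec:bulk deformations}, and each $\calA_\infty$ structure map by an $\hbar$-weighted sum over the number $q$ of interior marked points constrained to lie in $\wt{\mho}$ (resp.\ $\mho$ for objects in the fiber). Concretely, I would first construct bulk-deformed versions $\euA_\mho$, $\euB_\mho$, $\euT_{\wt{\mho}}$, the family of bimodules $\euU^c_{\wt{\mho}}$ (with all perturbation data chosen as in \S\ref{subsubsec:bdfuk consistent universal perturbation data} and \S\ref{subsubsec:consistent universal perturbation data}, plus the popsicle/sprinkle data of \S\ref{subsubsec:the moduli space calcont} needed for continuation), the continuation bimodule maps $\Gamma^{c_+,c_-}_{\wt{\mho}}$, the boundary map $\delta_{\wt{\mho}}: \euB_\mho/\euA_\mho \rightarrow \euA_\mho$, as well as the Fukaya--Seidel category $\fs_{\wt{\mho}}(\pi)$, the global monodromy $\sigma_{\wt{\mho}}$, and the natural transformation $N_{\wt{\mho}}: \sigma_{\wt{\mho}} \rightarrow \identity$. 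Since $\wt{\mho} = \mho \times \D^2$ is a product, an interior marked point constrained to $\wt{\mho}$ has free location in the disk base and fiber component constrained to $\mho$; thus fiber computations naturally pick up contributions only from curves mapping into $E_*$, while the total-space computations see both kinds of constraints.

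Next I would verify that the three structural inputs from \S\ref{subsec:a picard--lefschetz} survive intact. (i) Compactness and transversality for the bulk-deformed curves with thimble boundary conditions follow from the maximum-principle arguments of \cite{seidelpart1} on the base, combined with the general bulk transversality in Proposition \ref{prop:bd moduli spaces are regular}; crucially, since $\wt{\mho}$ is disjoint from the thimbles $T_1,\dots,T_k$ and from $\bdy_h E$, the point constraints do not interfere with the fiberwise contact-type boundary behavior. (ii) The main identification of \cite{seidelpart1} extends: after index-weighted summation over $q$, the continuation $\Gamma^{-\e,\e}_{\wt{\mho}}$ and $\delta_{\wt{\mho}}$ are quasi-equivalent morphisms in $\op{bimod}(\euA_\mho)$. (iii) The restriction functor
\[
\EuScript{R}_{\euA_\mho}:\fun(\fs_{\wt{\mho}}(\pi),\fs_{\wt{\mho}}(\pi)) \rightarrow \op{bimod}(\euA_\mho)
\]
is cohomologically full and faithful: full-faithfulness follows as in \cite[Lemma 2.7]{abouzaid2015khovanov} applied in the bulk-deformed setting, combined with thimble generation of $\fs_{\wt{\mho}}(\pi)$ (using the bulk-deformed Legendrian surgery viewpoint or, if one prefers, the split-generation arguments expected to be inherited from the $q=0$ case since $\hbar$ is a formal parameter of nonzero degree).

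With these in place, the algebraic endgame is identical to the untwisted story. Because $\EuScript{R}_{\euA_\mho}$ is cohomologically full and faithful, $\delta_{\wt{\mho}}$ determines $N_{\wt{\mho}}$ up to quasi-equivalence of natural transformations $\sigma_{\wt{\mho}} \rightarrow \identity$, and any such replacement computes the same direct limit
\[
\hw_{\wt{\mho}}(T_i,T_i) \cong \lim_{k \rightarrow \infty} H\hom_{\fs_{\wt{\mho}}(\pi)}\!\left(\sigma_{\wt{\mho}}^k T_i,\, T_i\right)
\]
up to $\KL$-module isomorphism, exactly as in \S\ref{subsubsec:wrapped fuk}. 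Since $\delta_{\wt{\mho}}$ is manifestly recoverable from $\euB_\mho$ viewed as an $\euA_\mho$-bimodule -- i.e.\ from the order-preserving quasi-isomorphism class of $\fuk_\mho(V_1,\dots,V_k)$ -- the result follows.

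The main obstacle I expect is ensuring that the extension of the key identification of \cite{seidelpart1} (step (ii) above) remains valid in the presence of interior point constraints. The relevant moduli spaces are now fiber products over $\mho^{\times q}$ of the parametrized strip families used by Seidel, and one must check that: (a) the one-parameter degeneration interpolating between $\Gamma^{-\e,\e}_{\wt{\mho}}$ and $\delta_{\wt{\mho}}$ admits consistent perturbation data compatible with the point constraints; and (b) codimension-two breakings involving sphere bubbles that capture constraint points do not create new boundary contributions, which follows from exactness of $(E,\Theta)$ and the dimension formula of Proposition \ref{prop:bd moduli spaces are regular}. This is a mechanical but technically delicate exercise in coherent perturbation theory, essentially requiring one to repeat Seidel's construction while dragging along the evaluation-to-$\mho$ constraints -- a step for which the framework of \S\ref{sec:bulk deformations} was designed precisely to accommodate.
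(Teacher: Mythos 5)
Your proposal matches the paper's own route: the paper proves this theorem (and its twisted analogue) precisely by rerunning the Picard--Lefschetz argument of \S\ref{subsec:a picard--lefschetz} \emph{mutatis mutandis}, bulk deforming the fiber invariants by $\mho$ and the total-space invariants by $\wt{\mho}$, with the same structural inputs (the bimodules $\euU^c$, the continuation maps $\Gamma^{c_+,c_-}$, the identification of $\Gamma^{-\e,\e}$ with $\delta$ from \cite{seidelpart1}, and the full and faithful restriction functor) carried over to the deformed setting. Your additional remarks on dragging the evaluation-to-$\mho$ constraints through Seidel's construction are exactly the kind of verification the paper leaves implicit, so the argument is essentially the same.
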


Using the above two theorems,
we can now complete the proof of Theorem \ref{thm:compute twsh and bdfuk weak}
by applying either Theorem \ref{thm:main thm twisted} or Theorem \ref{thm:main thm bulk deformed} iteratively $k$ times, which results in an order preserving quasi-isomorphism
\begin{align*}
\fuk_\Omega(\tau_{S_1}^2V_1,...,\tau_{S_k}^2V_k) \simeq \fuk(V_1,...,V_k)
\end{align*}
in the case $\dim X = 4$, or
\begin{align*}
\fuk_\mho(\tau_{S_1}^2V_1,...,\tau_{S_k}^2V_k) \simeq \fuk(V_1,...,V_k)
\end{align*}
in the case $\dim X = 4l \geq 8$.
One then appeals to \S\ref{subsubsec:wrapped Floer as a module} to bootstrap from wrapped Floer cohomology to symplectic cohomology.
Similarly, Theorem* \ref{thm:compute twsh and bdfuk strong} follows from the following
 stronger versions of Theorem \ref{thm:meta-principle twisted} and Theorem
\ref{thm:meta-principle bulk deformed} (see the discussion in \S\ref{subsec:the general setup}).
\begin{thm*}
The twisted symplectic cohomology $\sh_{\wt{\Omega}}(E,\Theta)$ depends only on $\fuk_\Omega(V_1,...,V_k)$ up to order-preserving quasi-isomorphism.
\end{thm*}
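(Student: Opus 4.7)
The plan is to upgrade the Picard--Lefschetz argument of \S\ref{subsec:a picard--lefschetz} from the level of individual endomorphism modules to the entire wrapped Fukaya category of thimbles, and then convert the result into a statement about symplectic cohomology via a twisted closed--open map. Throughout, I would use that $\Omega$ has support disjoint from $V_1,\dots,V_k$ while its extension $\wt{\Omega}$ is disjoint from $T_1,\dots,T_k$, so every pseudoholomorphic curve count entering the constructions of $\euA$, $\euB$, $\euU^c$, $\Gamma^{c_+,c_-}$, and the Fukaya--Seidel category $\fs(\pi)$ can be reweighted by $t^{\int u^*\wt{\Omega}}$ to produce twisted analogues $\euA_\Omega$, $\euB_\Omega$, $\euU^c_\Omega$, $\Gamma^{c_+,c_-}_\Omega$, and $\fs_{\wt{\Omega}}(\pi)$. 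The relevant structure equations follow by Stokes' theorem exactly as in \S\ref{subsec:the twisted Fukaya category}, and the main theorem of \cite{seidelpart1} transports to this setting to identify $\Gamma^{-\e,\e}_\Omega$ with a twisted boundary homomorphism $\delta_\Omega$ in $\op{bimod}(\euA_\Omega)$ up to quasi-equivalence.

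Next, I would extend the conclusion from the modules $\twhw_{\wt{\Omega}}(T_i,T_i)$ to the full $\calA_\infty$ subcategory $\mathfrak{WFuk}_{\wt{\Omega}}(T_1,\dots,T_k)$ of the twisted wrapped Fukaya category. As in \S\ref{subsec:a picard--lefschetz}, the entire collection of wrapped morphism complexes and $\calA_\infty$ operations is assembled from colimits $\varinjlim\Hhom_{\fs_{\wt{\Omega}}(\pi)}(\sigma^j T_i, T_{i'})$, with connecting maps induced by a natural transformation $N \colon \sigma \to \identity$ of the global monodromy. Because the restriction functor $\EuScript{R}_{\euA_\Omega}$ remains cohomologically full and faithful and sends $N$ to $\Gamma^{-1+\e,\e}_\Omega$, the bimodule morphism $\delta_\Omega$ determines $N$ up to quasi-equivalence, hence determines $\mathfrak{WFuk}_{\wt{\Omega}}(T_1,\dots,T_k)$ up to $\calA_\infty$ quasi-equivalence as a whole, and not merely object by object.

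Finally, to pass from the wrapped Fukaya category to symplectic cohomology I would invoke a twisted version of Ganatra's theorem from \cite{ganatra2013symplectic}: provided the thimbles $T_1,\dots,T_k$ split-generate the twisted wrapped Fukaya category of $(E,\Theta)$ and satisfy the appropriate nondegeneracy hypothesis, the twisted closed--open map gives an isomorphism
\begin{equation*}
\sh_{\wt{\Omega}}(E,\Theta) \;\cong\; HH^*\bigl(\mathfrak{WFuk}_{\wt{\Omega}}(T_1,\dots,T_k)\bigr).
\end{equation*}
Combined with the previous step this would express $\sh_{\wt{\Omega}}(E,\Theta)$ as an invariant of $\twfuk(V_1,\dots,V_k)$ alone. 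An alternative approach is to follow the symplectic field theory computation from the appendix of \cite{bourgeois2012effect}, which identifies $\sh$ with the Hochschild homology of Seidel's curved $\calA_\infty$ category $\euD$; one would construct a twisted analogue $\euD_\Omega$ purely from $\twfuk(V_1,\dots,V_k)$ and establish the corresponding twisted equality.

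The hard part is precisely this last step. Split-generation of the wrapped Fukaya category by Lefschetz thimbles is the subject of work in progress of Abouzaid--Ganatra, the nondegeneracy condition and Ganatra's isomorphism must both be established in the presence of a $B$-field, and the SFT alternative relies on transversality results that have not yet appeared in full generality. This is exactly the source of the asterisk attached to the theorem statement; the first two paragraphs above should be essentially routine upon careful bookkeeping of the reweighting factors $t^{\int u^*\wt{\Omega}}$.
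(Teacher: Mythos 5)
Your proposal follows essentially the same route as the paper: twist the Picard--Lefschetz machinery of \S\ref{subsec:a picard--lefschetz} (the categories $\euA$, $\euB$, the bimodules $\euU^c$, and the continuation maps) by $\Omega$ and $\wt{\Omega}$, conclude that the twisted wrapped data of the thimbles is determined by $\twfuk(V_1,\dots,V_k)$, and then pass to $\sh_{\wt{\Omega}}(E,\Theta)$ via either a twisted Ganatra-type closed--open isomorphism together with split-generation by thimbles, or the SFT/Legendrian-surgery identification with the Hochschild invariant of a twisted $\euD$. You also correctly locate the conditional last step as the source of the asterisk, which is exactly the paper's position.
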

\begin{thm*}
The bulk deformed symplectic cohomology $\sh_{{\wt{\mho}}}(E,\Theta)$ depends only on $\fuk_\mho(V_1,...,V_k)$ up to order-preserving quasi-isomorphism.
\end{thm*}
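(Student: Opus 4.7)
The plan is to upgrade the Picard--Lefschetz approach of \S\ref{subsec:a picard--lefschetz} from wrapped Floer modules of individual thimbles to the full bulk deformed symplectic cochain complex of $(E,\Theta)$, decorating every pseudoholomorphic curve count with interior marked point constraints in $\mho$ on the fiber side and in $\wt{\mho}$ on the total space side. I would first construct bulk deformed analogues of every algebraic object from \S\ref{subsec:a picard--lefschetz}: the bulk deformed directed Fukaya category $\euA_\mho$ of vanishing cycles sitting inside the bulk deformed full Fukaya category $\euB_\mho = \fuk_\mho(V_1,\ldots,V_k)$, a bulk deformed directed category $\euT_{\wt{\mho}}$ of perturbed thimbles in $(E,\Theta)$ quasi-isomorphic to $\euA_\mho$ over $\KL$, a family of bulk deformed wrapped bimodules $\euU^c_{\wt{\mho}}$ together with continuation homomorphisms $\Gamma^{c_+,c_-}_{\wt{\mho}}$, and the bulk deformed boundary map $\delta_\mho: \euA_\mho^\vee \to \euA_\mho$ attached to the inclusion $\euA_\mho \hookrightarrow \euB_\mho$. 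In each case the structure maps are defined by the prescription of \S\ref{sec:bulk deformations}, with a factor $\hbar^q/q!$ weighting a fiber product over $q$ interior marked points evaluating into $\mho$ or $\wt{\mho}$ as appropriate. These objects depend on the bulk datum only through its homology class, by the change-of-coordinates argument of \S\ref{subsubsec:independence of mho}.

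The central step is a bulk deformed enhancement of the main result of \cite{seidelpart1}: the once-wrapping continuation natural transformation $N_{\wt{\mho}}: \sigma_{\wt{\mho}} \to \identity$ on the bulk deformed Fukaya--Seidel category $\fs_{\wt{\mho}}(\pi)$ should, under a cohomologically full and faithful restriction functor $\EuScript{R}: \fun(\fs_{\wt{\mho}}(\pi),\fs_{\wt{\mho}}(\pi)) \rightarrow \bimod(\euA_\mho)$, be sent to a morphism quasi-equivalent to $\delta_\mho$. Seidel's original parametric moduli space argument in \cite{seidelpart1} is formally compatible with interior point constraints: the relevant cobordisms acquire additional families of marked points mapping to $\wt{\mho}$, and the gluing, Poincar\'e duality, and homotopy assertions go through term by term in the $\hbar$-expansion. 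Granted this, the direct limit formula for wrapped Floer cohomology from \S\ref{subsubsec:wrapped fuk}, now applied to $\hw_{\wt{\mho}}(T_i,T_i)$, combined with cohomological full-faithfulness of $\EuScript{R}$ (which is inherited from Morita theory over $\KL$ and from the expected split-generation of $\fs_{\wt{\mho}}(\pi)$ by the thimbles), shows that the full bulk deformed wrapped subcategory on $T_1,\ldots,T_k$ is determined by $\fuk_\mho(V_1,\ldots,V_k)$ up to $\calA_\infty$ quasi-isomorphism over $\KL$.

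To pass from this subcategory to the full bulk deformed symplectic cochain complex, I would invoke the anticipated bulk deformed enhancement of the Abouzaid--Ganatra package: the thimbles should split-generate $\wfuk_{\wt{\mho}}(E,\Theta)$, and this category should satisfy a Calabi--Yau nondegeneracy condition, in which case the bulk deformed closed--open map $\sh_{\wt{\mho}}(E,\Theta) \rightarrow HH^*(\wfuk_{\wt{\mho}}(E,\Theta))$ is an isomorphism. Since Hochschild invariants are preserved under $\calA_\infty$ quasi-isomorphism and coincide with those of any split-generating subcategory, this identifies $\sh_{\wt{\mho}}(E,\Theta)$ with an invariant computed entirely from $\fuk_\mho(V_1,\ldots,V_k)$, which is what we want.

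The hard part is twofold. Analytically, the maximum principle and Gromov compactness arguments underlying the Lefschetz fibration techniques of \cite{seidelpart1} and \S\ref{subsec:a picard--lefschetz} must be reconciled with fiberwise-varying interior marked points constrained to $\wt{\mho}$; producing perturbation data that are simultaneously consistent with the fibration projection, contact-type behavior near the vertical and horizontal boundaries, and the fiber product with $\mho^{\times q}$, is a nontrivial technical exercise, and is precisely where one must invoke either symplectic field theoretic transversality or the not-yet-available symplectic Picard--Lefschetz results flagged in the introduction. Algebraically, the homological perturbation lemma, Morita theory, and closed--open nondegeneracy statements used above are written in the literature over a field, whereas here the ground is the graded ring $\KL = \KL_0[\hbar,\hbar^{-1}]$; transporting these arguments requires the $\KL$-splitting property noted after Lemma \ref{lem:quasi-iso trick}, together with careful degree bookkeeping to ensure that the formal series in $\hbar$ appearing terminate on each generator.
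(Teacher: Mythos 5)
Your proposal follows essentially the same route the paper intends for this starred statement: bulk deform the Picard--Lefschetz machinery of \S\ref{subsec:a picard--lefschetz} so that $\fuk_\mho(V_1,\ldots,V_k)$ pins down the wrapped data of the thimbles, then invoke the expected closed--open isomorphism and split-generation by thimbles (the Ganatra and Abouzaid--Ganatra package discussed in \S\ref{subsec:the general setup}) to recover $\sh_{\wt{\mho}}(E,\Theta)$ as a Hochschild invariant of that subcategory --- the same conditional inputs that earn the theorem its asterisk, which you correctly flag. The only other route the paper sanctions, and which you mention just in passing, is the symplectic field theory/Legendrian surgery argument expressing the symplectic cohomology of the total space directly as the Hochschild homology of Seidel's curved category $\euD$.
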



\bibliographystyle{plain}
\bibliography{biblio}

\end{document}